\documentclass{article}

\usepackage[a4paper, left=2cm, right=2cm, top=2cm, bottom=2cm]{geometry}

\usepackage[english]{babel}
\usepackage[utf8]{inputenc}
\usepackage[T1]{fontenc}

\usepackage{amsmath}
\usepackage{amssymb}
\usepackage{amsthm}
\usepackage{stmaryrd}  

\usepackage{pdfpages}			
\usepackage[inline]{enumitem}	
\usepackage{array}				
\usepackage{multicol}			

\usepackage{wrapfig}			
\usepackage{caption}			
\usepackage{subcaption}			

\usepackage{tikz}
\usetikzlibrary{arrows}
\usetikzlibrary{bbox}			

\usepackage{hyperref}
\hypersetup{
	colorlinks=true,
	linkcolor=blue,
	hypertexnames=false, 
}

\usepackage{thm-restate}

\DeclareFontShape{T1}{cmr}{m}{scit}{<->ssub*cmr/m/scsl}{}

\newcommand{\NN}{\mathbb N}
\newcommand{\ZZ}{\mathbb Z}

\newcommand{\RR}{\mathbb R}

\setlist[itemize,1]{label={\footnotesize\textbullet}}
\setlist[itemize,2]{label=$\triangleright$}
\setlist[itemize,3]{label=$\circ$}

\newtheorem{theoreme}{Theorem}

\numberwithin{theoreme}{section}
\numberwithin{equation}{section}
\newtheorem{proposition}[theoreme]{Proposition}
\newtheorem{lemme}[theoreme]{Lemma}
\newtheorem{corollaire}[theoreme]{Corollary}

\theoremstyle{definition}
\newtheorem{definition}[theoreme]{Definition}
\newtheorem{exemple}[theoreme]{Example}
\theoremstyle{remark}
\newtheorem*{remarque}{Remark}

\newcommand{\ddelta}{\boldsymbol{\delta}}	
\newcommand{\bbeta}{\boldsymbol{\beta}}		
\newcommand{\pprec}{{\boldsymbol{\prec}}}	

\newcommand{\affc}[1]{\boldsymbol{\left(\vphantom{#1}\right.}\!#1\!\boldsymbol{\left.\vphantom{#1}\right)}}
\SetSymbolFont{stmry}{bold}{U}{stmry}{m}{n}			
\SetSymbolFont{largesymbols}{bold}{OMX}{txex}{b}{n}	

\newcommand{\Lcn}{\overline{L_c}}
\newcommand{\Rcn}{\overline{R_c}}

\newcommand{\diffsym}{\mathrel{\Delta}}

\newcommand{\ncad}{\widetilde{\smash{\mathrm{ncad}}}}

\begin{document}


\begingroup
	\def\nom{Abou~Yassin}
	\def\prenom{Jad}
	\def\titre{A generalization in affine type \texorpdfstring{$A$}{A} of \textsc{Coxeter} sortable elements and \textsc{Reading}'s bijection with noncrossing partitions}

	\def\NOM{\expandafter\MakeUppercase{\nom}}

	\hypersetup{
		pdfcreator    = {\LaTeX{}},
		pdfauthor     = {\prenom{}~\NOM{}},
		pdftitle      = {\titre{}},
		pdfsubject    = {},	
		pdfkeywords   = {}	
	}

	\begin{center}
		{\bfseries\Large \titre{}}
		\vspace*{\baselineskip}

		\prenom{}~\textsc{\nom{}}
	\end{center}
\endgroup


\begin{abstract}
	This paper generalizes in the affine symmetric group the notion of \textsc{Coxeter} sortable (or $c$-sortable for short) elements, as well as the classical bijection between $c$-sortable elements and $c$-noncrossing partitions defined by \textsc{Reading} in finite \textsc{Coxeter} groups.
	The generalization to the affine symmetric group of the $c$-sortable elements is achieved by using biclosed sets of reflections. Using recent works from \textsc{Barkley} and \textsc{Speyer}, these biclosed sets admit a sort of «~one-line notation~» called a TITO on $\ZZ$ (translation-invariant total order on $\ZZ$) that coincides with the usual one-line notation in the case of an affine permutation. We characterize the $c$-sortable elements of the affine symmetric group by pattern avoidance on their one-line notation, mirroring the well-known characterizations of $c$-sortable elements in the classical finite types. Based on this criterion, we then define the $c$-sortable biclosed sets, generalizing the $c$-sortable elements, as biclosed sets such that their TITOs on $\ZZ$ avoid certain patterns.
	We also build a bijection from our set of $c$-sortable biclosed sets to the set of $c$-noncrossing partitions using various combinatorial objects and their one-to-one correspondences. First, the TITOs, in bijection with the biclosed sets of the affine symmetric group using results from \textsc{Barkley} and \textsc{Speyer}. Second, the $c$-noncrossing partitions of an annulus, in bijection with the $c$-noncrossing partitions of the affine symmetric group, using results from \textsc{Digne} and \textsc{Reading}. Finally, the cyclic noncrossing arc diagrams, defined by \textsc{Barkley}, for which we exhibit a subset in bijection with both the set of $c$-sortable biclosed sets and the set of $c$-noncrossing partitions.
\end{abstract}


\tableofcontents

\newpage


\section*{Introduction}
	\addcontentsline{toc}{section}{Introduction}

	\paragraph*{Context} The \textsc{Catalan} numbers are famously known for their appearance in various and yet seemingly unrelated locations in mathematics \cite{stanley2015}. Among them, a notable example is the set of noncrossing partitions \cite{kreweras1972}. Associating to each part $\{a_1 < \dots < a_k\}$ of a noncrossing partition the cyclic permutation $(a_1, \dots, a_k)$ of the symmetric group $\mathfrak{S}_n$, we can define the notion of noncrossing partitions of $\mathfrak{S}_n$. It happens that these elements correspond exactly to the interval between the identity and $c = (1, 2, \dots, n-1, n) \in \mathfrak{S}_n$ for an ordering on $\mathfrak{S}_n$ called the absolute order \cite{biane97}.

	The symmetric group is a special case of \textsc{Coxeter} group, for which the absolute order is also defined. Considering $c$ a \textsc{Coxeter} element of a \textsc{Coxeter} group $W$, that is the product of all the generators of $W$ exactly once in a chosen order, we define the $c$-noncrossing partitions of $W$ as the interval between the identity and $c$ for the absolute order. When $W$ is finite, they play a major role in the study of \textsc{Artin} groups \cite{bessis2002dual, brady2002} and in the field of \textsc{Coxeter}-\textsc{Catalan} combinatorics \cite{reiner1997, armstrong2009generalized}.

	Another particular object appearing in \textsc{Coxeter}-\textsc{Catalan} combinatorics is the set of \textsc{Coxeter} sortable elements (or $c$-sortable elements) of a finite \textsc{Coxeter} group $W$. Once a \textsc{Coxeter} element $c$ is chosen, an element of $W$ has a particular reduced expression called its $c$-sorting word. The $c$-sortable elements are the ones for which their $c$-sorting words satisfy some compatibilities with the order in which the generators appear in $c$. They correspond, once a \textsc{Coxeter} element $c$ is chosen, to all the elements of $W$ that have a reduced expression that satisfies some compatibilities with the order in which the generators appear in $c$. These elements were introduced by \textsc{Reading} \cite{reading2005clusters} and were a major tool for obtaining bijective correspondences between various sets enumerated by the \textsc{Coxeter}-\textsc{Catalan} numbers, generalizing the \textsc{Catalan} numbers: $c$-noncrossing partitions, cambrian lattices, generalized associahedrons, cambrian fans and noncrossing arc diagrams (see \cite{reading2005clusters, reading2007cambrianlattice, reading2015noncrossingarcdiagramscanonical}). In type $A$, the case of the symmetric group $\mathfrak{S}_n$, the $c$-sortable permutations have a very explicit combinatorial interpretation as permutations whose one-line notation avoids certain patterns \cite{reading2005clusters}. This description is related to the sylvester congruence on words and binary search trees \cite{hivert2004algebra} and more generally to cambrian trees \cite{Chatel_2017}.

	The \textsc{Coxeter} sortable elements, the $c$-noncrossing partitions as well as \textsc{Reading}'s bijection between these two families of objects which is denoted $nc_c$ are still defined for infinite \textsc{Coxeter} groups. However, unlike the finite case, they form infinite sets and the map $nc_c$ fails to be a bijection: it is only injective \cite{reading2010sortable}. The aim of this paper is to generalize the $c$-sortable elements of the affine symmetric group $\widehat{\mathfrak{S}}_n$ in order to define a natural bijection with the $c$-noncrossing partitions, extending \textsc{Reading}'s map $nc_c$.

	This generalization happens in the set of biclosed sets of the positive root system of type $\widetilde{A}$. The framework of biclosed sets proved to be quite efficient at extending some properties of finite \textsc{Coxeter} groups for infinite \textsc{Coxeter} groups \cite{Dyer_2019, barkley2024affineextendedweakorder, barkley2025affinetamarilattice}, as biclosed sets naturally generalize the notion of inversion sets of elements of $W$. In the affine cases, there is a classification of biclosed sets \cite{barkley2022combinatorial} and specifically in type $\widetilde{A}$ (the affine symmetric group), there is a combinatorial description as translation-invariant total orders on $\ZZ$ (TITOs on $\ZZ$) and cyclic noncrossing arc diagrams \cite{barkley2025extendedweakorderaffine}.

	TITOs on $\ZZ$ are total orders $\prec$ on $\ZZ$ such that for all $x,y \in \ZZ$, $x \prec y$ if and only if $x + n \prec y + n$, for some fixed positive integer $n$. An explicit subset of them is in bijection with biclosed sets of the affine symmetric group \cite{barkley2022combinatorial, barkley2024affineextendedweakorder} in such a way that the TITO corresponding to the inversion set of $w \in \widehat{\mathfrak{S}}_n$ is given by the one-line notation of $w$.

	Noncrossing arc diagrams (on $n$ points) are diagrams encoding the cover reflections of a permutation (two consecutive integers of the one-line notation that are in decreasing order) and are in a one-to-one correspondence with the symmetric group of rank $n$. They play a fundamental role in understanding lattice quotients of the weak order on $\mathfrak{S}_n$ \cite{reading2015noncrossingarcdiagramscanonical} and $c$-sortable elements can be defined as minimal representatives of the cambrian congruence of the weak order \cite{reading2007cambrianlattice}. A generalized version of the noncrossing arc diagrams for the affine symmetric group and the extended weak order is defined by \textsc{Barkley} and shares many of the properties of the noncrossing arc diagrams \cite{barkley2025extendedweakorderaffine}.

	\paragraph*{Outline of the paper} We summarize briefly the outline of this paper before going into more depth in the following paragraphs. The main goal of this paper is to generalize the $c$-sortable elements of the affine symmetric group (a \textsc{Coxeter} group of type $\widetilde{A}$) in such a way that \textsc{Reading}'s map $nc_c$ naturally extends to a bijection with the $c$-noncrossing partitions (Section \ref{sec: biclos c-triable et généralisation de l'application de Reading}). Understanding the $c$-sortable elements of the affine symmetric group is crucial in order to generalize them, so we establish a characterization of $c$-sortable elements of the affine symmetric group based only on the appearance of patterns in the one-line notation of an affine permutation (Section \ref{sec: c triable du groupe symmetrique affine}).

	Taking inspiration from other generalizations from finite to affine \textsc{Coxeter} groups, for example the extended weak order \cite{barkley2024affineextendedweakorder, barkley2025extendedweakorderaffine}, we define the generalization of $c$-sortable elements in the set of biclosed sets of the positive root system of $\widehat{\mathfrak{S}}_n$, or equivalently, in the set of translation-invariant total orders on $\ZZ$ (TITOs on $\ZZ$ for short) \cite{barkley2022combinatorial, barkley2025extendedweakorderaffine}. This is motivated by the fact that biclosed sets form a natural generalization of inversion sets \cite{Dyer_2019}. TITOs on $\ZZ$ also have a notion of «~one-line notation~» for which we adapt and generalize the characterization based on pattern avoidance obtained for the $c$-sortable elements of $\widehat{\mathfrak{S}}_n$.

	In the noncrossing partition side of things, we use planar models as noncrossing partitions of annuli \cite{digne2005presentations, reading2024noncrossingpartitionsannulus}, and take inspiration from the results in the finite type $A$ \cite[6.3]{gobet2018dual}. It happens that \cite{barkley2025extendedweakorderaffine} also provides a third framework: the cyclic noncrossing arc diagrams, generalizing the noncrossing arc diagrams in the finite types \cite{reading2015noncrossingarcdiagramscanonical}. They are not naturally in bijection with neither the $c$-sortable elements or the $c$-noncrossing partitions, but we can easily define a subset of them, the cyclic $c$-noncrossing arc diagrams, to suit our needs. Finally, combining all these bijections together, we can build the generalization of \text{Reading}'s map $nc_c$, denoted $nc_c^{\widetilde{A}}$ (Section \ref{sec: generalization of reading's bijection}). Figure \ref{fig: diagramme des bijections} shows the main maps and objects that appear in this paper.

	\begingroup 
	\begin{figure}[ht!]
		\centering
		\newcommand{\biclos}{\begin{tikzpicture}[scale=0.8, racine/.style={inner sep=0.7pt, draw, circle, fill}, baseline=(current bounding box.center), rounded corners=0pt]
				\foreach \i in {0,1,2}{
					\coordinate (a\i) at ({-30 + 120*\i}:1);
					\pgfmathsetmacro{\j}{div(mod(\i+1,3),1)}
					\coordinate (a\i\j) at ({150 + 120*\i}:{1/2});
					\draw[draw=none] (0,0) -- node[pos=1.25] {$\alpha_\i$} (a\i);
					\draw[very thin] (a\i) -- (a\i\j);
				}

				\draw[draw=none, fill=gray, opacity=0.3] (a0) -- (a12) -- (a20) -- cycle;
				\node[racine] at (a0) {};
				\node[racine] at (a12) {};
				\node[racine] at (a20) {};
				\node[racine] at (-30:{1/4}) {};

				\draw (a0) -- (a1) -- (a2) -- cycle;
		\end{tikzpicture}}

		\newcommand{\ncadtikz}{\begin{tikzpicture}[scale=0.8, bezier bounding box, baseline=(current bounding box.center)]
				\def\n{3}
				\pgfmathsetmacro{\nm}{\n-1}

				\def\Rc{0}
				\def\Lc{1,2}

				\newcommand{\ang}[1]{{-(##1)*360/\n + 90}}

				\foreach \i in {0,...,\nm}{
					\coordinate (\i) at (\ang{\i}:1);
				}

				\foreach \i in \Rc{
					\draw (0,0) -- (\i);
				}
				\foreach \i in \Lc{
					\draw (\ang{\i}:2) -- (\i);
				}

				\node[fill, circle, inner sep=1pt] at (0,0) {};

				\draw[thick] (0) to[out=-45, in=0, looseness=2] (2) to[out=80, in=100, looseness=4] (1);

				\foreach \i in {0,...,\nm}{
					\node[circle, fill=white, inner sep=0pt] at (\i) {\i};
				}
		\end{tikzpicture}}

		\newcommand{\ncgeom}{\begin{tikzpicture}[bezier bounding box, scale=0.6]
				\def\n{3}
				\def\Lc{1,2}
				\def\Rc{0}

				\def\r{0.7}
				\def\R{2}

				\newcommand{\ang}[1]{{-(##1)*360/\n + 90}}
				\newcommand{\dis}[1]{{##1*\R + (1-##1)*\r}}

				\draw (0,0) circle (\dis{0});
				\draw (0,0) circle (\dis{1});

				\foreach \i in \Lc{
					\coordinate (\i) at (\ang{\i}:\dis{1});
					\draw[draw=none] (0,0) -- node[pos=1.2] {\i} (\i);
				}
				\foreach \i in \Rc{
					\coordinate (\i) at (\ang{\i}:\dis{0});
					\draw[draw=none] (0,0) -- node[pos=0.5] {\i} (\i);
				}

				\pgfmathparse{\n-1}	
				\foreach \i in {0,...,\pgfmathresult}{
					\node[draw, circle, fill, inner sep=1pt] at (\i) {};
				}

				\draw[fill, fill opacity=0.2]
				(0)	to[out=0,in=-50, looseness=3] (2)
				to[out=100, in=80, looseness=2.8] (1)
				to[out=90, in=120, out looseness=3.2, in looseness=2] ({\ang{2}}:\dis{0.3})
				to[out=-60, in=-5, looseness=3.2] (0);
		\end{tikzpicture}}

		\begin{tikzpicture}[scale=3]
			\node (sortc) at (-1,1) {$\mathrm{Sort}_c(\widehat{\mathfrak{S}}_n)$};
			\node (NCWc) at (1,1) {$\mathrm{NC}(\widehat{\mathfrak{S}}_n,c)$};
			\node (cBicW) at (-1,0) {$c$-$\mathrm{Bic}(\widehat{\mathfrak{S}}_n)$};
			\node (cBic) at (-1,-0.5) {$c$-$\mathrm{Bic}(\Phi^+)$};
			\node (cTITO) at (0,-0.5) {$c$-$\mathrm{TITO}$};
			\node (cNCAD) at (1,0) {$c$-$\widetilde{\mathrm{NCAD}}$};
			\node (geom) at (2.5,1) {$\widetilde{\mathrm{NC}}_c^{A,\circ}$};

			\draw[right hook->] (sortc) -- node[pos=0.5, above] {$nc_c$} (NCWc);
			\draw[right hook->] (sortc) -- node[pos=0.5, left] {$N$} (cBicW);
			\draw[right hook->] (sortc) -- node[pos=0.75, sloped, above] {\tiny one-line notation} (cTITO);

			\draw[left hook->, transform canvas={shift={(0.05,-0.05)}}] (cTITO)	-- node[pos=0.5, below, sloped] {$\ncad_c$} (cNCAD);
			\draw[densely dashed, ->, transform canvas={shift={(-0.05,0.05)}}] (cNCAD)	-- node[pos=0.5, above, sloped] {$\mathrm{tito}_c$} (cTITO);

			\draw[->] (cBicW) 	-- node[pos=0.5, sloped, above=-0.6ex] {$\sim$} node[pos=0.5, right] {$\bbeta$} (cBic);
			\draw[->] (cBicW) 	-- node[pos=0.5, sloped, above=-0.6ex] {$\sim$} node[pos=0.5, below left] {$\pprec$} (cTITO);
			\draw[->] (cNCAD)	-- node[pos=0.5, sloped, above=-0.6ex] {$\sim$} node[pos=0.5, right] {$\mathrm{NC}_c$} (NCWc);
			\draw[->] (NCWc)	-- node[pos=0.5, sloped, above=-0.6ex] {$\sim$} (geom);
			\draw[->, thick] (cBicW)	-- node[pos=0.5, sloped, above=-0.6ex] {$\sim$} node[pos=0.5, below, sloped] {$nc_c^{\widetilde{A}}$} (NCWc);

			\node[draw, rectangle, rounded corners=2pt] at ([shift={(-0.2,0.2)}]sortc) {$w = s_0s_1s_2s_1$};
			\node[draw, rectangle, rounded corners=2pt] at ([shift={(-0.6,0.2)}]cBicW) {$B = \{ \affc{0,2}, \affc{2,1}_1 \}$};
			\node[draw, rectangle, rounded corners=2pt] at ([shift={(-0.75,-0.15)}]cBic) {\biclos};
			\node[draw, rectangle, rounded corners=2pt] at ([shift={(0,-0.2)}]cTITO) {$\prec = [-3,4,2]$};
			\node[draw, rectangle, rounded corners=2pt] at ([shift={(0.6,-0.5)}]cNCAD) {$\mathcal{A} = $\ncadtikz};
			\node[draw, rectangle, rounded corners=2pt] at ([shift={(0,0.2)}]NCWc) {$x = s_0s_1s_2s_0$};
			\node[draw, rectangle, rounded corners=2pt] at ([shift={(-0.4,-0.55)}]geom) {\ncgeom};
		\end{tikzpicture}
		\caption{Main objects and maps used in this paper alongside a moving example with $n = 3$ and $c = s_0s_1s_2$.}
		\label{fig: diagramme des bijections}
	\end{figure}
	\endgroup

	\paragraph*{} To be more precise, in Section \ref{sec: background}, we define the main objects used in this paper: the affine symmetric group, \textsc{Coxeter} sortable elements, noncrossing partitions, biclosed sets, cyclic noncrossing arc diagrams, translation-invariant total orders on $\ZZ$, as well as some known bijections between some of these objects and a few results we use throughout this paper.

	\paragraph*{} In Section \ref{sec: c triable du groupe symmetrique affine}, we give a criterion to characterize the $c$-sortable elements of the affine symmetric group $\widehat{\mathfrak{S}}_n$ in terms of pattern avoidance in their one-line notation. In general, if $W$ is a finitely generated \textsc{Coxeter} group, and $c \in W$ a \textsc{Coxeter} element, the definition of a $c$-sortable element $w \in W$ involves a specific reduced expression of $w$. This is often quite difficult to compute, even in small examples. In the finite classical types, there are criteria based on pattern avoidance in the one-line notation of the elements of $W$ seen as a subset of permutations \cite[§4]{reading2005clusters}. Theorem \ref{thm: critère par motif pour les c-triables de Sn tilde} is similar in spirit to these known criteria but for the affine type $\widetilde{A}$. See Paragraph \ref{par: elements c-triables} of Section \ref{sec: background} for the definition of some notions (for example the sets $L_c$ and $R_c$).

	\begin{restatable}[Pattern avoidance criterion for $c$-sortable affine permutations]{theoremeA}{motifaffine}
		\label{thm: critère par motif pour les c-triables de Sn tilde}
		Let $n \in \NN^*$ and $c$ be a \textsc{Coxeter} element of the affine symmetric group $\widehat{\mathfrak{S}}_n$, characterized by a partition $L_c \sqcup R_c$ of $\llbracket 1,n \rrbracket$. Let $w \in \widehat{\mathfrak{S}}_n$. Then $w$ is $c$-sortable if and only if the one-line notation of $w$ avoids the two following patterns:
		\begin{enumerate}[label*=(\roman*)]
			\item $kij$ with $i,j,k \in \ZZ$, $i < j < k$ and $j \in \Lcn$,
			\item $jki$ with $i,j,k \in \ZZ$, $i < j < k$ and $j \in \Rcn$.
		\end{enumerate}
	\end{restatable}

	This criterion is very similar to the one in type $A$: let $c \in \mathfrak{S}_n$ be a \textsc{Coxeter} element, defined by a partition $L_c \sqcup R_c = \llbracket 2,n-1 \rrbracket$ with no empty parts. Then a permutation $\sigma \in \mathfrak{S}_n$ is $c$-sortable if and only if the one-line notation of $\sigma$ avoids the patterns $kij$ with $i < j < k$ and $j \in L_c$ as well as the patterns $jki$ with $i < j < k$ and $j \in R_c$ \cite{reading2005clusters}. One key difference is that in the affine type, the one-line notation is infinite and hence there is an infinite number of patterns to check. We will see in Section \ref{ssec: critère motif c-triable} that, in fact, one only needs a finite number of checks to distinguish between $c$-sortable and non $c$-sortable elements of $\widehat{\mathfrak{S}}_n$, and using Theorem \ref{thm: critère par motif pour les c-triables de Sn tilde} is quite efficient in practice\footnotemark.
	\footnotetext{Compared to Sage's \href{https://doc.sagemath.org/html/en/reference/categories/sage/categories/coxeter_groups.html\#sage.categories.coxeter_groups.CoxeterGroups.ElementMethods.is_coxeter_sortable}{built-in function} for checking whether an element is $c$-sortable, this criterion results in calculations that are on average about 500 times faster.}

	The methodology used to prove Theorem \ref{thm: critère par motif pour les c-triables de Sn tilde} is described in \cite[§4]{reading2005clusters} and generalized for infinite \textsc{Coxeter} groups in \cite[§4]{reading2010sortable}: it is a general tool for finding such a criterion in various \textsc{Coxeter} groups that have a permutation interpretation. This method is based on a characterization of $c$-sortable elements as $c$-aligned elements \cite{reading2005clusters, reading2010sortable}. An element $w \in W$ is said to be $c$-aligned if the intersection of its inversion set with noncommutative generalized rank two parabolic subgroups is compatible with an ordering of the roots of $\Phi^+(W)$ given by an explicit skew-symmetric bilinear form named $\omega_c$. The form $\omega_c$ is defined as the skew-symmetrization of the \textsc{Euler} form $E_c$ that appears in quiver representation theory \cite{derksen2005quiver}. This ordering is given by the sign of $\omega_c(\beta_1, \beta_2)$ for every two roots $\beta_1$ and $\beta_2$ in $\Phi^+(W)$. The goal of Section \ref{sssec: forme omega_c} is to compute the sign of the form $\omega_c$ in the case where $W$ is of type $\widetilde{A}$, see Corollary \ref{cor: sign of omega_c}. In order to apply \textsc{Reading}'s method, we only need to compute this sign for $\beta_1$ and $\beta_2$ the canonical generators of noncommutative generalized rank two parabolic subgroups of $W$. Section \ref{sssec: sgpgr2nc} is dedicated to the study of these subgroups in type $\widetilde{A}$, and especially their canonical generators. Using restrictions on the inversion set of $w \in \widehat{\mathfrak{S}}_n$ given by the definition of $c$-alignment (Section \ref{sssec: c-aligné Sn tilde}), we deduce restrictions on the patterns that can appear in the one-line notation of $w$ in order to prove Theorem \ref{thm: critère par motif pour les c-triables de Sn tilde}.

	\paragraph*{}In Section \ref{sec: biclos c-triable et généralisation de l'application de Reading}, we generalize the notion of $c$-sortable elements of the affine symmetric group in order to obtain a larger family of objects. The aim is to generalize \textsc{Reading}'s map $nc_c$ into a bijection to the set of $c$-noncrossing partitions of the affine symmetric group. This generalization takes the form of \emph{$c$-sortable biclosed sets} of $W$. In the following definition, see Paragraph \ref{par: tito} of Section \ref{sec: background} for the definition of TITOs and their shape.

	\begin{restatable*}[$c$-sortable TITO on $\ZZ$, $c$-sortable biclosed set]{definition}{biclosctriable} \label{def: biclos c-triable}
		Let $c$ be a \textsc{Coxeter} element of $W$ and $\prec$ be a TITO on $\ZZ$. We say that $\prec$ is \emph{$c$-sortable} if it satisfies the two following conditions.
		\begin{enumerate}
			\item It is either of shape $[\llbracket 0,n-1 \rrbracket]$ or $[L]\underline{[M]}[R]$ where $L \sqcup M \sqcup R = \llbracket 0,n-1 \rrbracket$ and $L \subset L_c$, $R \subset R_c$, $M \cap L_c \neq \emptyset$ and $M \cap R_c \neq \emptyset$.
			\item The patterns $k \prec i \prec j$ with $j \in \Lcn$ and $j \prec k \prec i$ with $j \in \Rcn$ are avoided in the one-line notation of $\prec$.
		\end{enumerate}
		We denote by $c$-$\mathrm{TITO}$ the set of $c$-sortable TITOs.	A biclosed set $B$ of $R$ is said to be \emph{$c$-sortable} if the TITO $\pprec(B)$ is $c$-sortable and we denote $c$-$\mathrm{Bic}(W)$ the set of $c$-sortable biclosed sets of $R$.
	\end{restatable*}

	We state a few motivations behind this choice of generalization. The $c$-sortable biclosed sets naturally extend the $c$-sortable elements of $W$ using the injection $N : W \hookrightarrow \mathrm{Bic}(W)$ associating to each element $w$ of $W$ its inversion set $N(w)$. In fact, among the finite biclosed sets, only those of the form $N(w)$ with $w$ a $c$-sortable element of $W$ are $c$-sortable biclosed sets. Moreover, there is a notion of «~one-line notation~» for biclosed sets of $W$, using a bijection between biclosed sets and translation-invariant total orders on $\ZZ$ (TITOs on $\ZZ$) \cite{barkley2022combinatorial}. Using this bijection, we define the $c$-sortable biclosed sets in Section \ref{ssec: biclos c-triables} using patterns avoidance in their one-line notation, taking inspiration from Theorem \ref{thm: critère par motif pour les c-triables de Sn tilde} and keeping important properties of $c$-sortable elements such as the injectivity of the map $w \in \mathrm{Sort}_c(W) \mapsto \mathrm{Cov}(w)$, which holds in any \textsc{Coxeter} group. Moreover, biclosed sets of the positive root system are a powerful tool for studying $c$-sortable elements, thanks to the correspondence of the latter with $c$-aligned elements \cite{reading2010sortable}.

	Defining the generalized bijection $nc_c^{\widetilde{A}}$ requires additional tools, because one cannot simply take a $c$-sorting word for a biclosed set. Using once again results from \cite{barkley2025extendedweakorderaffine}, we take a detour through the cyclic noncrossing arc diagrams. They are in bijection under a map denoted $\ncad$ with the widely generated TITOs on $\ZZ$, a subset of TITOs which includes all our wanted candidates for $c$-sortable biclosed sets (or $c$-sortable TITOs on $\ZZ$ through the bijection between these two sets). Restricting this bijection to $c$-sortable TITOs on $\ZZ$, we obtain the map $\ncad_c$ and its image is the set of what we call the cyclic $c$-noncrossing arc diagrams. We study them in detail in Section \ref{ssec: c-noncrossing arc diagrams}. They, in turn, are in bijection with the $c$-noncrossing partitions under a map named $\mathrm{NC}_c$ which we define in Section \ref{sssec: bijection between NCAD and NC}. Figure \ref{fig: diagramme des bijections} shows the main injections and bijections used in this paper. The map $nc_c^{\widetilde{A}}$ has a bold arrow, and is defined as $nc_c^{\widetilde{A}} = \mathrm{NC}_c \circ \ncad_c \circ \pprec$.

	\paragraph*{}In Section \ref{sec: generalization of reading's bijection}, we prove that the map $nc_c^{\widetilde{A}}$ is a bijection between the set of $c$-sortable biclosed sets and the set of $c$-noncrossing partitions of the affine symmetric group and that it extends \textsc{Reading}'s map $nc_c$.

	\begin{restatable}[Generalized \textsc{Reading} map]{theoremeA}{readinggeneralise}
		\label{thm: generalisation de l'application de Reading}
		Let $W$ be a \textsc{Coxeter} group of type $\widetilde{A}_n$, and $c \in W$ be a \textsc{Coxeter} element of $W$. The map $nc_c^{\widetilde{A}}$ is a bijective map from $c$-$\mathrm{Bic}(W)$ to $\mathrm{NC}(W,c)$. Moreover, for all $w \in \mathrm{Sort}_c(W)$, we have $nc_c^{\widetilde{A}}(N(w)) = nc_c(w)$.
	\end{restatable}

	Most of Section \ref{sec: generalization of reading's bijection} is dedicated to prove that the map $\ncad_c$ is bijective, and we also explicitly compute its inverse map (which we call $\mathrm{tito}_c$) in Section \ref{sssec: défintion de TITOc}. It is defined by selecting arcs of a cyclic $c$-noncrossing arc diagram in a specific order, resembling the computation of the inverse map of \textsc{Reading}'s bijection $nc_c$ in type $A$ \cite[6.3]{gobet2018dual}. The main difficulty of this procedure is to find a « good » system of representatives of arcs modulo $n$, called \emph{numbering}, to chose from. This is the goal of Section \ref{ssec: renumérotation} in which we use a sequence called $\mathcal{C}_a(\mathcal{A})$, where $\mathcal{A}$ is a cyclic $c$-noncrossing arc diagram, in order to determine a particular choice of numbering. We study the properties of the sequence in Section \ref{sssec: structure des c-NCAD + suite C}.

	The chain of bijections $nc_c^{\widetilde{A}} = \mathrm{NC}_c \circ \ncad_c \circ \pprec$ does not hide the explicit nature of the computation of the map $nc_c^{\widetilde{A}}$: see for example Definition \ref{def: reading généralisée} and Example \ref{ex: direct map}.

	\paragraph*{Acknowledgements} The present work is part of the author's PhD thesis at Université de Tours, Institut Denis Poisson, supervised by Thomas \textsc{Gobet} and Cédric \textsc{Lecouvey}. The author would like to thank both Thomas \textsc{Gobet} and Cédric \textsc{Lecouvey} for their guidance throughout every stage of the writing process of this paper and beyond. Additionally, the author thanks the ANR project CORTIPOM (ANR-21-CE40-0019) which gave the author the possibility of attending to many conferences.

\section{Background} \label{sec: background}

		\paragraph{Finite and affine symmetric groups}

		We denote by $\mathfrak{S}_n$ the \emph{symmetric group} on $\llbracket 1,n \rrbracket$ and $\widehat{\mathfrak{S}}_n$ the \emph{affine symmetric group} of order $n$. It is the subgroup of the symmetric group over $\ZZ$ consisting of all the permutations such that:
		\begin{multicols}{2}
			\begin{enumerate}[label*=(\alph*)]
			\item for all $k \in \ZZ$, $\sigma(k+n) = \sigma(k) + n$,
			\item the equality $\sum_{k=1}^n \sigma(k) = \sum_{k=1}^n k$ holds.
		\end{enumerate}
		\end{multicols}

		\noindent An \emph{orbit} of an affine permutation $\widehat{\sigma} \in \widehat{\mathfrak{S}}_n$ is a set of the form $\{ \widehat{\sigma}^k(a) \;|\; k \in \ZZ \}$ for some integer $a \in \ZZ$.

			\subparagraph{Notation} The \emph{one-line notation} of a permutation $\sigma$ is the sequence $\sigma(1) \dots \sigma(n)$ and for an affine permutation $\widehat{\sigma}$, it is $\cdots \widehat{\sigma}(-1) \widehat{\sigma}(0) \widehat{\sigma}(1) \cdots$. For simplicity, we often use the \emph{window notation} of an affine permutation $\widehat{\sigma}$: $[\widehat{\sigma}(1), \dots, \widehat{\sigma}(n)]$, which suffices to encode $\widehat{\sigma}$ using point (a) above.

			The natural injection $\iota : \mathfrak{S}_n \hookrightarrow \widehat{\mathfrak{S}}_n$ is given by $\iota(\sigma) = [\sigma(1), \dots, \sigma(n)]$.

			\subparagraph{Cover reflections} A \emph{cover reflection} of a permutation $\sigma \in \mathfrak{S}_n$ is a transposition $(i,j)$ such that $i < j$ and $ji$ is a factor of the one-line notation of $\sigma$. They are a special case of \emph{inversions} where we only require $j$ to precede $i$. We define the same notion for an affine permutation, in this case a cover reflection is an affine transposition $\affc{i,j}$. We denote $\mathrm{Cov}(\sigma)$ the set of all cover reflections of an (affine) permutation $\sigma$.

			\subparagraph{Affine transpositions} The \emph{affine transposition} $\affc{i,j}$, for $i \not\equiv j \bmod n$, is the affine permutation that exchanges, for all $p \in \ZZ$, $i + pn$ and $j + pn$ and fixes all the other integers. Since any representative of the class of $(i,j)$ in $\ZZ^2 / (n,n) \ZZ$ gives the same affine transposition, there is not a unique description of them. This is why we introduce the notation $\affc{i,j}_p$ for $1 \leq i,j \leq n$ and $p \in \ZZ$, defined as $\affc{i,j}_p = \affc{i, j+pn}$. To have uniqueness in the notation, we either enforce $i < j$ or $p \geq \ddelta_{i > j}$.

			\subparagraph{Affine cycles} Cycles in the affine symmetric group behave in a more complicated way than in the finite symmetric group. We refer to \cite[§2]{digne2005presentations} for the notations and nomenclature we use in this paper. A \emph{cycle} will be denoted $\affc{a_1, \dots, a_k}_{[p]}$ where $p \in \ZZ$ is its \emph{shift}. If $p = 0$, we may omit the index and say that it is a \emph{finite cycle}. A pseudo-cycle is a product of cycles such that the sum of their shifts is zero, but any partial sum of their shifts is nonzero.

			\subparagraph{\textsc{Coxeter} systems} With the simple transpositions $S = \{ (i,i+1) \;|\; i \in \llbracket 1,n-1 \rrbracket \}$, $(\mathfrak{S}_n, S)$ is a \textsc{Coxeter} system of type $A$, and with the simple affine transpositions $\widehat{S} = \{ \affc{i,i+1} \;|\; i \in \llbracket 1,n \rrbracket \}$, $(\widehat{\mathfrak{S}}_n, \widehat{S})$ is a \textsc{Coxeter} system of type $\widetilde{A}$. For background on the general theory of \textsc{Coxeter} groups, see \cite{humphreys_1990}.

		\paragraph{Root systems \cite{humphreys_1990}} We use the geometric representation of a \textsc{Coxeter} group $W$, which is a real vector space $V$ of basis $(\alpha_s)_{s \in S}$ on which $W$ acts faithfully.

			\subparagraph{Definition} In this paper, we consider the \emph{root system} of a \textsc{Coxeter} group $W$ to be the set of unit vectors $\Phi = \{ w(\alpha_s) \;|\; w \in W, s \in S \} \subset V$. The geometry on $V$ is given by the symmetric bilinear form:
			\begin{equation*}
				\forall s,s' \in S,\ B(\alpha_s, \alpha_{s'}) = - \cos \left( \dfrac{\pi}{m_{s,s'}} \right)
			\end{equation*}
			where $(m_{s,s'})_{s,s' \in S}$ is the \textsc{Coxeter} matrix. The \emph{positive root system} $\Phi^+$ corresponds to the set of all roots of $\Phi$ such that their coefficients in the basis $(\alpha_s)_{s \in S}$ are all nonnegative.

			\subparagraph{Roots in type $\widetilde{A}$} Let $n \geq 2$ and consider $S = \{ s_0, \dots, s_{n-1} \}$ such that $m(s_i, s_j) = 3$ if $|i-j| = 1 \bmod n$ and $2$ otherwise. If $n = 1$, consider $S = \{s_0, s_1\}$ such that $m(s_0, s_1) = \infty$.

			For simplicity, we denote the basis of $V$ by $(\alpha_i)_{0 \leq i \leq n-1}$ instead of $(\alpha_{s_i})_{0 \leq i \leq n-1}$. The positive root system of type $\widetilde{A}_n$ is
			\begin{equation*}
				\Phi^+ = \bigsqcup_{p \in \NN} \bigsqcup_{0 \leq i < j \leq n}  \{ \alpha_i + \dots + \alpha_{j-1} + p\delta,\; - (\alpha_i + \dots + \alpha_{j-1}) + (p+1)\delta \}
			\end{equation*}
			where $\delta$ is the vector $\delta = \alpha_0 + \dots + \alpha_{n-1}$. This vector plays an important role in the affine root system, it spans the radical of $B$ and the vector space $V/\RR\delta$ alongside the bilinear form induced by $B$ is a geometric representation of a \textsc{Coxeter} group of type $A_n$.

			\subparagraph{Bijection with reflections} The positive root system of $W$ is in bijection with $R$, the set of reflections of $W$. In type $\widetilde{A}_n$, this bijection is given by, using the convention for affine permutations $(i,j)_p$ with $0 \leq i,j \leq n-1$ and $p \geq \ddelta_{i > j}$:
			\begin{equation*}
				\begin{array}{*4c}
					\bbeta :	& R				& \rightarrow	& \Phi^+ \\
								& t = (i,j)_p	& \mapsto		&
						\beta_t = \left\lbrace \begin{array}{ll}
							\alpha_i + \dots + \alpha_{j-1} + p\delta & \text{ if } i < j \\
							- (\alpha_i + \dots + \alpha_{j-1}) + (p-1) \delta & \text{ if } j < i.
						\end{array} \right.
				\end{array}
			\end{equation*}
			In particular, $\beta_{s_i} = \alpha_i$ for all $i \in \llbracket 0, n-1 \rrbracket$.

		\paragraph{Biclosed sets}

		Biclosed sets are defined as subsets of the root system $\Phi^+$.

			\subparagraph{Definition} We denote by $R = \bigcup_{w \in W} wSw^{-1}$ the set of reflections of $W$. Let $X \subset R$. We say that $X$ is:
			\begin{itemize}
				\item \emph{closed} if for all $r,s,t \in R$ such that $\beta_t \in \mathrm{Span}_+(\beta_r, \beta_s)$ and $r,s \in X$, we have $t \in X$.
				\item \emph{coclosed} if $R \setminus X$ is closed.
				\item \emph{biclosed} if $X$ is closed and coclosed.
			\end{itemize}

			Usually, for example in \cite{barkley2022combinatorial}, biclosed sets are defined for subsets of the positive root system $\Phi^+$. The biclosed sets of $\Phi^+$ are precicely the subsets $B \subset \Phi^+$ such that $\bbeta^{-1}(B)$ is a biclosed set of $W$.

			We denote by $\mathrm{Bic}(W)$ (resp. $\mathrm{Bic}(\Phi^+)$) the set of biclosed sets of $R$ (resp. of $\Phi^+$).

			\subparagraph{Inversion sets} Let $w \in W$. Its (left) inversion set $N(w) = \{ t \in R \;|\; l(tw) < l(w) \}$ is a biclosed subset. In fact, the finite biclosed subsets of $R$ are exactly the inversion sets of elements of $W$ \cite{Dyer_2019}. Note that this definition coincides with the inversions of a permutation of $\mathfrak{S}_n$ or $\widehat{\mathfrak{S}}_n$.

			When $W$ is infinite, all the infinite biclosed sets fail to be inversion sets because no element of $W$ has an infinite number of inversions. Some infinite biclosed sets can be expressed as inversions sets of infinite reduced words \cite{wang2019infinite}, but we do not enter in such considerations in this paper.

			Due to the relation, for any $s \in S$ and $w \in W$, $N(ws) = N(w) \diffsym \{wsw^{-1}\}$ (where $\diffsym$ is the symmetric difference operator), we can define the notion of cover reflections entirely on the inversion set of $w$, and generalize this definition to biclosed sets. We say that $t \in R$ is a \emph{cover reflection} of a biclosed set $B \subset R$ if $B \setminus \{t\}$ is a biclosed set.

		\paragraph{\textsc{Coxeter} sortable elements} \label{par: elements c-triables}

		A \emph{\textsc{Coxeter} element} of a \textsc{Coxeter} system $(W,S)$ is an element $c \in W$ which can be expressed as the product of all the elements of $S$ exactly once in some order.

			\subparagraph{Definition \cite{reading2005clusters}}
			Let $c = s_1 \dots s_n$ where $\{s_i\}_{i \in \llbracket 1,n \rrbracket} = S$, and fix this reduced expression of $c$. Define the half-infinite word $c^\infty$ as an infinite repetition of the chosen reduced expression of $c$:
			\begin{equation*}
				c^\infty = s_1s_2 \dots s_n \  s_1s_2 \dots s_n \  s_1s_2 \dots s_n \  \cdots
			\end{equation*}
			Let $u = u_1 \dots u_k$ be a finite subword of $c^\infty$. It defines a sequence $(p_i(u))_{i \in \NN} \in \{0,1\}^\NN$ such that $p_i(u) = 1$ if and only if a letter of $u$ came from the $i$-th letter of $c^\infty$.
			Let $w \in W$. Among all its reduced expressions $u \in S^*$, we consider the one that appears the first as a subword in $c^\infty$ for the lexicographic order on the sequences $(p_i(u))_{i \in \NN}$, denoted $w_c$ and called $w$'s \emph{$c$-sorting word}. We say that $w$ is \emph{$c$-sortable} if for all $i \in \NN$, $p_{i+n}(w_c) \leq p_i(w_c)$.

			The set of $c$-sortable elements of $W$ is denoted $\mathrm{Sort}_c(W)$.

			\subparagraph{\textsc{Coxeter} elements in type \texorpdfstring{$A$}{A} and \texorpdfstring{$\widetilde{A}$}{Ã}} \textsc{Coxeter} elements can be characterized in type $A$ and $\widetilde{A}$. A permutation $\sigma \in \mathfrak{S}_n$ is a \textsc{Coxeter} element if and only if $\sigma = (1, a_1, \dots, a_{k-1}, n, b_{n-k-1}, \dots, b_1)$ where $1 \leq k \leq n - 1$, $a_1 < \dots < a_{k-1}$, $b_1 < \dots < b_{n-k-1}$ and $\{1,n,a_1, \dots, a_{k-1}, b_1, \dots, b_{n-k-1}\} = \llbracket 1,n \rrbracket$. In this case, we denote $L_\sigma = \{ a_1, \dots, a_{k-1} \}$ and $R_\sigma = \{ b_1, \dots, b_{n-k-1} \}$. The following map is a bijection:
			\begin{equation*}
				\begin{array}{*3c}
					\{ \text{\textsc{Coxeter} elements of } \mathfrak{S}_n \} & \xrightarrow{\sim} & \{ \text{partitions of } \llbracket 2,n-1 \rrbracket \text{ in two parts} \} \\
					c & \mapsto & (L_c, R_c).
				\end{array}
			\end{equation*}

			A similar result holds for \textsc{Coxeter} elements of the affine symmetric group {\cite[§2.10]{digne2005presentations}}. An affine permutation $c$ is a \textsc{Coxeter} element if and only if it is a pseudo-cycle of the form
			\begin{equation*}
				\affc{a_1, \dots, a_k}_{[1]} \affc{b_{n-k}, \dots, b_1}_{[-1]}
			\end{equation*}
			with $1 < k < n$, $\{ a_1, \dots, a_k, b_1, \dots, b_{n-k} \} = \llbracket 1,n \rrbracket$, $a_1 < \dots < a_k$ and $b_1 < \dots < b_{n-k}$.

			In this case, we will denote $L_c$ (resp. $R_c$) the set $\{a_1, \dots, a_k\}$ (resp. $\{b_1, \dots, b_{n-k}\}$). Furthermore, we will write $\Lcn = L_c + n\ZZ$ and $\Rcn = R_c + n\ZZ$. The following map is a bijection:
			\begin{equation*}
				\begin{array}{*3c}
					\{ \text{\textsc{Coxeter} elements of } \widehat{\mathfrak{S}}_n \} & \xrightarrow{\sim} & \{ \text{partitions of } \llbracket 1,n \rrbracket \text{ in two nonempty parts} \} \\
					c & \mapsto & (L_c, R_c).
				\end{array}
			\end{equation*}

			Note that the sets $\Lcn$ and $\Rcn$ have the following characterization: let $i \in \ZZ$. Then $i \in \Lcn$ if and only if $c(i) > i$.

			\subparagraph{\textsc{Coxeter} sortable elements in type $A$}

			There is a known characterization of $c$-sortable elements of the symmetric group using pattern avoidance on their one-line notation \cite[4.12]{reading2005clusters}. We recall this characterization here as the goal of Section \ref{sec: c triable du groupe symmetrique affine} is to find a similar characterization for affine permutations.

			Let $c \in \mathfrak{S}_n$ be a \textsc{Coxeter} element. Then a permutation $\sigma \in \mathfrak{S}_n$ is $c$-sortable if and only if its one-line notation avoids the patterns
			\begin{multicols}{2}
				\begin{itemize}
				\item $kij$ with $i < j < k$ and $j \in L_c$,
				\item $jki$ with $i < j < k$ and $j \in R_c$.
			\end{itemize}
			\end{multicols}
			These patterns are subwords of the one-line notation of $w$, but we can also assume that $k$ and $i$ are always consecutive.

		\paragraph{Noncrossing partitions of a \textsc{Coxeter} group}

			\subparagraph{Absolute order} In a \textsc{Coxeter} group $W$, since $S \subset R$, the reflections also generate $W$. The \emph{absolute length} of $w \in W$, denoted $l_R(w)$ is the length of a reduced expression of $w$ on the alphabet $R$. If $l$ denotes the usual length (on the alphabet $S$), we have $l_R(w) \leq l(w)$.

			Using reflections as generators, we define the \emph{absolute order} similarly to the weak order: $x \leq_R y$ if and only if every reduced expression of $x$ on the alphabet $R$ (called an \emph{$R$-reduced expression} of $x$) is a prefix of an $R$-reduced expression of $y$. Using the absolute length function, this is equivalent to $l_R(x) + l_R(x^{-1}y) = l_R(y)$.

			\subparagraph{Noncrossing partitions} If $c \in W$ is a \textsc{Coxeter} element, an element $w \in W$ is said to be a \emph{$c$-noncrossing partition} if $w \leq_R c$. We denote the set of $c$-noncrossing partitions of $W$ by $\mathrm{NC}(W,c)$ instead of $[1,c]_{\leq_R}$.

			\subparagraph{In finite \textsc{Coxeter} groups} When $W$ is a finite \textsc{Coxeter} group, the $c$-sortable elements and the $c$-noncrossing partitions are in bijection \cite{reading2005clusters}. This bijection, denoted $nc_c$ and called in this paper \emph{\textsc{Reading}'s bijection}, is defined as such: let $w \in W$ be a $c$-sortable element and consider $w_c = w_1 \dots w_r$ its $c$-reduced expression. We have
			\begin{equation*}
				N(w) = \{ w_1,\ w_1w_2w_1,\ w_1w_2w_3w_2w_1,\ \dots,\ w_1 \dots w_{r-1} w_r w_{r-1} \dots w_1 \}
			\end{equation*}
			and we totally order $N(w)$ such that $w_1 \dots w_{i-1} w_i w_{i-1} \dots w_1 \lhd w_1 \dots w_{j-1} w_j w_{j-1} \dots w_1$ if and only if $i < j$. Let $t_1 \lhd \dots \lhd t_k$ be the cover reflections of $w$ ordered by the restriction of $\lhd$ to $\mathrm{Cov}(w)$. Then $nc_c(w) = t_1 \dots t_k$ is a $c$-noncrossing partition.

			The inverse map of $nc_c$ is not explicit in general, but can be described on a type by type basis \cite{gobet2018dual}.

			\subparagraph{In infinite \textsc{Coxeter} groups} When $W$ is infinite, the map $nc_c$ is still defined but is only an injection \cite{reading2010sortable}. In Section \ref{sec: biclos c-triable et généralisation de l'application de Reading} we generalize the notion of $c$-sortable elements of the affine symmetric group in such a way that \textsc{Reading}'s map $nc_c$ generalizes into a bijection.

		\paragraph{Geometric interpretations in type \texorpdfstring{$\widetilde{A}$}{Ã}} \label{par: geometric interpretation of noncrossing partitions}

		In the case where $W$ is a \textsc{Coxeter} group of type $A$, there is a well known interpretation of the $c$-noncrossing partitions as noncrossing sets of polygons \cite{biane97} and this geometric point of view is used to describe the inverse map in type $A$ \cite{gobet2018dual}.

		When $W$ is of type $\widetilde{A}$, there is also a geometric description of the $c$-noncrossing partitions, as a set of noncrossing curved polygons in an annulus \cite{reading2024noncrossingpartitionsannulus,digne2005presentations}. In this paper, we do not consider dangling annular blocks: our $c$-noncrossing partitions will correspond to the set $\widetilde{\mathrm{NC}}_c^{A, \circ}$ of \cite{reading2024noncrossingpartitionsannulus}.

		\begin{figure}[ht!]
			\centering
			\begin{tikzpicture}[bezier bounding box, scale=0.8]
				\def\n{10}
				\def\Lc{0,2,4,6,8,9}
				\def\Rc{1,3,5,7}

				\def\r{1}
				\def\R{2}

				\newcommand{\ang}[1]{{-(#1-0.5)*360/\n + 90}}
				\newcommand{\dis}[1]{{#1*\R + (1-#1)*\r}}

				\draw (0,0) circle (\dis{0});
				\draw (0,0) circle (\dis{1});

				\foreach \i in \Lc{
					\coordinate (\i) at (\ang{\i}:\dis{1});
					\draw[draw=none] (0,0) -- node[pos=1.2] {\i} (\i);
				}
				\foreach \i in \Rc{
					\coordinate (\i) at (\ang{\i}:\dis{0});
					\draw[draw=none] (0,0) -- node[pos=0.7] {\i} (\i);
				}

				\pgfmathparse{\n-1}	
				\foreach \i in {0,...,\pgfmathresult}{
					\node[draw, circle, fill, inner sep=1pt] at (\i) {};
				}

				\draw[relative, fill, fill opacity=0.2]
				(8) to[out=100,in=80, looseness=1.8] (4) to[out=60, in=120, looseness=1.2] (8)
				(3) to[out=-100, in=-90, looseness=1.7] (\ang{8}:\dis{0.6}) to[out=-90, in=-80, looseness=2] (3);

				\begin{scope}
					\draw[relative, clip]
					(5) to[out=40, in=135] (7) to[out=80, in=105, looseness=1.4] (1) to[out=-110, in=-70, looseness=3] (5);
					\begin{scope}[rotate=45]
						\foreach \x in {-1,-0.95,...,1.1}{
							\draw ({\R*\x},{-\R}) -- ({\R*\x}, \R);
						}
					\end{scope}
				\end{scope}

				\draw[thick, relative]
				(9) to[out=40, in=140, looseness=1] (2);
			\end{tikzpicture}
			\caption{A $c$-noncrossing partition of an annulus for $n = 10$}
			\label{fig: exemple partition non croisée anneau}
		\end{figure}

			\subparagraph{Notation} In this paper, we use a slightly different terminology compared to \cite{reading2024noncrossingpartitionsannulus}: we call the embedded blocks \emph{curved polygons} for similarities with the case of the finite type $A$, and we omit the adjective «~nondangling~» for annular blocks as we only consider such annular blocks.

			\subparagraph{\texorpdfstring{$c$}{c}-marking of an annulus} Let $c$ be a \textsc{Coxeter} element of the affine symmetric group. A \emph{$c$-marking} of an annulus $A$ is done by placing $n$ numbered points from $0$ to $n-1$ (or any interval of $\ZZ$ of length $n$) on the border of the annulus. These points appear in increasing order starting by $0$ and reading in the clockwise direction, with the elements of $\Lcn$ on the outer border, and the elements of $\Rcn$ on the inner border.

			\subparagraph{\texorpdfstring{$c$}{c}-noncrossing partition of an annulus} Let $A$ be a $c$-marked annulus. A \emph{$c$-noncrossing partition} of $A$ is a set of disjoint curved polygons such that each marked point of $A$ belongs to a curved polygon. A $c$\nobreakdash-noncrossing partition is defined up to isotopy, meaning the precise shape of each curved polygon is not relevant, only their relative position.

			A bijection between $c$-noncrossing partitions of an annulus and $c$-noncrossing partitions of $W$ is done explicitly in \cite[§3.5]{reading2024noncrossingpartitionsannulus}. We call $\mathrm{perm}(A)$ the $c$-noncrossing affine permutation associated to a $c$-noncrossing partition of an annulus $A$.

			For example, the marked annulus of Figure \ref{fig: exemple partition non croisée anneau} is a $c$-marked annulus for $c = \affc{0,2,4,6,8,9}_{[1]} \affc{7,5,3,1}_{[-1]}$. Moreover, the five curved polygons present in this annulus form a $c$-noncrossing partition (recall that a numbered point is a curved polygon), and its corresponding $c$-noncrossing partition of $W$ is the product of the five pseudo-cycles $\sigma = \affc{6} \affc{10} \affc{-1,2} \affc{-5,-3,1} (\affc{4,8}_{[1]} \affc{3}_{[-1]})$.

			\subparagraph{Combinatorial description}

			For a curved polygon $P$ of a $c$-noncrossing partition of an annulus, we define $\mathrm{part}(P)$ to be the set of all orbits of the action of the affine permutation $\mathrm{perm}(P)$ on $\ZZ$. They correspond to all the different «~copies~» modulo $n$ of the vertices of a curved polygon.

			Let $A$ be a $c$-marked annulus and $P$, $Q$ two curved polygons of $A$. Then $P$ and $Q$ intersect if and only if there exists $U \in \mathrm{part}(P)$, $V \in \mathrm{part}(Q)$ such that $U \neq V$ and there exist $i,j,k,l \in \ZZ$ such that $i < j < k < l$ and one of the following cases is true
			\begin{itemize}
				\item $(i,k) \in U^2, \, (j,l) \in V^2$ and $(j,k) \in (\Lcn)^2 \sqcup (\Rcn)^2$,
				\item $(i,l) \in U^2, \, (j,k) \in V^2$ and $(j,k) \in \Lcn \times \Rcn \sqcup \Rcn \times \Lcn$.
			\end{itemize}

		\paragraph{Noncrossing partitions of \texorpdfstring{$\widehat{\mathfrak{S}}_n$}{the affine symmetric group}} \label{par: partitions non croisées Sn tilde}

		Using the notion of \emph{elementary divisors of $c$} from \cite[2.19]{digne2005presentations}, we obtain some properties of $c$-noncrossing partitions of $\widehat{\mathfrak{S}}_n$.

			\subparagraph{Elementary divisors} Let $L = \{ l_1 < \dots < l_l \} \subset \Lcn$ and $R = \{ r_1 < \dots < r_r  \} \subset \Rcn$ such that $l,r \geq 0$, $l_l - l_1 < n$ and $r_r - r_1 < n$. Then there exists a unique finite cycle of orbit $L \cup R$ that is a $c$-noncrossing partition, it is $\affc{l_1, \dots, l_l, r_r, \dots, r_1}$. Moreover, if $l,r \geq 1$, there also exists a unique pseudo-cycle of orbit $(L \cup R) + n\ZZ$ that is a $c$-noncrossing partition, it is $\affc{l_1, \dots, l_l}_{[1]} \affc{r_r, \dots, r_1}_{[-1]}$. We call them \emph{elementary divisors of $c$}.

			\subparagraph{Characterization}

			Let $x \in \widehat{\mathfrak{S}}_n$ and $c_1, \dots, c_r \in \widehat{\mathfrak{S}}_n$ be cycles such that $x = c_1 \dots c_r$ is the decomposition of $x$ in a product of disjoint cycles. Then $x$ is a $c$-noncrossing partition of $\widehat{\mathfrak{S}}_n$ if and only if the three following conditions are satisfied:
			\begin{enumerate}[label*=(\roman*)]
				\item There is either zero or two integers $i \in \llbracket 1,r \rrbracket$ such that $c_i$ has a nonzero shift. In the case where there are two such cycles, their product is a pseudo-cyclic elementary divisor of $c$.
				\item If $i \in \llbracket 1,r \rrbracket$ is such that $c_i$ if a finite cycle, $c_i$ is a cyclic elementary divisor of $c$.
				\item For all $i,j \in \llbracket 1,r \rrbracket$, for all $U$ a $c_i$-orbit and $V$ a $c_j$-orbit such that $U \neq V$, the following holds for all $a,b,c,d \in \ZZ$ such that $a < b < c < d$:
				\begin{equation*}
					\left\lbrace
					\begin{array}{l}
						(a,c) \in U^2, \, (b,d) \in V^2 \Rightarrow (b,c) \in (\Lcn \times \Rcn) \sqcup (\Rcn \times \Lcn) \\
						(a,d) \in U^2, \, (b,c) \in V^2 \Rightarrow (b,c) \in (\Lcn)^2 \sqcup (\Rcn)^2.
					\end{array}
					\right.
				\end{equation*}
			\end{enumerate}

			\subparagraph{\texorpdfstring{$R$}{R}-reduced expression} Let $x$ be a $c$-noncrossing partition of $\widehat{\mathfrak{S}}_n$. Define $P(x)$ to be the reflection subgroup of $\widehat{\mathfrak{S}}_n$ generated by all the reflection $t \in R$ such that $t \leq_T x$. We have the following results:
			\begin{enumerate}[label*=(\roman*)]
				\item For any $R$-reduced expression $t_1 \dots t_r$ of $x$ with $t_1, \dots, t_r \in R$, we have $P(x) = \langle t_1, \dots, t_r \rangle$.
				\item The subgroup $P(x)$ uniquely determines $x$.
			\end{enumerate}
			Although this result is already known for an irreducible affine \textsc{Coxeter} group \cite{paolini_2020}, in the case of the affine symmetric group, the proof is elementary in comparison and implicitly present in \cite{digne2005presentations}.

			This proposition has the very useful consequence which we will use in Section \ref{sssec: generalized reading's map} to prove that our generalization of \textsc{Reading}'s bijection $nc_c$ indeed coincides with $nc_c$ on the set of $c$-sortable elements of $W$: If $x = t_1 \dots t_r$ is a $R$-reduced expression of $x \in \mathrm{NC}_c(\widehat{\mathfrak{S}}_n)$, then for all $\sigma \in \mathfrak{S}_r$, the element $x_\sigma = t_{\sigma(1)} \dots t_{\sigma(r)}$ is either equal to $x$ or is not a $c$-noncrossing partition.

		\paragraph{Translation-invariant total orders on \texorpdfstring{$\ZZ$}{Z}} \label{par: tito}

		In the affine symmetric group, there is a description of the biclosed sets of $R$ as a subset of the set of total orders on $\ZZ$: the real \emph{translation-invariant total orders} (TITOs) on $\ZZ$. We refer to \cite{barkley2022combinatorial}, \cite{barkley2025extendedweakorderaffine} and \cite{barkley2025affinetamarilattice} for the results about these objects and we recall a few important results as well as some new definitions around them.

			\subparagraph{Definition} A TITO on $\ZZ$ is a total order $\prec$ on $\ZZ$ such that for all $x,y \in \ZZ$, $x \prec y$ if and only if $x + n \prec y + n$. We say that a TITO $\prec$ on $\ZZ$ is \emph{real} if there is no cover relation $x + n \prec x$ for $x \in \ZZ$. In this paper, we only consider real TITOs on $\ZZ$, and we refer to them simply as TITOs (without the adjective «~real~»).

			A \emph{block} is an infinite interval $I$ of $\prec$ such that for all $x \in I$, $x$ is at a finite distance of all other elements of $I$ and at infinite distance of all elements of $\ZZ \setminus I$.

			\subparagraph{Notation} Since TITOs are total orders, we can represent a TITO $\prec$ on a single infinite line. Using their periodic nature, only a finite number of terms are needed to completely describe the TITO. Hence, we often use their \emph{window notation}, similarly to the ones for affine permutations but with potentially many windows, each corresponding to a different block. If $x$ belongs to a block and $x+n \prec x$, we underline the window and say it is a \emph{waning} block, otherwise it is a \emph{waxing} block. For example, this is a $4$-TITO on $\ZZ$ composed of a waning block followed by a waxing block:
			\begin{equation*}
				\cdots \prec -2 \prec 5 \prec -6 \prec 1 \prec \dots \prec 3 \prec -4 \prec 7 \prec 0 \prec \cdots \ = \ \underline{[5,-6]} [-4,7].
			\end{equation*}

			\subparagraph{Shape of a TITO}
			The \emph{shape} of a TITO $\prec$ is a window notation of $\prec$ in which the integers of each window of each block are replaced by the set of their residue modulo $n$. For convenience in future statements, we allow blocks $[\emptyset]$ or $\underline{[\emptyset]}$ to also appear. For example, the shape of the previous TITO is $\underline{[\{ 1,2 \}]} [\{0,3\}]$.

			A shape can be viewed as a signed ordered partition of $\llbracket 0,n-1 \rrbracket$. It does not fully describe a TITO but it is a useful tool for classifying them. In fact, two TITOs have the same shape if and only if their inversion sets (see below) are commensurable \cite[4.9]{barkley2022combinatorial}.

			\subparagraph{Inversions and cover reflections}
			Let $\prec$ be a TITO on $\ZZ$, an \emph{inversion} of $\prec$ is an affine reflection $\affc{i,j}$ with $i < j$ distinct modulo $n$ such that $j \prec i$. We note $N(\prec)$ the set of inversions of $\prec$. Furthermore, $\affc{i,j}$ is a \emph{cover reflection} if additionally, $j \prec i$ is a cover relation.

			The bijection between the set of TITOs on $\ZZ$ and the biclosed sets of $R$ is given by the map $N$, and in this paper we denote its inverse map by $\pprec$.

			In \cite{barkley2025extendedweakorderaffine}, the cover reflections are called \emph{lower walls}. In this paper, we shall use the terminology of cover reflections because of the following result: let $B \subset R$ be a biclosed set, then $\mathrm{Cov}(B) = \mathrm{Cov}(\pprec(B))$.

			\subparagraph{Descending chains}
			A \emph{descending chain} of a TITO on $\ZZ$ $\prec$ is a sequence $(a_1, \dots, a_k)$ of strictly decreasing integers, pairwise distinct modulo $n$ except possibly $a_1$ and $a_k$, such that for all $i \in \llbracket 1, k-1 \rrbracket$, $\affc{a_i, a_{i+1}}$ is a cover reflection of $\prec$. We say that a descending chain $(a_1, \dots, a_k)$ is \emph{maximal} if $a_k \equiv a_1 \bmod n$ or if there are no other descending chain properly containing $a_1, \dots, a_k$.

			We consider two descending chains of a TITO $\prec$ to be equal if they contain the same elements modulo $n$ and have the same length. Note that this second condition is crucial: if $(a_1, \dots, a_k)$ is a descending chain such that $a_1 \equiv a_k \bmod n$, then $(a_2, \dots, a_k)$ truly is a different descending chain despite having the same elements modulo $n$. Meanwhile, $(a_2, \dots, a_k, a_2 + (a_k - a_1))$ is considered the same descending chain.

			An \emph{orbit} of a descending chain $d$ of $\prec$ is a set $\{ a_1, \dots, a_k \}$ such that $d = (a_1, \dots, a_k)$. A TITO has a finite number of maximal descending chains, and their orbits form a partition of $\ZZ$.

		\paragraph{Cyclic noncrossing arc diagrams} \label{par: cyclic noncrossing arc diagrams}

		Noncrossing arc diagrams appear as a useful tool for studying the weak order in the symmetric group \cite{reading2015noncrossingarcdiagramscanonical}. In the affine symmetric group, these arc diagrams can be generalized into \emph{cyclic noncrossing arc diagrams} and play a similar role for the extended weak order \cite{barkley2025extendedweakorderaffine}.

			\subparagraph{Definition}
			If $n \in \NN^*$, a \emph{cyclic noncrossing arc diagram (on $n$ points)} is a noncrossing arc diagram on countably infinite points labeled by $\ZZ$ that is invariant by translation of $n$. We refer to \cite[§4.3]{barkley2025extendedweakorderaffine} for more information on these diagrams. We may use the term «~\emph{cyclic arc}~» to refer to the class of all the translated copies of an arc. We insist on the fact that two arcs are not considered «~crossing~» if the final point of one is the initial point of the other (\textit{i.e.} we can create chains of arcs).

			\subparagraph{Diagram on a cycle} One could also «~roll~» the line of points indexed by $\ZZ$ into a circle of $n$ points, each one labeled by a class of residues modulo $n$, so that each curve between two points represents a cyclic arc instead of an individual arc. Figure \ref{fig: diagramme d'arcs non croisée deux représentations} shows an example of the two different representations of the same cyclic noncrossing arc diagram.

			\begin{figure}[ht]
				\centering
				\begin{tikzpicture}[scale=0.4, baseline=(current bounding box.center)]
					\def\n{5}

					\node at (0, -7.8) {$\vdots$};
					\node at (0, 6.1) {$\vdots$};

					\begin{scope}
						\clip (-2,-7.5) rectangle (2,5.5);

						\foreach \i in {-8,...,8}{
							\coordinate (\i) at (0,-\i);
						}

						\draw[thick]
						plot [ smooth, tension=0.8 ] coordinates {(0) (-0.2,-0.5) (1)}
						plot [ smooth, tension=0.8 ] coordinates {(-5) (-0.2,4.5) (-4)}
						plot [ smooth, tension=0.8 ] coordinates {(5) (-0.2,-5.5) (6)};

						\draw[gray, thick]
						plot [ smooth, tension=0.5 ] coordinates {(1) (-0.6,-2) (-0.4,-4.3) (0.4, -4.7) (0.6, -6) (7)}
						plot [ smooth, tension=0.5 ] coordinates {(6) (-0.6,-7) (-0.4,-9.3) (0.4, -9.7) (0.6, -11) (0,-12)}
						plot [ smooth, tension=0.5 ] coordinates {(-4) (-0.6,3) (-0.4,0.7) (0.4, 0.3) (0.6, -1) (2)}
						plot [ smooth, tension=0.5 ] coordinates {(0,9) (-0.6,8) (-0.4,5.7) (0.4, 5.3) (0.6, 4) (-3)};

						\draw[densely dotted, thick]
						plot [ smooth, tension=1 ] coordinates {(4) (0.9,-5) (0.9,-7) (8)}
						plot [ smooth, tension=1 ] coordinates {(-1) (0.9,0) (0.9,-2) (3)}
						plot [ smooth, tension=1 ] coordinates {(-6) (0.9,5) (0.9,3) (-2)};

						\foreach \i in {-8,...,8}{
							\node[circle, fill=white, inner sep=0pt, scale=0.8] at (\i) {\i};
						}
					\end{scope}
				\end{tikzpicture} \hspace*{0.2\textwidth}
				\begin{tikzpicture}[scale=1.3, baseline=(current bounding box.center)]
					\def\n{5}

					\newcommand{\ang}[1]{{-(#1-1)*360/\n + 90}}

					\node[draw, circle, fill, inner sep=2pt, black] at (0,0) {};
					\foreach \i in {1,...,\n}{
						\coordinate (\i) at (\ang{\i}:1);
					}

					\draw[thick]	plot [ smooth, tension=0.8 ] coordinates {(5) (\ang{5.5}:1) (1)};
					\draw[thick, gray]	plot [ smooth, tension=0.8 ] coordinates {(1) (\ang{2}:1.2) (\ang{3}:1.2) (\ang{4}:1.2) (\ang{5}:0.6) (\ang{1}:0.7) (2)};
					\draw[thick, densely dotted]	plot [ smooth, tension=0.8 ] coordinates {(4) (\ang{1}:0.2) (3)};

					\foreach \i in {1,...,\n}{
						\node[circle, fill=white, inner sep=0pt] at (\i) {$\i$};
					}

				\end{tikzpicture}
				\caption{Two representations of the same cyclic noncrossing arc diagram on $5$ points}
				\label{fig: diagramme d'arcs non croisée deux représentations}
			\end{figure}

			\subparagraph{Combinatorial description} We also recall from \cite{barkley2025extendedweakorderaffine} that an arc $\alpha$ can be entirely defined by a tuple $(p,q,L,R)$ where $p$ is the initial point of $\alpha$, $q$ the final point of $\alpha$, $L$ (resp. $R$) the set of points between $p$ and $q$ that are on the left (resp. right) of $\alpha$. Using the «~cyclic~» representation, replace «~left~» and «~right~» with «~outwards the center~» and «~inwards the center~» respectively.

			Using this description, two distinct arcs $\alpha = (p,q,L,R)$ and $\alpha' = (p', q', L', R')$ intersect if and only if
			\begin{equation} \label{eq: critère de croisement d'arcs}
				((L \cup \{p,q\} ) \cap R') \cup (L \cap \{p',q'\}) \neq \emptyset \quad\text{ and }\quad (R \cap (L' \cup \{p',q'\} ) ) \cup (\{p,q\} \cap L') \neq \emptyset.
			\end{equation}
			Indeed, $\alpha$ and $\alpha'$ cross only when at some point, $\alpha$ is on the left of $\alpha'$ and at another point, $\alpha'$ is on the left of $\alpha$.

			\subparagraph{Bijection with TITOs}

			In the finite case, the noncrossing arc diagrams are in bijection with the symmetric group \cite{reading2015noncrossingarcdiagramscanonical}. In the cyclic case, we do not have such a bijection as there are not enough affine permutations. Generalizing to the set of all TITOs using their one-line notation is too much.

			If we only consider \emph{widely generated} TITOs on $\ZZ$, which correspond to certain elements of the extended weak order poset on the TITOs, then we obtain a bijection with the cyclic noncrossing arc diagrams \mbox{\cite[4.12]{barkley2025extendedweakorderaffine}}. 
			The widely generated TITOs have a very easy characterization: they correspond to the TITOs with no two consecutive waxing blocks.

			Moreover, this bijection, which will be denoted in the rest of this paper by $\ncad$, is explicit. Let $\prec$ be a TITO on $\ZZ$. For all $p,q \in \ZZ$ with $p < q$ such that $\affc{p,q} \in \mathrm{Cov}(\prec)$, let $L$ (resp. $R$) be the set of all integers $i$ strictly between $p$ and $q$ such that $i \prec q$ (resp. $p \prec i$). Define $\alpha_{(p,q)}$ to be the arc $(p,q,L,R)$. Then:
			\begin{equation*}
				\ncad(\prec) = \bigcup_{
				\substack{p < q \text{ s.t. } \\ q \prec p \text{ and } \\ \left|[q,p]_\prec\right| = 2}
				} \alpha_{(p,q)} + n\ZZ.
			\end{equation*}

\section{\textsc{Coxeter} sortable elements in the affine symmetric group} \label{sec: c triable du groupe symmetrique affine}

	In this section, we use the correspondence between $c$-aligned elements and $c$-sortable elements \cite[§4]{reading2010sortable} in order to derive a characterization of $c$-sortable elements of the affine symmetric group by pattern avoidance on their one-line notation. The $c$-aligned elements are the elements whose inversion sets, viewed as roots of the positive root system of the \textsc{Coxeter} group, satisfy a certain alignment condition with respect to an orientation of the positive root system given by the choice of $c$.

	\motifaffine*

	\subsection{Orientation of a \textsc{Coxeter} group}

		In this subsection, we will consider $W$ a general \textsc{Coxeter} group with a finite set of generators $S$, and $c$ a \textsc{Coxeter} element of $W$.

		\subsubsection{\textsc{Euler} form and \texorpdfstring{$\omega_c$}{omega\_c} form} \label{sssec: forme omega_c}
			We will define an order on $R$, the set of reflections of $W$, given by the sign of a skew-symmetric form $\omega_c$ defined on the geometric representation $V$. The form $\omega_c$ is the skew-symmetrization of the \textsc{Euler} form $E_c$ that appears in quiver representation theory \cite{derksen2005quiver}.

			Let $A = (a_{s,s'})_{s,s' \in S}$ be the generalized \textsc{Cartan} matrix of $W$ defined by $a_{s,s'} = -2\cos(\pi/m_{s,s'})$. Recall that the set $\{\alpha_s \;|\; s \in S\}$ defines a basis of $V$.
			\begin{definition}[\textsc{Euler} form]
				Let $c$ be a \textsc{Coxeter} element of $W$ and fix a reduced expression $c = s_1 \dots s_n$. The \textsc{Euler} form (associated with the chosen reduced word for $c$) on $V$ is the bilinear form on $V$ defined by
				\begin{equation*}
					\forall s,s' \in S,\quad E_c(\alpha_s, \alpha_{s'}) = \left\lbrace \begin{array}{ll}
						a_{s,s'} & \text{if } s \text{ appears after } s' \text{ in the chosen reduced word for } c, \\
						1 & \text{if } s = s', \\
						0 & \text{if } s \text{ appears before } s' \text{ in the chosen reduced word for } c. \\
					\end{array} \right.
				\end{equation*}
			\end{definition}

			Note that this definition only depends on the element $c$ and not on the choice of a reduced expression of it. Indeed, two reduced expressions of $c$ differ by commutation of commuting generators, and by definition the \textsc{Euler} form on two simple roots associated with commuting generators is zero.

			Reading introduced the bilinear form $\omega_c$ obtained as the skew-symmetrization of the \textsc{Euler} form \mbox{\cite[§3]{reading2010sortable}}. 

			\begin{definition}[Form $\omega_c$]
				Let $c$ be a \textsc{Coxeter} element of $W$ and $E_c$ the \textsc{Euler} form on $V$. We define a skew-symmetric bilinear form $\omega_c$ on $V$ by
				\begin{equation*}
					\forall \beta, \beta' \in V,\quad \omega_c(\beta,\beta') = E_c(\beta,\beta') - E_c(\beta', \beta).
				\end{equation*}
			\end{definition}

			In particular, on the basis of simple roots $(\alpha_s)_{s \in S}$, we have
			\begin{equation*}
				\forall s,s' \in S,\quad \omega_c(\alpha_s, \alpha_{s'}) = \left\lbrace \begin{array}{ll}
					a_{s,s'} & \text{if } s \text{ appears after } s' \text{ in a reduced word for } c, \\
					0 & \text{if } s = s', \\
					-a_{s,s'} & \text{if } s \text{ appears before } s' \text{ in a reduced word for } c. \\
				\end{array} \right.
			\end{equation*}

			In the rest of this paper, we will be mostly interested by the sign of the $\omega_c$ form: $\omega_c(\alpha_s, \alpha_s')$ is positive (resp. negative) if and only if $s$ appears before (resp. after) $s'$ in a reduced word for $c$ and $s$ and $s'$ do not commute.

			In \cite[§3]{reading2010sortable}, the \textsc{Euler} form (and hence the $\omega_c$ form) is defined more generally by using a \emph{symmetrizable generalized \textsc{Cartan} matrix} instead of symmetric ones. However, the sign of the $\omega_c$ form does not depend on the choice of such a generalized \textsc{Cartan} matrix, so we will stay in the case of a symmetric one for easier computations.

			\begin{exemple}
				Let $W$ be a \textsc{Coxeter} group of type $\widetilde{C}_3$, with the following \textsc{Coxeter} graph.
				\begin{center}
					\begin{tikzpicture}[scale=2]
						\foreach \i in {0,1,2,3}
						{
							\node[draw, circle, inner sep=1] (\i) at (\i,0) {$s_\i$};
						}
						\draw (0) -- node[above] {4} (1) -- (2) -- node[above] {4} (3);
					\end{tikzpicture}
				\end{center}
				Let $c = s_0 s_2 s_1 s_3$. Then we have $\omega_c(\alpha_{s_2}, \alpha_{s_1}) = a_{s_2, s_1} = -1$ and $\omega_c(\alpha_{s_2}, \alpha_{s_0}) = -a_{s_2, s_0} = 0$. If we chose instead the reduced word $c = s_2 s_0 s_1 s_3$, then we also have $\omega_c(\alpha_{s_2}, \alpha_{s_1}) = a_{s_2, s_1} = -1$ and $\omega_c(\alpha_{s_2}, \alpha_{s_0}) = a_{s_2, s_0} = 0$.
			\end{exemple}

			Let $W$ be a \textsc{Coxeter} group of type $\widetilde{A}_{n-1}$, with generators $S = \{ s_0, \dots, s_{n-1} \}$. Recall it is isomorphic to the affine symmetric group by identifying each generator $s_i$ with the affine transposition $\affc{i,i+1}_0$ if $i > 0$ and $s_0$ to $\affc{n,1}_1$.

			In order to simplify notation, for all $i \in \llbracket 0, n-1 \rrbracket$ we shall simply write $\alpha_i$ instead of $\alpha_{s_i}$. Let $c \in W$ be a \textsc{Coxeter} element given by one of its reduced word on the generators. Then the $\omega_c$ form on $V$ is
			\begin{equation*}
				\forall i,j \in \llbracket 0,n-1 \rrbracket,\quad \omega_c(\alpha_i, \alpha_j) = \left\lbrace \begin{array}{ll}
					1 & \text{if } s_i \text{ appears before } s_j \text{ in } c \text{ and } j-i \equiv \pm1 \bmod n, \\
					-1 & \text{if } s_i \text{ appears after } s_j \text{ in } c \text{ and } j-i \equiv \pm1 \bmod n, \\
					0 & \text{otherwise.}
				\end{array} \right.
			\end{equation*}

			Once again, this definition does not depend on the reduced word chosen for $c$. The $\omega_c$ form on the simple roots indicates whether or not two simple roots of consecutive indices modulo $n$ appear in the «~natural~» order or not in any reduced word for $c$.

			We will now focus for the rest of this subsection on the computation of the form $\omega_c$ on pairs of positive roots. We start by rewording the value on the simple roots in terms of the sets $L_c$ and $R_c$ defined by $c$.

			\begin{lemme} \label{lem: omega c ai-1 ai}
				Let $i \in \llbracket 0, n-1 \rrbracket$. We have
				\begin{equation*}
					\omega_c(\alpha_{i-1}, \alpha_{i}) = \left\lbrace \begin{array}{ll}
						1 & \text{if } i \in L_c \\
						-1 & \text{if } i \in R_c
					\end{array} \right. = (-1)^{\ddelta_{i \in R_c}}
				\end{equation*}
				where $s_{-1} = s_{n-1}$ and $\ddelta$ is the \textsc{Kronecker} symbol.
			\end{lemme}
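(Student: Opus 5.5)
The plan is to reduce the statement to the characterization of $\Lcn$ recalled in Section \ref{sec: background}: for $i \in \ZZ$, $i \in \Lcn$ if and only if $c(i) > i$. Since $s_{i-1}$ and $s_i$ do not commute (indices taken modulo $n$, assuming $n \geq 3$ so that $m(s_{i-1},s_i)=3$), the explicit formula for $\omega_c$ in type $\widetilde{A}$ gives $\omega_c(\alpha_{i-1},\alpha_i) = 1$ exactly when $s_{i-1}$ appears before $s_i$ in a (any) reduced word for $c$, and $-1$ otherwise. It therefore suffices to prove
\[
s_{i-1} \text{ appears before } s_i \text{ in } c \iff c(i) > i .
\]
Combined with the partition $\llbracket 1,n\rrbracket = L_c \sqcup R_c$, this identifies the two cases of the lemma. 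Here residues are read modulo $n$, so that $i=0$ corresponds to $n \in \llbracket 1,n\rrbracket$ and $s_0 = \affc{0,1}$.

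To prove the equivalence, fix a reduced word $c = s_{j_1}\cdots s_{j_n}$, with each generator appearing once. Read it as a composition of affine permutations acting on $\ZZ$, so the rightmost letter acts first. I will track the single integer $i$ through the successive letters, using the following observations. The only generators moving $i$ are $s_{i-1} = \affc{i-1,i}$ and $s_i = \affc{i,i+1}$. More generally, the only generators moving an integer $m$ are $s_{m-1}$ and $s_m$, with indices mod $n$.

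Suppose $s_i$ is applied before $s_{i-1}$, i.e.\ $s_{i-1}$ appears to the left of $s_i$ in the word. Then $i$ is untouched until $s_i$ sends it to $i+1$, and $s_{i-1}$ no longer affects $i+1$. Afterwards the current value can only increase: from position $m$ it can move to $m+1$ via $s_m$ if that letter still lies to the left, and never back via $s_{m-1}$, since $s_{m-1}$ was already used at the previous step. Hence the final value is at least $i+1$, so $c(i) > i$. Symmetrically, if $s_{i-1}$ is applied first, the value drops to $i-1$ and then can only continue to decrease, so $c(i) < i$. This gives the equivalence.

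The main point of care, rather than a real obstacle, is the bookkeeping of conventions. First, the composition order (rightmost acts first) must match the convention under which the paper states ``$i \in \Lcn \iff c(i)>i$''; if the opposite convention is intended, the argument is the same with ``before'' and ``after'' swapped, and one checks on the running example $c = s_0s_1s_2$, $n=3$, that the signs come out as claimed. Second, the wrap-around must be handled correctly: the identification $s_0 = \affc{n,1}_1 = \affc{0,1}$ makes the monotonicity argument uniform over $\ZZ$. Because each generator is used once, the value can shift by at most $n-1$ steps and never revisits a generator, which is what makes ``can only keep increasing'' valid.
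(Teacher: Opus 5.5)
Your proposal is correct and is essentially the paper's proof. Both use $i\in\Lcn \iff c(i)>i$ to reduce the lemma to tracking $i$ through a reduced word $u\,s_{i-1}\,v\,s_i\,w$: the letter $s_i$ sends $i$ to $i+1$, and the remaining letters cannot bring the value back down. Your step-by-step monotonicity argument simply spells out the paper's terse claim $u s_{i-1} v(i+1)\geq i+1$.

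One harmless slip: the value can shift by up to $n$, not $n-1$. For example, $c=s_{i-1}s_{i-2}\cdots s_{i+1}s_i$ gives $c(i)=i+n$. The argument only uses monotonicity, so nothing breaks.
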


			\begin{proof}
				If $\omega_c(\alpha_{s_{i-1}}, \alpha_{s_i}) = 1$, we have $s_{i-1}$ preceding $s_i$ in $c$, which means we have $c = u s_{i-1} v s_i w$ a reduced word where $u$, $v$ and $w$ are also reduced words on disjoint alphabets. Since all the generators except $s_i$ and $s_{i-1}$ fix $i$, we have $s_iw(i) = i+1$. Since $u s_{i-1} v$ does not contain $s_i$, we have $u s_{i-1} v(i+1) \geq i+1$. Therefore, $c(i) \geq i+1 > i$: this means that $i \in L_c$ (see Paragraph \ref{par: elements c-triables} of Section \ref{sec: background}).

				Similarly, if $\omega_c(\alpha_{s_{i-1}}, \alpha_{s_i}) = -1$, we have $c = u' s_i v' s_{i-1} w'$ a reduced expression for $c$ and $c(i) \leq i-1 < i$ which means that $i \in R_c$.
			\end{proof}

			Recall that if $t = \affc{i,j}_p$ is a reflection of $W$ represented as an affine transposition (with $i,j \in \llbracket 0,n-1\rrbracket$ and $p \geq \ddelta_{i > j}$), then its corresponding positive root is $\beta_t = \beta_{\affc{i,j}_0} + p\delta$ if $i < j$ and $\beta_t = -\beta_{\affc{j,i}_0} + p\delta$ otherwise, with $\beta_{\affc{a,b}_0}$ being equal to the sum of all the simple roots $\alpha_{s_k}$ for $k \in \llbracket a,b-1 \llbracket$, and $\delta$ being equal to the sum of all simple roots.

			This means we can compute the form $\omega_c$ on any two roots using its bilinearity.

			\begin{exemple}
				Let $n = 6$ and $c = s_3s_0s_1s_2s_5s_4 = \affc{1,2,4}_{[1]} \affc{5,3,0}_{[-1]}$. Let $t = \affc{3,1}_2$ and $r = \affc{2,5}_1$. Recall that $\delta = \sum_{k = 0}^5 \alpha_k$. We have
				\begin{equation*}
					\begin{array}{rcl}
						\omega_c(\beta_t, \beta_r) & = & \omega_c(-\alpha_1 -\alpha_2 + 2\delta, \alpha_2 + \alpha_3 + \alpha_4 + \delta) \\
						& = & \omega_c(\alpha_0, \alpha_1) + \omega_c(\alpha_1, \alpha_2) - 2\omega_c(\alpha_2, \alpha_3) - 2\omega_c(\alpha_4, \alpha_5) \\
						& = & 1 + 1 + 2 + 2 = 6.
					\end{array}
				\end{equation*}
			\end{exemple}

			As we see, the computations can become a bit long even for simple examples, especially with larger values of $n$. A general formula might be derived, but we will only focus on the case where $t$ and $r$ do not commute as this will suffice for our needs later on.

			We will start with finite reflections, i.e. reflections of the form $\affc{i,j}$ with $i,j \in \llbracket 0,n-1 \rrbracket$ distinct.

			\begin{lemme} \label{lem: omega_c(ij0 jk0)}
				Let $i,j,k \in \llbracket 0,n \rrbracket$ be distinct modulo $n$. Write $a$ the element of $\{i,j,k\}$ that is neither maximal or minimal. We have
				\begin{equation*}
					\omega_c(\beta_{\affc{i,j}}, \beta_{\affc{j,k}}) = (-1)^{\ddelta_{a \in R_c} + \ddelta_{k < i}}.
				\end{equation*}
			\end{lemme}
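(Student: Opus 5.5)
We compute $\omega_c(\beta_{\affc{i,j}}, \beta_{\affc{j,k}})$ directly by bilinearity. Lemma \ref{lem: omega c ai-1 ai} gives the values on pairs of adjacent simple roots, and skew-symmetry handles everything else. A finite transposition satisfies $\affc{i,j} = \affc{j,i}$, so in both orders of $i,j$ we have
\begin{equation*}
\beta_{\affc{i,j}} = \sum_{m \in [\min(i,j), \max(i,j)[} \alpha_m .
\end{equation*}
We therefore write $\beta_{\affc{i,j}} = \alpha_I$ and $\beta_{\affc{j,k}} = \alpha_J$, where $\alpha_X$ denotes $\sum_{m \in X} \alpha_m$, with $I = [\min(i,j),\max(i,j)[$ and $J = [\min(j,k),\max(j,k)[$.

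\emph{Preliminary step: no wrap-around.} The only nonzero values $\omega_c(\alpha_p,\alpha_q)$ occur for $q - p \equiv \pm 1 \bmod n$. We must exclude the wrap-around pair $\alpha_0, \alpha_{n-1}$. Since $i,j,k \in \llbracket 0,n \rrbracket$ are distinct modulo $n$, the values $0$ and $n$ cannot both occur among them. Hence all indices in $I \cup J$ lie in $\llbracket 0,n-2 \rrbracket$ or in $\llbracket 1,n-1 \rrbracket$. In either range, $\alpha_p$ and $\alpha_q$ are $\omega_c$-orthogonal unless $|p-q| = 1$. So $\omega_c(\alpha_X, \alpha_Y)$ for intervals $X,Y$ reduces to a sum over pairs of integers at distance $1$. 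We also use the following observation: for $X = [x,y[$ and $Y = [y,z[$, all terms cancel except $\omega_c(\alpha_{y-1},\alpha_y)$, and $\omega_c(\alpha_X,\alpha_X) = 0$ by skew-symmetry.

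\emph{Case analysis on the relative order of $i,j,k$.} There are two situations.
\begin{enumerate}[label*=(\alph*)]
\item \emph{$j$ is the middle element, so $a = j$.} If $i<j<k$, then $I = [i,j[$ and $J = [j,k[$ are adjacent. We get $\omega_c = \omega_c(\alpha_{j-1},\alpha_j) = (-1)^{\ddelta_{j \in R_c}}$ by Lemma \ref{lem: omega c ai-1 ai}, and $\ddelta_{k<i} = 0$. If $k<j<i$, then $I = [j,i[$ and $J = [k,j[$, and skew-symmetry gives $-\omega_c(\alpha_{j-1},\alpha_j)$, which matches $\ddelta_{k<i} = 1$.
\item \emph{$j$ is extremal.} Then $I$ and $J$ share the endpoint $j$ and are nested, and $a$ is the other endpoint of the shorter interval. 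For instance, if $j<i<k$, write $J = I \sqcup [i,k[$. Then
\begin{equation*}
\omega_c(\alpha_I,\alpha_J) = \omega_c(\alpha_I,\alpha_I) + \omega_c(\alpha_{[j,i[},\alpha_{[i,k[}) = \omega_c(\alpha_{i-1},\alpha_i) = (-1)^{\ddelta_{i\in R_c}},
\end{equation*}
with $a = i$ and $k > i$.
\end{enumerate}
The three remaining orders $j<k<i$, $k<i<j$ and $i<k<j$ are handled the same way. We decompose the larger interval at $a$, kill the diagonal term by skew-symmetry, and evaluate the single adjacent pair at $a$ by Lemma \ref{lem: omega c ai-1 ai}. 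The sign flips exactly when the interval $[\cdot,a[$ plays the role of the second argument, which happens precisely when $k<i$.

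The main obstacle is the bookkeeping in the preliminary step. The hypothesis $i,j,k \in \llbracket 0,n \rrbracket$ allows the representative $n$, so we must check that the pair $\alpha_0,\alpha_{n-1}$, which is adjacent modulo $n$, never contributes. The argument above, that $0$ and $n$ cannot both appear, settles this. After that, the six orderings are routine, and for each one the sign is read off from the position of $a$ and skew-symmetry.
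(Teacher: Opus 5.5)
Your proof is correct and follows essentially the same route as the paper. In both, bilinearity reduces everything to the single adjacent pair $\omega_c(\alpha_{a-1},\alpha_a)$ from Lemma \ref{lem: omega c ai-1 ai}, and skew-symmetry both kills the diagonal term and produces the sign flip when $k<i$; the paper merely organizes this as a global reduction to $i<k$, followed by reducing the extremal-$j$ cases to the middle-$j$ case via $\beta_{\affc{j,k}} = \beta_{\affc{j,i}} + \beta_{\affc{i,k}}$. Your explicit check that the wrap-around pair $\alpha_0,\alpha_{n-1}$ never contributes, because $0$ and $n$ cannot both occur among $i,j,k$, is a step the paper uses silently, so it is a welcome addition.
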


			\begin{proof}
				Let us calculate $\omega_c(\beta_{\affc{i,j}}, \beta_{\affc{j,k}})$ for all six possible orders of the integers $i$, $j$ and $k$. Since the $\omega_c$ form is skew-symmetric, we have
				\begin{equation*}
					\omega_c(\beta_{\affc{i,j}}, \beta_{\affc{j,k}}) = - \omega_c(\beta_{\affc{k,j}}, \beta_{\affc{j,i}}).
				\end{equation*}
				Therefore, the cases $k < i$ can all be obtained by reversing the sign of the cases $i < k$. We can assume that $i < k$ and we are left with only three cases.
				\begin{itemize}
					\item If $i < j < k$, then $a = j$. Using Lemma \ref{lem: omega_c(ij0 jk0)}, we have
					\begin{equation*}
						\omega_c(\beta_{\affc{i,j}}, \beta_{\affc{j,k}})
							= \omega_c(\alpha_i + \dots + \alpha_{j-1}, \alpha_j + \dots + \alpha_{k - 1})
							= \omega_c(\alpha_{j-1}, \alpha_j)
							= (-1)^{\ddelta_{a \in R_c}}.
					\end{equation*}

					\item If $j < i < k$. We have $a = i$. Since $\omega_c$ is skew symmetric, we have:
					\begin{equation*}
						\omega_c(\beta_{\affc{i,j}}, \beta_{\affc{j,k}})
							= \omega_c(\beta_{\affc{i,j}}, \beta_{\affc{j,i}} + \beta_{\affc{i,k}})
							= \omega_c(\beta_{\affc{j,i}}, \beta_{\affc{i,k}})
							= (-1)^{\ddelta_{a \in R_c}}.
					\end{equation*}

					\item If $i < k < j$, we have $a = k$ and
					\begin{equation*}
						\omega_c(\beta_{\affc{i,j}}, \beta_{\affc{j,k}})
							= \omega_c(\beta_{\affc{i,k}} + \beta_{\affc{k,j}}, \beta_{\affc{j,k}})
							= \omega_c(\beta_{\affc{i,k}}, \beta_{\affc{k,j}})
							= (-1)^{\ddelta_{a \in R_c}}.
					\end{equation*}
				\end{itemize}
				Hence the result.
			\end{proof}

			Now we need to compute the form $\omega_c$ between a finite reflection and $\delta$.

			\begin{lemme} \label{lem: omega_c(ij0 delta)}
				Let $i,j \in \llbracket 0, n-1 \rrbracket$ be distinct. We have
				\begin{equation*}
					\omega_c(\beta_{\affc{i,j}}, \delta) = 2(-1)^{\ddelta_{i > j}}(\ddelta_{i \in R_c} - \ddelta_{j \in R_c}).
				\end{equation*}
			\end{lemme}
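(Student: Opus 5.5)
The plan is to reduce everything to simple roots via bilinearity and then telescope. Since $\delta = \alpha_0 + \dots + \alpha_{n-1}$ and $\omega_c$ vanishes on pairs of simple roots whose indices are not adjacent modulo $n$ (and $\omega_c(\alpha_k,\alpha_k) = 0$), I would first compute, for each $k \in \llbracket 0,n-1 \rrbracket$ (indices taken modulo $n$),
\begin{equation*}
	\omega_c(\alpha_k, \delta) = \omega_c(\alpha_k, \alpha_{k-1}) + \omega_c(\alpha_k, \alpha_{k+1}) = -\omega_c(\alpha_{k-1}, \alpha_k) + \omega_c(\alpha_k, \alpha_{k+1}).
\end{equation*}
Lemma \ref{lem: omega c ai-1 ai} gives both terms, and rewriting $(-1)^{\ddelta_{x \in R_c}} = 1 - 2\ddelta_{x \in R_c}$ yields
\begin{equation*}
	\omega_c(\alpha_k, \delta) = 2\ddelta_{k \in R_c} - 2\ddelta_{k+1 \in R_c},
\end{equation*}
where membership in $R_c$ is read modulo $n$, identifying the residue $0$ with $n \in \llbracket 1,n \rrbracket$.

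Next, I would treat the case $i < j$. Here $\beta_{\affc{i,j}} = \alpha_i + \dots + \alpha_{j-1}$, so summing the previous identity over $k \in \llbracket i, j-1 \rrbracket$ telescopes to $2(\ddelta_{i \in R_c} - \ddelta_{j \in R_c})$. This is the claim, since $\ddelta_{i>j} = 0$.

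For $i > j$, the root is $\beta_{\affc{i,j}} = -\beta_{\affc{j,i}} + p\delta$ for the appropriate $p$. Skew-symmetry gives $\omega_c(\delta,\delta) = 0$, so
\begin{equation*}
	\omega_c(\beta_{\affc{i,j}}, \delta) = -\omega_c(\beta_{\affc{j,i}}, \delta) = -2(\ddelta_{j \in R_c} - \ddelta_{i \in R_c}) = 2(\ddelta_{i \in R_c} - \ddelta_{j \in R_c}).
\end{equation*}
Hmm — this is the same sign as the $i<j$ case, not the opposite one predicted by the factor $(-1)^{\ddelta_{i>j}}$. So I would need to check the convention here. If $\affc{i,j}$ with $i>j$ denotes the finite reflection (shift $0$ in the paper's unordered sense), its root is simply $\beta_{\affc{j,i}}$, and the formula would carry no sign. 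If instead the statement intends the "reversed" root $-\beta_{\affc{j,i}} + p\delta$, the sign flip must be absorbed by the ordering of $i$ and $j$ in the factor $\ddelta_{i \in R_c} - \ddelta_{j \in R_c}$. The main obstacle is therefore not computational but bookkeeping. I would fix the convention exactly as in the paragraph preceding the example (with $\beta_t = -\beta_{\affc{j,i}_0} + p\delta$ for $i>j$), redo the sign, and compare it with the statement to pin down the intended $(-1)^{\ddelta_{i>j}}$ factor.

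Finally, I would check the degenerate small-rank cases. For $n=2$, $\alpha_{k-1} = \alpha_{k+1}$ and $m = \infty$, so $a = -2$ and the first step must be redone directly. For the endpoint $j = n$, identified with $0$, I would verify that the boundary term matches $\ddelta_{0 \in R_c} = \ddelta_{n \in R_c}$.
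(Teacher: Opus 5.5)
Your computation for $i<j$ is correct, and it takes a different route from the paper. The paper writes $\delta=\beta_{\affc{0,i}_0}+\beta_{\affc{i,j}_0}+\beta_{\affc{j,n}_0}$ and applies Lemma \ref{lem: omega_c(ij0 jk0)} to the two outer pieces. You instead expand into simple roots, use Lemma \ref{lem: omega c ai-1 ai} to get $\omega_c(\alpha_k,\delta)=2(\ddelta_{k\in R_c}-\ddelta_{k+1\in R_c})$, and telescope. Your route is more elementary, and it handles $i=0$ uniformly. In the paper's decomposition the first piece vanishes when $i=0$, so the wrap-around adjacency of $\alpha_0$ and $\alpha_{n-1}$ has to be absorbed in the third piece.

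The gap is the case $i>j$. You leave it unresolved, and the resolution you propose points the wrong way.

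\begin{itemize}
\item \textbf{Which root is meant.} In this lemma $\affc{i,j}$ is a finite reflection, and as an affine transposition $\affc{i,j}=\affc{j,i}$. So for $j<i$ its root is $\beta_{\affc{j,i}_0}=\alpha_j+\dots+\alpha_{i-1}$, with no minus sign and no $p\delta$.
\item \textbf{It gives the stated sign.} Your first case with $i$ and $j$ exchanged gives
\[
\omega_c(\beta_{\affc{i,j}},\delta)=2(\ddelta_{j\in R_c}-\ddelta_{i\in R_c})=2(-1)^{\ddelta_{i>j}}(\ddelta_{i\in R_c}-\ddelta_{j\in R_c}).
\]
This is exactly the statement. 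The left side is symmetric in $(i,j)$ while the bracket on the right is antisymmetric, and $(-1)^{\ddelta_{i>j}}$ just records that. This is the paper's one-line second case.
\item \textbf{Your remark is backwards.} You say that under this reading ``the formula would carry no sign.'' In fact this is precisely the reading under which the sign is correct.
\item \textbf{The reading you propose fails.} You plan to adopt $-\beta_{\affc{j,i}_0}+p\delta$, the root of $\affc{i,j}_p$ with $p\geq 1$. For that root, your own correct computation gives $2(\ddelta_{i\in R_c}-\ddelta_{j\in R_c})$. This contradicts the statement whenever exactly one of $i,j$ lies in $R_c$, so carrying out your final plan would not prove the lemma.
\item \textbf{The finite reading is the one the paper needs.} In Proposition \ref{prop: omegac ijp jkq}, $\beta_{\affc{i,j}_p}=(-1)^{\ddelta_{i>j}}\beta_{\affc{i,j}}+p\delta$. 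The two factors $(-1)^{\ddelta_{i>j}}$ cancel there to produce $2q(\ddelta_{i\in R_c}-\ddelta_{j\in R_c})$, which is the value you found for the oriented root.
\end{itemize}
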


			\begin{proof}
				If $i < j$, using the fact $\omega_c$ is bilinear and skew-symmetric we obtain using Lemma \ref{lem: omega_c(ij0 jk0)}
				\begin{equation*}
					\begin{split}
						\omega_c(\beta_{\affc{i,j}}, \delta)
						& = \omega_c(\beta_{\affc{i,j}_0}, \beta_{\affc{0,i}_0}) + \omega_c(\beta_{\affc{i,j}_0}, \beta_{\affc{i,j}_0}) + \omega_c(\beta_{\affc{i,j}_0}, \beta_{\affc{j,n}_0}) \\
						& = - (-1)^{\ddelta_{i \in R_c}} + 0 + (-1)^{\ddelta_{j \in R_c}} = 2 \left( \ddelta_{i \in R_c} - \ddelta_{j \in R_c} \right).
					\end{split}
				\end{equation*}
				And if $j < i$, we have $\omega_c(\beta_{\affc{i,j}}, \delta) = \omega_c(\beta_{\affc{j,i}_0}, \delta)$ which is the opposite of the result of the previous calculation, hence the result.
			\end{proof}

			We can now give a formula for the value of the form $\omega_c$ applied on two positive roots associated with affine reflections that share an index.

			\begin{proposition} \label{prop: omegac ijp jkq}
				Let $i,j,k \in \llbracket 0, n-1 \rrbracket$ be distinct and $p,q \in \NN$ such that $p \geq \ddelta_{i > j}$ and $q \geq \ddelta_{j > k}$. Denote by $\sigma$ the permutation of $\mathfrak{S}(\{i,j,k\})$ such that $\sigma(i) < \sigma(j) < \sigma(k)$ (in particular, the integer $a$ from Lemma \ref{lem: omega_c(ij0 jk0)} is $\sigma(j)$). We have
				\begin{equation*}
					\omega_c(\beta_{\affc{i,j}_p}, \beta_{\affc{j,k}_q}) = \varepsilon(\sigma) (-1)^{\ddelta_{\sigma(j) \in R_c}} -2p \left( \ddelta_{j \in R_c} - \ddelta_{k \in R_c} \right) + 2q \left( \ddelta_{i \in R_c} - \ddelta_{j \in R_c} \right).
				\end{equation*}
			\end{proposition}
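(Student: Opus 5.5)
The plan is to split each root into a finite part plus a multiple of $\delta$ and expand by bilinearity. For distinct $a,b \in \llbracket 0,n-1 \rrbracket$, let $\beta_{\affc{a,b}}$ denote $\beta_{\affc{a,b}_0}$ when $a<b$ and $-\beta_{\affc{b,a}_0}$ when $a>b$. With the convention recalled just before Lemma \ref{lem: omega_c(ij0 jk0)}, this gives uniformly
\begin{equation*}
\beta_{\affc{i,j}_p} = \beta_{\affc{i,j}} + p\delta, \qquad \beta_{\affc{j,k}_q} = \beta_{\affc{j,k}} + q\delta,
\end{equation*}
and the signed finite parts satisfy $\beta_{\affc{a,b}} = -\beta_{\affc{b,a}}$ and $\beta_{\affc{a,b}}+\beta_{\affc{b,c}} = \beta_{\affc{a,c}}$. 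Since $\omega_c$ is skew-symmetric, $\omega_c(\delta,\delta)=0$, and bilinearity gives
\begin{equation*}
\omega_c(\beta_{\affc{i,j}_p}, \beta_{\affc{j,k}_q}) = \omega_c(\beta_{\affc{i,j}}, \beta_{\affc{j,k}}) + q\,\omega_c(\beta_{\affc{i,j}},\delta) - p\,\omega_c(\beta_{\affc{j,k}},\delta).
\end{equation*}

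For the two $\delta$-terms I would use Lemma \ref{lem: omega_c(ij0 delta)}, read for the signed finite part. When $i>j$, the finite part is $-\beta_{\affc{j,i}_0}$, and this extra minus sign cancels the factor $(-1)^{\ddelta_{i>j}}$ appearing in the lemma. Hence $\omega_c(\beta_{\affc{i,j}},\delta) = 2(\ddelta_{i\in R_c}-\ddelta_{j\in R_c})$ in all cases. I would double-check this directly from the $i<j$ computation in that proof.

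Substituting this into the expansion produces exactly the terms $2q(\ddelta_{i\in R_c}-\ddelta_{j\in R_c})$ and $-2p(\ddelta_{j\in R_c}-\ddelta_{k\in R_c})$ of the statement.

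The main obstacle is the finite term $\omega_c(\beta_{\affc{i,j}}, \beta_{\affc{j,k}})$, which must be shown equal to $\varepsilon(\sigma)(-1)^{\ddelta_{\sigma(j)\in R_c}}$. I would not trust the sign bookkeeping of Lemma \ref{lem: omega_c(ij0 jk0)} blindly. Instead I would redo its six cases with the signed convention, starting from the base case $i<j<k$: there the term equals $\omega_c(\alpha_{j-1},\alpha_j) = (-1)^{\ddelta_{j\in R_c}}$ by Lemma \ref{lem: omega c ai-1 ai}, which is consistent with $\sigma=\mathrm{id}$.

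Each other ordering is reduced to this base case using only two moves. The first is skew-symmetry combined with $\beta_{\affc{a,b}}=-\beta_{\affc{b,a}}$, which swaps the roles of $i$ and $k$. The second is additivity, for example $\beta_{\affc{j,k}} = \beta_{\affc{j,i}}+\beta_{\affc{i,k}}$, which moves the middle element. Each such move acts on the relative order of $(i,j,k)$ by a transposition and multiplies the value by $-1$. This is exactly why the sign is $\varepsilon(\sigma)$ rather than $(-1)^{\ddelta_{k<i}}$. Throughout, the middle value $a=\sigma(j)$ is the one whose membership in $R_c$ controls the remaining sign.

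I would verify each of the six cases explicitly, for instance $j<i<k$ giving $-(-1)^{\ddelta_{i\in R_c}}$, and $i<k<j$ giving $-(-1)^{\ddelta_{k\in R_c}}$. Summing the three contributions then yields the formula of the statement.
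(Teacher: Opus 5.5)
Your proposal is correct and follows essentially the paper's proof. Like the paper, you expand $\omega_c$ bilinearly into a finite part and $\delta$-parts, use $\omega_c(\delta,\delta)=0$, evaluate the $\delta$-terms with Lemma \ref{lem: omega_c(ij0 delta)}, and evaluate the finite term by the six-case analysis of Lemma \ref{lem: omega_c(ij0 jk0)}. The only difference is sign bookkeeping: you absorb $(-1)^{\ddelta_{i>j}}$ into a signed finite part and re-derive the finite term as an alternating function, whereas the paper keeps unsigned finite roots and rewrites $(-1)^{\ddelta_{i>j}+\ddelta_{j>k}+\ddelta_{k<i}}$ as $\varepsilon(\sigma)$ at the end.
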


			\begin{proof}
				Using Lemmas \ref{lem: omega_c(ij0 jk0)} and \ref{lem: omega_c(ij0 delta)}, we calculate
				\begin{equation*}
					\begin{split}
						\omega_c(\beta_{\affc{i,j}_p}, \beta_{\affc{j,k}_q})
						& = \omega_c\left( (-1)^{\ddelta_{i > j}} \beta_{\affc{i,j}} + p\delta, (-1)^{\ddelta_{j > k}} \beta_{\affc{j,k}} + q\delta \right) \\
						& = (-1)^{\ddelta_{i > j} + \ddelta_{j > k}} \omega_c(\beta_{\affc{i,j}}, \beta_{\affc{j,k}})
						  + (-1)^{\ddelta_{i > j}} q \omega_c(\beta_{\affc{i,j}}, \delta)
						  + (-1)^{\ddelta_{j > k}} p \omega_c(\delta, \beta_{\affc{j,k}})
						  + pq \omega_c(\delta, \delta) \\
						& = (-1)^{\ddelta_{i > j} + \ddelta_{j > k} + \ddelta_{k < i}} (-1)^{\sigma(j) \in R_c}
						  + 2q(\ddelta_{i \in R_c} - \ddelta_{j \in R_c})
						  - 2p(\ddelta_{j \in R_c} - \ddelta_{k \in R_c})
						  + 0 \\
						& = \varepsilon(\sigma) (-1)^{\ddelta_{\sigma(j) \in R_c}} -2p \left( \ddelta_{j \in R_c} - \ddelta_{k \in R_c} \right) + 2q \left( \ddelta_{i \in R_c} - \ddelta_{j \in R_c} \right).
					\end{split}
				\end{equation*}
			\end{proof}

			\begin{exemple} \label{ex: calcul de omega_c pour un grand n}
				Let $n = 14$ and $c = \affc{1,4,6,7,8,13,14}_{[1]} \affc{12,11,10,9,5,3,2}_{[-1]}$. We have
				\begin{equation*}
					\left\lbrace
					\begin{array}{*5{@{\;}l}}
					\omega_c(\beta_{\affc{3,11}_{12}}, \beta_{\affc{11,6}_{31}}) & = & (-1) (-1)^0 - 2 \times 12 (1 - 0) + 2 \times 31 (1 - 1) & = & - 25 \\
					\omega_c(\beta_{\affc{2,1}_{16}}, \beta_{\affc{1,14}_{0}}) & = & (-1) (-1)^1 - 2 \times 16 (0 - 0) + 2 \times 0 (1 - 0) & = & 1 \\
					\omega_c(\beta_{\affc{14,5}_{18}}, \beta_{\affc{5,13}_{21}}) & = & 1 (-1)^0 - 2 \times 18 (1 - 0) + 2 \times 21 (0 - 1) & = & -77.
					\end{array}
					\right.
				\end{equation*}
			\end{exemple}

			What is important to us in the rest of this section is the sign of the form $\omega_c$ (positive, negative or zero), not the precise value it takes. Notice that the formula of Proposition \ref{prop: omegac ijp jkq} shows that the value of $\omega_c(\beta_{\affc{i,j}_p}, \beta_{\affc{j,k}_q})$ is always an odd number, and therefore its sign cannot be zero.

			\begin{corollaire}[sign of $\omega_c$] \label{cor: sign of omega_c}
				Let $i,j,k \in \llbracket 0, n-1 \rrbracket$ be distinct and $p,q \in \NN$ such that $p \geq \ddelta_{i > j}$ and $q \geq \ddelta_{j > k}$. The sign of $\omega_c(\beta_{\affc{i,j}_p}, \beta_{\affc{j,k}_q})$ is given in Table \ref{tbl: sign of omega_c} and only depends on the relative order of $i$, $j$ and $k$ and whether they belong to $L_c$ or $R_c$.
				\begin{table}[ht!]
				\begin{equation*}
					\def\arraystretch{1.2}\begin{array}{|c|@{\hspace*{1pt}}|*6{c|}}
					\hline
					(i,j,k) & i < j < k & i < k < j & k < i < j & k < j < i & j < k < i & j < i < k \\
					\hline &&&&&& \\[\dimexpr 1pt -\arraystretch\normalbaselineskip]
					\hline L_c \times L_c \times L_c & + & - & + & - & + & - \\
					\hline L_c \times L_c \times R_c & + & + & + & + & + & + \\
					\hline L_c \times R_c \times L_c & - & - & - & - & - & - \\
					\hline L_c \times R_c \times R_c & - & - & - & - & - & - \\
					\hline R_c \times L_c \times L_c & + & + & + & + & + & + \\
					\hline R_c \times L_c \times R_c & + & + & + & + & + & + \\
					\hline R_c \times R_c \times L_c & - & - & - & - & - & - \\
					\hline R_c \times R_c \times R_c & - & + & - & + & - & + \\
					\hline
					\end{array}
				\end{equation*}
				\vspace*{-\baselineskip}
				\caption{Sign of $\omega_c(\beta_{\affc{i,j}_p}, \beta_{\affc{j,k}_q})$ for $p \geq \ddelta_{i > j}$ and $q \geq \ddelta_{j > k}$.}
				\label{tbl: sign of omega_c}
				\end{table}

			\end{corollaire}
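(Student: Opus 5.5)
The plan is to read off the sign directly from the formula of Proposition \ref{prop: omegac ijp jkq}:
\begin{equation*}
	\omega_c(\beta_{\affc{i,j}_p}, \beta_{\affc{j,k}_q}) = \varepsilon(\sigma) (-1)^{\ddelta_{\sigma(j) \in R_c}} -2p \left( \ddelta_{j \in R_c} - \ddelta_{k \in R_c} \right) + 2q \left( \ddelta_{i \in R_c} - \ddelta_{j \in R_c} \right).
\end{equation*}
Write it as $T_0 + T_p + T_q$, where $T_0 = \pm 1$ and $T_p, T_q$ are even. Since the total is odd, it is never zero. The sign is then determined as follows. If $T_p$ and $T_q$ are both zero, the sign is that of $T_0$. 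Otherwise $T_p + T_q$ decides the sign, unless the nonzero terms cancel or are outweighed by $T_0$.

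The main work is to check that, whenever $T_p + T_q$ could be nonzero, its sign agrees with $T_0$ or dominates it. We go through the eight membership patterns of $(i,j,k)$ in $L_c$ and $R_c$. In these terms $T_p = -2p(\ddelta_{j\in R_c} - \ddelta_{k\in R_c})$ and $T_q = 2q(\ddelta_{i\in R_c} - \ddelta_{j\in R_c})$.

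For $LLL$ and $RRR$, both $T_p$ and $T_q$ vanish. The sign is $\varepsilon(\sigma)(-1)^{\ddelta_{\sigma(j)\in R_c}}$, which alternates with the parity of the relative order of $i,j,k$. This gives the rows $+-+-+-$ and $-+-+-+$. Here $\varepsilon(\sigma)$ is computed for each of the six orders; $i<j<k$ is the identity, hence $+$.

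For the mixed patterns, $T_p \geq 0$ and $T_q \geq 0$ in the cases $LLR$, $RLL$ and $RLR$, and both are $\leq 0$ in the cases $LRL$, $LRR$ and $RRL$. The hard part is the situation where both $p$ and $q$ can be zero. Then the sign is that of $T_0$, and we must verify that $T_0$ has the expected sign for every order compatible with $p \geq \ddelta_{i>j}$ and $q \geq \ddelta_{j>k}$.

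\begin{itemize}
	\item In $LLR$, $T_q = 0$ and $T_p = 2p$. If $i > j$, then $p \geq 1$ and $T_p \geq 2$ dominates $T_0 = \pm1$. If $i < j$, then $p$ can be zero, so we check $T_0$ on the three orders with $i < j$. For example, $i<j<k$ gives $a = j \in L_c$ and $\varepsilon = +$, hence $+$. The order $i<k<j$ gives $a = k \in R_c$ and $\varepsilon = -$, hence $+$.
	\item In $RLL$ the situation is symmetric in $q$, using $j > k \Rightarrow q \geq 1$.
	\item In $RLR$ we have $T_p = 2p$ and $T_q = 2q$. Whenever $p$ or $q$ is positive we get at least $+2$, and when $p = q = 0$ we have $i<j<k$, where $T_0 = (+)(+) = +$.
	\item The cases $LRL$, $LRR$ and $RRL$ are the sign-reversed analogues of these, and we check them the same way.
\end{itemize}

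The expected obstacle is only bookkeeping: we must confirm that in each mixed row $T_0$ never disagrees with the forced sign in the orders where both $p$ and $q$ may vanish. This is a finite check of at most eighteen small cases, using $\varepsilon(\sigma)$ for each order and the membership of the middle element $a = \sigma(j)$.
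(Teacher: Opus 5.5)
Your proposal is correct and follows the paper's approach: the paper states this corollary without a separate proof, reading the table directly off the formula of Proposition~\ref{prop: omegac ijp jkq} and noting that the value is odd, hence nonzero. One wording point: in a mixed row, the orders you must check are those where the parameter carrying a nonzero coefficient can vanish (for example all three orders with $i<j$ in the $L_c \times L_c \times R_c$ row), not only those where both $p$ and $q$ can vanish. Your individual case bullets already handle this correctly.
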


			Notice that, except in the case where all three of $i$, $j$ and $k$ belong to the same set $L_c$ or $R_c$, the sign of $\omega_c(\beta_{\affc{i,j}_p}, \beta_{\affc{j,k}_q})$ only depends on the occurrence of $j$ in $L_c$ or $R_c$ (positive if it is in $L_c$, negative if it is in $R_c$).

			Using the fact that $\omega_c$ is skew-symmetric and Lemma \ref{lem: omega_c(ij0 delta)} we can easily compute the value on pairs of reflections sharing both coefficients when written as affine transpositions. We will use the following case repeatedly, so it is worth making it explicit.

			\begin{proposition} \label{prop: sign of omega_c on A1 tilda parabolic generators}
				Let $i,j \in \llbracket 0, n-1 \rrbracket$ such that $i < j$. Then \begin{equation*}
					\omega_c(\beta_{\affc{i,j}_0}, \beta_{\affc{j,i}_1}) = 2(\ddelta_{i \in R_c} - \ddelta_{j \in R_c}).
				\end{equation*}
			\end{proposition}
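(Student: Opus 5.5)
Since $i<j$, the root of $\affc{j,i}_1$ is given by the second case of the map $\bbeta$: it is $-\beta_{\affc{i,j}_0}+\delta$. I would substitute this and expand by bilinearity:
\begin{equation*}
	\omega_c(\beta_{\affc{i,j}_0}, \beta_{\affc{j,i}_1}) = -\omega_c(\beta_{\affc{i,j}_0}, \beta_{\affc{i,j}_0}) + \omega_c(\beta_{\affc{i,j}_0}, \delta).
\end{equation*}
The first term vanishes because $\omega_c$ is skew-symmetric. For the second term I would apply Lemma \ref{lem: omega_c(ij0 delta)} with $i<j$, so that $\ddelta_{i>j}=0$. This gives $\omega_c(\beta_{\affc{i,j}_0},\delta)=2(\ddelta_{i\in R_c}-\ddelta_{j\in R_c})$, which is exactly the stated formula.

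The only step needing care is the normalization of the root of $\affc{j,i}_1$. One must check that the convention $\beta_t=-\beta_{\affc{i,j}_0}+p\delta$ with $p=1$ is the one fixed in the paragraph just before the first example of this subsection. The earlier display for $\bbeta$ in Section \ref{sec: background} has a $(p-1)\delta$ shift, so there is an apparent inconsistency here.

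If one instead used the $(p-1)$ convention, the root would be $-\beta_{\affc{i,j}_0}$, which is not positive. So the convention $-\beta_{\affc{i,j}_0}+\delta$ is the correct one, and I would state this explicitly. After that, the proof is a two-line computation with no real obstacle. The remaining work is the sign bookkeeping inside Lemma \ref{lem: omega_c(ij0 delta)}, which is already done there.
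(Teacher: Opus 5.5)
Your proof is correct and is the same as the paper's: write $\beta_{\affc{j,i}_1}=\delta-\beta_{\affc{i,j}_0}$, drop the self-pairing by skew-symmetry, and conclude with Lemma \ref{lem: omega_c(ij0 delta)} for $i<j$. You are also right about the normalization. The paper's own proof uses $\beta_{\affc{j,i}_1}=\delta-\beta_{\affc{i,j}_0}$, matching the $p\delta$ convention of Section \ref{sec: c triable du groupe symmetrique affine}, and the $(p-1)\delta$ in the background formula for $\bbeta$ is a typo.
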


			\begin{proof}
				We have
				\begin{equation*}
					\omega_c(\beta_{\affc{i,j}_0}, \beta_{\affc{j,i}_1}) = \omega_c(\beta_{\affc{i,j}_0}, \delta - \beta_{\affc{i,j}_0}) = \omega_c(\beta_{\affc{i,j}_0}, \delta)
				\end{equation*}
				and we conclude with Lemma \ref{lem: omega_c(ij0 delta)}.
			\end{proof}

			Note that in this case, the sign can be zero (if and only if $i$ and $j$ belong to the same set $L_c$ or $R_c$).

			\begin{exemple}
				Let $n = 14$ and $c = \affc{1,4,6,7,8,13,14}_{[1]} \affc{12,11,10,9,5,3,2}_{[-1]}$ as in example \ref{ex: calcul de omega_c pour un grand n}. We can easily check that the signs of the values computed there match those from Table \ref{tbl: sign of omega_c}. Also, we have thanks to Proposition \ref{prop: sign of omega_c on A1 tilda parabolic generators}:
				\begin{equation*}
					\left\lbrace
					\begin{array}{*5{@{\;}l}}
					\omega_c(\beta_{\affc{4,14}_{0}}, \beta_{\affc{14,4}_{1}}) & = & 2(0 - 0) & = & 0 \\
					\omega_c(\beta_{\affc{4,10}_{0}}, \beta_{\affc{10,4}_{1}}) & = & 2(0 - 1) & = & -2 \\
					\omega_c(\beta_{\affc{11,14}_{0}}, \beta_{\affc{14,11}_{1}}) & = & 2(1 - 0) & = & 2 \\
					\end{array}
					\right.
				\end{equation*}
			\end{exemple}

		\subsubsection{Generalized rank two parabolic subgroups} \label{sssec: sgpgr2nc}

			Let $t$ and $t'$ be two distinct reflections in $W$. The subgroup $\langle t,t' \rangle$ is a dihedral reflection subgroup of $W$, but there may be other dihedral reflection subgroups containing both $t$ and $t'$ (consider for example $W$ of type $\widetilde{A}_1$, which already is dihedral). On the other hand, there may not exist a rank two parabolic subgroup of $W$ containing both $t$ and $t'$. For example if $W$ is of type $\widetilde{A}_2$, all rank 2 parabolic subgroups are finite but $\langle s_0, s_1s_2s_1 \rangle$ is infinite.

			In this section, following \cite{reading2010sortable}, we use \emph{generalized rank two parabolic subgroups} of $W$, which will solve both issues. It happens that some complications arise when dealing with ranks larger than 2: for generalized parabolic subgroups of arbitrary rank, one can define them as maximal rank $k$ reflection subgroups \cite{DyerMaximalRankKReflectionSubgroups}. In this paper, we only use generalized rank two parabolic subgroups.

			\begin{definition}[Generalized rank two parabolic subgroup]
				Let $t,t' \in R$ be two distinct reflections of $W$. The \emph{generalized rank two parabolic subgroup} containing $t$ and $t'$ is the subgroup of $W$ generated by the set of all reflections $r \in R$ such that $\beta_r \in \text{Span}(\beta_t, \beta_{t'})$.
			\end{definition}

			We use the notion of \emph{canonical generators} of a reflection subgroup of $W$: if $W'$ is a reflection subgroup of $W$, there exists a unique subset $T \subset R \cap W'$ such that for any reflection $r \in R \cap W'$, the positive root $\beta_r$ lies in the positive span of $\{ \beta_t \;|\; t \in T \}$ and no reflection $t \in T$ lies in the positive span of $\{ \beta_r \;|\; r \in R \cap W' \setminus \{r\} \}$ \cite{Deodhar1989, Dyer1990}. The elements of $T$ are called the \emph{canonical generators} of $W'$. If $W'$ is a standard parabolic subgroup $W' = \langle S' \rangle$ where $S' \subset S$, then $S'$ is the set of canonical generators of $W'$. In general, the number of canonical generators is not under control \cite[2.13]{reading2010sortable}, but in the case of generalized rank two parabolic subgroups defined above, they indeed have exactly two canonical generators.

			\begin{proposition}[{\cite[2.11]{reading2010sortable}}] \label{prop: rank 2 generalized parabolic subgroups are of rank 2}
				Let $W'$ be a generalized rank two parabolic subgroup of $W$. Then $W'$ has exactly two canonical generators. Moreover, if $\{r_1, r_2\}$ are the canonical generators of $W'$, then
				\begin{equation*}
					W' \cap R = \{ r_1(r_2r_1)^k \;|\; k \in \NN \} \cup \{ (r_2r_1)^kr_2 \;|\; k \in \NN \}.
				\end{equation*}
			\end{proposition}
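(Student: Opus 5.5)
The plan is to deduce the statement from the general theory of reflection subgroups due to \textsc{Deodhar} and \textsc{Dyer} \cite{Deodhar1989, Dyer1990}, used as a black box. Two planarity observations then reduce everything to the dihedral case. Fix $t \neq t'$, let $U = \mathrm{Span}(\beta_t, \beta_{t'})$, which is a $2$-dimensional subspace of $V$ since distinct reflections have non-proportional positive roots, and let $W'$ be the generalized rank two parabolic subgroup generated by $\{ r \in R \;|\; \beta_r \in U \}$. The results I will invoke are the following.
\begin{enumerate}[label*=(\alph*)]
	\item $(W', T)$ is a \textsc{Coxeter} system, where $T$ is the set of canonical generators.
	\item Every root of $W' \cap R$ is a nonnegative combination of $\{\beta_r \;|\; r \in T\}$.
	\item For distinct $r, r' \in T$ we have $B(\beta_r, \beta_{r'}) \leq 0$.
	\item The reflections of the \textsc{Coxeter} system $(W',T)$ are exactly the elements of $W' \cap R$.
\end{enumerate}

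\emph{Step 1: all relevant roots lie in $U$.} Every $r \in W' \cap R$ satisfies $\beta_r \in U$. The generators of $W'$ act on $V$ by $v \mapsto v - 2B(v,\beta)\beta$ with $\beta \in U$, so they preserve $U$. A reflection of $W'$ is $w r_0 w^{-1}$ with $w \in W'$ and $r_0$ a generator, and its root is $\pm w(\beta_{r_0}) \in U$. In particular every canonical generator has its root in $U$.

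\emph{Step 2: $|T| = 2$.} The group $W'$ contains the two distinct reflections $t$ and $t'$, so it is not generated by a single reflection. Hence $|T| \geq 2$.

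For the reverse inequality, suppose $r_1, r_2, r_3 \in T$ are distinct. Their roots lie in the plane $U$, so a linear dependence exists. Since $\beta_{r_1}$ and $\beta_{r_2}$ are not proportional, we may write $\beta_{r_3} = a\beta_{r_1} + b\beta_{r_2}$. I then split into sign cases.
\begin{itemize}
	\item If $a, b \geq 0$, then $\beta_{r_3}$ lies in the positive span of the other roots of $W'$, which contradicts canonicity.
	\item If $a, b \leq 0$, then $\beta_{r_3}$ is a nonpositive combination of positive roots, hence not positive, which is absurd.
	\item If $a < 0 < b$, then $\beta_{r_2} = b^{-1}(\beta_{r_3} - a\beta_{r_1})$ is a positive combination of $\beta_{r_3}$ and $\beta_{r_1}$, again contradicting canonicity. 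The case $b < 0 < a$ is symmetric.
\end{itemize}
Therefore $|T| = 2$. This step only uses the definition of canonical generators and positivity of roots, so it should be short.

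\emph{Step 3: the list of reflections.} Write $T = \{r_1, r_2\}$. By (a), $W'$ is a dihedral \textsc{Coxeter} group with \textsc{Coxeter} generators $r_1, r_2$, finite or infinite. In a dihedral \textsc{Coxeter} system, the reflections are exactly the conjugates of $r_1$ and $r_2$. These are the odd-length alternating words $r_1 r_2 r_1 \cdots$ and $r_2 r_1 r_2 \cdots$, that is, $r_1(r_2r_1)^k$ and $(r_2r_1)^k r_2$ for $k \in \NN$. In the finite case the list has repetitions, which is harmless for the set equality. Finally, (d) identifies this set with $W' \cap R$, giving the displayed formula.

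The main obstacle is entirely in the imported facts (a)--(d), especially (d): a reflection of $W$ lying in $W'$ must be a reflection of the \textsc{Coxeter} system $(W',T)$. This is precisely \textsc{Dyer}'s theorem, whose proof uses the length function and the sets $N(w)$ restricted to $W'$. I would cite it rather than reprove it. If a self-contained argument were needed, I would instead work directly in $U$. There, the positive roots in $U$ form a subset of a pointed $2$-dimensional cone, and one would show that its two extremal roots generate everything by the standard reflection-ordering argument in rank $2$.
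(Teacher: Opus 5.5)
Your proposal is correct. The paper gives no proof of its own, only the citation, and your reduction (Dyer's theory of canonical generators plus the fact that all roots of $W' \cap R$ lie in the plane $U$) is the standard argument behind that result.

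One step to tighten is Step 1. The claim ``every reflection of $W'$ is $W'$-conjugate to one of the generating reflections'' is not literally among (a)--(d); it needs the extra fact that every conjugacy class of reflections meets any generating set of reflections. You can bypass it: each generator, and hence each $w \in W'$, satisfies $w(v) - v \in U$ for all $v \in V$, so $2\beta_r = \beta_r - r(\beta_r) \in U$ for every $r \in W' \cap R$. Note also that fact (c) is never used.
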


			Let $t_1$ and $t_2$ be two distinct reflections of $W$. If there exists a rank two parabolic subgroup $W'$ of $W$ containing $t_1$ and $t_2$, then $W'$ is the generalized rank two parabolic subgroup containing $t_1$ and $t_2$. This implies that all parabolic subgroups of $W$ of rank two are generalized rank two parabolic subgroups. The converse is not true in general, but it is when $W$ is finite. It is a consequence of \cite[2.9]{reading2010sortable}, because when $W$ is finite, the \textsc{Tits} cone is the whole space.

			\begin{exemple}
				Let $W$ be of type $\widetilde{G}_2$ with the following \textsc{Coxeter} diagram:
				\begin{center}
					\begin{tikzpicture}[sommet/.style={draw, circle, inner sep=0, minimum size=15}, scale=2]
						\node[sommet] (1) at (0,0) {$p$};
						\node[sommet] (2) at (1,0) {$q$};
						\node[sommet] (3) at (2,0) {$r$};

						\draw (1) -- node[above] {\footnotesize$6$} (2) -- (3);
					\end{tikzpicture}
				\end{center}

				Let $t = qpqpq$ and $t' = rtr$. We have $\beta_t = \sqrt{3} \alpha_p + 2\alpha_q$ and $\beta_{t'} = \sqrt{3} \alpha_p + 2 \alpha_q + 2\alpha_r$ (recall that we choose a symmetric generalized \textsc{Cartan} matrix). Since the imaginary root $\delta$ is equal to $\sqrt{3}\alpha_p + 2 \alpha_q + \alpha_r$, we have $\beta_t = \delta - \alpha_r$ and $\beta_{t'} = \delta + \alpha_r$. The positive roots in $\text{Span}(\beta_t, \beta_{t'})$ therefore are
				\begin{equation*}
					\{\beta_t + m\delta \;|\; m \in \NN \} \cup \{ \alpha_r + m\delta \;|\; m \in \NN \}.
				\end{equation*}

				The generalized rank two parabolic subgroup $W'$ containing $t$ and $t'$ has therefore canonical generators $r$ and $t$ and satisfies
				\begin{equation*}
					W' \cap R = \{ r, rtr, rtrtr, rtrtrtr, \dots, trtrtrt, trtrt, trt, t \}.
				\end{equation*}

				Moreover, since $W'$ contains an infinite number of reflections, it is an infinite generalized rank two parabolic subgroup of $W$. Because all proper parabolic subgroups of $W$ are finite, then $W'$ is a generalized rank two parabolic subgroup of $W$ that is not a parabolic subgroup of $W$.

			\end{exemple}

			In light of Proposition \ref{prop: rank 2 generalized parabolic subgroups are of rank 2}, we will associate with each generalized rank two parabolic subgroup $W'$ of $W$ a sequence of reflections.

			\begin{definition} \label{def: sequence of reflextions associated to a generalized parabolic subgroup}
				Let $W'$ be a generalized rank two parabolic subgroup of $W$ with canonical generators $r_1$ and $r_2$. Note $m = \text{ord}(r_2r_1) \in \NN \cup \{\infty\}$ and consider two sequences $(u_k)_{k < m}$ and $(u_{m-k})_{k < m}$ defined by
				\begin{equation*}
					\forall k \in \llbracket 0, m \llbracket,\; u_{k+1} = r_1(r_2r_1)^k \quad \text{ and } \quad u_{m-k} = (r_2r_1)^kr_2.
				\end{equation*}
				When $m$ is finite, since $(r_1r_2)^m = 1$ and both $r_1$ and $r_2$ are reflections, we have for all $1 \leq k \leq m$,
				\begin{equation*}
					r_1 r_2 r_1 \cdots = r_2 r_1 r_2 \cdots
				\end{equation*}
				with $2k-1$ terms on the left hand side and $2(m-k)+1$ terms on the right hand side. This means that  $u_k = u_{m - (m-k)}$ and both these sequences coincide but are reversed. We will refer to these two sequences as one, which we denote $(u_k)_k$, implicitly indexing it by $\llbracket 1,m \rrbracket$ if $m$ is finite and by $\NN^* \sqcup (\infty - \NN)$ if $m = +\infty$.
			\end{definition}

			\begin{proposition}[{\cite[2.11]{reading2010sortable}}]
				Let $W'$ be a generalized rank two parabolic subgroup of $W$ and $m \in \NN \cup \{ \infty \}$ and $(u_k)_k$ defined in Definition \ref{def: sequence of reflextions associated to a generalized parabolic subgroup}. Then $W' \cap R = \{ u_1, \dots, u_m \}$.
			\end{proposition}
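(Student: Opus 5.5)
The plan is to derive the statement directly from Proposition \ref{prop: rank 2 generalized parabolic subgroups are of rank 2}, which already gives
\begin{equation*}
W' \cap R = \{ r_1(r_2r_1)^k \;|\; k \in \NN \} \cup \{ (r_2r_1)^k r_2 \;|\; k \in \NN \}.
\end{equation*}
What remains is to check that this set is exactly the set of terms of the sequence $(u_k)_k$ from Definition \ref{def: sequence of reflextions associated to a generalized parabolic subgroup}. I will treat the cases $m = \infty$ and $m < \infty$ separately.

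\emph{Case $m = \infty$.} The sequence is indexed by $\NN^* \sqcup (\infty - \NN)$. As $k$ runs over $\NN$, the terms $u_{k+1} = r_1(r_2r_1)^k$ (indices in $\NN^*$) run over the whole first family. Likewise the terms $u_{\infty - k} = (r_2r_1)^k r_2$ (indices in $\infty - \NN$) run over the whole second family. Hence $\{u_k\} = W' \cap R$ with nothing more to prove.

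\emph{Case $m < \infty$.} First I show both families only take finitely many values. Since $(r_2r_1)^m = 1$, the expressions $r_1(r_2r_1)^k$ and $(r_2r_1)^k r_2$ depend only on $k \bmod m$. So it suffices to let $k$ range over $\llbracket 0, m-1 \rrbracket$, and the first family is then exactly $\{u_1, \dots, u_m\}$.

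Next I show the second family adds nothing new. I will use the identity, valid for $0 \le k \le m-1$,
\begin{equation*}
(r_2r_1)^k r_2 = r_1(r_2r_1)^{m-k-1}.
\end{equation*}
To prove it, write $r_1(r_2r_1)^{m-k-1} = (r_1r_2)^{m-k-1} r_1$. Since $(r_1r_2)^m = 1$, this equals $(r_1r_2)^{-(k+1)} r_1 = (r_2r_1)^{k+1} r_1$. Finally $(r_2r_1)^{k+1} r_1 = (r_2r_1)^k r_2 r_1 r_1 = (r_2r_1)^k r_2$, because $r_1^2 = 1$.

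This identity is exactly the coincidence $u_{m-k} = u_{m-k}$ between the two definitions in Definition \ref{def: sequence of reflextions associated to a generalized parabolic subgroup}. Therefore the second family is contained in $\{u_1, \dots, u_m\}$, and the union equals $\{u_1, \dots, u_m\}$.

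The only delicate point is this reindexing in the finite case: the two families must be shown to overlap completely rather than contribute extra reflections. The identity above settles it using only $(r_1r_2)^m = 1$ and $r_1^2 = r_2^2 = 1$.
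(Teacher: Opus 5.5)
Your proof is correct and is essentially the argument the paper relies on. The paper gives no separate proof: it cites \cite[2.11]{reading2010sortable} and treats the statement as a re-indexing of the earlier proposition describing $W' \cap R$ as the union of the two families, using the coincidence of the two sequences remarked in the definition of $(u_k)_k$, and your identity $(r_2r_1)^k r_2 = r_1(r_2r_1)^{m-k-1}$ verifies that coincidence explicitly (your phrase ``$u_{m-k} = u_{m-k}$'' should just say that the two defining formulas for $u_{m-k}$ agree).
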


			Generalized rank two parabolic subgroups can be used to characterize inversion sets of elements of $W$.

			\begin{proposition}[{\cite[2.17]{reading2010sortable}}] \label{prop: ensemble d'inversion ssi segment initial ou final}
				Let $I$ be a finite subset of $T$. The following are equivalent:
				\begin{enumerate}
					\item There exists an element $w \in W$ such that $I = N(w)$.
					\item For every noncommutative generalized rank two parabolic subgroup $W'$ of $W$, with canonical generators $r_1$ and $r_2$, the intersection $W \cap I$ is an initial or a final segment of the sequence $(u_k)_k$.
				\end{enumerate}
			\end{proposition}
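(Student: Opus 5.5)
The plan is to reduce the statement to the characterization of finite biclosed sets recalled in the Background: the finite biclosed subsets of $R$ are exactly the inversion sets $N(w)$ \cite{Dyer_2019}. It then suffices to show that, for a finite $I \subset R$, biclosedness is equivalent to condition (2). Here $W \cap I$ in (2) should be read as $W' \cap I$. Both implications pass through a single local fact about a generalized rank two parabolic subgroup $W'$ with canonical generators $r_1,r_2$.

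\emph{Local fact:} the positive roots $\beta_{u_k}$ are ordered monotonically by angle in the plane $\mathrm{Span}(\beta_{r_1},\beta_{r_2})$, starting at $\beta_{r_1}=\beta_{u_1}$ and ending at $\beta_{r_2}=\beta_{u_m}$. Consequently, for $a<b$, a root $\beta_{u_c}$ lies in $\mathrm{Span}_+(\beta_{u_a},\beta_{u_b})$ if and only if $a \leq c \leq b$. When $m=\infty$, the order is the one on $\NN^*\sqcup(\infty-\NN)$, and both ends accumulate towards a common limiting ray.

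I would prove this fact in the standard way. Every positive root of $W'$ lies in the positive cone spanned by $\beta_{r_1},\beta_{r_2}$ (canonical generators, Proposition \ref{prop: rank 2 generalized parabolic subgroups are of rank 2}). The roots $u_{k+1}=r_1(r_2r_1)^k$ are then obtained by successive reflections in this planar dihedral system. A direct induction in the $2$-dimensional geometric representation of the dihedral group of order $2m$ (or the infinite dihedral group) shows that the coefficients of $\beta_{u_k}$ on $\beta_{r_1},\beta_{r_2}$ have monotone ratio. I expect this step to be the main obstacle, especially making the $m=\infty$ case clean, since the form $B$ may be degenerate on the plane, as in the affine case. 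I would handle it using the explicit Chebyshev-type recursion for the coefficients.

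$(1)\Rightarrow(2)$. Let $I=N(w)$, which is biclosed. Fix a noncommutative $W'$ and set $J=W'\cap I$. By the local fact, closedness of $I$ makes $J$ convex (an interval) in the order $(u_k)_k$. Coclosedness makes $(W'\cap R)\setminus J$ convex as well. An interval whose complement is also an interval in a linearly ordered set must be an initial or a final segment. This uses only the two ends $u_1,u_m$; when $m=\infty$, the finiteness of $J$ excludes pathologies.

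$(2)\Rightarrow(1)$. Let $r,s\in I$ and $t\in R$ with $\beta_t\in\mathrm{Span}_+(\beta_r,\beta_s)$. Then $r,s,t$ all lie in the generalized rank two parabolic subgroup $W'$ containing $r,s$. If $W'$ is commutative, it has only the two reflections $r,s$, so $t\in\{r,s\}\subset I$. Otherwise $W'\cap I$ is an initial or final segment containing $r$ and $s$. By the local fact, $t$ lies between them in the order, hence $t\in I$, and $I$ is closed. The same argument applied to $R\setminus I$ gives coclosedness, because the complement of an initial (resp. final) segment is a final (resp. initial) segment. Thus $I$ is a finite biclosed set, so $I=N(w)$ for some $w$ by \cite{Dyer_2019}.
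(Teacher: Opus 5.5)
Your argument is correct. Note, however, that the paper gives no proof of its own here: the proposition is imported verbatim from \cite[2.17]{reading2010sortable}, so there is no internal argument for yours to match.

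Your route derives the result from two ingredients the paper already has on hand:
\begin{enumerate}
\item the fact, recalled in the Background, that finite biclosed sets are exactly inversion sets \cite{Dyer_2019};
\item the convex planar geometry of the root subsystem of a generalized rank two parabolic subgroup $W'$.
\end{enumerate}
This makes transparent that condition 2 is just biclosedness tested plane by plane. The commutative $W'$ contribute nothing, since they contain only their two canonical generators. The membership $t\in W'$ is automatic, because $W'$ is by definition generated by all reflections whose roots lie in $\mathrm{Span}(\beta_r,\beta_s)$. You were also right to read $W\cap I$ as $W'\cap I$. The citation buys brevity; your argument buys a derivation whose only nontrivial external input is Dyer's theorem.

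Three small corrections to your local fact and its use:
\begin{enumerate}
\item The Chebyshev-type induction needs that $\beta_{r_1},\beta_{r_2}$ form a simple system for $W'$, i.e.\ $B(\beta_{r_1},\beta_{r_2})=-\cos(\pi/m)$ or $B(\beta_{r_1},\beta_{r_2})\leq -1$. This comes from the Deodhar--Dyer theory of canonical generators, and it is what makes the coefficients nonnegative with strictly monotone ratio. Say so explicitly.
\item The statement is for general $W$, and the claim that both tails accumulate on a common ray holds only when $B(\beta_{r_1},\beta_{r_2})=-1$. That is the case for all the $\widetilde{A}_1$ subgroups arising in type $\widetilde{A}$. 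When $B(\beta_{r_1},\beta_{r_2})<-1$, the two tails converge to two distinct isotropic rays. This does not affect your argument, which only uses the angular order $\NN^*\sqcup(\infty-\NN)$.
\item The lemma that an interval with interval complement is an initial or final segment holds in any linear order. Finiteness of $J$ is needed only to ensure the segment is finite, which is the intended meaning of ``initial/final segment'' when $m=\infty$.
\end{enumerate}
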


			We will focus for the rest of this subsection on the computation of all the generalized rank two parabolic subgroups of $W$ of type $\widetilde{A}$, and give their canonical generators.

			\begin{proposition} \label{prop: generalized rank two parabolic subgroup of the affine symmetric group}
				Let $W$ be a \textsc{Coxeter} group of type $\widetilde{A}_{n-1}$, with generators $s_0, \dots, s_{n-1}$, identified as the affine symmetric group $\widehat{\mathfrak{S}}_n$. Let $W'$ be a subgroup of $W$. Then $W'$ is a noncommutative generalized rank two parabolic subgroup of $W$ if and only if $W'$ is
				\begin{itemize}
					\item either finite of type $A_2$ with canonical generators $\{\affc{i,j}_p, \affc{j,k}_q\}$ with $i,j,k \in \llbracket 1,n \rrbracket$ distinct and $p \geq \ddelta_{i > j}$ and $q \geq \ddelta_{j > k}$. In this case, we have
					\begin{equation*}
						W' \cap R = \{ \affc{i,j}_p, \affc{i,k}_{p+q}, \affc{j,k}_q \}.
					\end{equation*}
					\item either infinite of type $\widetilde{A}_1$ with canonical generators $\{\affc{i,j}_0, \affc{j,i}_1\}$ with $i,j \in \llbracket 1,n \rrbracket$ such that $i < j$. In this case, we have
					\begin{equation*}
						W' \cap R = \{ \affc{i,j}_p \;|\; p \in \NN \} \sqcup \{ \affc{j,i}_p \;|\; p \in \NN^* \}.
					\end{equation*}
				\end{itemize}
			\end{proposition}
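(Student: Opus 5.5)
The plan is to work in the quotient $V/\RR\delta$, where the geometric representation becomes that of type $A_{n-1}$. The generalized rank two parabolic subgroup generated by $t$ and $t'$ is governed by the plane $P = \mathrm{Span}(\beta_t, \beta_{t'})$. Every positive root has the form $\pm\beta_{\affc{i,j}_0} + p\delta$, so the finite parts of the roots lying in $P$ are type $A$ roots $e_a - e_b$, with $a,b$ residues modulo $n$. Their images in $V/\RR\delta$ lie in the image of $P$, which has dimension $1$ or $2$. This gives two cases, according to whether $\delta \in P$.

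\emph{Case 1: $\delta \in P$.} Then $P = \mathrm{Span}(\beta_{\affc{i,j}_0}, \delta)$ for some $i < j$. The positive roots in $P$ are exactly the $\beta_{\affc{i,j}_0} + p\delta$ with $p \ge 0$ and the $\delta - \beta_{\affc{i,j}_0} + p\delta$, that is $\beta_{\affc{j,i}_{p+1}}$. This is the claimed set of reflections. Every such root is a nonnegative combination of $\beta_{\affc{i,j}_0}$ and $\beta_{\affc{j,i}_1}$. Neither of these two roots is such a combination of the others, since the others have strictly larger $\delta$-coefficient together with the same or opposite finite part. So the canonical generators are $\affc{i,j}_0$ and $\affc{j,i}_1$. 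Their product is a translation of infinite order, which gives type $\widetilde{A}_1$, and the subgroup is noncommutative.

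\emph{Case 2: $\delta \notin P$.} Then $P$ maps isomorphically onto a plane of the type $A$ representation. That plane contains at most three positive type $A$ roots up to sign, namely $e_i - e_j$, $e_j - e_k$ and $e_i - e_k$, which happens when the subgroup is of type $A_2$. Otherwise it contains only two orthogonal roots, and the subgroup is commutative.

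Lifting back, each type $A$ root has at most one lift in $P$, because two lifts would differ by a multiple of $\delta$ lying in $P$. So in the noncommutative case, $P \cap \Phi^+$ consists of exactly three roots, $\gamma_1$, $\gamma_2$ and $\gamma_3$, lifting $\pm(e_i - e_j)$, $\pm(e_j - e_k)$ and $\pm(e_i - e_k)$. One of them is the sum of the other two, and the two summands are the canonical generators. After relabelling, these can be written as $\beta_{\affc{i,j}_p}$ and $\beta_{\affc{j,k}_q}$ with the normalization $p \ge \ddelta_{i>j}$ and $q \ge \ddelta_{j>k}$. Their sum is then $\beta_{\affc{i,k}_{p+q}}$, which I will check directly from the formula for $\bbeta$, being careful with the sign convention when $i > k$.

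For the converse, I will start from any such pair. The span of the two roots does not contain $\delta$, because their finite parts are independent. The argument above then shows that the positive roots in the span are exactly these three, which gives a noncommutative type $A_2$ subgroup.

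The main obstacle is the bookkeeping in Case 2. I have to verify that the sum root always corresponds to $\affc{i,k}_{p+q}$ under the stated convention. I also have to make sure that no lift with a different $\delta$-coefficient sneaks in, and that I know which of the three roots is the sum. For that I would first use the fact that a root in $P$ is determined by its finite part, and then argue case by case on the relative order of $i$, $j$ and $k$.
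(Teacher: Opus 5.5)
Your argument is correct, but it is organized differently from the paper's. The paper starts from the two canonical generators $\affc{a,b}_u$, $\affc{c,d}_v$ and argues on their indices:
\begin{itemize}
\item In the finite case, finiteness and noncommutativity force $|\{a,b,c,d\}|=3$. The coincidences $a=c$ and $b=d$ are then ruled out by exhibiting a third reflection $t_3$ with $\beta_{t_1}+\beta_{t_3}=\beta_{t_2}$ (or symmetrically).
\item In the infinite case, $(a,b)=(c,d)$ and $(u,v)\neq(0,1)$ are ruled out by exhibiting a root of the plane incompatible with $t_1,t_2$ being canonical.
\item $W'\cap R$ is then read off from $t_1t_2t_1$, using Reading's description of the reflections of a dihedral reflection subgroup.
\item For the converse in type $A_2$, the paper proves more than is needed: $\langle t_1,t_2\rangle$ is conjugate to a standard parabolic subgroup. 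This costs explicit conjugating elements and a separate treatment of $n=3$.
\end{itemize}
Your projection to $V/\RR\delta$ instead computes $P\cap\Phi^+$ outright for every plane spanned by two roots. The reflection sets, the canonical generators (as extremal rays) and both directions of the equivalence then come out of one computation, with no case analysis on index coincidences and no conjugation. What you give up is the extra information that the finite ones are honest parabolic subgroups, which the statement does not require. Your Case 1 also yields the $\widetilde{A}_1$ converse, since $\beta_{\affc{i,j}_0}+\beta_{\affc{j,i}_1}=\delta$ puts you in that case.

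Two small repairs are needed.
\begin{itemize}
\item In Case 1, your reason for extremality fails for $\beta_{\affc{j,i}_1}$, because $\beta_{\affc{i,j}_0}$ has a smaller $\delta$-coefficient. Instead, in the basis $(\beta_{\affc{i,j}_0},\beta_{\affc{j,i}_1})$ the positive roots of $P$ have coordinates $(m+1,m)$ and $(m,m+1)$ for $m\in\NN$. So every root other than the two basis vectors has both coordinates positive.
\item In Case 2, the relabelling needs no case analysis on the order of $i,j,k$. The image in $V/\RR\delta$ of the root of a normalized $\affc{a,b}_u$ is always $e_a-e_b$. So if the two summands are $\affc{a,b}_u$ and $\affc{c,d}_v$, the requirement that $e_a-e_b+e_c-e_d$ be a root forces $b=c$ or $a=d$. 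The sum is then $\beta_{\affc{i,k}_{p+q}}$ by the identity $(-1)^{\ddelta_{i>j}}\beta_{\affc{i,j}}+(-1)^{\ddelta_{j>k}}\beta_{\affc{j,k}}=(-1)^{\ddelta_{i>k}}\beta_{\affc{i,k}}$. The normalization $p+q\geq\ddelta_{i>k}$ holds automatically, since $i>k$ forces $i>j$ or $j>k$.
\end{itemize}
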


			\begin{proof}
				Let $W'$ be a noncommutative generalized rank two parabolic subgroup of $W$, and let $t_1 = \affc{a,b}_u$ and $t_2 = \affc{c,d}_v$ be its canonical generators. We have $a,b,c,d \in \llbracket 1,n \rrbracket$, $a \neq b$, $c \neq d$, $p \geq \ddelta_{a > b}$ and $q \geq \ddelta_{c > d}$. We want to show that $W'$ satisfies one of the two conditions of the proposition.

				\begin{itemize}
					\item If $W'$ is finite, then the order of $t_1 t_2$ is finite. This means that $\left| \{ a,b,c,d \} \right| \geq 3$. Indeed, if $\{a,b\} = \{c,d\}$, then $t_1t_2$ is of infinite order: using the cycle with shift notation, we have
					\begin{equation*}
						\affc{a,b}_u \affc{a,b}_v = \affc{a}_{[v-u]} \affc{b}_{[u-v]} \quad \text{ and } \quad \affc{a,b}_u \affc{b,a}_v = \affc{a}_{[-u-v]} \affc{b}_{[u+v]}.
					\end{equation*}
					We also cannot have $\left| \{ a,b,c,d \} \right| = 4$ since $t_1$ and $t_2$ do not commute. Therefore, $\left| \{ a,b,c,d \} \right| = 3$. This means that exactly one of the following cases is true: $a = c$, $a = d$, $b = c$ or $b = d$.
					Before proceeding to the case by case study, we will show the following result: Let $i,j,k$ be three distinct integers in $\llbracket 1,n \rrbracket$. We have
					\begin{equation} \label{eq: lemme utile dans la preuve des sg paraboliques}
						(-1)^{\ddelta_{i > j}}\beta_{\affc{i,j}} + (-1)^{\ddelta_{j > k}}\beta_{\affc{j,k}} = (-1)^{\ddelta_{i > k}}\beta_{\affc{i,k}}.
					\end{equation}
					If $i < j < k$, we indeed have $\beta_{\affc{i,j}} + \beta_{\affc{j,k}} = \beta_{\affc{i,k}}$. If $j < i < k$, we have $-\beta_{\affc{i,j}} + \beta_{\affc{j,k}} = \beta_{\affc{i,k}}$. If $i < k < j$, we have $\beta_{\affc{i,j}} - \beta_{\affc{j,k}} = \beta_{\affc{i,k}}$. Finally, reversing the orders of the three previous cases negates the sign in front of each term, and hence the result is true for each possible order of the integers $i,j,k$.

					\begin{itemize}
						\item If $a = c$. We will show this case cannot happen. Let
						\begin{equation*}
							t_3 = \left\lbrace
								\begin{array}{ll}
								\affc{b,d}_{v-u} & \text{ if } v - u \geq \ddelta_{b > d} \\
								\affc{d,b}_{u-v} & \text{ otherwise}.
								\end{array}
							\right.
						\end{equation*}
						If $v-u \geq \ddelta_{b > d}$, we have
						\begin{equation*}
							\beta_{t_3} = (v-u)\delta + (-1)^{\ddelta_{b > d}} \beta_{\affc{b,d}}
						\end{equation*}
						and using (\ref{eq: lemme utile dans la preuve des sg paraboliques}) we obtain
						\begin{equation*}
							\beta_{t_1} + \beta_{t_3} = \beta_{t_2}.
						\end{equation*}
						This means that $t_1$ and $t_2$ are not the canonical generators of $W'$ which is absurd. Similarly, if $u-v \geq \ddelta_{d > b}$, we have
						\begin{equation*}
							\beta_{t_2} + \beta_{t_3} = \beta_{t_1}
						\end{equation*}
						and once again a contradiction. Therefore, the case $a = c$ is not possible.

						\item If $b = d$, the same argument using
						\begin{equation*}
							t_3 = \left\lbrace
								\begin{array}{ll}
								\affc{a,c}_{u-v} & \text{ if } u-v \geq \ddelta_{a > c} \\
								\affc{c,a}_{v-u} & \text{ otherwise}
								\end{array}
							\right.
						\end{equation*} also leads to a contradiction.

						\item If $a = d$, the canonical generators of $W'$ are $\{ \affc{c,a}_v, \affc{a,b}_u \}$.

						\item If $b = c$, the canonical generators of $W'$ are $\{ \affc{a,b}_u, \affc{b,c}_v \}$.
					\end{itemize}
					Therefore, the canonical generators of $W'$ are of the form $\{ \affc{i,j}_p, \affc{j,k}_q \}$ with $i,j,k \in \llbracket 1,n \rrbracket$ pairwise distinct, $p \geq \ddelta_{i > j}$ and $q \geq \ddelta_{j > k}$. In this case, we have
					\begin{equation*}
						t_1 t_2 t_1 = \affc{i,j}_p \affc{j,k}_q \affc{i,j}_p = \affc{i,k}_{p+q} \quad\text{ and }\quad t_2 t_1 t_2 = \affc{j,k}_q \affc{i,j}_p \affc{j,k}_q = \affc{i,k}_{p+q}.
					\end{equation*}
					This means $t_1 t_2$ is of order $3$, and $R(W') = \{ t_1, t_1t_2t_1, t_2 \}$. Therefore, $W'$ is a \textsc{Coxeter} group of type $A_2$.

					\item If $W'$ is infinite, then $\left| \{ a,b,c,d \} \right| = 2$ and we have either $(a,b) = (c,d)$ or $(a,b) = (d,c)$. If $(a,b) = (c,d)$, then $v \neq u$ since $t_1$ and $t_2$ are distinct reflections. Without loss of generality, we assume that $u < v$. We can write
					\begin{equation*}
						\beta_{t_1} = u\delta + (-1)^{\ddelta_{a > b}} \beta_{\affc{a,b}} \quad \text{ and } \quad \beta_{t_2} = v\delta + (-1)^{\ddelta_{a > b}} \beta_{\affc{a,b}}.
					\end{equation*}
					Let $t_3 = \affc{a,b}$. We have $\beta_{t_3} = \beta_{\affc{a,b}}$, and thus \begin{equation*}
						\dfrac{u}{v} \beta_{t_2} + \left( 1 - \dfrac{u}{v} \right)\beta_{t_3} = \beta_{t_1}
					\end{equation*}
					which is a positive linear combination of $\beta_{t_2}$ and $\beta_{t_3}$. This means that $t_1$ and $t_2$ are not the canonical generators of $W'$, which is absurd. Therefore, we have $(a,b) = (d,c)$.

					Without loss of generality, we can assume $a < b$. In this case, we have
					\begin{equation*}
						\beta_{t_1} = \beta_{\affc{a,b}_u} = u\delta + \beta_{\affc{a,b}} \quad \text{ and } \quad \beta_{t_2} = \beta_{\affc{b,a}_v} = v\delta -\beta_{\affc{a,v}}.
					\end{equation*}
					We will show that $(u,v) = (0,1)$. Note that $v \geq \ddelta_{b > a} = 1$. If $u \neq 0$, then let $t_3 = \affc{a,b}_0$, we have $t_3 \neq t_1$, $t_3 \neq t_2$ and
					\begin{equation*}
						\beta_{t_3} = \beta_{\affc{a,b}} = \dfrac{1}{uv} \left(v \beta_{t_1} - u \beta_{t_2}\right).
					\end{equation*}
					This contradicts the fact that $t_1$ and $t_2$ are the canonical generators of $W'$, so it is impossible. Hence $p = 0$. Similarly, if we assume that $v > 1$, let $t_4 = \affc{b,a}_1$. We have $t_4 \neq t_1$, $t_4 \neq t_2$ and
					\begin{equation*}
						\beta_{t_4} = \delta - \beta_{\affc{a,b}} = \dfrac{1}{v}\left( -(v-1) \beta_{t_1} + \beta_{t_2} \right).
					\end{equation*}
					Again, this is absurd. Therefore, $v = 1$, and the canonical generators of $W'$ are $\{ \affc{a,b}_1, \affc{b,a}_{-1} \}$. Since $W'$ is an infinite \textsc{Coxeter} group of rank $2$, it is of type $\widetilde{A}_1$.
				\end{itemize}

				Now we prove the converse and show that such subgroups of $W$ are indeed generalized rank two parabolic subgroups.
				\begin{itemize}
					\item If $W'$ is an infinite reflection subgroup of type $\widetilde{A}_1$ of $W$, with canonical generators $t_1 = \affc{i,j}_0$ and $t_2 = \affc{j,i}_{1}$ where $1 \leq i < j \leq n$. We need to show that all roots in $\Phi^+(W)$ belonging to the positive linear span of $\beta_{t_1}$ and $\beta_{t_2}$ are roots in $\Phi^+(W')$.

					Let $\beta \in \Phi^+(W)$ such that $\beta = \lambda \beta_{t_1} + \mu \beta_{t_2}$ with $\lambda, \mu \in \RR_+$. We have
					\begin{equation*}
						\beta = \mu \delta + (\lambda - \mu) \beta_{\affc{i,j}}.
					\end{equation*}
					Since $\beta$ is a positive root of an affine \textsc{Coxeter} root system of type $\widetilde{A}_n$, we have $\lambda - \mu = 1$ and $\mu \in \NN$, or $\lambda - \mu = -1$ and $\mu \in \NN^*$. In the first case, we obtain that $\beta = \beta_{\affc{i,j}_\mu}$, and in the second case $\beta = \beta_{\affc{j,i}_\mu}$. These are all roots in $\Phi^+(W')$, therefore $W'$ is a generalized parabolic subgroup of rank $2$ of $W$.

					\item If $W'$ is a finite reflection subgroup of type $A_2$ with canonical generators $t_1 = \affc{i,j}_p$ and $t_2 = \affc{j,k}_q$ where $i,j,k \in \llbracket 1,n \rrbracket$ are distinct, $p \geq \ddelta_{i > j}$ and $q \geq \ddelta_{j > k}$, we will show that $W'$ is in fact a parabolic subgroup of $W$, and therefore also a generalized one. Let $w_1 \in W$ such that $w_1 \affc{i,j}_p w_1^{-1} = (a,a+1)$ for some $a \in \ZZ$. We also have $w_1 \affc{j,k}_q w_1^{-1} = \affc{a+1,b}$ for some $b \in \ZZ$, and $a$, $a+1$ and $b$ have distinct residue modulo $n$.
					\begin{itemize}
						\item If $b \not\equiv a+2 \bmod n$, then consider $w_2 = \affc{b,a+2}$. We have $w_2\affc{a,a+1}w_2^{-1} = \affc{a,a+1}$ and $w_2\affc{a+1,b}w_2^{-1} = \affc{a+1,a+2}$.
						\item Else, if $n > 3$, then we can choose $c \in \ZZ$ belonging to a different class modulo $n$ than $a$, $a+1$ and $a+2$. Let $w_2 = \affc{b}_{[v]} \affc{c}_{[-v]}$ where $v$ is such that $b + vn = a+2$. We obtain that $w_2 \affc{a,a+1} w_2^{-1} = \affc{a,a+1}$ and $w_2 \affc{a+1,b} w_2^{-1} = \affc{a+1,a+2}$.

						If $n = 3$, we need to be more careful since we cannot use a spare number $c$ like before. We write $a+2 = b + vn$ as before.

						\begin{itemize}
							\item If $v = 3k$ for some $k \in \ZZ$. Consider $w_2$ the affine permutation defined by the window
							\begin{equation*}
								\left[ \begin{array}{*3c}
								a & a+1 & a+2 \\
								a + kn & a+1+kn & a+2 - 2kn
								\end{array} \right].
							\end{equation*}
							We have $w_2 \affc{a,a+1} w_2^{-1} = \affc{a,a+1}$ and $w_2 \affc{a+1, a+2 + vn} w_2^{-1} = \affc{a+1, a+2}$.
							\item If $v = 3k+1$ for some $k \in \ZZ$. Consider $w_2$ the affine permutation defined by the window
							\begin{equation*}
								\left[ \begin{array}{*3c}
								a & a+1 & a+2 \\
								a+1 + kn & a+2+kn & a - 2kn
								\end{array} \right].
							\end{equation*}
							We have $w_2 \affc{a,a+1} w_2^{-1} = \affc{a+1,a+2}$ and $w_2 \affc{a+1, a+2 + vn} w_2^{-1} = \affc{a+2, a+n} = \affc{a+2,a+3}$.
							\item If $v = 3k-1$ for some $k \in \ZZ$. Consider $w_2$ the affine permutation defined by the window
							\begin{equation*}
								\left[ \begin{array}{*3c}
								a & a+1 & a+2 \\
								a+1 + kn & a+kn & a+2 - 2kn
								\end{array} \right].
							\end{equation*}
							We have $w_2 \affc{a,a+1} w_2^{-1} = \affc{a,a+1}$ and $w_2 \affc{a+1, a+2 + vn} w_2^{-1} = \affc{a, a+2-n} = \affc{a,a-1}$.
						\end{itemize}
					\end{itemize}
					In all cases, with $w = w_2w_1$, we obtain that $w t_1 w^{-1}$ and $w t_2 w^{-1}$ are two simple reflections, therefore $W' = \langle t_1, t_2 \rangle$ is a parabolic subgroup of $W$.
				\end{itemize}
			\end{proof}

			\begin{exemple}
				Let $W$ be of type $\widetilde{A}_2$. As in \cite{Hohlweg_2014} or \cite{dyer2016imaginaryconeslimitroots}, we represent its positive root system in Figure \ref{fig: example generalized parabolic subgroups in A2 tilda} using a projective projection, because an affine representation would be of dimension 3 and harder to illustrate. Broadly, each point on the plane of this representation corresponds to a line in the vector space $V$ and positive linear combinations of vectors become barycenters. For example, the convex hull of the three simple roots represents their positive linear span, and the root $\alpha_0 + \alpha_1$ is the midpoint between $\alpha_0$ and $\alpha_1$.

				Let $t_1$ and $t_2$ be two distinct reflections. Then the reflections of $W'$, the generalized rank two parabolic subgroup containing $t_1$ and $t_2$, are the ones associated with all the roots belonging to the intersection of the line going through $\beta_{t_1}$ and $\beta_{t_2}$, and the canonical generators of $W'$ are the end points of the convex hull of these reflections.
				\begin{enumerate}
					\item Let $t_1 = \affc{1,2}_1$ and $t_2 = \affc{3,1}_2$ and consider $W'$ the generalized rank two parabolic subgroup containing $t_1$ and $t_2$. We have $\beta_{t_1} = \alpha_1 + \delta$ and $\beta_{t_2} = \alpha_0 + \delta$. Figure \ref{fig: example generalized parabolic subgroups in A2 tilda} shows in {\color{gray}gray} all the positive roots associated with the reflections of $W'$: they are $\{ \alpha_1 + \delta, \alpha_0 + \alpha_1 + 2\delta, \alpha_0 + \delta \}$. Then $W'$ is of type $A_2$ and its canonical generators are $t_1$ and $t_2$.

					\item Let $t_1 = \affc{2,3}_0$ and $t_2 = \affc{2,3}_1$ and consider $W'$ the generalized rank two parabolic subgroup containing $t_2$ and $t_2$. We have $\beta_{t_1} = \alpha_2$ and $\beta_{t_2} = \alpha_2 + \delta$. Figure \ref{fig: example generalized parabolic subgroups in A2 tilda} shows in \tikz[baseline=(x.base)]{\node[inner sep=0pt, outer sep=0] (x) {dotted}; \draw[thick, densely dotted] ([yshift=-1pt]x.south west) -- ([yshift=-1pt]x.south east);} all the positive roots associated with the reflections of $W'$: they are $\{ \alpha_2 + k\delta \;|\; k \in \NN \} \sqcup \{ \alpha_0 + \alpha_1 + k\delta \;;\; k \in \NN \}$. Then $W'$ is of type $\widetilde{A}_1$ and its canonical generators are $t_1$ and $\affc{3,2}_1$.
				\end{enumerate}

			\end{exemple}

			\begin{figure}[ht!]
				\centering

				\begin{tikzpicture}[scale=3]
				\foreach \x in {0,...,30}
				{
				 \pgfmathparse{1/(3*\x + 1)}\edef\y{\pgfmathresult} 
				 \pgfmathparse{1/(3*\x + 2)}\edef\z{\pgfmathresult} 
				 \pgfmathparse{1/(\x+1)}\edef\s{\pgfmathresult} 
				 \foreach \r in {0, 1, 2}
				 {
				  \coordinate (a\r\x) at (-30+\r*120:\y);
				  \coordinate (aa\r\x) at (-30+\r*120:-\z);
				  \draw[draw=none, fill=black] (a\r\x) circle (\s/50);
				  \draw[draw=none, fill=black] (aa\r\x) circle (\s/50);
				 }
				}

				\foreach \x in {0, 60, 120, 180, 240, 300}{
				\draw[fill, line width=0, rotate=\x] (0,0) -- (0.001,0.02) -- (-0.001, 0.02) -- (0,0);
				}

				\draw (90:1) -- (210:1) -- (330:1) -- cycle; 
	      		\draw[ultra thin, opacity=0.3] (-90:0.5) -- (30:0.5) -- (150:0.5) -- cycle;

				\foreach \x in {0, 120, 240}{
				 \draw[ultra thin, opacity=0.3, rotate=\x] (0,0) -- (0,1);
				 \draw[ultra thin, opacity=0.3, rotate=\x] (0,0) -- (0,-0.5);
				}

				\node at (-30:1.1) {$\alpha_0$};
				\node at ( 90:1.1) {$\alpha_1$};
				\node at (210:1.1) {$\alpha_2$};
				\node[scale=0.8, anchor=west] at ( 30: 0.55) {$\alpha_0 + \alpha_1$};
				\node[scale=0.8, anchor=east] at (150: 0.55) {$\alpha_1 + \alpha_2$};
				\node[scale=0.8, anchor=north] at (-90:0.55) {$\alpha_2 + \alpha_0$};
				\node[scale=0.6, anchor=west] at (-30: 0.26) {$\alpha_0 + \delta$};
				\node[scale=0.6, anchor=south] at (90: 0.26) {$\alpha_1 + \delta$};
				\node[scale=0.6, anchor=east] at (210: 0.26) {$\alpha_2 + \delta$};
				\node[scale=0.5] at (0.06,0) {$\delta$};

				\clip (-150:1.2) rectangle (1,1.2);
				\draw[opacity=0.5, shorten <= -10cm, shorten >= -10cm] (a01) -- (a11);
				\draw[gray, very thick] (a01) -- (a11);

				\draw[densely dotted, opacity=0.5, shorten <= -10cm, shorten >= -10cm] (a20) -- (aa20);
				\draw[densely dotted, thick] (a20) -- (aa20);
				\end{tikzpicture}
				\caption{Generalized rank two parabolic subgroups in the positive root system of type $\widetilde{A_2}$.}
				\label{fig: example generalized parabolic subgroups in A2 tilda}
			\end{figure}

	\subsection{Pattern avoidance criterion for \textsc{Coxeter} sortable elements} \label{ssec: critère motif c-triable}

		In this section, we will study the inversion sets of $c$-sortable elements of the affine symmetric group using the notion of $c$-aligned elements defined in \cite[§4]{reading2010sortable}, based on the skew-symmetric form $\omega_c$.

		\subsubsection{\texorpdfstring{$c$}{c}-aligned elements} \label{sssec: c-aligné Sn tilde}

			Let $W$ be a \textsc{Coxeter} group and fix $c$ a \textsc{Coxeter} element of $W$. We can define an orientation on the set of reflections of $W$ using the $\omega_c$ form. This orientation is well behaved with respect to generalized rank two parabolic subgroups as shown in the following result.

			\begin{proposition}[{\cite[4.1]{reading2010sortable}}]
				Let $W'$ be a generalized rank two parabolic subgroup of $W$. Let $(u_k)_k$ be the sequence defined in Definition \ref{def: sequence of reflextions associated to a generalized parabolic subgroup}.

				\begin{itemize}
					\item If $\omega_c(\beta_{u_1}, \beta_{u_m}) > 0$, then $\omega_c(\beta_{u_i}, \beta_{u_j}) > 0$ for all indices $i < j$,
					\item if $\omega_c(\beta_{u_1}, \beta_{u_m}) < 0$, then $\omega_c(\beta_{u_i}, \beta_{u_j}) < 0$ for all indices $i < j$,
					\item if $\omega_c(\beta_{u_1}, \beta_{u_m}) = 0$, then $\omega_c(\beta_{u_i}, \beta_{u_j}) = 0$ for all indices $i,j$. In this case, we say that $W'$ is \emph{not oriented}.
				\end{itemize}

				Without loss of generality, we can always assume that $\omega_c(\beta_{u_1}, \beta_{u_2}) \geq 0$ by exchanging the roles of the canonical generators.
			\end{proposition}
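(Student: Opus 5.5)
The plan is to reduce everything to a determinant computation in the plane $P = \mathrm{Span}(\beta_{r_1}, \beta_{r_2})$, where $r_1, r_2$ are the canonical generators of $W'$. By Proposition \ref{prop: rank 2 generalized parabolic subgroups are of rank 2}, every reflection of $W'$ is some $u_k$, and its root lies in $P$. Since $\beta_{u_1} = \beta_{r_1}$ and $\beta_{u_m} = \beta_{r_2}$, we can write
\begin{equation*}
	\beta_{u_k} = a_k \beta_{r_1} + b_k \beta_{r_2}, \qquad a_k, b_k \geq 0,
\end{equation*}
where nonnegativity comes from the definition of canonical generators. Because $\omega_c$ is skew-symmetric and bilinear, its restriction to the $2$-dimensional space $P$ is a scalar multiple of the determinant in the basis $(\beta_{r_1}, \beta_{r_2})$:
\begin{equation*}
	\omega_c(\beta_{u_i}, \beta_{u_j}) = (a_i b_j - a_j b_i)\, \omega_c(\beta_{u_1}, \beta_{u_m}).
\end{equation*}
This already gives the third bullet: if $\omega_c(\beta_{u_1}, \beta_{u_m}) = 0$, then $\omega_c$ vanishes identically on $P$. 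For the first two bullets, it suffices to prove the purely dihedral statement that $a_i b_j - a_j b_i > 0$ whenever $i < j$. Equivalently, the roots $\beta_{u_k}$ sweep monotonically from $\beta_{r_1}$ to $\beta_{r_2}$ inside the cone they span.

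To prove this monotonicity, I would compute the coefficients explicitly. The restriction of $B$ to $P$ is the form of a rank two Coxeter system, with $B(\beta_{r_1}, \beta_{r_2}) = -\cos(\pi/m)$ if $m$ is finite, and $B(\beta_{r_1}, \beta_{r_2}) = -\cosh\theta \leq -1$ if $m = \infty$. The root of $u_{k+1} = r_1(r_2r_1)^k$ is obtained by applying alternately the two reflections
\begin{equation*}
	x \mapsto x - 2B(x, \beta_{r_i})\beta_{r_i},
\end{equation*}
starting from $\beta_{r_1}$. The standard Chebyshev-type recursion then gives closed forms (up to a common positive normalization):
\begin{itemize}
	\item $\beta_{u_{k}} \propto \sin((m-k+1)\pi/m)\,\beta_{r_1} + \sin((k-1)\pi/m)\,\beta_{r_2}$ when $m$ is finite;
	\item the analogous expression with $\sinh$ when $B(\beta_{r_1}, \beta_{r_2}) < -1$;
	\item coefficients $(k, k-1)$ when $B(\beta_{r_1}, \beta_{r_2}) = -1$.
\end{itemize}
From these, the ratio $b_k/a_k$ is visibly strictly increasing in $k$ along the first sequence. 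Applying the symmetric argument with the roles of $r_1$ and $r_2$ swapped shows it is also strictly increasing along the reversed sequence $u_{m-k}$, indexed by $\infty - \NN$ when $m = \infty$.

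The main obstacle is the case $m = \infty$, where the indexing is $\NN^* \sqcup (\infty - \NN)$. There I also need every element of the first sequence to come strictly before every element of the second. I would show that the ratios $b_k/a_k$ along the first sequence increase to a limit $\lambda$, which is the direction of the isotropic (or limit) ray of $B$ on $P$. The ratios along the second sequence decrease to the same or a larger limit, since both sequences converge to the boundary of the imaginary cone from opposite sides. Strict inequality then follows because no real root lies on that limit ray. If a clean argument via limits is awkward, I would instead compare one term of each sequence directly: since both sequences are strictly monotone, it suffices to show that $\beta_{r_1}$-type roots and $\beta_{r_2}$-type roots are separated by the isotropic line. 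Finally, the closing remark of the statement is just the observation that swapping $r_1$ and $r_2$ reverses the sequence and negates all the signs.
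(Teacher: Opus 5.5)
Your argument is correct and is essentially the standard one behind the result, which the paper imports from \textsc{Reading} and \textsc{Speyer} without reproving it. The restriction of $\omega_c$ to $\mathrm{Span}(\beta_{r_1},\beta_{r_2})$ is a multiple of the determinant, and the roots of $W'$ sweep monotonically from $\beta_{r_1}$ to $\beta_{r_2}$.

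Fix one slip in your finite-$m$ formula. Since $\sin((m-k+1)\pi/m)=\sin((k-1)\pi/m)$, your two coefficients coincide, and $\beta_{u_1}$ comes out as $0$. The correct formula is $\beta_{u_k}\propto\sin(k\pi/m)\,\beta_{r_1}+\sin((k-1)\pi/m)\,\beta_{r_2}$. Its analogues are the coefficient pairs $\sinh(k\theta),\sinh((k-1)\theta)$ when $B(\beta_{r_1},\beta_{r_2})=-\cosh\theta<-1$, and $k,k-1$ when it equals $-1$. These give $a_ib_j-a_jb_i$ as a positive multiple of $\sin((j-i)\pi/m)$, $\sinh((j-i)\theta)$ or $j-i$ within one sequence. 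For $m=\infty$ they also show $b/a<1$ along the first sequence and $b/a>1$ along the reversed one, so your limit argument is unnecessary.
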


			\begin{remarque}
				By convention, we have $a < \infty - b$ for all $a,b \in \NN$ and $\infty - a < \infty - b$ for all $a,b \in \NN$ such that $b < a$.
			\end{remarque}

			This proposition means that generalized rank two parabolic subgroups are either fully oriented with a total order given by $\omega_c$, or there is no orientation between any two reflections of $W'$.

			\textsc{Reading} defines in \cite[§4]{reading2010sortable} the notion of $c$-aligned elements of a \textsc{Coxeter} group $W$ to be the elements such that their inversions satisfy some alignment properties with respect to the form $\omega_c$. Let $W'$ be a noncommutative generalized rank two parabolic subgroup of $W$ and denote $(u_k)_k$ the sequence from Definition \ref{def: sequence of reflextions associated to a generalized parabolic subgroup}.

			\begin{definition}[$c$-aligned elements] \label{def: biclos c-aligné}
				We say that $w \in W$ is \emph{$c$-aligned with respect to} $W'$ if one of the following conditions is true.
				\begin{enumerate}
					\item $\omega_c(\beta_{u_1}, \beta_{u_2}) > 0$ and $N(w) \cap W'$ is either the empty set, the singleton $\{u_m\}$ or an initial segment of $(u_k)_k$.
					\item $\omega_c(\beta_{u_1}, \beta_{u_2}) = 0$ and $N(w) \cap W'$ is either the empty set or a singleton (necessarily one of the canonical generators of $W'$).
				\end{enumerate}

				We say that $w$ is \emph{$c$-aligned} if it is $c$-aligned with respect to all noncommutative generalized rank two parabolic subgroups.
			\end{definition}

			The usefulness of this notion appears in the following result by \textsc{Reading}.

			\begin{theoreme}[{\cite[§4.3]{reading2010sortable}}]
				Let $w \in W$. Then $w$ is $c$-sortable if and only if $w$ is $c$-aligned.
			\end{theoreme}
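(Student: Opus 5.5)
The plan is to prove the equivalence by induction on the length of $w$ and the rank of $W$. Both properties should satisfy the same recursion with respect to an initial letter of $c$, and I would set this up directly rather than comparing $c$-sorting words with inversion sets globally. Fix $s \in S$ initial in $c$, so that $c = s s_2 \cdots s_n$. Reading's recursive characterization of $c$-sortable elements says that $w$ is $c$-sortable if and only if one of the following holds:
\begin{enumerate}
\item $l(sw) < l(w)$ and $sw$ is $scs$-sortable;
\item $l(sw) > l(w)$, $w$ lies in the standard parabolic subgroup $W_{\langle s \rangle} = \langle S \setminus \{s\} \rangle$, and $w$ is $sc$-sortable there.
\end{enumerate}
This follows directly from the definition of the $c$-sorting word as the lexicographically first subword of $c^\infty$. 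I would then show that $c$-aligned elements satisfy exactly the same recursion. Together with the trivial base case $w = e$, induction closes the argument.

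\textbf{Behaviour of $\omega_c$ under the recursion.}
\begin{itemize}
\item Since $s$ is initial, $s$ is a source, so $\omega_c(\alpha_s, \beta) \geq 0$ for every positive root $\beta$. This holds because $\omega_c(\alpha_s, \alpha_{s'}) = -a_{s,s'} \geq 0$ for all $s' \neq s$.
\item Conjugation by $s$ intertwines the forms: $\omega_{scs}(s\beta, s\beta') = \omega_c(\beta, \beta')$. I would check this on simple roots using $E_{scs}(s\cdot, s\cdot) = E_c$, which is essentially the reflection-functor identity.
\item Conjugation by $s$ maps the set of noncommutative generalized rank two parabolic subgroups bijectively to itself, with $W' \mapsto sW's$, except for those containing $s$. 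It carries the sequence $(u_k)_k$ to the corresponding sequence, possibly reversed. Combined with the previous point, the orientations match.
\end{itemize}

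\textbf{Case $s \in N(w)$.} Here $N(sw) = s\bigl(N(w) \setminus \{s\}\bigr)$. For subgroups $W'$ not containing $s$, $c$-alignment of $w$ with respect to $W'$ is equivalent to $scs$-alignment of $sw$ with respect to $sW's$. For $W'$ containing $s$, the reflection $s$ is a canonical generator and, by the source property, it is $u_1$ in the $\omega_c$-orientation. Removing $s$ from an initial segment and conjugating should produce a final segment, or the singleton $\{u_m\}$, in the $scs$-orientation, and conversely. Checking this bookkeeping of initial versus final segments, including the not-oriented case, is routine but fiddly.

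\textbf{Case $s \notin N(w)$; this is the step I expect to be the main obstacle.} I must show that $c$-alignment forces $w \in W_{\langle s \rangle}$. Suppose it does not. Then some cover reflection, or more precisely some inversion $t$ of $w$ whose root has positive $\alpha_s$-coefficient, exists. I would choose such a $t$ of minimal depth. Then I would consider the generalized rank two parabolic subgroup $W'$ containing $s$ and $t$. Since $s$ is a source, $s = u_1$ and $\omega_c(\alpha_s, \beta_t) > 0$, unless the subgroup is not oriented. Alignment then requires $N(w) \cap W'$ to be an initial segment, which must contain $s$, or to be $\{u_m\}$, or to be a singleton in the unoriented case.

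The difficulty is to choose $t$ so that $N(w) \cap W'$ is neither $\{u_m\}$ nor another allowed singleton. My first attempt would be to take $t$ to be the last letter in a reduced word, so that $\beta_t = w(\ldots)$ is a simple-root neighbour of $\alpha_s$ after conjugation. Failing that, I would descend along the support of $w$ using biclosedness of $N(w)$ together with Proposition \ref{prop: ensemble d'inversion ssi segment initial ou final}. Once $w \in W_{\langle s \rangle}$ is established, the restriction of $\omega_c$ to the parabolic root subsystem is $\omega_{sc}$, and the generalized rank two parabolics of $W_{\langle s \rangle}$ are among those of $W$. So $c$-alignment transfers to $sc$-alignment, and the converse direction, that an $sc$-aligned element of $W_{\langle s \rangle}$ is $c$-aligned in $W$, needs a check only on subgroups meeting $\alpha_s$, which contain no inversions of $w$.
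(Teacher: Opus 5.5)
The paper gives no proof of this theorem; it imports it from \cite{reading2010sortable}. Your skeleton is sound: induction on rank and length through the initial-letter recursion, using that an initial $s$ is a source and that $\omega_{scs}(s\beta,s\beta')=\omega_c(\beta,\beta')$. The steps you actually carry out are essentially right, up to two slips.
\begin{itemize}
\item For $s\in N(w)$ and $W'\ni s$, conjugation by $s$ reverses $u_2,\dots,u_m$, and $s$ becomes the last element for the $scs$-orientation. So $s\bigl((N(w)\cap W')\setminus\{s\}\bigr)s$ is an \emph{initial} segment for $scs$, not a final one; a final segment of size $\geq 2$ would be a violation.
\item In your last step, a subgroup $W'\not\subset W_{\langle s\rangle}$ can contain inversions of $w$, e.g.\ $\langle s,s'\rangle$ with $s'\in N(w)$. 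What is true is that $W'$ contains at most one reflection of $W_{\langle s\rangle}$. That reflection is necessarily a canonical generator, hence $u_1$ or $u_m$, so the intersection is allowed.
\end{itemize}

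The genuine gap is the implication ``$w$ is $c$-aligned and $s\notin N(w)$ $\Rightarrow$ $w\in W_{\langle s\rangle}$''. It carries essentially all the content of the theorem, and you leave it open. Worse, your proposed strategy, finding $t\in N(w)$ such that the generalized rank two parabolic subgroup through $s$ and $t$ violates alignment, cannot work in general: there are cases where no choice of $t$ succeeds.

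Take $W=\mathfrak{S}_4$, $c=s_2s_1s_3$, $s=s_2$ and $w=s_1s_3s_2$.
\begin{itemize}
\item The roots of $N(w)$ are $\alpha_1$, $\alpha_3$ and $\theta=\alpha_1+\alpha_2+\alpha_3$. So $s_2\notin N(w)$ and $w\notin\langle s_1,s_3\rangle$.
\item The only inversion outside $W_{\langle s_2\rangle}$ is the reflection $t$ with root $\theta$; it is also the unique cover reflection. Since $B(\alpha_2,\theta)=0$, the subgroup through $s_2$ and $t$ is commutative and imposes nothing.
\item The other subgroups through $s_2$ that meet $N(w)$ are $\langle s_1,s_2\rangle$ and $\langle s_2,s_3\rangle$. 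Their intersections with $N(w)$ are $\{s_1\}=\{u_3\}$ and $\{s_3\}=\{u_3\}$, which are allowed.
\item Yet $w$ is not $c$-sortable: its $c$-sorting word $s_1s_3\,|\,s_2$ skips $s_2$ in the first copy of $c$ and uses it in the second.
\item Nor is $w$ $c$-aligned. The subgroup $W''=s_3\langle s_1,s_2\rangle s_3$ has roots $\alpha_2+\alpha_3,\theta,\alpha_1$ in this order, and $\omega_c(\alpha_2+\alpha_3,\alpha_1)=1>0$. Then $N(w)\cap W''=\{u_2,u_3\}$ is a final segment of size $2$, which is forbidden.
\end{itemize}

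So in general the violating subgroup does \emph{not} contain $s$. Here $W''=w\langle s_1,s_2\rangle w^{-1}$, the rank two parabolic at a codimension-two face of the chamber of $w$ lying on the wall of its cover reflection. A proof must locate such a subgroup and control its $\omega_c$-orientation using only that $s$ is initial. Choosing $t$ of minimal depth, or descending along the support while staying in subgroups through $s$, cannot supply this.
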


			This result enlightens us on the inversions of a $c$-sortable element, which are easily encoded in the one-line notation of an affine permutation. The rest of this section uses this fact to derive a criterion on the one-line notation of an affine permutation to decide whether it is $c$-sortable or not.

		\subsubsection{The case of the affine symmetric group}

			From now on, $W$ is a \textsc{Coxeter} group of type $\widetilde{A}_{n-1}$ which will be identified with the affine symmetric group $\widehat{\mathfrak{S}}_n$, and $c$ is a \textsc{Coxeter} element of $W$. Since we would like to find a criterion on pattern avoidance, a first step is to characterize the inversions of non $c$-sortable elements of $W$.
			\begin{proposition} \label{prop: pas c triable quelles sont les inversions}
				Let $w \in W$. Then $w$ is not $c$-sortable if and only if at least one the following conditions holds.
				\begin{enumerate}
					\item There exists $i,j \in \llbracket 1,n \rrbracket$ such that $i < j$ and
					\begin{enumerate}[label*=\alph*.]
						\item if $(i,j) \in L_c^2 \sqcup R_c^2$, then $\{ \affc{i,j}_1, \affc{j,i}_2 \} \cap N(w) \neq \emptyset$. \label{enum: 1a}
						\item if $(i,j) \in L_c \times R_c$, then $\affc{i,j}_1 \in N(w)$. \label{enum: 1b}
						\item if $(i,j) \in R_c \times L_c$, then $\affc{j,i}_2 \in N(w)$. \label{enum: 1c}
					\end{enumerate}

					\item There exists $i,j,k \in \llbracket 1,n \rrbracket$ distinct, $p \geq \ddelta_{j < i}$ and $q \geq \ddelta_{k < j}$ such that
					\begin{enumerate}[label*=\alph*.]
						\item if $j \in L_c$, then $\{ \affc{i,j}_p, \affc{i,k}_{p+q}, \affc{j,k}_q \} \cap N(w) = \{ \affc{i,k}_{p+q}, \affc{j,k}_q \}$. \label{enum: 2a}
						\item if $j \in R_c$, then $\{ \affc{i,j}_p, \affc{i,k}_{p+q}, \affc{j,k}_q \} \cap N(w) = \{ \affc{i,j}_q, \affc{i,k}_{p+q} \}$. \label{enum: 2b}
					\end{enumerate}
				\end{enumerate}
			\end{proposition}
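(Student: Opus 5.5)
We combine Reading's theorem ($c$-sortable $\iff$ $c$-aligned) with the explicit description of noncommutative generalized rank two parabolic subgroups in Proposition \ref{prop: generalized rank two parabolic subgroup of the affine symmetric group}. Thus $w$ fails to be $c$-sortable exactly when some such $W'$ witnesses a failure of Definition \ref{def: biclos c-aligné}. Since $N(w)$ is an inversion set, Proposition \ref{prop: ensemble d'inversion ssi segment initial ou final} tells us that $N(w)\cap W'$ is always an initial or a final segment of $(u_k)_k$. So the task in each case is to list which initial or final segments are forbidden by $c$-alignment. We treat the two types of $W'$ separately; they give conditions 1 and 2 respectively.

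\textbf{Type $A_2$.} The canonical generators are $\affc{i,j}_p,\affc{j,k}_q$ and $W'\cap R=\{\affc{i,j}_p,\affc{i,k}_{p+q},\affc{j,k}_q\}$. By Corollary \ref{cor: sign of omega_c} the form $\omega_c$ never vanishes here, so $W'$ is oriented. We order the generators so that $\omega_c(\beta_{u_1},\beta_{u_2})>0$. The nonempty initial or final segments are $\{u_1\}$, $\{u_1,u_2\}$, $\{u_3\}$, $\{u_2,u_3\}$ and the whole set. Alignment allows every one of these except $\{u_2,u_3\}$. The orientation is governed by the sign of $\omega_c(\beta_{\affc{i,j}_p},\beta_{\affc{j,k}_q})$.
\begin{itemize}
\item When the middle letter $j$ of the chain determines the sign (positive for $j\in L_c$), we have $u_1=\affc{i,j}_p$. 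The forbidden set is then $\{\affc{i,k}_{p+q},\affc{j,k}_q\}$, which is case 2a.
\item For $j\in R_c$ the roles are swapped and the forbidden set is $\{\affc{i,j}_p,\affc{i,k}_{p+q}\}$, which is case 2b. Here we read the subscript $q$ in 2b as the typo $p$.
\end{itemize}
The delicate point is the table entries where $i,j,k$ all lie in $L_c$ or all in $R_c$, since there the sign also depends on the relative order of $i,j,k$. We will handle these by swapping the labels: the same $A_2$ subgroup can be presented with canonical generators $\affc{k,i}$ and $\affc{i,j}$ (cyclic relabelling of the chain), and the forbidden pair should then be re-expressed in the form 2a/2b with the new middle index. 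The main verification is to check this relabelling against each column of Table \ref{tbl: sign of omega_c}, and to check that it is compatible with the constraints on $p$ and $q$.

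\textbf{Type $\widetilde A_1$.} The canonical generators are $\affc{i,j}_0$ and $\affc{j,i}_1$ with $i<j$. The sequence is $u_1=\affc{i,j}_0, u_2=\affc{j,i}_2$ (reflecting $u_1$ by $u_{\text{last}}$), and so on, with the final segment $\dots,\affc{i,j}_1,\affc{j,i}_1$. By Proposition \ref{prop: sign of omega_c on A1 tilda parabolic generators}, $\omega_c(\beta_{\affc{i,j}_0},\beta_{\affc{j,i}_1})=2(\ddelta_{i\in R_c}-\ddelta_{j\in R_c})$. We split into cases according to this sign.
\begin{itemize}
\item If $i,j$ lie in the same part, the sign is $0$. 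Alignment then means $N(w)\cap W'$ is empty or a single canonical generator. Any nonempty initial or final segment of length at least two contains $\affc{j,i}_2$ or $\affc{i,j}_1$. Conversely, a segment containing either of these has length at least two. This gives 1a.
\item If $(i,j)\in L_c\times R_c$, the value is $-2$. Reorienting makes $\affc{j,i}_1$ the first element. The allowed sets are the initial segments from that side together with the singleton $\{\affc{i,j}_0\}$. The violations are exactly the segments from the $\affc{i,j}_0$ side of length at least two, i.e.\ those containing $\affc{i,j}_1$, which is 1b.
\item The case $(i,j)\in R_c\times L_c$ is symmetric and gives 1c.
\end{itemize}
Here we use that an inversion set containing $\affc{i,j}_1$ must be a final segment, by the segment property.

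\textbf{Main obstacle.} The converse direction is essentially bookkeeping: each listed condition exhibits a non-aligned $W'$. The main obstacle is the all-$L_c$ or all-$R_c$ sign cases in type $A_2$. We would first attack them by relabelling the triple so that the sign-determining index becomes the middle letter.
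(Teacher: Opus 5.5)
Your framework (Reading's theorem, the classification of noncommutative generalized rank two parabolic subgroups, the segment property of Proposition~\ref{prop: ensemble d'inversion ssi segment initial ou final}, and a split by orientation) is the paper's. The mixed rows of Table~\ref{tbl: sign of omega_c} are handled correctly, and you rightly spot the typo $\affc{i,j}_q$ for $\affc{i,j}_p$ in 2b. The step you flag as the main obstacle, however, cannot go through as planned.

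\textbf{The relabelling is impossible.} Canonical generators of a reflection subgroup are unique, and $\affc{i,k}_{p+q}$ is not one of them, since $\beta_{\affc{i,k}_{p+q}}=\beta_{\affc{i,j}_p}+\beta_{\affc{j,k}_q}$. Writing it as the generator ``$\affc{k,i}_r$'' of a relabelled chain forces $r=-(p+q)<0$, i.e.\ a negative root, which violates $p\ge\ddelta_{j<i}$, $q\ge\ddelta_{k<j}$. In fact the two canonical generators share exactly the residue $j$, so their presentation in the form required by condition~2 is unique, and the pair appearing in 2a (resp.\ 2b) is determined by $W'$. Consider the orders $j<i<k$, $i<k<j$, $k<j<i$ with either $i,j,k\in L_c$ (where $\omega_c(\beta_{\affc{i,j}_p},\beta_{\affc{j,k}_q})<0$) or $i,j,k\in R_c$ (where it is $>0$). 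There the violating set $\{u_2,u_3\}$ is $\{\affc{i,k}_{p+q},\affc{i,j}_p\}$ with $j\in L_c$, resp.\ $\{\affc{i,k}_{p+q},\affc{j,k}_q\}$ with $j\in R_c$. Neither is of the form condition~2 demands.

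\textbf{The missing idea: these six configurations land in condition 1a.} Reading the constraints column by column gives $p+q\ge1$ when $i<k$, and $p+q\ge2$ when $k<i$ (order $k<j<i$). Now use the $\widetilde{A}_1$ subgroup on the residues $\{i,k\}$, whose reflection sequence is $\affc{a,b}_0,\affc{a,b}_1,\affc{a,b}_2,\dots,\affc{b,a}_2,\affc{b,a}_1$ for $a<b$. Since $N(w)$ is finite and contains $\affc{i,k}_{p+q}$, the segment property in this subgroup forces $\affc{i,k}_1\in N(w)$, resp.\ $\affc{i,k}_2\in N(w)$. As $i,k$ lie in the same part, this is exactly 1a.

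\textbf{The same point breaks your converse.} It is not true that each listed condition exhibits a non-aligned $W'$. In these configurations the set in 2a (resp.\ 2b) is the initial segment $\{u_1,u_2\}$ of the $A_2$ subgroup, which alignment allows. You must instead deduce 1a as above and use the $\widetilde{A}_1$ subgroup; this is what the paper does in both directions.

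\textbf{A smaller correction.} Your $\widetilde{A}_1$ sequence is misstated. $u_2=r_1r_2r_1$ is the conjugate of $\affc{j,i}_1$ by $\affc{i,j}_0$, namely $\affc{i,j}_1$. The reflection $\affc{j,i}_2$ is $u_{\infty-1}=r_2r_1r_2$, not $u_2$. Your conclusions 1a--1c agree with the correct sequence, not with the one you wrote; with yours, 1b would come out with $\affc{j,i}_2$. This is only a local fix.
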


			\begin{proof}
				Suppose that $w$ is not $c$-sortable. By \textsc{Reading}'s theorem, $w$ is not $c$-aligned. By Definition \ref{def: biclos c-aligné}, there exists a generalized noncommutative rank two parabolic subgroup $W'$ of $W$ such that one of the points of Definition \ref{def: biclos c-aligné} does not hold.
				\begin{enumerate}
					\item If $\omega_c(\beta_{u_1}, \beta_{u_2}) > 0$, then by Proposition \ref{prop: ensemble d'inversion ssi segment initial ou final} $N(w) \cap W'$ is a final segment of $(u_k)_k$ of cardinality at least $2$ different from the set of all inversions of $W'$. We are going to split cases depending on the nature of $W'$ using Proposition \ref{prop: generalized rank two parabolic subgroup of the affine symmetric group}.
					\begin{itemize}
						\item If $W'$ is of type $\widetilde{A}_1$, then by Proposition \ref{prop: sign of omega_c on A1 tilda parabolic generators} we have $u_1 = \affc{i,j}_0$ and $u_{\infty} = \affc{j,i}_1$ with $i < j$ and $(i,j) \in R_c \times L_c$, or $u_1 = \affc{j,i}_1$ and $u_\infty = \affc{i,j}_0$ with $i < j$ and $(i,j) \in L_c \times R_c$. In the first case, a final segment of $(u_k)_k$ of cardinality at least 2 contains $\affc{j,i}_2$ (thus condition \ref{enum: 1c}) and in the second case, it contains $\affc{i,j}_1$ (thus condition \ref{enum: 1b}), hence the result in this case.
						\item If $W'$ is of type $A_2$, then $N(w) \cap W' = \{ u_2, u_3 \}$. By Corollary \ref{cor: sign of omega_c}, we have $u_1 = \affc{i,j}_p$, $u_2 = \affc{i,k}_{p+q}$ and $u_3 = \affc{j,k}_q$ if $i,j,k$ satisfy a «~$+$~» case of table \ref{tbl: sign of omega_c} (call it \emph{case $(+)$}), and $u_1 = \affc{j,k}_q$, $u_2 = \affc{i,k}_{p+q}$ and $u_3 = \affc{i,j}_p$ otherwise (call it \emph{case $(-)$}).

						For now, put aside the case where $(i,j,k) \in L_c^3 \sqcup R_c^3$. In all the other cases, we are in case $(+)$ exactly when $j \in L_c$ and in case $(-)$ exactly when $j \in R_c$. Thus we have condition \ref{enum: 2a} of the proposition.

						Now, if $i,j,k \in L_c$, then we are in the case $(+)$ if $i < j < k$, $j < k < i$ or $k < i < j$ and condition \ref{enum: 2a} of the proposition is held. And in the three other cases on the order of $i,j,k$, we are in the case $(-)$: if $j < i < k$ or $i < k < j$, we have $p+q \geq 1$ and $\affc{i,k}_{p+q}$ belongs to $N(w)$, hence $\affc{i,k}_1 \in N(w)$ by Proposition \ref{prop: ensemble d'inversion ssi segment initial ou final}. Therefore we obtain condition \ref{enum: 1a} of the proposition. And if $k < j < i$, then $p+q \geq 2$ and this time we have $\affc{i,k}_2 \in N(w)$, which is also condition \ref{enum: 1a} of the proposition.

						Finally, the same argument for $i,j,k \in R_c$ gives for each case on the 6 possible orders of these integers one of the conditions of the proposition.
					\end{itemize}

					\item If $\omega_c(\beta_{u_1}, \beta_{u_2}) = 0$, then $W'$ is of type $\widetilde{A}_1$ by Propositions \ref{prop: sign of omega_c on A1 tilda parabolic generators}, \ref{prop: generalized rank two parabolic subgroup of the affine symmetric group} and \ref{cor: sign of omega_c}, and its canonical generators are of the form $\affc{i,j}_0$ and $\affc{j,i}_1$ with $i,j \in \llbracket 1,n \rrbracket$ such that $i < j$ and $(i,j) \in L_c^2 \sqcup R_c^2$. Since $W' \cap N(w)$ is neither empty nor the singleton $\{\affc{i,j}_0\}$ nor the singleton $\{\affc{j,i}_1\}$, then by Proposition \ref{prop: ensemble d'inversion ssi segment initial ou final} $N(w) \cap W'$ contains $\affc{i,j}_1$ or $\affc{j,i}_2$, hence condition \ref{enum: 1a} of the proposition.
				\end{enumerate}
				We proved that if $w$ is not $c$-sortable, then one of the conditions of the proposition holds. Now we prove the converse: if one of the conditions of the proposition holds, then $w$ is not $c$-sortable.
				\begin{enumerate}
					\item If condition $1$ of the proposition holds, consider $W'$ the generalized rank two parabolic subgroup containing $\affc{i,j}_0$ and $\affc{j,i}_1$. By Proposition \ref{prop: sign of omega_c on A1 tilda parabolic generators} and Definition \ref{def: biclos c-aligné}:
					\begin{enumerate}[label*=\alph*.]
						\item if $(i,j) \in L_c^2 \sqcup R_c^2$, then $W'$ is not oriented. Since $N(w) \cap W'$ contains $\affc{i,j}_1$ or $\affc{j,i}_2$, it is of cardinality greater than $2$ by Proposition \ref{prop: ensemble d'inversion ssi segment initial ou final}. Therefore $w$ is not $c$-aligned, and thus not $c$-sortable.
						\item if $(i,j) \in L_c \times R_c$, then we have $u_1 = \affc{j,i}_1$ and $u_\infty = \affc{i,j}_0$ with the orientation $\omega_c(\beta_{u_1}, \beta_{u_\infty}) > 0$. Since $N(w)$ contains $\affc{i,j}_1 = u_{\infty - 1}$, then $N(w) \cap W'$ is a finite final segment of $(u_k)_k$ by Proposition \ref{prop: ensemble d'inversion ssi segment initial ou final}, and of cardinality at least two: $w$ is therefore not $c$-aligned and thus not $c$-sortable.
						\item if $(i,j) \in R_c \times L_c$, then we have $u_1 = \affc{i,j}_0$ and $u_\infty = \affc{j,i}_1$ with the orientation $\omega_c(\beta_{u_1}, \beta_{u_\infty}) > 0$. Since $N(w)$ contains $\affc{j,i}_2 = u_{\infty - 1}$, then $N(w) \cap W'$ is a finite final segment of $(u_k)_k$ by Proposition \ref{prop: ensemble d'inversion ssi segment initial ou final}, and of cardinality at least two: $w$ is therefore not $c$-aligned and thus not $c$-sortable.
					\end{enumerate}

					\item If condition $2$ of the proposition holds, consider $W'$ the generalized rank two parabolic subgroup containing $\affc{i,j}_p$ and $\affc{j,k}_q$. If $t_1$ and $t_2$ are two reflections, we write $t_1 \rightarrow t_2$ if $\omega_c(t_1, t_2) > 0$. By Corollary \ref{cor: sign of omega_c} and Definition \ref{def: biclos c-aligné}:
					\begin{itemize}
						\item If $j \in L_c$ and $i,j,k$ are such that the corresponding case is «~$+$~» in table \ref{tbl: sign of omega_c}, then $(u_k)_k$ is oriented as such:
						\begin{equation*}
							\affc{i,j}_p \rightarrow \affc{i,k}_{p+q} \rightarrow \affc{j,k}_q.
						\end{equation*} Since $N(w) \cap W' = \{ \affc{i,k}_{p+q}, \affc{j,k}_q \}$, $w$ is not $c$-aligned and thus not $c$-sortable.
						\item If $j \in L_c$ (resp. $j \in R_c$) and $i,j,k$ are such that the corresponding case if «~$-$~» (resp. «~$+$~») in Table \ref{tbl: sign of omega_c}, then we have either $\affc{i,k}_1 \in N(w)$ and $i < k$ (because $p + q \geq 1$), or $\affc{k,i}_2 \in N(w)$ and $k < i$ (because $p + q \geq 2$), and $(i,k) \in L_c^2$ (resp.$(i,k) \in R_c^2$). Therefore, first condition of the proposition also holds and we already shown that $w$ is not $c$-sortable in this case.
						\item If $j \in R_c$ and $i,j,k$ are such that the corresponding case is «~$-$~» in Table \ref{tbl: sign of omega_c}, then $(u_k)_k$ is oriented as such:
						\begin{equation*}
							\affc{j,k}_q \rightarrow \affc{i,k}_{p+q} \rightarrow \affc{i,j}_p.
						\end{equation*} Since $N(w) \cap W' = \{ \affc{i,j}_p, \affc{i,k}_{p+q} \}$, $w$ is not $c$-aligned and thus not $c$-sortable.
					\end{itemize}
				\end{enumerate}

				Hence the result.

			\end{proof}

			Each of the cases of Proposition \ref{prop: pas c triable quelles sont les inversions} will give us insight on the patterns that cannot appear in the one-line notation of an affine permutation $w \in W$. We will now recall and prove Theorem \ref{thm: critère par motif pour les c-triables de Sn tilde}.

			\motifaffine*

			\begin{proof}
				We will prove the result by double implication of the negation statement: $w$ is not $c$-sortable if and only if it contains a pattern $kij$ with $i < j < k$ and $j \in \Lcn$ or a pattern $jki$ with $i < j < k$ and $j \in \Rcn$.

				We begin by showing that if $w$ is not $c$-sortable, then it contains one such pattern. Assume that $w \in W$ is not $c$-sortable. This means one of the conditions of Proposition \ref{prop: pas c triable quelles sont les inversions} holds.

				\begin{enumerate}
					\item If there exists $a,b \in \llbracket 1,n \rrbracket$ such that $a < b$ and $\affc{a,b}_1 \in N(w)$, then we also have $\affc{a,b}_0 \in N(w)$ by Proposition \ref{prop: ensemble d'inversion ssi segment initial ou final}. This means the following pattern appears in the one-line notation of $w$: $b(b+n)a(a+n)$. In particular, there are the patterns $b(b+n)a$ of the form $jki$ and $(b+n)a(a+n)$ of the form $kij$. Similarly, if we have $\affc{b,a}_2 \in N(w)$, then we have a pattern $(a+n)(a+2n)b(b+n)$ and once again the two types of patterns: $(a+n)(a+2n)b$ of the form $jki$ and $(a+2n)b(b+n)$ of the form $kij$.

					Now, if $(a,b) \in L_c^2 \sqcup R_c^2$, we know by Proposition \ref{prop: pas c triable quelles sont les inversions} that one of $\affc{a,b}_1$ or $\affc{a,b}_2$ belongs to $N(w)$ and therefore there is a pattern of the form $kij$ with $j \in \Lcn$ or $jki$ with $j \in \Rcn$. Similarly, if $(a,b) \in L_c \times R_c$, we have a pattern $jki$ with $j \in \Rcn$ and if $(a,b) \in R_c \times L_c$, we have a pattern $kij$ with $j \in \Lcn$.

					\item If there exists $a,b,c \in \llbracket 1,n \rrbracket$ distinct and $p \geq \ddelta_{b < a}$ and $q \geq \ddelta_{c < b}$ such that $b \in L_c$ and
					\begin{equation*}
						\{ \affc{a,b}_p, \affc{a,c}_{p+q}, \affc{b,c}_q \} \cap N(w) = \{ \affc{a,c}_{p+q}, \affc{b,c}_q \},
					\end{equation*}
					then there is the following pattern in the one-line notation of $w$: $(c + (p+q)n)a(b+pn)$ which is of the form $kij$ with $j \in \Lcn$. Similarly, if $b \in R_c$ and
					\begin{equation*}
						\{ \affc{a,b}_p, \affc{a,c}_{p+q}, \affc{b,c}_q \} \cap N(w) = \{ \affc{a,b}_q, \affc{a,c}_{p+q} \},
					\end{equation*}
					then we have a pattern of the form $jki$ with $j \in \Rcn$, namely $(b+qn)(c+(p+q)n)a$.
				\end{enumerate}

				In all cases, $w$ contains one of the patterns.

				\paragraph*{} Now we show the other implication of the statement: If $w$ contains a forbidden pattern, then $w$ is not $c$-sortable. Suppose that $w$ contains at least one pattern $kij$ with $i < j < k$ and $j \in \Lcn$, we will show that one of the conditions of Proposition \ref{prop: pas c triable quelles sont les inversions} holds. Without loss of generality, we can assume that $j \in \llbracket 1,n \rrbracket$. Define $a,b,c,p,q$ to be integers such that $a = i + pn$, $b = j$, $c = k - qn$ and $a,b,c \in \llbracket 1,n \rrbracket$. In particular, we have $p \geq \ddelta_{a \geq b}$ and $q \geq \ddelta_{b \geq c}$.
				\begin{itemize}
					\item If $a \neq b$, then $a,b,c$ are distinct (we cannot have $a = c$ or $b = c$ because elements of the same class modulo $n$ appear in increasing order from left to right in the one-line notation of $w$). We have the pattern $(c+qn)(a-pn)b$, this means $\affc{a,c}_{p+q}$ and $\affc{b,c}_q$ are inversions of $w$ but not $\affc{a,b}_p$. Since $j \in \Lcn$, condition \ref{enum: 2a} of Proposition \ref{prop: pas c triable quelles sont les inversions} holds and $w$ is not $c$-sortable.
					\item If $a = b$, we have the pattern $(c+qn)(b-pn)b$ and in particular the pattern (after translation by $pn$) $(c+(p+q)n)b$. This means that the reflection $\affc{b,c}_{p+q}$ belongs to $N(w)$. Since $p \geq \ddelta_{a \geq b} = 1$, then we have $p+q \geq 1 + \ddelta_{b > c}$. This means that if $b < c$, we have $\affc{b,c}_1 \in N(w)$ and since $b \in L_c$, condition \ref{enum: 1a} or condition \ref{enum: 1b} of Proposition \ref{prop: pas c triable quelles sont les inversions} is fulfilled. And if $c < b$, then we have $\affc{b,c}_2 \in N(w)$ and since $b \in L_c$, condition \ref{enum: 1a} or condition \ref{enum: 1c} of Proposition \ref{prop: pas c triable quelles sont les inversions} is fulfilled.
				\end{itemize}
				In all cases, $w$ is not $c$-sortable.

				Exactly the same argument applied to a pattern $jki$ with $j \in \Rcn$ also shows that $w$ is not $c$-sortable.
			\end{proof}

			We end this section with some examples and remarks. First, even though the number of patterns to check in the one-line notation of $\sigma \in \widehat{\mathfrak{S}}_n$ is seemingly infinite, only a finite number of checks needs to be performed. Indeed, one can assume that $ki$ are consecutive in the patterns $kij$ and $jki$, because the existence of a pattern $kij$ (resp. $jki$) implies the existence of a pattern $k'i'j$ (resp. $jk'i'$), where $k'$ is defined as the furthest element on the right of $k$ and on the left of $i$ greater than $j$:
			\begin{equation*}
				k' = \sigma\left( \max\left( \left\{u \in \llbracket \sigma^{-1}(k), \sigma^{-1}(i) \rrbracket \;|\; \sigma(u) > j \right\} \right) \right)
			\end{equation*}
			and $i'$ is the integer directly on the right of $k'$, which is by definition smaller than $j$.
			Moreover, it suffices to check the consecutive pairs $ki$ for $k$ in the window of an affine permutation, because of the periodicity. Note that $i$ may be outside the window (if $k$ is the last element of the window), and $j$ may be very far away, but there are only a finite number of $j$ to check for each pair $ki$.

			\begin{exemple}
				Let $n = 10$ and $c = \affc{1,4,6,9}_{[1]}\affc{8,7,5,3,2,0}_{[-1]}$. Consider the following two affine permutations:
				\begin{equation*}
					\sigma_1 = [0, 1, 4, 3, 9, 6, 5, 8, 7, 12] ; \hspace*{0.1\textwidth} \sigma_2 = [-2, 0, 3, 4, 5, 9, 6, 11, 7, 12].
				\end{equation*}
				Let us check all the patterns $kij$ and $jki$ with $i < j < k$ and $k$ in the window of $\sigma_1$:
				\begin{itemize}
					\item $k = 4$, then $i = 3$ but no $j$ are possible, and similarly if $k = 8$ (then $i = 7$).
					\item $k = 9$, then $i = 6$. For $j = 7$, we have the pattern $kij$ and $j \in \Rcn$ (not forbidden) and for $j = 8$, we have the pattern $kij$ with $j \in \Rcn$ (not forbidden),
					\item $k = 12$. Then $i = 0 + 10 = 10$. For $j = 11$ (the only possible value for $j$), let us extend the window to see $i$ and $j$:
					\begin{equation*}
						{\color{gray}\cdots, -5, -2 ,-3, 2}[0, 1, 4, 3, 9, 6, 5, 8, 7, 12]{\color{gray}10, 11, 14, 13, \cdots}
					\end{equation*}
					Thus, we have a pattern $kij$ and $j \in \Lcn$ : therefore $\sigma_1$ is not $c$-sortable.
				\end{itemize}
				In this example, there are no forbidden patterns in the window of $\sigma_1$ and we needed to check beyond the window.

				For $\sigma_2$, we list all the patterns $kij$ and $jki$ with $k$ in the window of $\sigma_2$ and $ki$ consecutive in the one-line notation of $\sigma_2$. Since none of them are forbidden, it is a $c$-sortable element:
				\begin{equation*}
					\begin{array}{*9{!{\;}c}}
						1\;(-2)\;0; &
						9\;6\;7; &
						9\;6\;8; &
						11\;7\;8; &
						9\;7\;11; &
						11\;7\;10; &
						9\;12\;8; &
						12\;8\;10; &
						11\;12\;8.
					\end{array}
				\end{equation*}
			\end{exemple}

			\begin{remarque}
				Let $c \in \mathfrak{S}_n$ be a \textsc{Coxeter} element. The affine permutation $\widehat{c} = s_0 c \in \widehat{\mathfrak{S}}_n$ is a \textsc{Coxeter} element of $\widehat{\mathfrak{S}}_n$, and the natural injection $\mathfrak{S}_n \hookrightarrow \widehat{\mathfrak{S}}_n$ restricts to an injection $\mathrm{Sort}_c(\mathfrak{S}_n) \hookrightarrow \mathrm{Sort}_{\widehat{c}}(\widehat{\mathfrak{S}}_n)$. This should also be visible in the pattern avoidance criterion of Theorem \ref{thm: critère par motif pour les c-triables de Sn tilde}.

				Indeed, if $L_c \sqcup R_c = \rrbracket 1,n \llbracket$ is the partition of $\rrbracket 1,n \llbracket$ defined by $c$, then we have $L_{\widehat{c}} = L_c \sqcup \{1\}$ and $R_{\widehat{c}} = R_c \sqcup \{n\}$. Let $\sigma \in \mathfrak{S}_n$ and $\widehat{\sigma}$ the affine permutation whose window notation is obtained from the one-line notation of $\sigma$. If $a$ and $b$ are respectively the first and last elements of the window of $\widehat{\sigma}$, then $b < a+n$, therefore, if a pattern $kij$ or $jki$ with $k$ in the window of $\widehat{\sigma}$ appears in the one-line notation of $\widehat{\sigma}$, then both $k$ and $i$ are in the window. Thus, for each $j \in \rrbracket i,k \llbracket$, we have that $j$ also appears in the window of $\widehat{\sigma}$, and moreover is neither $1$ or $n$. Therefore, a pattern $kij$ with $j \in \overline{L_{\widehat{c}}}$ (reps. $jki$ with $j \in \overline{R_{\widehat{c}}}$) exists in the one-line notation of $\widehat{\sigma}$ if and only if a pattern $kij$ with $j \in L_c$ (resp. $jki$ with $j \in R_c$) exists in the one-line notation of $\sigma$. Therefore, Theorem \ref{thm: critère par motif pour les c-triables de Sn tilde} generalizes the criterion for the finite symmetric group.
			\end{remarque}

\section{\texorpdfstring{$c$}{c}-sortable biclosed sets and cyclic \texorpdfstring{$c$}{c}-noncrossing arc diagrams} \label{sec: biclos c-triable et généralisation de l'application de Reading}

	In this section, we generalize in type $\widetilde{A}$ the notion of a $c$-sortable element by leaving the \textsc{Coxeter} group $W$ and going in the set of biclosed sets of reflections of $W$. We will use a bijection between these biclosed sets and a subset of TITOs on $\ZZ$ \cite[4.12]{barkley2025extendedweakorderaffine} in order to define a $c$-sortable biclosed set similarly as done in Theorem \ref{thm: critère par motif pour les c-triables de Sn tilde}. The goal of this section is to define the necessary objects in order to define in Section \ref{sec: generalization of reading's bijection} a bijective map $nc_c^{\widetilde{A}}$ (Definition \ref{def: reading généralisée}) that generalizes \textsc{Reading}'s map $nc_c$:

	\readinggeneralise*

	\subsection{\textsc{Coxeter} sortable biclosed sets in affine type A} \label{ssec: biclos c-triables}

		In this section, we will present a definition of $c$-sortable biclosed sets of the set of reflections of the affine symmetric group, generalizing the notion of $c$-sortable elements of $\widehat{\mathfrak{S}}_n$. The definition is based on the representation of these biclosed sets as TITOs on $\ZZ$ from \cite[§3]{barkley2022combinatorial}.

		\subsubsection{Definition}
			Recall from Theorem \ref{thm: critère par motif pour les c-triables de Sn tilde} that the $c$-sortable elements of the affine symmetric group are the affine permutations such that their one-line notation avoids the patterns $kij$ with $j \in \Lcn$ and $jki$ with $j \in \Rcn$.
			As a candidate for a generalized definition, we need these elements to stay «~generalized $c$-sortable~», and we also want the other affine permutations to remain «~not generalized $c$-sortable~». Therefore, in our definition of a $c$-sortable TITO on $\ZZ$, a TITO of shape $\left[ \llbracket 0,n-1 \rrbracket \right]$ (\textit{i.e.} an affine permutation) is $c$-sortable if and only if it avoids the previously mentioned patterns.

			\biclosctriable


			Here is why we get such restriction for TITOs of other shapes. First, we would like to keep the following important property, true for any \textsc{Coxeter} group: the map $w \in \mathrm{Sort}_c(W) \mapsto \mathrm{Cov}(w) \in \mathcal{P}(R)$ is injective. But many TITOs on $\ZZ$ avoid the patterns $kij$ with $j \in \Lcn$ and $jki$ with $j \in \Rcn$, while having the same set of cover reflections (for example, with $c = \affc{1,2}_{[1]} \affc{3}_{[-1]}$: $[0,1,2]$ and $[1,2][0]$). We thus need to forbid two consecutive waxing blocks from appearing. This is also motivated by the fact we want to define the $c$-sortable TITOs as a subset of \emph{widely generated TITOs} (Paragraph \ref{par: cyclic noncrossing arc diagrams} of Section \ref{sec: background}).

			We also forbid two consecutive waning blocks from appearing: the only situation where the forbidden patterns are avoided is for a shape $\underline{[L]}\,\underline{[R]}$ where $L \subset \Lcn$ and $R \subset \Rcn$. Under the bijection with cyclic noncrossing arc diagrams, this leads to an arc diagram with two loops which we do not want in the future section.

			Finally, it is easy to see that if there are three waxing blocks in a TITO, then a pattern $kij$ with $j \in \Lcn$ or $jki$ with $j \in \Rcn$ always appears, and same goes for if a waning block appears before a block containing an element of $\Lcn$ or after a block containing an element of $\Rcn$.

			This leaves only the TITOs of shape $[L]\underline{[M]}[R]$ where $L \sqcup M \sqcup R = \llbracket 1,n \rrbracket$, and $L \subset L_c$ and $R \subset R_c$.
			In fact, we also force $M$ to contain at least an element of $L_c$ and an element of $R_c$, for reasons involving the $c$-noncrossing partitions we consider not having dangling annular blocks (See Paragraph \ref{par: geometric interpretation of noncrossing partitions} of Section \ref{sec: background}) once we establish the generalized bijection.

			\begin{exemple} \label{ex: c-sortable TITO}
				Let $c = \affc{1,4,5,6,9}_{[1]} \affc{10,8,7,3,2}_{[-1]}$. The following TITOs on $\ZZ$ are $c$-sortable:
				\begin{equation*}
					[9,6]\underline{[18,11,5,10,4]}[3,2,7],\;\; [14,11,9,15,16,3,12,8,7,10],\;\; [5,4,6,1]\underline{[17,10,9]}[3,2,8].
				\end{equation*}
				The following ones are not $c$-sortable. A forbidden pattern is marked in \textbf{bold} in each case.
				\begin{equation*}
					[19,\mathbf{16}]\underline{[\mathbf{5,11},18,10,4]}[3,2,7], \;\; [5,4,6,1]\underline{[\mathbf{7},0,-1]}[3,\mathbf{8,2}].
				\end{equation*}
				Note that it is not straightforward to check the presence or not of such a pattern, since one needs to check patterns in the whole TITO and not just the window notation. We will see in Proposition \ref{prop: TITO c triable motifs plus facile à distance finie} that it suffices to check a finite number of patterns for each affine permutation, for similar reasons than those explained at the end of Section \ref{sec: c triable du groupe symmetrique affine}.
			\end{exemple}

			By definition, a $c$-sortable TITO on $\ZZ$ is a widely generated TITO of the extended weak order, and we are able to restrict the map $\ncad$ on the set of $c$-sortable TITOs. Moreover, this also gives us the following result, which was also true for $c$-sortable elements of $\widehat{\mathfrak{S}}_n$.

			\begin{proposition} \label{prop: les couvertures d'un TITO c-triable le caractérisent}
				A $c$-sortable TITO on $\ZZ$ is uniquely characterized by its cover reflections: Let $\prec_1$ and $\prec_2$ be two $c$-sortable TITOs on $\ZZ$. Then $\prec_1 {=} \prec_2$ if and only if $\mathrm{Cov}(\prec_1) = \mathrm{Cov}(\prec_2)$.
			\end{proposition}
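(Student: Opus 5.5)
The forward direction is trivial, so the work is to show that a $c$-sortable TITO $\prec$ is determined by $\mathrm{Cov}(\prec)$. The cleanest route goes through the bijection $\ncad$ of \cite[4.12]{barkley2025extendedweakorderaffine} between widely generated TITOs and cyclic noncrossing arc diagrams. By condition 1 of Definition \ref{def: biclos c-triable}, a $c$-sortable TITO has shape either $[\llbracket 0,n-1\rrbracket]$ or $[L]\underline{[M]}[R]$, so it never has two consecutive waxing blocks and is therefore widely generated. Hence $\prec_1 = \prec_2$ as soon as $\ncad(\prec_1) = \ncad(\prec_2)$. It then suffices to show that, for $c$-sortable TITOs, the arc diagram $\ncad(\prec)$ can be recovered from $\mathrm{Cov}(\prec)$ alone.

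Each cover reflection $\affc{p,q}$ with $p<q$ and $q \prec p$ contributes the arc $(p,q,L,R)$. Here $L$ is the set of $p<i<q$ with $i \prec q$, and $R$ is the set of those with $p \prec i$. Since $q \prec p$ is a cover relation, every integer strictly between $p$ and $q$ lies in exactly one of $L$ or $R$. The key claim is that $c$-sortability forces $L \subset \Lcn$ and $R \subset \Rcn$, so that the arc is determined by its endpoints alone.

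To prove the claim, take $j$ with $p<j<q$. If $j \prec q$, then $j \prec q \prec p$ gives a pattern $jki$ with $i=p<j<k=q$, which condition 2 forbids when $j \in \Rcn$; so $j \in \Lcn$. If instead $p \prec j$, then $q \prec p \prec j$ gives a pattern $kij$, forbidden when $j \in \Lcn$; so $j \in \Rcn$. Thus $L = \rrbracket p,q\llbracket \cap \Lcn$ and $R = \rrbracket p,q\llbracket \cap \Rcn$. The set $\ncad(\prec)$ is then the union over $\affc{p,q} \in \mathrm{Cov}(\prec)$ of these explicitly determined arcs, translated by $n\ZZ$. So equal cover sets give equal diagrams, and hence equal TITOs.

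The point to handle carefully is the bookkeeping of cover reflections. A cover reflection is recorded as an affine transposition $\affc{p,q}$, which determines the pair $(p,q)$ only up to simultaneous translation by $n$. The argument above needs the covering pair $q \prec p$ with $p<q$ to be determined by the reflection up to translation, so that the arc is well defined modulo $n$. This holds because $\affc{p,q}$ determines $\{p,q\}$ up to translation by $n$, and the orientation is fixed by $p<q$. I would also check that the pattern conditions in Definition \ref{def: biclos c-triable} are read on the whole one-line notation of $\prec$, across block boundaries. This matters because $p,j,q$ may lie in different blocks, for example at the junction between a waning block and a waxing block. The argument uses only the relative order of the three integers in $\prec$, so it still goes through there.
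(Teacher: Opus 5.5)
Your proof is correct and follows the paper's argument. Both use the injectivity of $\ncad$ on widely generated TITOs, and both use the pattern-avoidance condition to force $L \subset \Lcn$ and $R \subset \Rcn$, so that each arc is determined by its endpoints; you simply write out the pattern check ($j \prec q \prec p$ versus $q \prec p \prec j$) that the paper leaves implicit.
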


			\begin{proof}
				Under the map $\ncad$, a cover reflection $\affc{p,q}$ with $p < q$ is sent on the set of arcs
				\begin{equation*}
					\{ (p + kn,q + kn,L + kn,R + kn) \;|\; k \in \ZZ \}
				\end{equation*} such that in the TITO, the elements of $L$ precede $q$ and the elements of $R$ succeed $p$, with $L \sqcup R = \llbracket p+1, q-1 \rrbracket$. By Definition \ref{def: biclos c-triable}, we have $L \subset \Lcn$ and $R \subset \Rcn$. Therefore, these arcs are uniquely defined by their extremities, thus a $c$-sortable TITO is uniquely defined by its cover reflections.
			\end{proof}

		\subsubsection{Properties and structure}

			From this definition, we can obtain a more precise description of the blocks $[L]$, $\underline{[M]}$ and $[R]$ of a $c$-sortable TITO on $\ZZ$ that is not an affine permutation. These structures will be useful later for defining and proving a generalization of \textsc{Reading}'s bijection in Theorem \ref{thm: generalisation de l'application de Reading}. We start with the structure of $\underline{[M]}$.

			\begin{proposition} \label{prop: forme de M dans le cas où il y a plusieurs chaînes de descentes}
				Let $\prec$ be a $c$-sortable TITO on $\ZZ$ that is not an affine permutation. If there exist $x, y \in \underline{[M]}$ such that $x \prec y$ and $x < y$, then there exist $k \in \NN^*$, $r_1, \dots, r_k \in \NN^*$ and a family of pairwise distinct modulo $n$ integers $\left(a_j^{(i)}\right)_{\substack{1 \leq i \leq k \\ 1 \leq j \leq r_i}}$ such that:
				\begin{enumerate}[label*=(\roman*)]
					\item $\underline{[M]} = \underline{\left[a_1^{(1)}, \dots, a_{r_1}^{(1)}, a_1^{(2)}, \dots, a_{r_2}^{(2)}, \cdots, a_1^{(k)}, \dots, a_{r_k}^{(k)}\right]}$,
					\item For all $i \in \llbracket 1,k \rrbracket$, $a_1^{(i)} > a_2^{(i)} > \dots > a_{r_i}^{(i)}$, and if $i < k$, $a_{r_i}^{(i)} < a_1^{(i+1)}$.
					\item $a_1^{(1)} - a_{r_k}^{(k)} > n$,

					\item \label{enum: a1 dans Rc et ar dans Lc} For all $i \in \llbracket 1, k \rrbracket$, $a_1^{(i)} \in \Rcn$ and $a_{r_i}^{(i)} \in \Lcn$,
					\item For all $i \in \llbracket 1, k \rrbracket$ and $j \in \llbracket 1, r_i \rrbracket$ such that $a_j^{(i)} \in \Rcn$ (resp. $a_j^{(i)} \in \Lcn$), if $x \in \ZZ$ is such that $a_j^{(i)} \prec x$ (resp. $x \prec a_j^{(i)}$), then $a_j^{(i)} > x$ (resp. $x > a_j^{(i)}$).
				\end{enumerate}
			\end{proposition}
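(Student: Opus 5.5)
Let $\underline{[M]}$ be the waning block, read as one period $m_1 \prec m_2 \prec \dots \prec m_s$ where $s = |M|$ and the sequence continues $\dots \prec m_s \prec m_1 - n \prec \dots$ (waning means $x+n \prec x$). The plan is to cut this cyclic word into its maximal decreasing runs $a^{(1)}, \dots, a^{(k)}$, and to choose the starting point of the window so that the cut falls at an ascent $x \prec y$, $x<y$; the hypothesis guarantees such an ascent exists. The representatives $m_i$ are then automatically pairwise distinct modulo $n$, and (i) and (ii) hold by construction: each run is strictly decreasing and consecutive runs meet at an ascent. The only subtle point is the junction between $a^{(k)}_{r_k}$ and the next window. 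This element is $a^{(1)}_1 - n$, so in order for the cut there to be an ascent we need $a^{(k)}_{r_k} < a^{(1)}_1 - n$, which is exactly (iii). Thus (iii) is equivalent to saying that the window can be chosen to begin at an ascent.

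I would derive (iv) and (v) from the pattern-avoidance condition together with the shape condition. For (v), take $a = a^{(i)}_j \in \Rcn$ and some $x$ with $a \prec x$ and $x > a$. Because the block is waning, for $N$ large the element $a - Nn$ satisfies $a \prec a - Nn$, but it lies arbitrarily far to the right. Depending on where $x$ sits relative to $a-Nn$, I get either the pattern $a \prec x \prec a-Nn$ or the pattern $a \prec a-Nn \prec x$. The first is of type $jki$ with $j = a \in \Rcn$, which is forbidden. In the second case, $x$ lies after the copy, and I instead translate: $a + Nn$ comes before $a$, so I should use the triple $a \prec x' \prec i$ with a suitable $i$ from a later copy that is smaller than $a$. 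Concretely, I would take $i = a - Nn$ after moving $x$ along with it, i.e.\ replace $x$ by $x - N'n$ while keeping $x - N'n > a$, since $x > a$ allows a bounded shift. Elements $x$ in the block $[R]$ lie to the right of all of $M$, and their translates stay in $[R]$. For these, $a \prec x$ and $x > a$ together with $a - Nn \prec x$ would give the pattern $a, x, \ldots$, so this case needs separate care. I would take $x \in [R] \subset \Rcn$ and exploit that $[R]$ follows the waning block: pick $i = a - Nn \prec x$ and $k = x$, which gives a pattern with $j = a$, namely $a \prec k \prec$? This ordering fails, so instead I would use $x$ as $j$. Since $x \in \Rcn$, the triple $x \prec \ldots$ also fails. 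Therefore I expect to handle $[R]$ by noting that $a \prec x$ holds for all translates of $x$, so some translate has $x < a$ anyway. Statement (v) should then be read for $x$ in $M$, or else it needs the positional argument. The $\Lcn$ case is symmetric.

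Statement (iv) then follows from (v). The element $a^{(i)}_1$ is preceded (cyclically) by an ascent, so it has a smaller predecessor $x \prec a^{(i)}_1$ with $x < a^{(i)}_1$, and by (v) it cannot lie in $\Lcn$. Symmetrically, $a^{(i)}_{r_i}$ is followed by a larger successor, so it cannot lie in $\Rcn$. I expect the main obstacle to be the careful choice of translates in (v), and in particular the interaction with the outer blocks $[L]$ and $[R]$ and the condition that $M$ meets both $L_c$ and $R_c$.
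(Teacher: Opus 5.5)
Your construction of the family is the paper's. You start the window of $\underline{[M]}$ at a cover ascent $x \prec y$, so it ends at $x-n$, and cut it into maximal decreasing runs; (i) and (ii) then hold by construction and (iii) reads $y-x+n>n$.

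Obtaining (iv) from (v) is a legitimate reordering. The predecessor of $a_1^{(i)}$ is $a_{r_{i-1}}^{(i-1)}$, or $a_{r_k}^{(k)}+n$ when $i=1$, and it is smaller than $a_1^{(i)}$ by (ii) and (iii). Likewise the successor of $a_{r_i}^{(i)}$ is larger than it. Both neighbours lie in $\underline{[M]}$. The paper instead proves (iv) directly from the patterns $a_{r_i}^{(i)} \prec a_1^{(i+1)} \prec a_{r_i}^{(i)}-n$ and $(a_1^{(i)}+n) \prec a_{r_{i-1}}^{(i-1)} \prec a_1^{(i)}$. These are the patterns of (v) with $p=1$.

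The part that does not work as written is (v), for two reasons.

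First, for $x \in \underline{[M]}$ your case distinction is unnecessary, and your repair of the second case goes the wrong way. In a waning block $x \prec x-n \prec x-2n \prec \cdots$, so replacing $x$ by $x-N'n$ moves it further right, past $a-Nn$. What you need, and what the paper does, is to choose $N$ depending on $x$. The element $x$ is at finite distance from $a$ inside the block, while $a \prec a-n \prec a-2n \prec \cdots$ runs off to the right end of the block. So some $N$ gives $a \prec x \prec a-Nn$ with $a-Nn<a<x$, a forbidden $jki$ pattern with $j=a\in\Rcn$. Symmetrically, $a+Nn \prec x \prec a$ handles $a \in \Lcn$.

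Second, for $x$ in $[R]$ no argument can succeed, because (v) is false there. By (iv), $a_1^{(1)} \in \Rcn$; every element of $[R]$ succeeds it, and $[R]$ contains arbitrarily large integers. Concretely, take $c = \affc{1,4,5,6,9}_{[1]} \affc{10,8,7,3,2}_{[-1]}$ and the $c$-sortable TITO $[9,6]\underline{[18,11,5,10,4]}[3,2,7]$ from the paper's example. Starting the window at the ascent $5 \prec 10$ gives $a_1^{(1)}=10\in\Rcn$, yet $10 \prec 13$ and $10<13$. The same failure occurs with $[L]$ in the $\Lcn$ case.

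Your closing remark that ``some translate has $x<a$ anyway'' does not rescue the statement, since (v) quantifies over every $x$. You should state (v) for $x \in \underline{[M]}$ only and drop the ``positional argument'' alternative. This is exactly what the paper's proof establishes: its integer $p$ with $x \prec a_j^{(i)}-pn$ exists only when $x$ lies in the waning block. It is also all that your derivation of (iv) uses.
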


			\begin{proof}
				If such a pair $x, y \in \underline{[M]}$ exists, we can assume without loss of generality that $x \prec y$ is a cover relation. Consider the window of $\underline{[M]}$ starting at $y$ (and ending at $x - n$ because this block is waning). We have
				\begin{equation*}
					\underline{[M]} = \underline{\left[ y = b_1 , b_2 , \dots , b_{m-1}, b_m = x-n \right]}.
				\end{equation*}
				Now group the $b_i$'s by the maximal descending chains (Paragraph \ref{par: tito} of Section \ref{sec: background}) to which they belong in order to define the sequence $\left(a_j^{(i)}\right)_{i,j}$:
				\begin{itemize}
					\item $b_1 = a_1^{(1)}$,
					\item assume that $b_m = a_j^{(i)}$ is already defined, define $k = i$ and stop,
					\item assume that $b_l = a_j^{(i)}$ is already defined, define $b_{l+1} = a_{1}^{(i+1)}$ and $r_i = j$ if $b_{l+1} > b_l$, and define $b_{l+1} = a_{j+1}^{(i)}$ otherwise.
				\end{itemize}
				By definition, $M = \{ a_j^{(i)} \bmod n \;|\; i \in \llbracket 1, k\rrbracket,\; j \in \llbracket 1, r_i \rrbracket \}$. Moreover, by construction, point $(ii)$ is satisfied, and the lexicographical order on the indices $(i,j)$ of the family $\left(a_j^{(i)}\right)_{i,j}$ coincides with the order $\prec$ on this family, hence $(i)$. Finally, since $x < y$, we have $(iii)$ because
				\begin{equation*}
					a_1^{(1)} - a_{r_k}^{(k)} = b_1 - b_m = y - (x-n) = y - x + n > n.
				\end{equation*}

				Now, we prove the last two points. To simplify the following reasoning, we note $a_j^{(0)} = a_j^{(k)}$ and $a_j^{(1)} = a_j^{(k + 1)}$.
				\begin{enumerate}[label*=$(\roman*)$] \setcounter{enumi}{3}
					\item For all $i \in \llbracket 1, k\rrbracket$, the following pattern appears in $\prec$: $a_{r_i}^{(i)} \prec a_1^{(i+1)} \prec (a_{r_i}^{(i)} - n)$. It is of the form $jki$ with $i < j < k$. Since $\prec$ if $c$-sortable, $j$ cannot belong to $\Rcn$. Therefore $a_{r_i}^{(i)} \in \Lcn$. Similarly, we have the pattern $(a_1^{(i)} + n) \prec a_{r_{i-1}}^{(i-1)} \prec a_1^{(i)}$ which is of the form $kij$ with $i < j < k$, and since $\prec$ is $c$-sortable then $j$ cannot belong to $\Lcn$. Therefore $a_1^{(i)} \in \Rcn$.

					\item If $a_j^{(i)} \prec x$ and $a_j^{(i)} \in \Rcn$ (resp. $x \prec a_j^{(i)}$ and $a_j^{(i)} \in \Lcn$), there is a pattern $a_j^{(i)} \prec x \prec a_j^{(i)} - pn$ (resp. $(a_j^{(i)} + pn) \prec x \prec a_j^{(i)}$) where $p \in \NN^*$ is such that $x \prec a_j^{(i)} - pn$ and $a_j^{(i)} - pn < x$ (resp. $(a_j^{(i)} + pn) \prec x$ and $x < (a_j^{(i)} + pn)$). If $a_j^{(i)} < x$ (resp. $x < a_j^{(i)}$), it is a pattern of the form $jki$ with $j \in \Rcn$ (resp. $kij$ with $j \in \Lcn$). Since $\prec$ is $c$-sortable, this pattern cannot occur. Therefore  $x < a_j^{(i)}$ (resp. $a_j^{(i)} < x$).
				\end{enumerate}
			\end{proof}

			One useful consequence of this theorem is that every element of $\underline{[M]}$ appears in a cover reflection of $\prec$.

			\begin{lemme}
				Let $\prec$ be a $c$-sortable TITO on $\ZZ$. If $\prec$ is not a single waxing block, then for all $a \in \underline{[M]}$, there exists $t \in \mathrm{Cov}(\prec)$ and $b \in \underline{[M]}$ such that $t = \affc{a,b}$.
			\end{lemme}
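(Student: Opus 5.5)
We argue inside the waning block $\underline{[M]}$. By Definition \ref{def: biclos c-triable}, a $c$-sortable TITO that is not a single waxing block has shape $[L]\underline{[M]}[R]$. Take $a \in \underline{[M]}$. The aim is to show that $a$ has an immediate neighbour $b$ for $\prec$ inside $\underline{[M]}$, either a successor or a predecessor, with the two in decreasing order in the sense required for a cover reflection. Concretely, we want $b \prec a$ covering with $a < b$, or $a \prec b$ covering with $b < a$. In either case $\affc{a,b}$ (written with the smaller entry first) is a cover reflection.

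Since $\underline{[M]}$ is an infinite interval of $\prec$ with finitely many classes modulo $n$ in each window, every element of $\underline{[M]}$ has an immediate successor and an immediate predecessor in $\underline{[M]}$. Let $a^+$ be the successor of $a$. If $a^+ < a$, then $a \prec a^+$ is a cover relation with $a^+ < a$, so $\affc{a^+,a} \in \mathrm{Cov}(\prec)$ and we are done. Similarly, if the predecessor $a^-$ satisfies $a^- > a$, we are done. The remaining case is $a^- < a < a^+$, meaning $a$ is increasing on both sides in one-line notation.

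We split into two cases. \emph{First case: there are no $x \prec y$ in $\underline{[M]}$ with $x<y$.} Then the whole block is decreasing. In particular $a \prec a^+$ with $a^+<a$ gives a cover reflection directly, so the remaining case cannot occur. \emph{Second case: such a pair exists.} We apply Proposition \ref{prop: forme de M dans le cas où il y a plusieurs chaînes de descentes} and write $a = a_j^{(i)}$ up to translation by a multiple of $n$, which is harmless by translation invariance. By (ii), inside a chain $(a^{(i)}_1, \dots, a^{(i)}_{r_i})$ consecutive entries strictly decrease, and the lexicographic order on indices is $\prec$. So if $r_i \geq 2$, then $a$ has a neighbour within its chain: its successor if $j<r_i$, its predecessor if $j>1$. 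That neighbour forms a descent with $a$, giving the required cover reflection. Consecutive windows link through $a^{(k)}_{r_k}$ followed by $a^{(1)}_1 - n$.

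The main obstacle is the singleton chain $r_i = 1$. Here $a = a^{(i)}_1 = a^{(i)}_{r_i}$, so by (iv) $a$ would lie in both $\Rcn$ and $\Lcn$, which is impossible. Hence every chain has length at least $2$, and the singleton case does not arise. I would double-check (iv) on the wrap-around index $i=k$, using the conventions $a^{(0)} = a^{(k)}$ and $a^{(k+1)} = a^{(1)}$ shifted by $n$, to make sure it really applies to every chain, including the first and the last one.
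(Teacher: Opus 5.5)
Your proposal is correct and is essentially the paper's own proof. The case where $\underline{[M]}$ is entirely decreasing is settled by taking the successor of $a$. Otherwise, the chain decomposition of $\underline{[M]}$ together with point (iv) (first element of each chain in $\Rcn$, last in $\Lcn$) rules out singleton chains, so $a$ has a descent-neighbour within its own chain. The wrap-around check you flag is already covered by that proposition: (iv) is stated for every $i \in \llbracket 1,k \rrbracket$, and its proof at $i=k$ relies on (iii).
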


			\begin{proof}
				If for all $x,y \in \underline{[M]}$, we have $x \prec y \Rightarrow x > y$, then the numbers appear in $\underline{[M]}$ in decreasing order for $<$. Thus, for all $a \in \underline{[M]}$, the reflection $t = \affc{a,b}$ where $b$ is the successor of $a$ for $\prec$ is a cover reflection of $\prec$.

				Otherwise, let $a \in \underline{[M]}$. Then $a$ belongs to a maximal descending chain $a_1 \prec a_2 \prec \cdots \prec a_k$. By Proposition \ref{prop: forme de M dans le cas où il y a plusieurs chaînes de descentes} point \ref{enum: a1 dans Rc et ar dans Lc}, we know there is at least two elements in this chain (since $a_1 \in \Rcn$ and $a_k \in \Lcn$, they are distinct). Let $i \in \llbracket 1,k \rrbracket$ such that $a = a_i$. If $i > 1$, then $\affc{a_{i-1}, a_i}$ is a cover reflection of $\prec$ since $a_{i-1} > a_i$. Otherwise, we have $a = a_1$ and $\affc{a_1, a_2}$ is also a cover reflection of $\prec$. In both cases, there exists a cover reflection $t$ of $\prec$ and $b \in \underline{[M]}$ such that $t = \affc{a,b}$.
			\end{proof}

			Proposition \ref{prop: forme de M dans le cas où il y a plusieurs chaînes de descentes} also has the following consequence that we only need to consider $kij$ and $jki$ patterns with $k$ and $i$ consecutive for the order $\prec$. This makes looking for patterns much easier to do by hand: for cover reflections (a pair $ki$), check all $j$ between $i$ and $k$: the ones in $\Lcn$ should be on the left and the ones in $\Rcn$ should be on the right of the pair $ki$. If this is not the case, then there is a pattern $kij$ or $jki$.

			\begin{proposition} \label{prop: TITO c triable motifs plus facile à distance finie}
				Let $\prec$ be a TITO on $\ZZ$ of one of the two shapes of Definition \ref{def: biclos c-triable}.
				Then the following are equivalent:
				\begin{enumerate}[label*=(\roman*)]
					\item $\prec$ is $c$-sortable
					\item $\prec$ avoids the patterns $kij$ with $i < j < k$, $\left|[k,i]_\prec\right| < +\infty$ and $j \in \Lcn$, and the patterns $jki$ with $i < j < k$, $\left|[k,i]_\prec\right| < +\infty$ and $j \in \Rcn$.
					\item $\prec$ avoids the patterns $kij$ with $i < j < k$, $k$ and $i$ consecutive and $j \in \Lcn$, and the patterns $jki$ with $i < j < k$, $k$ and $i$ consecutive and $j \in \Rcn$.
				\end{enumerate}
			\end{proposition}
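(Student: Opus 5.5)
The implications $(i) \Rightarrow (ii) \Rightarrow (iii)$ are immediate, since each list of forbidden patterns contains the next one. I would therefore prove $(iii) \Rightarrow (ii)$ and then $(ii) \Rightarrow (i)$.

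For $(iii) \Rightarrow (ii)$, I would adapt the argument given at the end of Section \ref{sec: c triable du groupe symmetrique affine}. Take a pattern $k \prec i \prec j$ with $i<j<k$, $j \in \Lcn$ and $\left|[k,i]_\prec\right| < +\infty$. Let $k'$ be the $\prec$-last element of the finite interval $[k,i]_\prec$ that is greater than $j$; it exists because $k$ qualifies. Let $i'$ be the $\prec$-successor of $k'$. Then $i' \in [k,i]_\prec$, so $i' \preceq i \prec j$, hence $i' \neq j$. Moreover $i' \not> j$ by maximality of $k'$, so $i'<j$. Thus $k' i' j$ is a pattern of the same type with $k',i'$ consecutive. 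The same construction works for a pattern $j \prec k \prec i$ with $j \in \Rcn$: again take $k'$ the last element of $[k,i]_\prec$ exceeding $j$, and $i'$ its successor. For the shape $[\llbracket 0,n-1\rrbracket]$ there is a single block and every interval is finite, so this already settles that case.

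For $(ii) \Rightarrow (i)$ with $\prec$ of shape $[L]\underline{[M]}[R]$, $L \subset L_c$, $R \subset R_c$, I would assume there is a forbidden pattern with $k$ and $i$ at infinite distance, i.e.\ in different blocks, and produce one at finite distance. Consider first a pattern $k \prec i \prec j$ with $j \in \Lcn$. Then $j$ is not in $[R]$ because $R \subset R_c$. If $j$ were in $[L]$, then $k$ and $i$, which precede $j$, would also lie in the first block $[L]$, contradicting the assumption. Hence $j \in \underline{[M]}$, and so $i \in \underline{[M]}$ and $k \in [L]$. Since the block is waning, $j+pn \prec j$ and the translates $j+pn$ go arbitrarily far to the left inside $\underline{[M]}$ as $p$ grows. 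For $p$ large enough we therefore get $j+pn \prec i \prec j$, with $i<j<j+pn$, all three in the same block. This is a forbidden pattern of type $kij$ whose $[k,i]_\prec$ is finite, contradicting $(ii)$.

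Symmetrically, for a pattern $j \prec k \prec i$ with $j \in \Rcn$, we have $j \notin [L]$, and $j \in [R]$ would force $k,i \in [R]$. So $j \in \underline{[M]}$, $k \in \underline{[M]}$ and $i \in [R]$. Using $j-pn$, which goes arbitrarily far right in $\underline{[M]}$ (and lies after $k$ for $p$ large), we obtain $j \prec k \prec j-pn$ with $j-pn<j<k$: a finite-distance forbidden pattern of type $jki$. The main point needing care is the block bookkeeping, namely the directions in which translates move inside waxing versus waning blocks, and the fact that distinct blocks are at infinite distance while elements within one block are at finite distance. I expect these to follow directly from the definitions in Paragraph \ref{par: tito}.
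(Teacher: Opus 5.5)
Your proof is correct and follows the paper's argument. For $(iii)\Rightarrow(ii)$ you use the same extremal choice of $k'$ and its successor $i'$. For $(ii)\Rightarrow(i)$ you use the same block bookkeeping that forces $j$ into $\underline{[M]}$, then translate inside the waning block to get a finite-distance forbidden pattern; the only cosmetic difference is that you translate $j$ (using $j+pn$, resp.\ $j-pn$) where the paper translates $i$ (using $(i+pn)ij$), resp.\ $k$ (using $jk(k-pn)$), which works equally well.
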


			\begin{proof}
				It is clear that $(i)$ implies $(ii)$ which implies $(iii)$. Instead of proving $(iii) \Rightarrow (i)$, we will go upwards and show that $(iii) \Rightarrow (ii) \Rightarrow (i)$.

				\begin{itemize}
				\item[$(iii) \Rightarrow (ii)$] We prove the contrapositive: assume that $\prec$ contains a pattern $kij$ or $jki$ with $|[k,i]_\prec| < +\infty$. Define $k' = \max_\prec \left(\{ a \in [k,i]_\prec \;|\; a > j \}\right)$ and $i'$ the element following $k'$ in $\prec$. By definition, we have $k' > j$ and $i' < j$, and $k'$ and $i'$ are consecutive in this order. Hence a pattern $k'i'j$ or $jk'i'$ with $k'$ and $i'$ consecutive.

				\item[$(ii) \Rightarrow (i)$] We only need to show that $\prec$ avoids all patterns such that $k$ and $i$ are at infinite distance of each other, meaning they belong to two different blocks of $\prec$. This already covers the case where $\prec$ is an affine permutation (it only has a single block). In the case where $\prec$ is of shape $[L]\underline{[M]}[R]$, we will show that we cannot have a pattern $kij$ with $j \in \Lcn$ or $jki$ with $j \in \Rcn$ and $k$ and $i$ in two different blocks. Assume that we have such a pattern $kij$, we must have $i,j \in \underline{[M]}$ and $k \in [L]$ since $j \in \Lcn$ cannot be in $[R]$. But then, we have a pattern $(i+pn)ij$ for $p \in \NN$ large enough since $\underline{[M]}$ is waxing, which is of the form $kij$ with $k$ and $i$ at finite distance. This is impossible by hypothesis. Same for a pattern $jki$: we have $j,k \in \underline{[M]}$ and $i \in [R]$ because $j$ cannot belong to $[L]$. But then we have a pattern $jk(k-pn)$ with $p \in \NN$ large enough, which is of the form $jki$ with $j \in \Rcn$ and $i$ and $k$ at finite distance.
				\end{itemize}
				Hence the result.
			\end{proof}

			\begin{exemple}
				In the first example of a non $c$-sortable TITO on $\ZZ$ of Example \ref{ex: c-sortable TITO}, the pattern chosen was of the form $kij$ with $k$ and $i$ at infinite distance. However, we also have a pattern $kij$ with $k$ and $i$ consecutive: $14 \prec 5 \prec 11$ is one such pattern.

				In this example, we can now more easily check the ones that are said to be $c$-sortable are indeed $c$-sortable: in each block and for all cover reflections $\affc{k,i}$ of $\prec$ (two consecutive elements for $\prec$ in decreasing order for $<$), we verify that for all $j$ between $i$ and $k$, it is on the left if in $\Lcn$ and on the right if in $\Rcn$. For example, in the TITO $[9,6]\underline{[18,11,5,10,4]}[3,2,7]$, between $9$ and $6$ ($k$ and $i$), we have $7$ and $8$ in $\Rcn$ and both are on the right of $6$.
			\end{exemple}

	\subsection{Cyclic \texorpdfstring{$c$}{c}-noncrossing arc diagrams} \label{ssec: c-noncrossing arc diagrams}

		\subsubsection{Definition}

		Noncrossing arc diagrams are a useful tool for studying lattice quotients of the weak order of $\mathfrak{S}_n$ \cite{reading2015noncrossingarcdiagramscanonical}, and indeed the $c$-sortable elements of $\mathfrak{S}_n$ are easily characterized by their noncrossing arc diagrams: they correspond to the noncrossing arc diagrams such that each arc never goes on the left of an element of $L_c$ and on the right of an element of $R_c$.

		In the case of the affine symmetric group $\widehat{\mathfrak{S}}_n$, we introduce the notion of \emph{cyclic $c$-noncrossing arc diagrams}, which will in turn be in bijection with the $c$-sortable TITOs and biclosed sets of Definition \ref{def: biclos c-triable}. Like in the finite case, each arc must pass on the left of elements in $\Lcn$ and on the right of elements in $\Rcn$, but we will need more constraints. In order to define them, we need to introduce the notion of \emph{chains} and \emph{loops}.

		\begin{definition}[full chains and loops] \label{def: full chains and loops}
			Let $\mathcal{A}$ be a cyclic noncrossing arc diagrams. A \emph{partial chain} is a finite sequence of arcs $(\alpha_1, \alpha_2, \dots, \alpha_k) \in \mathcal{A}^k$, such that for all $i \in \llbracket 1, k-1 \rrbracket$, the final point $q_i$ of $\alpha_i$ and the initial point $p_{i+1}$ of $\alpha_{i+1}$ are equal: $q_i = p_{i+1}$. A \emph{full chain} is a partial chain $(\alpha_1, \dots, \alpha_k) \in \mathcal{A}^k$ such that for all arc $\beta \in \mathcal{A}$, both $(\beta, \alpha_1, \dots, \alpha_k)$ and $(\alpha_1, \dots, \alpha_k, \beta)$ are not partial chains.

			A \emph{loop} is a bi-infinite sequence $(\alpha_i)_{i \in \ZZ} \in \mathcal{A}^\ZZ$ of arcs such that for all $i \in \ZZ$, $q_i = p_{i+1}$. Let $S$ denote the set of all initial and final points of arcs in a loop. If $S \subset \Lcn$ or $S \subset \Rcn$, we say that the loop is \emph{imaginary}, otherwise, we say it is \emph{real}. The set of loops of $\mathcal{A}$ is denoted $\mathrm{L}(\mathcal{A})$.
		\end{definition}

		\begin{exemple}
			The cyclic noncrossing arc diagram in Example \ref{ex: C_a(A)} has three full chains up to translation by a multiple of $n$:
			\begin{equation*}
				((1,6,\{4\}, \{2,3,5\}), (6,8,\emptyset,\{7\})),\
				((7,15,\{8,11,14\}, \{9,10,12,13\}))\ \text{ and } \
				((9,12,\{11\}, \{10\}), (12, 13, \emptyset, \emptyset)).
			\end{equation*}
		\end{exemple}

		\begin{definition}[cyclic $c$-noncrossing arc diagram]
			Let $c$ be a \textsc{Coxeter} element of $\widehat{\mathfrak{S}}_n$, given by non trivial partition $L_c \sqcup R_c = \llbracket 1,n \rrbracket$. A \emph{cyclic $c$-noncrossing arc diagramm} is a cyclic noncrossing arc diagram satisfying the two following conditions:
			\begin{enumerate}
				\item for each arc $\alpha = (p,q,L,R)$, we have $L \subset \Lcn$ and $R \subset \Rcn$,
				\item every loop is a real loop.
			\end{enumerate}
			The set of cyclic $c$-noncrossing arc diagrams is denoted $c$-$\widetilde{\mathrm{NCAD}}$.
		\end{definition}

		The requirement for real loops is analogous to the absence of dandling annular blocks in our definition of $c$-noncrossing partitions of an annulus in Paragraph \ref{par: geometric interpretation of noncrossing partitions} of Section \ref{sec: background}. Moreover, a cyclic $c$-noncrossing arc diagram can have at most one real loop.

		Notice that an arc of a cyclic $c$-noncrossing arc diagram is entirely determined by its initial and final point. This way, we can describe any arc by a tuple of two integers, and we will often write them as $p \rightarrow q$ for the arc having initial point $p$ and final point $q$ if $p < q$. A full chain (resp. a loop) becomes a finite sequence (resp. an bi-infinite sequence) of strictly increasing integers, also written with arrows as separators. To prevent the apparition of many subcases later on, we define a \emph{chain} to be either a full chain or a singleton. The set of chains of a cyclic $c$-noncrossing arc diagram $\mathcal{A}$ is denoted $\mathrm{Ch}(\mathcal{A})$.

		\begin{exemple} \label{ex: chaines et loops de A}
			The noncrossing arc diagram of Example \ref{ex: C_a(A)} is a $c$-noncrossing arc diagram, and we have:
			\begin{equation*}
				\mathrm{Ch}(\mathcal{A}) = \{ 1 \rightarrow 6 \rightarrow 8, 4, 7 \rightarrow 15, 9 \rightarrow 12 \rightarrow 13, 10 \} + 10\ZZ \quad \text{ and } \quad \mathrm{L}(\mathcal{A}) = \emptyset.
			\end{equation*}
		\end{exemple}

		The crossing criterion between two arcs of Equation \ref{eq: critère de croisement d'arcs} has a simpler form in the case where every arc $\alpha = (p,q,L,R)$ satisfies $L \subset \Lcn$ and $R \subset \Rcn$, thus simplifying the identification of cyclic $c$-noncrossing arc diagrams. We recall that two arcs are allowed to have a nontrivial intersection as long as it consists only of the final point of one being the initial point of the other.

		\begin{lemme} \label{lem: critère de croisement d'arcs}
			Let $\mathcal{A}$ be a cyclic arc diagram such that each arc $(a,b,L,R)$ satisfies $L \subset \Lcn$ and $R \subset \Rcn$. We write $a \rightarrow b$ for such an arc. Let $\alpha = p \rightarrow q$ and $\beta = s \rightarrow t$ be two arcs of $\mathcal{A}$ such that $p < s$. Then $\alpha$ and $\beta$ intersect if and only if
			\begin{equation*}
				\left( p < s < q < t \text{ and } (s,q) \in (\Lcn)^2 \sqcup (\Rcn)^2 \right) \text{ or } \left( p < s < t < q \text{ and } (s,t) \in \Lcn \times \Rcn \sqcup \Rcn \times \Lcn \right).
			\end{equation*}
		\end{lemme}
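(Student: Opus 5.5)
We specialise the general crossing criterion \eqref{eq: critère de croisement d'arcs} to arcs satisfying the $c$-condition. For an arc $\alpha = (p,q,L,R)$ with $L \subset \Lcn$ and $R \subset \Rcn$, we have $L = \rrbracket p,q \llbracket \cap \Lcn$ and $R = \rrbracket p,q \llbracket \cap \Rcn$, so the arc is determined by its endpoints. Similarly, $\beta = (s,t,L',R')$ has $L' = \rrbracket s,t \llbracket \cap \Lcn$ and $R' = \rrbracket s,t \llbracket \cap \Rcn$. Since $p < s$, there are three relative positions of the two intervals: disjoint or touching ($q \leq s$), overlapping ($s < q < t$ or $s<q=t$), and nested ($t \leq q$). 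In each case we compute the two sets $X = ((L \cup \{p,q\}) \cap R') \cup (L \cap \{s,t\})$ and $Y = (R \cap (L' \cup \{s,t\})) \cup (\{p,q\} \cap L')$, and test when both are nonempty.

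\textbf{Disjoint or touching.} If $q \leq s$, then $L, R \subset \rrbracket p,q\llbracket$ is disjoint from $L' \cup R' \cup \{s,t\}$, and $\{p,q\}$ meets $\rrbracket s,t\llbracket$ nowhere. Hence both sets are empty, so there is no crossing. This is consistent with chains being allowed.

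\textbf{Overlapping, $p < s < q < t$.} Here $\{p,q\} \cap R' = \{q\} \cap \Rcn$ and $L \cap R' = \emptyset$, since $L \subset \Lcn$ and $R' \subset \Rcn$. Also $L \cap \{s,t\} = \{s\} \cap \Lcn$, so $X \neq \emptyset$ iff $q \in \Rcn$ or $s \in \Lcn$. Symmetrically, $R \cap L' = \emptyset$, $R \cap \{s,t\} = \{s\}\cap\Rcn$ and $\{p,q\} \cap L' = \{q\}\cap\Lcn$, so $Y \neq \emptyset$ iff $s \in \Rcn$ or $q \in \Lcn$. Since $\Lcn \sqcup \Rcn = \ZZ$, both hold iff $s$ and $q$ lie in the same class, i.e.\ $(s,q) \in (\Lcn)^2 \sqcup (\Rcn)^2$.

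\textbf{Overlapping, $q = t$.} The arcs share their final point. One checks that $X \neq \emptyset$ iff $s \in \Lcn$ (using $q \notin R'$ and $L \cap R' = \emptyset$), and $Y \neq \emptyset$ iff $s \in \Rcn$. These cannot both hold, so there is no crossing. This agrees with the statement, which requires strict inequalities.

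\textbf{Nested, $p < s < t < q$.} Now $\{p,q\}$ is outside $[s,t]$, so $X = L \cap \{s,t\}$ and $Y = R \cap \{s,t\}$. Thus $X \neq \emptyset$ iff one of $s,t$ is in $\Lcn$, and $Y \neq \emptyset$ iff one is in $\Rcn$. Both hold iff $s$ and $t$ are in different classes, i.e.\ $(s,t) \in \Lcn\times\Rcn \sqcup \Rcn\times\Lcn$.

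\textbf{Main obstacle.} The main difficulty is bookkeeping in the boundary cases where endpoints coincide, and making sure the criterion \eqref{eq: critère de croisement d'arcs} is applied with the correct convention for shared endpoints. The key simplification throughout is $L \cap R' = R \cap L' = \emptyset$, which follows from the $c$-condition.
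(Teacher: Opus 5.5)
Your proof is correct and is the intended argument: the paper gives no written proof of this lemma, and introduces it as the specialization of the general crossing criterion of Equation~\ref{eq: critère de croisement d'arcs} to arcs with $L = \rrbracket p,q \llbracket \cap \Lcn$ and $R = \rrbracket p,q \llbracket \cap \Rcn$. Your case split on the relative position of $s$, $q$ and $t$ (including the boundary cases $q \leq s$ and $q = t$), together with the observation $L \cap R' = R \cap L' = \emptyset$, carries out that specialization completely.
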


		Considering intersections of (partial) chains, they behave in a similar way than arcs and we only need to compare their endpoints and wether they belong to $\Lcn$ or $\Rcn$.

		\begin{corollaire} \label{cor: croisement de chaîne}
			Let $\mathcal{A}$ be a cyclic $c$-noncrossing arc diagram and $\gamma, \gamma'$ two disjoint partial chains of $\mathcal{A}$ such that $\min(\gamma) < \min(\gamma') < \max(\gamma)$. Then one of the two following cases holds:
			\begin{itemize}
				\item either $\min(\gamma) < \min(\gamma') < \max(\gamma) < \max(\gamma')$ and $\left(\min(\gamma'), \max(\gamma)\right) \in \Lcn \times \Rcn \cup \Rcn \times \Lcn$,
				\item or $\min(\gamma) < \min(\gamma') \leq \max(\gamma') < \max(\gamma)$ and $\left(\min(\gamma'), \max(\gamma')\right) \in (\Lcn)^2 \cup (\Rcn)^2$.
			\end{itemize}
		\end{corollaire}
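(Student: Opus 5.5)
The plan is to reduce the statement to Lemma \ref{lem: critère de croisement d'arcs}, applied arc by arc, and to track on which side of $\gamma$ the chain $\gamma'$ runs. Write $\gamma = (p_0 \rightarrow p_1 \rightarrow \dots \rightarrow p_r)$ and $\gamma' = (s_0 \rightarrow s_1 \rightarrow \dots \rightarrow s_m)$, with $p_0 = \min(\gamma)$, $p_r = \max(\gamma)$, $s_0 = \min(\gamma')$ and $s_m = \max(\gamma')$. Since the chains are disjoint, no $s_l$ equals any $p_i$. Moreover, every arc of a cyclic $c$-noncrossing arc diagram passes left of the points of $\Lcn$ and right of the points of $\Rcn$ lying strictly between its endpoints. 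So the concatenation of the arcs of $\gamma$ is a curve from $p_0$ to $p_r$ which leaves every point $x \in \rrbracket p_0, p_r \llbracket$ not on $\gamma$ on its right if $x \in \Lcn$ and on its left if $x \in \Rcn$. In other words, it behaves like a single arc $(p_0,p_r,L,R)$ whose interior vertices $p_1, \dots, p_{r-1}$ are simply extra points lying on the curve.

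First, locate $s_0$: it lies in a unique interval $\rrbracket p_i, p_{i+1} \llbracket$, and we set $\varepsilon = L$ if $s_0 \in \Lcn$ and $\varepsilon = R$ otherwise. The intermediate claim, proved by induction on $l$, is as follows: as long as $s_l < p_r$, the arc $s_{l} \rightarrow s_{l+1}$ stays on side $\varepsilon$ of $\gamma$. Concretely, for every arc $p_j \rightarrow p_{j+1}$ of $\gamma$ that it overlaps, Lemma \ref{lem: critère de croisement d'arcs} forces the following.
\begin{itemize}
\item If $s_l$ and $s_{l+1}$ both lie in $\rrbracket p_j, p_{j+1} \llbracket$ (nesting), then they are in the same class as $s_0$.
\item If the arc overlaps $p_j \rightarrow p_{j+1}$ only partially, then the endpoint of one arc lying inside the other has the class opposite to the inner starting point.
\end{itemize}
This case analysis should show that every vertex $s_l$ with $s_l < p_r$ lies in the same class as $s_0$, and that any vertex $p_j$ jumped over by an arc of $\gamma'$ has the opposite class. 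The step from one arc of $\gamma$ to the next uses that $p_j$ is shared by two consecutive arcs of $\gamma$. It also uses that $\gamma'$ cannot pass through $p_j$ (disjointness), so it must jump over $p_j$ on side $\varepsilon$.

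Then conclude with the two cases.
\begin{itemize}
\item If $s_m < p_r$, all vertices of $\gamma'$ lie in $\rrbracket p_0, p_r \llbracket$ and are in the class of $s_0$. In particular $(\min(\gamma'),\max(\gamma')) \in (\Lcn)^2 \cup (\Rcn)^2$, which is the second alternative. It includes the degenerate case $\min(\gamma')=\max(\gamma')$, where the inequality is $\leq$.
\item If $s_m > p_r$, some arc $s_l \rightarrow s_{l+1}$ jumps over $p_r = \max(\gamma)$. Applying Lemma \ref{lem: critère de croisement d'arcs} to this arc and to the last arc $p_{r-1} \rightarrow p_r$ of $\gamma$, in its first case, together with the inductive claim, gives that $p_r$ has the class opposite to $s_0$. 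This is the first alternative.
\end{itemize}

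The main obstacle is the bookkeeping of the inductive claim: when an arc of $\gamma'$ spans several consecutive arcs of $\gamma$, or an arc of $\gamma$ is nested inside an arc of $\gamma'$, one must check that Lemma \ref{lem: critère de croisement d'arcs} always forces the classes as claimed. I would organise this by comparing each arc of $\gamma'$ only with the arc of $\gamma$ containing its starting point and with the arc of $\gamma$ containing its final point. The intermediate arcs of $\gamma$ are then nested inside the arc of $\gamma'$, and condition 1 of the definition of cyclic $c$-noncrossing arc diagrams directly gives the class of their vertices.
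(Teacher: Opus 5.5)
Your proposal is correct and is essentially the paper's argument. The paper proves this corollary in one line, by observing that in any configuration other than the two listed, some arc of $\gamma$ and some arc of $\gamma'$ would satisfy one of the crossing conditions of Lemma \ref{lem: critère de croisement d'arcs}; your induction over the arcs of $\gamma'$ just makes that case analysis explicit.

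There is one slip. You attribute the classes of the vertices of $\gamma$ jumped over by an arc $s_l \rightarrow s_{l+1}$ to condition 1 of the definition, but they actually come from Lemma \ref{lem: critère de croisement d'arcs}. First apply its partial-overlap case to the arc of $\gamma$ containing $s_l$, then its nested case to the arcs of $\gamma$ lying under $s_l \rightarrow s_{l+1}$; this also settles the subcase $s_l < p_{r-1}$ of your final step.
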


		In fact, in the second case, we even have $\mathrm{Supp}(\gamma') \subset \Lcn$ or $\mathrm{Supp}(\gamma') \subset \Rcn$ since all subchains of $\gamma'$ also satisfy the second case.

		\begin{proof}
			In all the other cases, we can find an arc of $\gamma$ and an arc of $\gamma'$ satisfying a case of Lemma \ref{lem: critère de croisement d'arcs}, which is absurd since $\mathcal{A}$ is a cyclic $c$-noncrossing arc diagram.
		\end{proof}

		As we notice in Example \ref{ex: chaines et loops de A}, the loops and chains form a partition of $\ZZ$.

		\subsubsection{Bijection with \texorpdfstring{$c$}{c}-noncrossing partitions} \label{sssec: bijection between NCAD and NC}

		In the finite symmetric group $\mathfrak{S}_n$, there is a bijection between the $c$-noncrossing partitions and the $c$-noncrossing arc diagrams where $c$ is a fixed \textsc{Coxeter} element since both are enumerated by the \textsc{Catalan} numbers \cite{kreweras1972, reading2015noncrossingarcdiagramscanonical}. Geometrically, this bijection consists of vertically «~squashing~» the $c$-noncrossing partition of a disk into a line, and each polygon becomes a chain of arcs. Figure \ref{fig: bijection entre diagrammes d'arcs et partitions noncroisées en type A fini} shows an example for $n = 6$ and $c = (1,2,4,5,6,3)$.

		\begin{figure}[ht!]
			\centering

			\begingroup
			\def\n{6}
			\def\r{(\n-1)/2} \def\c{-(\n+1)/2}
			\def\Lc{2,4,5} \def\Rc{3}
			\def\s{0.7}

			\newcommand{\NCAD}[1]{
				\foreach \x in {1,...,\n}{
					\coordinate (\x) at (0,-\x);
					\node[draw=none,inner sep=0] at (\x) {$\times$};
				}

				\foreach \x in {1,\n}{
					\node[draw=none,inner sep=0,xshift=8] at (\x) {\x};
				}

				\ifnum #1=1 \clip (0,{\c}) circle ({\r}); \fi
				\foreach \x in \Lc{
					\draw[densely dotted, thick] (\x) -- ({-0.5 - #1*\r},-\x);
					\node[draw=none,inner sep=0,xshift=8] at (\x) {\x};
				}
				\foreach \x in \Rc{
					\draw[densely dotted, thick] (\x) -- ({0.5 + #1*\r},-\x);
					\node[draw=none,inner sep=0,xshift=-8] at (\x) {\x};
				}
				\ifnum #1=1 \draw[thin, opacity=0.3] (0,{\c}) circle ({\r}); \fi
			}

			\newcommand{\NCP}[1]{
				\foreach \x in {1,...,\n}{
					\coordinate (\x) at (0,-\x);
				}

				\foreach \x in \Lc{
					\coordinate (c\x) at (-{sqrt(\r*\r - (-\x - \c)*(-\x - \c))},-\x);
					\node[draw=none,inner sep=0] at (c\x) {$\times$};
					\node[draw=none,inner sep=0,xshift=-10] at (c\x) {\x};
					\ifnum #1=1 \draw[thick, densely dotted, opacity=0.3] (\x) -- (c\x); \fi
				}
				\foreach \x in \Rc{
					\coordinate (c\x) at ({sqrt(\r*\r - (-\x - \c)*(-\x - \c))},-\x);
					\node[draw=none,inner sep=0] at (c\x) {$\times$};
					\node[draw=none,inner sep=0,xshift=10] at (c\x) {\x};
					\ifnum #1=1 \draw[thick, densely dotted, opacity=0.3] (\x) -- (c\x); \fi
				}

				\foreach \x in {1,\n}{
					\coordinate (c\x) at (\x);
					\node[draw=none,inner sep=0] at (c\x) {$\times$};
					\node[draw=none,inner sep=0,xshift=10] at (c\x) {\x};
				}

				\draw[thin] (0,{\c}) circle ({\r});
			}
			\begin{tikzpicture}[baseline=(current bounding box.center), scale=\s]
				\NCP{0}

				\draw[thick, fill, fill opacity=0.3, rounded corners=0.01] (c1) -- (c4) -- (c5) -- cycle;
				\draw[thick, gray] (c3) -- (c6);
				\draw[fill, fill opacity=0.3] (c2) circle (0.1);
			\end{tikzpicture}\hfill$\leftrightarrow$\hfill%
			\begin{tikzpicture}[baseline=(current bounding box.center), scale=\s]
				\NCP{1}

				\draw[thick] (c1) -- (c4) -- (c5);
				\draw[thick, gray] (c3) -- (c6);
			\end{tikzpicture}\hfill$\leftrightarrow$\hfill%
			\begin{tikzpicture}[baseline=(current bounding box.center), scale=\s]
			\NCAD{1}

			\draw[thick] (1) to[out=-30,in=70] (0,-2.5) to[out=-110,in=110] (4) to (5);
			\draw[thick, gray] (3) to[out=-45,in=45] (6);
			\end{tikzpicture}\hfill$\leftrightarrow$\hfill%
			\begin{tikzpicture}[baseline=(current bounding box.center), scale=\s]
				\NCAD{0}

				\draw[thick] (1) to[out=-30,in=70] (0,-2.5) to[out=-110,in=110] (4) to (5);
				\draw[thick, gray] (3) to[out=-45,in=45] (6);
			\end{tikzpicture}%
			\endgroup
			\caption{Bijection between $c$-noncrossing partitions and $c$-noncrossing arc diagrams in type $A$}
			\label{fig: bijection entre diagrammes d'arcs et partitions noncroisées en type A fini}
		\end{figure}

		We do a similar construction for cyclic $c$-noncrossing arc diagrams and $c$-noncrossing partitions. Using the definitions from Paragraph \ref{par: geometric interpretation of noncrossing partitions} of Section \ref{sec: background}, for each polygon $P$ of a $c$-noncrossing partition of an annulus and for each $U \in \mathrm{part}(P)$, we define for each consecutive integers in $U$ (for the restricted natural order of $\ZZ$ on $U$) $p$ and $q$ the arc $\alpha = (p,q,L,R)$ where $L \subset \Lcn$ and $R \subset \Rcn$. The set of all such arcs coming from all curved polygons of a $c$-noncrossing partition of an annulus is a cyclic $c$-noncrossing arc diagram. Geometrically, Figure \ref{fig: bijection entre diagrammes d'arcs cycliques et partitions noncroisées} shows on an example for $n = 9$ and $c = \affc{1,2,4,5,7,9}_{[1]}\affc{8,6,3}_{[-1]}$ how to transform a $c$-noncrossing partition of an annulus into a cyclic $c$-noncrossing arc diagram and vice versa.

		\begin{figure}[ht!]
			\centering
			\begingroup
			\def\n{9}
			\def\Lc{1,2,5,4,7,9}
			\def\Rc{3,6,8}

			\def\r{0.8}
			\def\R{2}

			\def\s{0.8}

			\newcommand{\ang}[1]{{-(#1-1)*360/\n + 90}}
			\newcommand{\dis}[1]{{#1*\R + (1-#1)*\r}}

			\newcommand{\anneau}[1]{
				\draw[opacity={\ifnum #1=0 1 \else 0.1 \fi}] (0,0) circle (\dis{0});
				\draw[opacity={\ifnum #1=0 1 \else 0.1 \fi}] (0,0) circle (\dis{1});

				\foreach \i in \Lc{
					\coordinate (\i) at (\ang{\i}:\dis{1});
					\ifnum #1=0
					\draw[draw=none] (0,0) -- node[pos=1.2] {\i} (\i);
					\else
					\draw (\ang{\i}:{\dis{1.3}}) -- node[pos=1,fill=white, circle, inner sep=0pt] {\i} ({\ang{\i}}:{\dis{0.5}});
					\fi
				}
				\foreach \i in \Rc{
					\coordinate (\i) at (\ang{\i}:\dis{0});
					\ifnum #1=0
					\draw[draw=none] (0,0) -- node[pos=0.7] {\i} (\i);
					\else
					\draw (0,0) -- node [pos=1,fill=white, circle, inner sep=0pt] {\i} ({\ang{\i}}:{\dis{0.5}});
					\fi
				}

				\foreach \i in {1,...,\n}{
					\node[draw, circle, fill, inner sep=1pt] at (\i) {};
				}

			}
			\begin{tikzpicture}[bezier bounding box,baseline=(current bounding box.center), scale=\s]
				\anneau{0}

				\draw[relative, fill, fill opacity=0.2, rounded corners=0.05]
				(4)	to[out=100,in=80, looseness=2.8]	(1)
				to[out=80, in=40, looseness=1.5, out looseness=3]	(6)
				to[out=-80, in=-100, looseness=1.2]	(3)
				to[out=-80, in=-100, looseness=1.4]	({\ang{8}}:{\dis{0.3}})
				to[out=-77, in=-118, looseness=1.4]	(4);

				\draw[thick, relative] (2) to[out=95, in=45, out looseness=3.4, in looseness=2.8] (8);

				\draw[thick, relative, densely dotted] (7) to[out=30, in=150, looseness=1] (9);
			\end{tikzpicture}\hfill$\leftrightarrow$\hfill%
			\begin{tikzpicture}[bezier bounding box,baseline=(current bounding box.center), scale=\s]
				\anneau{0}

				\draw[relative, fill, fill opacity=0.08, rounded corners=0.05, draw opacity=0.1]
				(4)	to[out=100,in=80, looseness=2.8]	(1)
				to[out=80, in=40, looseness=1.5, out looseness=3]	(6)
				to[out=-80, in=-100, looseness=1.2]	(3)
				to[out=-80, in=-100, looseness=1.4]	({\ang{8}}:{\dis{0.3}})
				to[out=-77, in=-118, looseness=1.4]	(4);

				\draw[relative, rounded corners=0.05, densely dashed, thick]
				(4)	to[out=100,in=80, looseness=2.8]	(1)
				to[out=20,in=160, looseness=1.2]	(3)
				to[out=80, in=100, looseness=1.2]	(6);

				\draw[thick, relative] (2) to[out=95, in=45, out looseness=3.4, in looseness=2.8] (8);

				\draw[thick, relative, densely dotted] (7) to[out=30, in=150, looseness=1] (9);
			\end{tikzpicture}\hfill$\leftrightarrow$\hfill%
			\begin{tikzpicture}[bezier bounding box,baseline=(current bounding box.center), scale=\s]
				\anneau{1}

				\draw[relative, rounded corners=0.05, densely dashed, thick]
				(4)	to[out=100,in=80, looseness=2.8]	(1)
				to[out=20,in=160, looseness=1.2]	(3)
				to[out=80, in=100, looseness=1.2]	(6);

				\draw[thick, relative] (2) to[out=95, in=45, out looseness=3.4, in looseness=2.8] (8);

				\draw[thick, relative, densely dotted] (7) to[out=30, in=150, looseness=1] (9);
			\end{tikzpicture}\hfill$\leftrightarrow$\hfill%
			\begin{tikzpicture}[baseline=(current bounding box.center), scale=\s]
				\foreach \i in \Lc{
					\coordinate (\i) at (\ang{\i}:\dis{0.5});
				}
				\foreach \i in \Rc{
					\coordinate (\i) at (\ang{\i}:\dis{0.5});
				}

				\draw[densely dashed, thick] plot [ smooth, tension=0.8 ] coordinates {(4) (\ang{5}:\dis{0.3}) (\ang{6}:\dis{0.9}) (\ang{7}:\dis{0.3}) (\ang{8}:\dis{0.7}) (\ang{9}:\dis{0.3}) (1) (\ang{2}:\dis{0.3}) (3) (\ang{4}:\dis{0}) (\ang{5}:\dis{0}) (6)};

				\draw[thick] plot [ smooth, tension=0.8 ] coordinates {(2) (\ang{3}:\dis{0.7}) (\ang{4}:\dis{0.15}) (\ang{5}:\dis{0.15}) (\ang{6}:\dis{0.7}) (\ang{7}:\dis{0.2}) (8)};

				\draw[thick, densely dotted] plot [ smooth, tension=0.8 ] coordinates {(7) (\ang{8}:\dis{0.9}) (9)};

				\foreach \i in \Lc{
					\draw (\ang{\i}:{\dis{1.3}}) -- node[pos=1,fill=white, circle, inner sep=0pt] {\i} ({\ang{\i}}:{\dis{0.5}});
				}
				\foreach \i in \Rc{
					\draw (0,0) -- node [pos=1,fill=white, circle, inner sep=0pt] {\i} ({\ang{\i}}:{\dis{0.5}});
				}
			\end{tikzpicture}
			\endgroup
			\caption{Bijection between $c$-noncrossing partitions of an annulus and cyclic $c$-noncrossing arc diagrams}
			\label{fig: bijection entre diagrammes d'arcs cycliques et partitions noncroisées}
		\end{figure}

		The noncrossing property of the arc diagram obtained from this procedure comes from the fact that two curved polygons cross if and only if there exists a diagonal of each one such that these diagonals cross, where a diagonal is any segment between two vertices of a polygon.

		\begin{proposition} \label{prop: bijection partitions noncroisées et diagrammes d'arcs cycliques}
			The previous construction defines a bijection between $c$-noncrossing partitions of an annulus and cyclic $c$-noncrossing arc diagrams.
		\end{proposition}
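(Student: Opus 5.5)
To prove Proposition \ref{prop: bijection partitions noncroisées et diagrammes d'arcs cycliques}, I will build an explicit inverse map and check, in both directions, that each map lands in the right set. Call $\Psi$ the construction above: it sends a $c$-noncrossing partition of an annulus to a cyclic arc diagram. For each curved polygon $P$ and each $U \in \mathrm{part}(P)$, it joins consecutive elements of $U$ by arcs with $L \subset \Lcn$ and $R \subset \Rcn$.

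\textbf{Step 1: $\Psi$ lands in $c$-$\widetilde{\mathrm{NCAD}}$.}
\begin{itemize}
\item \emph{Translation invariance.} This holds because $\mathrm{part}(P)$ is stable under translation by $n$.
\item \emph{Left/right condition.} This holds by construction.
\item \emph{Noncrossing.} Arcs coming from the same orbit $U$ never cross: they only meet at shared endpoints. Arcs from distinct orbits $U \neq V$ (of the same or of different polygons) are handled with Lemma \ref{lem: critère de croisement d'arcs}. Such a crossing would produce $i<j<k<l$ matching exactly the pattern of the combinatorial intersection criterion of Paragraph \ref{par: geometric interpretation of noncrossing partitions}. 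That contradicts disjointness of the polygons; for two orbits of the same annular polygon, I will use condition (iii) of the characterization in Paragraph \ref{par: partitions non croisées Sn tilde} instead.
\item \emph{Full chains and loops.} Each full chain is the list of consecutive elements of a finite orbit $U$, and each loop comes from an infinite orbit, i.e. from the annular polygon. Since we only allow nondangling annular blocks, that block contains points of both $\Lcn$ and $\Rcn$, so the loop is real.
\end{itemize}

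\textbf{Step 2: the inverse map $\Phi$.} Given $\mathcal{A} \in c$-$\widetilde{\mathrm{NCAD}}$, the chains and loops partition $\ZZ$, as noted after Corollary \ref{cor: croisement de chaîne}. Each chain or loop is a set $U$ of integers. I will group the sets $U$ into translation classes modulo $n$ and let each class give one candidate block, namely the pseudo-cycle or cycle with orbits these sets.
\begin{itemize}
\item \emph{Finite chains.} For a finite chain, the translates are pairwise disjoint. The key claim is $\max U - \min U < n$: otherwise $U$ and $U+n$ would interleave. By Corollary \ref{cor: croisement de chaîne} they would then have to be nested, which is impossible for translates of the same chain, or have a specific $L/R$ endpoint pattern. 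I need to exclude the latter, and I expect a small argument comparing $\min(U)+n$ with elements of $U$. Granting the claim, $U$ gives a cyclic elementary divisor of $c$.
\item \emph{Loops.} A loop is $n$-periodic as a set, or its translates interleave in the same way. I will show it gives the pseudo-cyclic elementary divisor on $U + n\ZZ$. Realness ensures both shifts are nonzero.
\item \emph{At most one loop.} The paper asserts this just before; I would verify it using Corollary \ref{cor: croisement de chaîne} on two loops.
\end{itemize}

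\textbf{Step 3: $\Phi(\mathcal{A})$ is noncrossing, and $\Phi$, $\Psi$ are mutually inverse.} The product of these elementary divisors must satisfy condition (iii) of Paragraph \ref{par: partitions non croisées Sn tilde}, equivalently give disjoint curved polygons. I will get this directly from Corollary \ref{cor: croisement de chaîne} applied to the partial chains between the relevant points $a<b<c<d$. The two cases of the corollary are exactly the two allowed configurations. The compositions are then the identity: a polygon is recovered from the orbit sets of its vertices, and the arcs are recovered from consecutive pairs.

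\textbf{Main obstacle.} I expect the hard part to be the width bound $\max U - \min U < n$ for chains, together with the correct treatment of the loop. This is where the annular geometry really enters, and I plan to handle it by a careful case analysis with Corollary \ref{cor: croisement de chaîne} applied to $U$ and $U+n$.
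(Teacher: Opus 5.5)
\textbf{The approach matches the paper's, but the key claim of Step 2 is false.} Your architecture is the paper's own: chains go to cyclic elementary divisors of $c$, the loop goes to the pseudo-cyclic one, and you match the two crossing criteria. Your Step 3 is more explicit than the paper. Applying Corollary~\ref{cor: croisement de chaîne} to the sub-chains from $a$ to $c$ and from $b$ to $d$ (resp.\ from $a$ to $d$ and from $b$ to $c$) yields exactly the two implications of condition (iii).

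The gap is the claim $\max U - \min U < n$ for a finite chain $U$. The case analysis you plan cannot succeed, because the claim fails. Take $n=3$, $L_c=\{1,2\}$, $R_c=\{3\}$, and the diagram consisting of all arcs $3k \rightarrow 3k+4$. Each such arc passes left of $3k+1,3k+2\in\Lcn$ and right of $3k+3\in\Rcn$. By Lemma~\ref{lem: critère de croisement d'arcs}, $0\rightarrow 4$ and $3\rightarrow 7$ do not cross, since $(3,4)\in\Rcn\times\Lcn$. No arc starts at $4$, so there is no loop. This is therefore a cyclic $c$-noncrossing arc diagram whose chain $\{0,4\}$ has span $4>n$. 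It corresponds to the reflection $\affc{0,4}$, which is the cyclic elementary divisor with $L=\{4\}$ and $R=\{0\}$. The ``specific $L/R$ endpoint pattern'' you hoped to exclude, namely $\min U$ and $\max U$ lying on opposite sides, is exactly what happens here.

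\textbf{The fix.} You only need the hypothesis of the elementary-divisor statement in the background section: $U\cap\Lcn$ and $U\cap\Rcn$ each have span $<n$. This is true, and it follows from the same corollary.
\begin{itemize}
\item The elements of a finite chain $\gamma$ are pairwise distinct mod $n$. Indeed, chains and loops partition $\ZZ$ and $\gamma\neq\gamma+kn$, so $y-x=n$ cannot occur.
\item Suppose $x<y$ in $U$ lie on the same side with $y-x>n$. Apply the corollary to the sub-chain of $\gamma$ from $x$ to $y$ and to its translate by $n$.
\item Then $x<x+n<y<y+n$, which rules out nesting. The interleaving case would force $(x+n,y)$ to lie on opposite sides, which is a contradiction.
\end{itemize}
With this replacement, the rest of your argument goes through.

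\textbf{A smaller point about the annular block.} Read literally, $\mathrm{part}(P)$ for the annular block consists of two orbits of the pseudo-cycle, $U_L\subset\Lcn$ and $U_R\subset\Rcn$. Joining consecutive elements of each orbit would give two imaginary loops, which do not even cross each other. Your ``single infinite orbit'' must therefore be $U_L\cup U_R$. Then, when you derive noncrossing from (iii) for an arc of this loop joining $U_L$ to $U_R$, its endpoints are not in a single orbit. You need to use auxiliary points of $U_L$ or $U_R$ far to the left or right instead.
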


		The inverse map of this bijection will be denoted $\mathrm{NC}_c$, such that if $\mathcal{A}$ is a cyclic $c$-noncrossing arc diagram, $\mathrm{NC}_c(\mathcal{A})$ is a $c$-noncrossing partition. This inverse map is also explicit: associate to each chain $\gamma$ the cycle $\affc{a_1, \dots, a_k, b_l, \dots, b_1}$ where $\{ a_1, \dots, a_k \} = \mathrm{Supp}(\gamma) \cap \Lcn$, $\{ b_1, \dots, b_l \} = \mathrm{Supp}(\gamma) \cap \Rcn$ and $a_1 < \dots < a_k$ and $b_1 < \dots < b_l$. Similarly, associate to each loop $\gamma$ the product of $\affc{a_1, \dots, a_k}_{[1]} \affc{b_l, \dots, b_1}_{[-1]}$ where $\{ a_1, \dots, a_k \} = \mathrm{Supp}(\gamma) \cap L_c$, $\{ b_1, \dots, b_l \} = \mathrm{Supp}(\gamma) \cap R_c$ and $a_1 < \dots < a_k$ and $b_1 < \dots < b_l$. Taking the product of these pseudo-cycles defined from all the loops and chains of a cyclic $c$-noncrossing arc diagram $\mathcal{A}$ gives a $c$-noncrossing partition $\mathrm{NC}_c(\mathcal{A})$ such that $\mathcal{A}$ is the cyclic $c$-noncrossing arc diagram obtained by the map of Proposition \ref{prop: bijection partitions noncroisées et diagrammes d'arcs cycliques}. The permutation obtained is indeed $c$-noncrossing due to the fact that the crossing property between two arcs of a cyclic $c$-noncrossing arc diagram (Lemma \ref{lem: critère de croisement d'arcs}) and the cyclic factors of a $c$-noncrossing partition (Paragraph \ref{par: partitions non croisées Sn tilde} of Section \ref{sec: background}) is the same.

		\subsubsection{Structure of cyclic \texorpdfstring{$c$}{c}-noncrossing arc diagrams} \label{sssec: structure des c-NCAD + suite C}

		Let $\mathcal{A}$ be a cyclic $c$-noncrossing arc diagram, and fix $a \in \ZZ$. There may be arcs passing on the left or the right of $a$ (but not both at the same time). We call all these arcs \emph{covering arcs of $a$}. It is also possible that many copies modulo $n$ of the same arc covers $a$. We would like to find, if $a$ has covering arcs, the closest one. Geometrically, draw a horizontal line passing through $a$ (or the radial half line in a cyclic representation) and mark all intersections points of arcs and this line. The point that minimizes the distance to $a$ belongs to the closest arc. We call this arc a \emph{neighbor arc of $a$}.

		\begin{exemple}
			In the $c$-noncrossing arc diagram of Example \ref{ex: C_a(A)}, the neighbor arc of $8$ is $7 \rightarrow 15$. It looks like it is also the neighbor arc of $2$, but it is in fact its previous copy modulo $n$, $-3 \rightarrow 5$. The point $4$ has two covering arcs, $1 \rightarrow 8$ and $-3 \rightarrow 5$, but the former is closer to it, thus it is its neighbor arc.
		\end{exemple}

		We now give an equivalent, more abstract definition of covering arcs and neighbor arcs, which will be more useful in proofs.

		\begin{definition}[Covering arcs, neighbor arc] \label{def: arc couvrant, arc voisin}
			Let $\mathcal{A}$ be a cyclic $c$-noncrossing arc diagram, and $a \in \ZZ$.
			The set of \emph{covering arcs of $a$} is
			\begin{equation*}
				\Gamma_a = \{ p \rightarrow q \in \mathcal{A} \;|\; p < a < q \}.
			\end{equation*}

			If $\Gamma_a = \emptyset$, we say that $a$ \emph{does not have any neighbor arc}. Otherwise, define for all $X \subset \ZZ$ the set of all covering arcs with their initial point in $X$:
			\begin{equation*}
				S_a^X = \{ p \rightarrow q \in \Gamma_a \;|\; p \in X \}.
			\end{equation*}
			In particular, $S_a^\ZZ = \Gamma_a$. We order the arcs $p \rightarrow q$ of $S_a^X$ by considering the order induced on $p$ by the natural order of $\ZZ$.

			Let $X \in \{ \Lcn, \Rcn \}$ such that $a \in X$. The \emph{neighbor arc of $a$} is the arc of $\Gamma_a$ defined by
			\begin{equation*}
				\alpha_a = \left\lbrace \begin{array}{ll}
					\max(S_a^X)		& \text{ if } S_a^X \neq \emptyset \\
					\min(S_a^\ZZ)	& \text{ if } S_a^X = \emptyset.
				\end{array} \right.
			\end{equation*}
		\end{definition}

		This definition indeed coincides with the geometric notion previously described:

		\begin{lemme} \label{lem: arc voisin = le plus proche}
			Let $\mathcal{A}$ be a cyclic $c$-noncrossing arc diagram and $a \in \ZZ$. If $a$ has a neighbor arc $\alpha_a$, then it is the covering arc of $a$ that is geometrically the closest to $a$.
		\end{lemme}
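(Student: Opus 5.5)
The plan is to reduce the statement to a pairwise comparison of covering arcs. Fix $a$ with $\Gamma_a \neq \emptyset$ and let $X \in \{\Lcn, \Rcn\}$ be the class containing $a$; by symmetry I treat $a \in \Lcn$.

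First I would record the geometry. Every covering arc $p \rightarrow q$ of $a$ has $a$ strictly between $p$ and $q$. Since $L \subset \Lcn$ for every arc of a cyclic $c$-noncrossing arc diagram, $a$ lies on the left of every covering arc. So all covering arcs meet the horizontal line through $a$ on the same side of $a$. Because the arcs pairwise do not cross, the intersection points are totally ordered by distance to $a$. Hence "geometrically closest" is determined by a pairwise relation: for two covering arcs $\alpha,\beta$, say $\beta$ is closer than $\alpha$ when $\beta$ separates $a$ from $\alpha$ on that line. Two distinct covering arcs cannot share an initial point, since the only allowed common points are a final point equal to an initial point, and here both initial points are $< a$. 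So I may write $\alpha = p \rightarrow q$ and $\beta = p' \rightarrow q'$ with $p < p' < a$.

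The key claim is that $\beta$ is closer to $a$ than $\alpha$ if and only if $p' \in X$. I would prove it by splitting on the two configurations left by Lemma \ref{lem: critère de croisement d'arcs}. The endpoints satisfy $q, q' > a > p'$, so $p'$ is never an endpoint of $\alpha$.
\begin{itemize}
\item If $q < q'$, noncrossing forces $(p',q) \notin (\Lcn)^2 \sqcup (\Rcn)^2$. When $p' \in \Lcn$, the arc $\beta$ starts on the left of $\alpha$ (the side of $a$) and never crosses it, so it is closer. When $p' \in \Rcn$, the endpoint $q \in \Lcn$ lies on the left of $\beta$, so $\alpha$ is on the $a$-side of $\beta$ and $\alpha$ is closer.
\item If $q' < q$, noncrossing forces $p', q'$ to be in the same class. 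If both are in $\Lcn$, then $\beta$ is nested on the left of $\alpha$, hence closer. If both are in $\Rcn$, then $\beta$ lies entirely on the right of $\alpha$, away from $a$, so $\alpha$ is closer.
\end{itemize}
Making "separates on the horizontal line" rigorous from the $(p,q,L,R)$ description is the main technical point. I would argue it by the intermediate value principle: two noncrossing curves keep a constant relative side order along the strip $\llbracket p',\min(q,q')\rrbracket$, and the side is read off at one endpoint.

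Then I would conclude from the claim. Suppose $S_a^X \neq \emptyset$ and $\alpha_a = \max(S_a^X)$. Any covering arc with a smaller initial point loses to $\alpha_a$, because the initial point of $\alpha_a$ lies in $X$. Any covering arc with a larger initial point has its initial point outside $X$, by maximality, so it also loses. Suppose instead $S_a^X = \emptyset$, so no covering arc has its initial point in $X$. Then for every pair of covering arcs the one with the smaller initial point is closer, and $\min(S_a^{\ZZ})$ is the closest. The case $a \in \Rcn$ is the mirror image, with left and right exchanged. I expect the only real obstacle to be the justification of the pairwise geometric claim; the combinatorial bookkeeping afterwards is immediate.
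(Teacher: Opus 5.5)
Your proof is correct. The paper states this lemma without any proof: it only links Definition~\ref{def: arc couvrant, arc voisin} to the informal horizontal-line description given just before it, and is then used as a black box (for instance when a chain is said to be geometrically closer to $z$ than another). So there is no route to compare with. Your reduction to a pairwise comparison of covering arcs, followed by the bookkeeping on $\max(S_a^X)$ and $\min(S_a^{\ZZ})$, is a sound way to fill this gap.

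Two small remarks.

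First, your case split on $q<q'$ versus $q'<q$ is unnecessary. Since $p<p'<a<q$, the point $p'$ lies strictly within the range of $\alpha$, hence on the same side of $\alpha$ as $a$ exactly when $p'\in X$. The arc $\beta$ emanates from $p'$, and the two arcs have no common point at heights strictly between $p'$ and $\min(q,q')$: the only common points two arcs may have are shared endpoints, and no endpoint of either arc lies in that range. So this relative position persists up to height $a$. This gives your key claim at once, without invoking Lemma~\ref{lem: critère de croisement d'arcs}. It also makes irrelevant the possibility $q=q'$, which your split silently omits; that case is excluded anyway by the same shared-endpoint rule you used for initial points.

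Second, the fact that $a$ lies on the same side of every covering arc does not follow from $L\subset\Lcn$ alone. It uses $R\subset\Rcn$ together with $L\sqcup R=\rrbracket p,q\llbracket$.
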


		\begin{remarque}
			Consider the symmetric diagram $\mathcal{A}' = \{ -q \rightarrow -p \;|\; p \rightarrow q \in \mathcal{A} \}$. We have for any $a \in \ZZ$, $\alpha_a = u \rightarrow v$ in $\mathcal{A}$ if and only if $\alpha_{-a} = -v \rightarrow -u$ in $\mathcal{A}'$. Therefore, one can also define the neighbor arc of $a \in \ZZ$ as
			\begin{equation*}
				\alpha_a = \left\lbrace \begin{array}{ll}
					\min({S'}_a^X)		& \text{ if } {S'}_a^X \neq \emptyset \\
					\max({S'}_a^\ZZ)	& \text{ if } {S'}_a^X = \emptyset
				\end{array} \right.
			\end{equation*}
			where $X \in \{\Lcn, \Rcn\}$ is such that $a \in X$ and ${S'}_a^X = \{ p \rightarrow q \in \Gamma_a \;|\; q \in X \}$ is ordered by the natural order on $q$.
		\end{remarque}

		We now introduce a sequence that encodes useful information about a cyclic $c$-noncrossing arc diagram.

		\begin{definition} \label{def: suite C_a(x)}
			Let $\mathcal{A}$ be a cyclic $c$-noncrossing arc diagram. For each $a \in \ZZ$, we construct a finite or infinite sequence $\mathcal{C}_a(\mathcal{A}) $ that takes values in $\ZZ \cup \{\rightarrow, \uparrow, \downarrow\}$ as follows: start with the empty sequence. Let $b = a$ and consider, $\gamma$ the chain or loop containing $b$, which will be denoted by
			\begin{equation*}
				\gamma = \cdots \rightarrow b_{-1} \rightarrow b_0 = b \rightarrow b_1 \rightarrow \cdots .
			\end{equation*}
			Add to $\mathcal{C}_a(\mathcal{A})$ all the terms $b_0, \rightarrow, b_1, \rightarrow, \cdots$. If an infinite number of terms were added, or if $\max(\gamma)$ does not have a neighbor arc, we stop here. Otherwise, we add to $\mathcal{C}_a(\mathcal{A})$ the arrow $\downarrow$ if $\max(\gamma) \in \Lcn$ and $\uparrow$ otherwise. We modify the value of $b$ to be the final point of $\alpha_{\max(\gamma)}$ and we iterate the same procedure.

			From the sequence $\mathcal{C}_a(\mathcal{A})$, we also build the sequence $\mathcal{C}_a^n(\mathcal{A})$ obtained by replacing all integers by their residue modulo $n$.
		\end{definition}

		\begin{exemple} \label{ex: C_a(A)} Let $n = 10$ and take $c = \affc{1,4,6,8}_{[1]} \affc{10,9,7,5,3,2}_{[-1]}$.

			\noindent\begin{minipage}{0.7\textwidth}
			Consider this cyclic $c$-noncrossing arc diagram $\mathcal{A}$. We have the following sequences:
			\begin{align*}
				\mathcal{C}_2(\mathcal{A}) & = {\color{gray!70!black}2 \rightarrow 3} \uparrow \mathbf{5} \uparrow {6 \rightarrow 8} \downarrow \mathbf{15} \uparrow {16 \rightarrow 18} \downarrow \mathbf{25} \uparrow {26} \cdots \\
				\mathcal{C}_2^n(\mathcal{A}) & = {\color{gray!70!black}2 \rightarrow 3} \uparrow \mathbf{5} \uparrow {6 \rightarrow 8} \downarrow \mathbf{5} \uparrow {6 \rightarrow 8} \downarrow \mathbf{5} \uparrow {6} \cdots \\
				\mathcal{C}_7(\mathcal{A}) & = \mathbf{7 \rightarrow 15} \uparrow {16 \rightarrow 18} \downarrow \mathbf{25} \uparrow {26 \rightarrow 28} \downarrow \mathbf{35} \uparrow {36} \cdots \\
				\mathcal{C}_7^n(\mathcal{A}) & = \mathbf{7 \rightarrow 5} \uparrow {6 \rightarrow 8} \downarrow \mathbf{5} \uparrow {6 \rightarrow 8} \downarrow \mathbf{5} \uparrow {6} \cdots
			\end{align*}
			Notice that the sequences $\mathcal{C}_a^n(\mathcal{A})$ are all infinite and ultimately periodic for $a \in \{2,7\}$. Moreover, their periods are the same. This is in fact true for all $a \in \ZZ$ and the rest of this section is mainly dedicated to proving this fact.
			\end{minipage} \hfill
			\begin{tikzpicture}[baseline, scale=1.3]
				\def\n{10}

				\def\Rc{2, 3, 5, 7, 9, 10}
				\def\Lc{1, 4, 6, 8}

				\newcommand{\ang}[1]{{-(#1-1)*360/\n + 90}}

				\foreach \i in {1,...,\n}{
					\coordinate (\i) at (\ang{\i}:1);
				}

				\foreach \i in \Rc{
					\draw (0,0) -- (\i);
				}
				\foreach \i in \Lc{
					\draw (\ang{\i}:1.7) -- (\i);
				}

				\draw[gray!70!black] plot [ smooth, tension=0.8 ] coordinates {(9) (\ang{10}:1.3) (\ang{1}:0.7) (2) (3)};
				\draw[densely dashed] plot [ smooth, tension=0.8 ] coordinates {(1) (\ang{2}:1.3) (\ang{3}:1.4) (\ang{4}:0.7) (\ang{5}:1.3) (6) (\ang{7}:1.3) (8)};
				\draw[thick] plot [ smooth, tension=0.8 ] coordinates {(7) (\ang{8}:0.7) (\ang{9}:1.3) (\ang{10}:1.4) (\ang{1}:0.8) (\ang{2}:1.2) (\ang{3}:1.2) (\ang{4}:0.5) (5)};

				\foreach \i in {1,...,\n}{
					\node[circle, fill=white, inner sep=0pt] at (\i) {\i};
				}
			\end{tikzpicture}
		\end{exemple}

		In the rest of this paper, we may write $\updownarrow$ for either $\uparrow$ or $\downarrow$. For example, there is a factor $5 \updownarrow 6 \rightarrow 8 \updownarrow 15$ in $\mathcal{C}_2(\mathcal{A})$ of Example \ref{ex: C_a(A)}.

		We now state preliminary results about the sequence $\mathcal{C}_a(\mathcal{A})$. By construction, if $a \in \ZZ$, then for all $p \in \ZZ$, $\mathcal{C}_{a+pn}(\mathcal{A}) = \mathcal{C}_a(\mathcal{A}) + pn$. This means that the finite or infinite nature of the sequence $\mathcal{C}_a(\mathcal{A})$ depends only on the class modulo $n$ of $a$. In fact, it does not depend on $a$ at all:

		\begin{proposition} \label{prop: C_a(A) est fini ou infini}
			Let $\mathcal{A}$ be a cyclic $c$-noncrossing arc diagram. We have the following alternative:
			\begin{itemize}
				\item for all $a \in \ZZ$, $\mathcal{C}_a(\mathcal{A})$ is finite,
				\item for all $a \in \ZZ$, $\mathcal{C}_a(\mathcal{A})$ is infinite.
			\end{itemize}
			Moreover, in the second case, the sequence $\mathcal{C}_a^n(\mathcal{A})$ is ultimately periodic.
		\end{proposition}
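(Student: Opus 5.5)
We view the construction of $\mathcal{C}_a(\mathcal{A})$ as a deterministic dynamical process on residues modulo $n$. Each time the procedure restarts, it begins at a point $b$, runs along the chain or loop containing $b$, and then (if possible) jumps to the final point of the neighbor arc of $\max(\gamma)$. By translation invariance ($\mathcal{C}_{a+pn}(\mathcal{A}) = \mathcal{C}_a(\mathcal{A}) + pn$, together with the fact that chains, loops and neighbor arcs are translated by $n$ when their arguments are), the next restart point modulo $n$ depends only on the current restart point modulo $n$. So we define a partial map $f$ on $\ZZ/n\ZZ$, plus a symbol for \og stop\fg{}. It sends $\bar b$ to $\overline{q}$, where $q$ is the final point of $\alpha_{\max(\gamma_b)}$, and it is undefined when $\gamma_b$ is a loop (infinitely many terms) or when $\max(\gamma_b)$ has no neighbor arc. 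The sequence $\mathcal{C}_a^n(\mathcal{A})$ is then determined by the orbit $\bar a, f(\bar a), f^2(\bar a), \dots$, each block being the residue pattern of the corresponding chain. Since $\ZZ/n\ZZ$ is finite, an infinite orbit of $f$ is eventually periodic, and this gives the ultimate periodicity in the second case immediately, as long as the sequence is infinite because of infinitely many jumps. If it is infinite because we reach a loop, the residues along a loop are periodic, since a loop is translation invariant and so contains $b+mn$ for some $m$. Either way the periodicity is immediate.

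The real content is the dichotomy: finiteness must not depend on $a$. We plan to show two facts. First, if some $\mathcal{C}_a(\mathcal{A})$ stops, then every point's orbit eventually reaches that stopping configuration. Second, if some $\mathcal{C}_a(\mathcal{A})$ is infinite, none stops. Stopping means reaching a chain $\gamma$ whose maximum $m=\max(\gamma)$ has $\Gamma_m=\emptyset$, i.e. no arc passes over $m$. We then claim $m+n\ZZ$ is a set of \og cut points\fg{}: no arc of $\mathcal{A}$ jumps over $m+kn$. In that case there is no loop either, since a loop has arcs passing over every sufficiently large integer. Symmetrically, an infinite sequence forces either a loop or an unbounded jump process, and in both cases every integer is covered by some arc. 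So the key lemma is: $\mathcal{C}_a(\mathcal{A})$ is finite if and only if some integer has no covering arc. Equivalently, the finite case is exactly the case where the diagram admits a cut point.

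To prove the lemma, take the direction \og cut point $\Rightarrow$ finite for all $a$\fg{} first. Fix a cut point $m_0 \geq a$. The process is monotone: restart points strictly increase, because the neighbor arc of $\max(\gamma)$ covers $\max(\gamma)$ and so its final point exceeds it. The process also never crosses a cut point. Indeed, a chain never contains an arc over $m_0$, and a neighbor arc covering $\max(\gamma)<m_0$ cannot end beyond $m_0$. If $\max(\gamma) = m_0$ exactly, then $m_0$ has no neighbor arc, and we stop. Hence the restart points are bounded by $m_0$, and the process stops. For the converse, if every integer is covered, the process never stops at a chain maximum, so it continues forever or enters a loop. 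We must check that a neighbor arc always exists for $\max(\gamma)$; this holds because $\Gamma_{\max(\gamma)} \neq \emptyset$.

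The main obstacle is making the monotonicity rigorous: we must check that each step strictly increases $b$ and that chains are finite when no loop is met. Here we would invoke Corollary~\ref{cor: croisement de chaîne} to rule out pathological interleavings. We also need the observation that full chains are finite since cyclic arcs have bounded length; a chain of unbounded length containing no loop would revisit a residue and hence be translation invariant, i.e. a loop.
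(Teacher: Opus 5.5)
Your periodicity argument is fine and is essentially the paper's pigeonhole argument: the residue of the next restart point depends only on the residue of the current one, so an infinite orbit in $\ZZ/n\ZZ$ is eventually periodic. The gap is in your key lemma, which is false as stated: it is not true that $\mathcal{C}_a(\mathcal{A})$ is finite if and only if some integer has no covering arc.

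\textbf{Counterexamples.} Take the diagram whose arcs are all $i \rightarrow i+1$ with $L=R=\emptyset$. It is a single loop $\cdots\rightarrow -1\rightarrow 0\rightarrow 1\rightarrow\cdots$, real because $L_c$ and $R_c$ are nonempty, so it is a cyclic $c$-noncrossing arc diagram. Here $\Gamma_m=\emptyset$ for every $m$, yet every $\mathcal{C}_a(\mathcal{A})$ is infinite. The failure also occurs without loops. In the diagram of Example~\ref{ex: C_a(A)}, no arc covers $6$, but $\mathcal{C}_2(\mathcal{A})$ is infinite. The walk lands on $6$ as the final point of $1\rightarrow 6$, the neighbor arc of $5$, and then continues along $6\rightarrow 8$.

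\textbf{Where the argument breaks.} The failing step is ``the process never crosses a cut point''. You only handle the case $\max(\gamma)=m_0$, but an uncovered integer can be an interior point of a chain, or a point of a loop, and the walk passes through it. For the same reason two of your claims are false in general: ``a loop has arcs passing over every sufficiently large integer'' (a loop passes over every integer \emph{not on it}), and ``an infinite sequence forces every integer to be covered''.

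\textbf{The repair.} Keep exactly what a stopping sequence gives you: an uncovered \emph{chain maximum} $m=\max(\gamma)$. For a given $a$, pick $m_0=m+kn\geq a$; it is the maximum of the chain $\gamma+kn$.
\begin{itemize}
\item Every arc starting below $m_0$ ends at most at $m_0$, since no arc covers $m_0$.
\item If the walk reaches $m_0$, either along a chain or as the final point of a neighbor arc, it is on $\gamma+kn$, which ends there. Since $m_0$ has no neighbor arc, the sequence stops.
\item No loop can be entered. A loop is a strictly increasing bi-infinite sequence, so walking up it from below $m_0$ would force it to hit $m_0$ exactly, but $m_0$ lies on a chain.
\item Restart points strictly increase and stay at most $m_0$, and chains are finite, so $\mathcal{C}_a(\mathcal{A})$ is finite.
\end{itemize}
Your ``second fact'' is just the contrapositive of the first, so drop the false covering argument for it. This corrected monotonicity argument is precisely what justifies the paper's sentence ``since $\mathcal{C}_{a_0}(\mathcal{A})$ is infinite, the maximal element of every chain of $\mathcal{A}$ has a neighbor arc'', which the paper states without proof.
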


		\begin{proof}
			Let $a_0 \in \ZZ$ such that $\mathcal{C}_{a_0}(\mathcal{A})$ is infinite. We show that $\mathcal{C}_a(A)$ is also infinite for all $a \in \ZZ$. Since $\mathcal{C}_{a_0}(\mathcal{A})$ is infinite, the maximal element of every chain of $\mathcal{A}$ has a neighbor arc. If the construction of $\mathcal{C}_a(\mathcal{A})$ stops, then a loop has been added, and therefore $\mathcal{C}_a(\mathcal{A})$ is infinite. Otherwise, the construction never ends, and $\mathcal{C}_a(\mathcal{A})$ is also infinite.

			Now we show that in the case where $\mathcal{C}_{a}(\mathcal{A})$ is infinite, the sequence $\mathcal{C}_a^n(\mathcal{A})$ is ultimately periodic. Since it is infinite, there exists an integer $c \in \llbracket 0,n-1 \rrbracket$ such that $c$ appears more than once in $\mathcal{C}_a^n(\mathcal{A})$. This means there exists $p,q \in \ZZ$ with $p < q$ such that $c + pn$ and $c + qn$ appear in $\mathcal{C}_a(\mathcal{A})$. By construction, the subsequences of $\mathcal{C}_a(\mathcal{A})$ obtained by considering only the terms starting from $c + pn$ and $c + qn$ are equal up to a shift of $(q-p)n$. This means that modulo $n$, they are equal, and the sequence $\mathcal{C}_a^n(\mathcal{A})$ is periodic after the first occurrence of $c$.
		\end{proof}

		When $\mathcal{C}_a^n(\mathcal{A})$ is infinite (thus ultimately periodic), we call a \emph{periodic factor} of $\mathcal{C}_a^n(\mathcal{A})$ any subword of minimal length such that from a certain point, the sequence $\mathcal{C}_a^n(\mathcal{A})$ consists of repetition of this subword. Two periodic factors differ only by cyclic permutation: in Example \ref{ex: C_a(A)}, a periodic factor of $\mathcal{C}_2^n(\mathcal{A})$ is $5 \uparrow 6 \rightarrow 8 \downarrow$, and another one is $\rightarrow 8 \downarrow 5 \uparrow 6$. The set of periodic factors will be called the \emph{period} of $\mathcal{C}_a^n(\mathcal{A})$.

		In fact, the period of $\mathcal{C}_a^n(\mathcal{A})$, when this sequence is infinite, is independent of the choice of $a \in \ZZ$. In order to prove this fact, we need more knowledge about the structure of this sequence.

		\begin{lemme} \label{lem: si facteur c|d dans tau_a alors enchevetrement des chaînes}
			Let $\mathcal{A}$ be a cyclic $c$-noncrossing arc diagram such that $\mathcal{C}_a(\mathcal{A})$ is infinite for all $a \in \ZZ$. Let $\tau^n_a$ be a periodic factor of $\mathcal{C}_a^n(\mathcal{A})$ for $a \in \ZZ$, and $\tau_a$ a factor of $\mathcal{C}_a(\mathcal{A})$ such that $\tau_a^n = \tau_a \bmod n$. By construction, if there is a factor $c \updownarrow d$ in $\tau_a$, then $c$ and $d$ appear in chains of $\mathcal{A}$ (and not loops). In this case, if $\gamma$ is the chain of $\mathcal{A}$ containing $c$ and $\eta$ the chain of $\mathcal{A}$ containing $d$, we have
			\begin{equation*}
				\min(\gamma) < \min(\eta) < \max(\gamma) = c < \max(\eta).
			\end{equation*}
		\end{lemme}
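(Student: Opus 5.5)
We write the factor as $c \updownarrow d$ in $\tau_a$. By Definition \ref{def: suite C_a(x)}, $c = \max(\gamma)$ for the chain $\gamma$ containing $c$. Moreover, $d$ is the final point of the neighbor arc $\alpha_c = u \rightarrow d$, so $u < c < d$. The arc $u \rightarrow d$ belongs to the chain $\eta$ containing $d$, hence $\min(\eta) \leq u$ and $d \leq \max(\eta)$. This already gives $\min(\eta) < c < \max(\eta)$, so only the inequality $\min(\gamma) < \min(\eta)$ remains. Next we show $\gamma \neq \eta$. In a chain two consecutive points are joined by an arc, and $u, d$ are consecutive in $\eta$ with $u < c < d$, so $c \notin \eta$. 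The chains therefore form two disjoint partial chains with $\min(\eta) < \min(\gamma) \leq \max(\gamma) < \max(\eta)$, or with the reverse order of the minima.

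Suppose for contradiction that $\min(\eta) < \min(\gamma)$. Corollary \ref{cor: croisement de chaîne}, applied with $\eta$ as the outer chain, puts us in its second case: $\gamma$ is nested under $\eta$ and $\mathrm{Supp}(\gamma)$ lies entirely in $\Lcn$ or entirely in $\Rcn$ (see the remark after the corollary). The hypothesis that $\tau_a$ is taken inside the periodic part of $\mathcal{C}_a^n(\mathcal{A})$ now matters. Since the sequence is ultimately periodic, some translate $\gamma + pn$ appears earlier in $\mathcal{C}_a(\mathcal{A})$, and it is entered through a factor $c'' \updownarrow e$ with $e \in \gamma + pn$. Up to translation we may assume $p=0$, and we then analyse how $e$ is reached.

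The arc $\alpha_{c''} = u'' \rightarrow e$ ends at $e$. At most one arc ends at a given point, and a full chain cannot be extended backwards. Hence $u'' \rightarrow e$ is an arc of $\gamma$ itself (in particular $e \neq \min(\gamma)$), and $c''$ lies strictly under an arc of $\gamma$. Consequently the chain $\gamma''$ with $\max(\gamma'') = c''$ is covered by $\gamma$. Corollary \ref{cor: croisement de chaîne} then forces either $\gamma''$ nested in $\gamma$, or the interleaving configuration $\min(\gamma) < \min(\gamma'') < \max(\gamma) < \max(\gamma'')$, which is impossible since $\max(\gamma'') = c'' < e \le \max(\gamma)$. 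So $\gamma''$ is strictly nested inside $\gamma$.

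Iterating this backwards along the periodic part, each chain visited is strictly nested inside the next one visited. We will measure this with the nesting depth of a chain, meaning the number of chains (up to translation by $n\ZZ$ counted in a fixed window) properly nesting it. Along the periodic part this depth strictly increases at the step $\gamma \to \eta$ and at every step of the backward iteration. A full period would then return to a translate of the same chain with strictly larger depth, which is absurd because depth is invariant under translation by $n$. This contradiction gives $\min(\gamma) < \min(\eta)$. I expect the main obstacle to be justifying this monotonicity of nesting depth at each step $\updownarrow$ of the period, in particular checking that loops never occur inside $\tau_a$ (a loop has no maximum, so the construction never emits $\updownarrow$ after it) and that the choice of neighbor arc (Definition \ref{def: arc couvrant, arc voisin}, Lemma \ref{lem: arc voisin = le plus proche}) never produces the interleaved configuration in the backward step. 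If the depth argument turns out to be fragile, the fallback is to use the closeness property of Lemma \ref{lem: arc voisin = le plus proche} directly. One would show that, in the nested case with single-class support, the outer arc $u \rightarrow d$ cannot be the closest covering arc of $c$, because $\max(\gamma)$ and the points of $\gamma$ are then shielded by the arc of $\gamma$ through which $\gamma$ was entered.
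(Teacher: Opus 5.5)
Your reduction to the case $\min(\eta)<\min(\gamma)$, where $\gamma$ is nested in $\eta$ and $\mathrm{Supp}(\gamma)$ lies in a single class, is correct. Your overall infinite-descent strategy along the period also matches the paper's.

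The gap is in the backward step. Starting from $c''\updownarrow e$, with $u''\rightarrow e$ an arc of $\gamma$ covering $c''$, you apply Corollary \ref{cor: croisement de chaîne} only with $\gamma$ as the chain having the smaller minimum. You never consider the case $\min(\gamma'')<\min(\gamma)$. There the corollary gives the interleaved configuration $\min(\gamma'')<\min(\gamma)<\max(\gamma'')=c''<\max(\gamma)$, with $(\min(\gamma),c'')$ in different classes, and nothing you have used excludes it. This is exactly the configuration the lemma asserts for the factor $c''\updownarrow e$. It occurs in Example \ref{ex: C_a(A)} at the factor $8\downarrow 15$, where the chain $1\rightarrow 6\rightarrow 8$ enters $7\rightarrow 15$. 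So the claim that every backward step is nested is false without further input. As written, your depth argument never uses the hypothesis on $\eta$, so it would rule out every infinite $\mathcal{C}_a(\mathcal{A})$ whose period contains an arrow; that example contradicts this.

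The missing idea is to carry single-class support as an invariant and to apply the closeness of the neighbor arc at $c''$.
\begin{itemize}
\item Suppose $\mathrm{Supp}(\gamma)\subset X$ with $X\in\{\Lcn,\Rcn\}$. Then $c''$ is covered by the arc $u''\rightarrow e$ of $\gamma$, and also by an arc of $\eta$, because $\gamma$ is nested in $\eta$.
\item The points of $\gamma$ lie on the same side of $\eta$ as the points of $X$. Lemma \ref{lem: arc voisin = le plus proche} therefore forces $c''\in X$; otherwise the arc of $\eta$ would be closer to $c''$ than the neighbor arc.
\item Now $\min(\gamma)$ and $c''$ both lie in $X$, which excludes the interleaved case of Corollary \ref{cor: croisement de chaîne}. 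Hence $\gamma''$ is nested in $\gamma$ with support in $X$.
\item The argument then repeats with $(\gamma'',\gamma)$ in place of $(\gamma,\eta)$. This is the paper's proof.
\end{itemize}
Your fallback does not supply this step. No arc of $\gamma$ covers $c=\max(\gamma)$, so $c$ cannot be ``shielded'' by $\gamma$, and there is no local contradiction at $c$. The closeness argument must be applied at the maximum of the predecessor chain, and it works only because of the single-class invariant inherited from the previous step.
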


		\begin{remarque}
			Note that $\gamma$ and $\eta$ can be equal modulo $n$.
		\end{remarque}

		\begin{proof}
			By definition, $c = \max(\gamma)$. Since $c$ has a neighbor arc in the chain $\eta$, we also have $\min(\eta) < c < d \leq \max(\eta)$. Moreover, by construction of $\mathcal{C}_a(\mathcal{A})$, we have either $\min(\gamma) < \min(\eta) < \max(\gamma) < \max(\eta)$ or $\min(\eta) < \min(\gamma) \leq \max(\gamma) < \max(\eta)$. We show by contradiction that the second case cannot happen, using an argument of infinite descent.

			We thus assume that we are in the second case: by construction of the sequence $\mathcal{C}_a(\mathcal{A})$, there exists an arc in $\gamma$ that is the neighbor arc of some integer $z \in \ZZ$ that belongs to $\tau_a$. Thus, $\min(\gamma) < z < \max(\gamma)$ hence $\min(\eta) < z < \max(\eta)$.
			By Corollary \ref{cor: croisement de chaîne}, we have either $\mathrm{Supp}(\gamma) \subset \Lcn$ or $\mathrm{Supp}(\gamma) \subset \Rcn$. If $\mathrm{Supp}(\gamma) \subset \Lcn$ (resp. $\mathrm{Supp}(\gamma) \subset \Rcn$),
			since $\gamma$ is geometrically closer to $z$ than $\eta$ by Lemma \ref{lem: arc voisin = le plus proche}, then $z \in \Lcn$ (resp. $z \in \Rcn$). By Corollary \ref{cor: croisement de chaîne}, this means that the chain containing $z$ is also contained between $\min(\gamma)$ and $\max(\gamma)$. Repeating this argument gives an infinite number of chains appearing between $\min(\gamma)$ and $\max(\gamma)$, which is absurd. Therefore, only the case $\min(\gamma) < \min(\eta) < \max(\gamma) < \max(\eta)$ happens.
		\end{proof}

		A useful consequence of this lemma is the fact that the minimal element of the union of the supports of the chains that appear in a periodic factor of the sequence $\mathcal{C}_a(\mathcal{A})$ belongs to the first chain that appears in this factor, even though it may not appear in the period itself (in fact, it never does).

		\begin{corollaire} \label{cor: m appartient à la première chaine}
			Let $\mathcal{A}$ be a cyclic $c$-noncrossing arc diagram and $a \in \ZZ$ such that $\mathcal{C}_a(\mathcal{A})$ is infinite and $\tau^n_a$, a periodic factor of $\mathcal{C}_a^n(\mathcal{A})$, contains an arrow $\uparrow$ or $\downarrow$. Let $\tau_a$ be a factor of $\mathcal{C}_a(\mathcal{A})$ such that $\tau_a^n = \tau_a \bmod n$. Let $\Gamma_{\tau_a} = \{ \gamma \in \mathrm{Ch}(\mathcal{A}) \;|\; \mathrm{Supp}(\gamma) \cap \tau_a \neq \emptyset \}$ be the set of all chains that have an element in $\tau_a$, and denote $\gamma_0$ the chain that contains the first integer element of $\tau_a$. Then $\min\left( \bigcup_{\gamma \in \Gamma_{\tau_a}} \mathrm{Supp}(\gamma) \right)$ belongs to $\gamma_0$.
		\end{corollaire}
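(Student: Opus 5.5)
The plan is to follow the chains of $\Gamma_{\tau_a}$ in the order in which they appear in $\tau_a$ and show that their minima only increase. By hypothesis $\tau_a$ contains an arrow, and the sequence is periodic modulo $n$. So $\tau_a$ is a concatenation of chain segments separated by arrows $\updownarrow$. Write $\gamma_0, \gamma_1, \dots, \gamma_r$ for the chains met successively in $\tau_a$. By Lemma~\ref{lem: si facteur c|d dans tau_a alors enchevetrement des chaînes}, no loop occurs in $\tau_a$. Each transition in $\tau_a$ is a factor $c \updownarrow d$ with $c = \max(\gamma_i)$ and $d \in \gamma_{i+1}$.

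The first step applies Lemma~\ref{lem: si facteur c|d dans tau_a alors enchevetrement des chaînes} to each such factor, which gives
\begin{equation*}
	\min(\gamma_i) < \min(\gamma_{i+1}) < \max(\gamma_i) < \max(\gamma_{i+1}).
\end{equation*}
Chaining these inequalities yields $\min(\gamma_0) < \min(\gamma_1) < \dots < \min(\gamma_r)$. Hence the minimum of $\bigcup_i \mathrm{Supp}(\gamma_i)$ equals $\min(\gamma_0)$, which lies in $\gamma_0$.

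Two points need care, and I expect the second to be the main obstacle.

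\emph{The last chain may not end in $\tau_a$.} The factor $\tau_a$ may stop in the middle of a chain, or it may end with an arrow. In the latter case, the next element is the translate by a multiple of $n$ of the first integer of $\tau_a$. That chain does not belong to $\Gamma_{\tau_a}$ unless one of its integers appears in $\tau_a$, so it does not affect the minimum. Every chain of $\Gamma_{\tau_a}$ is some $\gamma_i$ above, since any integer of $\tau_a$ belongs to the chain segment in which it is written.

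\emph{The first chain may start before $\tau_a$.} The first integer of $\tau_a$ may lie in the middle of $\gamma_0$. The smaller elements of $\gamma_0$ then do not appear in $\tau_a$, but they still belong to $\mathrm{Supp}(\gamma_0)$. This only lowers $\min(\gamma_0)$, so the conclusion is unaffected.

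I would also check that the hypotheses of Lemma~\ref{lem: si facteur c|d dans tau_a alors enchevetrement des chaînes} are met: $\tau_a$ must be a lift of a periodic factor $\tau_a^n$. This is exactly what the statement assumes.

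Finally, I would remark that the minimum never appears in the period itself. Since the chain $\gamma_0$ is entered through its neighbour arc, its entry point is not its minimum, except possibly at the very start of $\mathcal{C}_a(\mathcal{A})$, which lies outside the periodic part. This remark is not needed for the statement.
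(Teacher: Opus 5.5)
Your argument is correct and is essentially the one the paper intends. The paper gives no separate proof: it presents the corollary as an immediate consequence of Lemma~\ref{lem: si facteur c|d dans tau_a alors enchevetrement des chaînes}, and chaining $\min(\gamma) < \min(\eta)$ over the successive factors $c \updownarrow d$ of $\tau_a$ is exactly how it follows. One minor point: the absence of loops is better justified by noting that once a loop is entered, $\mathcal{C}_a(\mathcal{A})$ contains no further arrow $\uparrow$ or $\downarrow$, which contradicts the period containing one; this does not affect your proof.
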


		\begin{lemme} \label{lem: deux arcs imbriqués avec la fin dans le même Lc Rc alors ils sont dans la période}
			Let $\mathcal{A}$ be a cyclic $c$-noncrossing arc diagram such that $\mathcal{C}_a(\mathcal{A})$ is infinite. Let $q \rightarrow q'$ be an arc of $\mathcal{A}$ belonging to a chain $\gamma'$, and $\gamma$ be a chain of $\mathcal{A}$ such that $\gamma \bmod n$ intersects the period of $\mathcal{C}_a^n(\mathcal{A})$. If there exists $p \in \gamma$ such that $q < p < q'$ and $(p,q') \in (\Lcn)^2 \sqcup (\Rcn)^2$, then $\gamma' \bmod n$ also intersects the period of $\mathcal{C}_a^n(\mathcal{A})$.
		\end{lemme}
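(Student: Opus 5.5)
The idea is to show that the point $p$ forces the construction of $\mathcal{C}_b(\mathcal{A})$, for $b$ in the period, to eventually land on $\gamma'$ (up to translation by $n$).

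First, use Corollary \ref{cor: croisement de chaîne} to locate $\gamma$ relative to the arc $q \rightarrow q'$. Since $p \in \gamma$, $q < p < q'$ and $(p,q')$ lie in the same set $\Lcn$ or $\Rcn$, Lemma \ref{lem: critère de croisement d'arcs} constrains where the rest of $\gamma$ can go. The arc $q \rightarrow q'$ cannot cross an arc of $\gamma$ incident to $p$. So either $\gamma$ is entirely nested strictly between $q$ and $q'$, or $\gamma$ overlaps $\gamma'$ in the interleaved pattern $\min(\gamma') < \min(\gamma) < \max(\gamma') < \max(\gamma)$, or the reverse. The same-side condition on $(p,q')$ is what excludes $\gamma$ escaping past $q'$ on the wrong side.

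Next, translate $\gamma$ by a multiple of $n$ so that an element $b$ of $\gamma$ appears in a periodic factor $\tau_a$ of $\mathcal{C}_a(\mathcal{A})$. Lemma \ref{lem: si facteur c|d dans tau_a alors enchevetrement des chaînes} then says the chains visited after $\gamma$ form a strictly interleaving sequence: each successive chain starts inside the previous one and ends beyond it. Their maxima therefore increase. Their minima also increase but stay bounded below by $\min(\gamma_0)$, by Corollary \ref{cor: m appartient à la première chaine}. The key step is to follow this sequence from $\gamma$ until the first chain $\eta$ with $\max(\eta) \ge q'$ (or the corresponding interleaving condition). Such a chain exists because the maxima grow without bound along an infinite sequence with only finitely many chains modulo $n$.

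At that moment, consider the previous chain $\eta_0$, with $\max(\eta_0) < q'$, and its neighbor arc. Because $\max(\eta_0)$ lies under $q \rightarrow q'$, the arc $q \rightarrow q'$ is a covering arc of $\max(\eta_0)$. I would then argue via Definition \ref{def: arc couvrant, arc voisin} and Lemma \ref{lem: arc voisin = le plus proche} that the neighbor arc is either $q \rightarrow q'$ itself or an arc nested inside it. This uses the same-side hypothesis: a chain reached from inside without crossing must leave on the same side as $q'$. In the nested case, we iterate the argument with a strictly smaller interval $]q,q'[$; finiteness of the integers in it guarantees termination. Hence some chain in the periodic part equals $\gamma'$ modulo $n$.

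The main obstacle is this last step. One must control exactly which arc is chosen as neighbor arc, distinguishing the cases $S^X_a \neq \emptyset$ and $S^X_a = \emptyset$, and one must check that the side condition ($\downarrow$ versus $\uparrow$, i.e.\ $\max(\eta_0) \in \Lcn$ or $\Rcn$) is compatible with $q'$ being on the same side as $p$. I would first try to prove a small auxiliary claim: if $x$ lies under an arc $u \rightarrow v$ with $x,v$ in the same side set, then $\alpha_x$ is nested in or equal to $u \rightarrow v$. This claim should follow from the crossing criterion of Lemma \ref{lem: critère de croisement d'arcs}.
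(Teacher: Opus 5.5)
\paragraph{Verdict: genuine gap at the key step.} Your overall strategy is right: follow $\mathcal{C}_a(\mathcal{A})$ from $\gamma$ and show that it is forced onto $q'$. But the step that carries the whole proof is left open, and the tool you propose for it is false.

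Knowing that $q\rightarrow q'$ covers $\max(\eta_0)$ does not control where the neighbor arc of $\max(\eta_0)$ ends unless $\max(\eta_0)$ lies in the same set $X\in\{\Lcn,\Rcn\}$ as $q'$, because the selection rule depends on the side of the point. You only assert this (``a chain reached from inside without crossing must leave on the same side as $q'$''). Moreover, it has to be propagated along \emph{every} chain between $\gamma$ and $\eta_0$, not just at the last step.

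Your auxiliary claim is also wrong. Take $u_0<u<x<v_0<v$ with $x,v_0,v\in\Lcn$, $u\in\Rcn$, and arcs $u_0\rightarrow v_0$, $u\rightarrow v$. By Lemma \ref{lem: critère de croisement d'arcs} these arcs do not cross, since $(u,v_0)\in\Rcn\times\Lcn$. Yet $\alpha_x=u_0\rightarrow v_0$, which is not nested in $u\rightarrow v$.

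Finally, the ``iterate in the nested case'' step does not fit your own setup. You chose $\eta_0$ as the last chain with maximum below $q'$. Then, once you know $\max(\eta_0)\in X$, a neighbor arc ending strictly before $q'$ at a point of $X$ is impossible: Corollary \ref{cor: croisement de chaîne} would give $\max(\eta)<q'$.

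\paragraph{The fix.} What is true, and is all you need, is the Remark after Lemma \ref{lem: arc voisin = le plus proche}. If ${S'}_x^{X}\neq\emptyset$, the neighbor arc of $x$ is the covering arc with the smallest final point in $X$. So if $q\rightarrow q'$ covers $x$ and $x,q'\in X$, the next integer $d$ of the sequence satisfies $d\in X$ and $d\le q'$.

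The proof is then a direct induction, which is the paper's argument:
\begin{itemize}
\item Apply Corollary \ref{cor: croisement de chaîne} to $q\rightarrow q'$ and the part of $\gamma$ starting at $p$. This gives $\max(\gamma)<q'$ and $\max(\gamma)\in X$, which is the precise form of your first step.
\item The next entry $d$ of $\mathcal{C}_a(\mathcal{A})$ then satisfies $d\in X$ and $d\le q'$.
\item If $d=q'$, the neighbor arc is $q\rightarrow q'$, since it is the only arc ending at $q'$, so $\gamma'$ is reached.
\item Otherwise $q<d<q'$, $(d,q')\in X^2$, and the chain of $d$ lies in the period. Restart with $p:=d$.
\end{itemize}
The successive values strictly increase inside $\rrbracket q,q'\llbracket$, so the process stops at $q'$. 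The ``first chain with $\max\geq q'$'' device and the nesting claim are then unnecessary.
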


		\begin{proof}
			Suppose that $p$ and $q'$ both belong in $\Lcn$ (resp. $\Rcn$). In particular, by Corollary \ref{cor: croisement de chaîne}, we have that $p' = \max(\gamma) < q'$ and $p' \in \Lcn$ (resp. $p' \in \Rcn$). Thus, we have ${S'}_{p'}^{\Lcn} \neq \emptyset$ (resp. ${S'}_{p'}^{\Rcn} \neq \emptyset$) since it contains the arc $q \rightarrow q'$. Therefore, the neighbor arc of $p'$, $\alpha_{p'} = u_0 \rightarrow u_0'$, has its final point $u_0'$ in $\Lcn$ (resp. $\Rcn$) and $u_0' \leq q'$. If $u_0' = q'$, then by construction the chain containing $q'$ intersects the period of $\mathcal{C}_a^n(\mathcal{A})$. Otherwise, we have $q < u_0' < q'$. We apply the same argument again with $u_0'$ and $q'$ to obtain an arc $u_1 \rightarrow u_1'$, then $u_2 \rightarrow u_2'$, etc. Each time, either $q' = u_i'$ for some $i \in \NN$ and thus the chain containing $q'$ intersects the period of $\mathcal{C}_a^n(\mathcal{A})$, or we consider the neighbor arc of $u_i'$. Since the sequence $u_i'$ is strictly increasing and takes values between $p'$ and $q'$, it takes a finite number of iterations to conclude.
		\end{proof}

		The next result states that if $\mathcal{C}_a(\mathcal{A})$ is infinite, one can remove all chains that appear in the period of $\mathcal{C}_a^n(\mathcal{A})$ an replace them with a single loop.

		\begin{proposition} \label{prop: on peut remplacer une boucle de chaine par une seule boucle}
			Let $\mathcal{A}$ be a cyclic $c$-noncrossing arc diagram and $a \in \ZZ$ such that $\mathcal{C}_a(\mathcal{A})$ is infinite. Note $\tau_a^n$ a periodic factor of $\mathcal{C}_a^n(\mathcal{A})$. Let
			\begin{equation*}
				\Gamma_{\tau_a^n} = \{ \gamma \in \mathrm{Ch}(\mathcal{A}) \cup \mathrm{L}(\mathcal{A}) \;|\; \exists p \in \mathrm{Supp}(\gamma),\, p \bmod n \in \tau_a^n \}.
			\end{equation*}

			Split all the numbers of the chains of $\Gamma_{\tau_a^n}$ in two sets $L$ and $R$ such that:
			\begin{equation*}
				L = \Lcn \cap \bigcup_{\gamma \in \Gamma_{\tau_a^n}} \mathrm{Supp}(\gamma) \quad\text{ and }\quad R = \Rcn \cap \bigcup_{\gamma \in \Gamma_{\tau_a^n}} \mathrm{Supp}(\gamma).
			\end{equation*}

			We have the two following results:
			\begin{enumerate}
				\item The sets $L$ and $R$ are both nonempty and thus infinite and translationally invariant by multiples of $n$.
				\item If $\widetilde{\gamma}$ is the loop of support $L \cup R$, then the set $\mathcal{B} = (\mathcal{A} \setminus \bigcup_{\gamma \in \Gamma_{\tau_a^n}} \gamma) \cup \widetilde{\gamma}$ is a cyclic $c$-noncrossing arc diagram.
			\end{enumerate}
		\end{proposition}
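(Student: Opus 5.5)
Here is the plan. The point is that the chains in $\Gamma_{\tau_a^n}$, taken modulo $n$, form a cyclically interlocked family, and Lemma \ref{lem: si facteur c|d dans tau_a alors enchevetrement des chaînes} gives exactly this interlocking. I would first dispose of a degenerate case. If $\tau_a^n$ contains no arrow $\updownarrow$, then the eventual part of $\mathcal{C}_a(\mathcal{A})$ is a single loop $\gamma$. In that case $\Gamma_{\tau_a^n}=\{\gamma\}$, the loop is real by definition of a cyclic $c$-noncrossing arc diagram, both statements are immediate, and $\mathcal{B}=\mathcal{A}$. So assume $\tau_a^n$ contains an arrow. Then every element of $\Gamma_{\tau_a^n}$ is a chain.

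\textbf{Point 1.} Lift the periodic factor to a factor $\tau_a$ of $\mathcal{C}_a(\mathcal{A})$ of the form $\gamma_0 \updownarrow \gamma_1 \updownarrow \cdots \updownarrow \gamma_r$, where $\gamma_r=\gamma_0+pn$ for some $p\geq 1$. Such a lift exists because the integers of $\mathcal{C}_a(\mathcal{A})$ strictly increase. Lemma \ref{lem: si facteur c|d dans tau_a alors enchevetrement des chaînes} gives, for each $i$,
\[
\min(\gamma_i)<\min(\gamma_{i+1})<\max(\gamma_i)<\max(\gamma_{i+1}).
\]
By Corollary \ref{cor: croisement de chaîne} applied to these crossing chains, $(\min(\gamma_{i+1}),\max(\gamma_i))\in \Lcn\times\Rcn\cup\Rcn\times\Lcn$. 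Hence both $\Lcn$ and $\Rcn$ meet $\bigcup\mathrm{Supp}(\gamma_i)$, so $L$ and $R$ are nonempty. Translation invariance holds because $\Gamma_{\tau_a^n}$ is defined modulo $n$ and $\mathcal{A}$ is $n$-periodic, and nonempty translation-invariant sets are infinite.

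\textbf{Point 2.} The new loop $\widetilde\gamma$ consists of the arcs between consecutive elements of $L\cup R$. Each such arc must pass to the left of the points of $\Lcn$ and to the right of the points of $\Rcn$ strictly between its endpoints, so condition 1 holds by construction. Condition 2 also holds: $\widetilde\gamma$ is real by point 1, and $\widetilde\gamma$ is the only loop of $\mathcal{B}$ because $\mathcal{A}$ had no other loop compatible with an infinite $\mathcal{C}$ through chains. I would also need to check that $\mathcal{B}$ is a genuine cyclic arc diagram, i.e.\ that the supports of the removed chains are disjoint from the remaining ones. This is automatic, since chains partition $\ZZ$.

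\textbf{Main obstacle: noncrossing.} It remains to show that no arc $q\rightarrow q'$ of a remaining chain $\gamma'$ crosses an arc $u\rightarrow v$ of $\widetilde\gamma$. I would use the criterion of Lemma \ref{lem: critère de croisement d'arcs} and argue by contradiction. Suppose such a crossing exists. Then one of $u,v$ lies strictly between $q$ and $q'$, say $p$, and $p$ belongs to a chain $\gamma$ of $\Gamma_{\tau_a^n}$. The crossing condition forces $p$ to be on the wrong side, meaning $(p,q')\in(\Lcn)^2\sqcup(\Rcn)^2$ or the symmetric configuration at $q$. In the first case, Lemma \ref{lem: deux arcs imbriqués avec la fin dans le même Lc Rc alors ils sont dans la période} gives $\gamma'\in\Gamma_{\tau_a^n}$, contradicting the fact that $\gamma'$ is a remaining chain. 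The symmetric configuration is handled with the mirrored diagram $\mathcal{A}'$ from the remark after Lemma \ref{lem: arc voisin = le plus proche}, together with the mirrored version of the lemma. The delicate part is the case analysis showing that every crossing produces such a nested same-side pair. There I would use the fact that the consecutive chains $\gamma_i$ interlock to cover all of $\ZZ$, by Corollary \ref{cor: m appartient à la première chaine} and the displayed inequalities. This coverage lets any endpoint of the long arc $u\rightarrow v$ be compared with an actual arc of some $\gamma_i$, which reduces a crossing with $\widetilde\gamma$ to a configuration with an original arc of $\mathcal{A}$.
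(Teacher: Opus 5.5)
Your degenerate case and point~1 follow the paper's argument. Condition~2 holds trivially, because loops kept from $\mathcal{A}$ are already real; you therefore do not need the claim that $\widetilde\gamma$ is the only loop, and you have not justified it.

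\textbf{The gap is in the noncrossing step.} Write $u\rightarrow v$ for an arc of $\widetilde\gamma$ joining two different chains of $\Gamma_{\tau_a^n}$, and $q\rightarrow q'$ for a remaining arc.
\begin{enumerate}
\item Your claim that a crossing puts $u$ or $v$ strictly between $q$ and $q'$ is false. By Lemma~\ref{lem: critère de croisement d'arcs}, the configuration $u<q<q'<v$ with $(q,q')\in\Lcn\times\Rcn\sqcup\Rcn\times\Lcn$ is a crossing. Lemma~\ref{lem: deux arcs imbriqués avec la fin dans le même Lc Rc alors ils sont dans la période} says nothing about it.
\item For $u<q<v<q'$ with $(q,v)$ on the same side, the ``mirrored lemma'' is not available. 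Under the neighbor-arc correspondence of the remark, the sequence $\mathcal{C}$ of $\mathcal{A}'$ corresponds in $\mathcal{A}$ to a backward walk: from the minimum of a chain to the initial point of that minimum's neighbor arc. The mirrored lemma concerns the periodic chains of this backward walk. Nothing in the paper or your proposal shows these chains coincide with $\Gamma_{\tau_a^n}$, and that identification is exactly what your use of it requires.
\end{enumerate}

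\textbf{The missing ingredient}, which your last sentence only gestures at, is this: every arc $u\rightarrow v$ of $\widetilde\gamma$ lies under an original arc $u'\rightarrow v'$ of a chain of $\Gamma_{\tau_a^n}$ with $u'\leq u<v\leq v'$.
\begin{itemize}
\item If $u$ is not the maximum of its chain, take the next arc of that chain. Its final point is in $L\cup R$, is larger than $u$, and differs from $v$, so it is larger than $v$.
\item If $u$ is the maximum, take the neighbor arc $\alpha_u$. By construction of $\mathcal{C}_a(\mathcal{A})$ it belongs to the next chain of the periodic part. Its final point therefore lies in $L\cup R$ and exceeds $u$, hence is at least $v$.
\end{itemize}

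With this fact, both problematic configurations close:
\begin{itemize}
\item In the nested configuration $u<q<q'<v$, the arc $q\rightarrow q'$ sits under $u'\rightarrow v'$. Noncrossing then forces $(q,q')$ onto the same side, so there is no crossing.
\item The configuration $u<q<v<q'$ needs only the forward lemma. That lemma puts $(v,q')$ on opposite sides. Next, $v'=q'$ is impossible, since $q'$ is not in a chain of $\Gamma_{\tau_a^n}$. If $v'>q'$, nesting puts $q$ and $q'$ on the same side. If $v'<q'$, noncrossing puts $(q,v')$ on opposite sides, and the forward lemma applied to $q<v'<q'$ puts $(v',q')$ on opposite sides. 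In either case $q$ and $q'$ are on the same side, so $(q,v)$ are on opposite sides and there is no crossing.
\end{itemize}
This is the paper's case analysis. Your treatment of the other two configurations, $q<u<q'<v$ and $q<u<v<q'$, by the forward lemma alone is correct.
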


		\begin{proof}
			If $|\Gamma_{\tau_a^n}| = 1$, then its only element is a (real) loop. By definition, it contains in its support an infinite amount of elements in $\Lcn$ and $\Rcn$ (there is at least one in each set, as well as all the translations by a multiple of $n$). This proves the first point. The second point is trivial in this case since $\widetilde{\gamma}$ is the only element of $\Gamma_{\tau_a^n}$.

			Now suppose that $|\Gamma_{\tau_a^n}| > 1$. We show the two points separately.
			\begin{enumerate}
				\item Let $\tau_a$ be a factor of $\mathcal{C}_a(\mathcal{A})$ such that $\tau_a \bmod n = \tau_a^n$. Since $\Gamma_{\tau_a^n}$ contains at least two cycles, there exists a factor $x \updownarrow y$, with $x,y \in \ZZ$, in $\tau_a$. Denote $z$ the minimal element of the chain containing $y$. Using Lemma \ref{lem: si facteur c|d dans tau_a alors enchevetrement des chaînes} and Corollary \ref{cor: croisement de chaîne}, we have that $(x,z) \in \Lcn \times \Rcn \sqcup \Rcn \times \Lcn$. This means the sets $L$ and $R$ are nonempty. Since they are stable by translation by a multiple of $n$, they are infinite.

				\item We know that $(\mathcal{A} \setminus \bigcup_{\gamma \in \Gamma_{\tau_a^n}} \gamma) \cup \widetilde{\gamma}$ is a cyclic arc diagram since the loop added has support exactly the union of the support of the chains and loops removed. We need to show that it is noncrossing. Two arcs in $(\mathcal{A} \setminus \bigcup_{\gamma \in \Gamma_{\tau_a^n}} \gamma)$ are naturally noncrossing since $\mathcal{A}$ is a cyclic $c$-noncrossing arc diagram. We only need to check that no arc of $\mathcal{A}$ crosses an arc of $\widetilde{\gamma}$. Let $p \rightarrow q$ be an arc of $\widetilde{\gamma}$ and $x \rightarrow y$ be an arc of $(\mathcal{A} \setminus \bigcup_{\gamma \in \Gamma_{\tau_a^n}} \gamma)$, where $p < q$ and $x < y$. Note $\gamma_1$ the chain or loop of $\Gamma_{\tau_a^n}$ that contains $p$, and $\gamma_2$ the chain or loop of $\Gamma_{\tau_a^n}$ that contains $q$. If $\gamma_1 = \gamma_2$, then $p \rightarrow q$ is an arc of $\gamma_1$, and therefore an arc of $\mathcal{A}$. Hence, it does not cross $x \rightarrow y$. We now assume that $\gamma_1$ and $\gamma_2$ are distinct.

				Let us consider all six possible orders of the integers $p, q, x, y$ and we show using Lemma \ref{lem: critère de croisement d'arcs} that these arcs do not cross.

				\begin{itemize}
					\item If $p < q < x < y$ or $x < y < p < q$. In this case, the arcs $p \rightarrow q$ and $x \rightarrow y$ do not cross.

					\item If $p < x < y < q$, we need to show that $(x,y) \in (\Lcn)^2 \sqcup (\Rcn)^2$. If $p = \max(\gamma_1)$, consider $\alpha_p = u \rightarrow v$ the neighbor arc of $p$. By construction, $u < p < v$ and $u,v \in L \cup R$, since $p$ and $q$ are consecutive in $L \cup R$, then $u < p < q \leq v$. Therefore, $u < x < y < v$. Since the arc $u \rightarrow v$ and $x \rightarrow y$ belong to $\mathcal{A}$, they do not cross, hence $(x,y) \in (\Lcn)^2 \sqcup (\Rcn)^2$. If $p < \max(\gamma_1)$, then we have $p < x < y < \max(\gamma_1)$ since $q < \max(\gamma_1)$ by the same logic as previously for $v$. Once again, this proves that $(x,y) \in (\Lcn)^2 \sqcup (\Rcn)^2$.

					\item If $x < p < y < q$, we need to show that $(p,y) \in (\Lcn \times \Rcn) \sqcup (\Rcn \times \Lcn)$. By Lemma \ref{lem: deux arcs imbriqués avec la fin dans le même Lc Rc alors ils sont dans la période}, we have $(p,y) \in (\Lcn \times \Rcn) \sqcup (\Rcn \times \Lcn)$ since the chain containing $x \rightarrow y$ does not belong to $\Gamma_{\tau_a^n}$.

						\item If $p < x < q < y$, we need to show that $(x,q) \in (\Lcn \times \Rcn) \sqcup (\Rcn \times \Lcn)$. Suppose that $q \in \Lcn$, the case where $q \in \Rcn$ is analogous, and we show that $x \in \Rcn$. By Lemma \ref{lem: deux arcs imbriqués avec la fin dans le même Lc Rc alors ils sont dans la période}, $y \in \Rcn$ since $q \in \Lcn$ and the chain containing $x \rightarrow y$ does not belong to $\Gamma_{\tau_a^n}$.

					\begin{minipage}{0.72\linewidth}
						If $p \neq \max(\gamma_1)$, let $u = p$ and $v \in \ZZ$ such that $u \rightarrow v \in \mathcal{A}$. Otherwise, let $u \rightarrow v = \alpha_p$, the neighbor arc of $p$. In both cases, we have $u \leq p < q \leq v$, and $u \rightarrow v$ is an arc of some chain of  $\Gamma_{\tau_a^n}$.

						If $u < x < y < v$, then $(x,y) \in (\Lcn)^2 \sqcup (\Rcn)^2$ therefore $x \in \Rcn$. Otherwise, we have $u < x < v < y$. By Lemma \ref{lem: deux arcs imbriqués avec la fin dans le même Lc Rc alors ils sont dans la période} applied to $x < v < y$, we have that $v \in \Lcn$ because $x \rightarrow y$ does not belong to a chain of $\Gamma_{\tau_a^n}$. Therefore, by Lemma \ref{lem: critère de croisement d'arcs}, $x \in \Rcn$. The following diagram gives a visual representation of this last case.
					\end{minipage}\hfill%
					\begin{tikzpicture}[baseline={(current bounding box.center)}, scale=1.2]
						\def\n{9}

						\newcommand{\ang}[1]{{-(#1-1)*360/\n + 90}}

						\foreach \i in {0,...,7} {
							\coordinate (\i) at (\ang{\i}:1);
						}

						\foreach \i in {2,6}{
							\draw (0,0) -- (\i);
						}
						\foreach \i in {3,4,5}{
							\draw (\ang{\i}:1.7) -- (\i);
						}

						\draw[gray!70!black] plot [ smooth, tension=0.8 ] coordinates {(\ang{0}:1.1) (\ang{1}:1.1) (\ang{2}:1.2) (3) (4)};

						\draw[densely dashed] plot [ smooth, tension=0.8 ] coordinates {(\ang{0}:0.9) (1) (\ang{2}:1.1) (\ang{3}:0.9) (\ang{4}:0.9) (5) (\ang{6}:1.1) (7)};

						\draw[thick] plot [ smooth, tension=0.8 ] coordinates {(2) (\ang{3}:0.8) (\ang{4}:0.8) (\ang{5}:0.9) (6)};

						\draw[draw=none] (0,0) -- node[pos=1, circle, fill=black, inner sep=1pt] {} node[pos=0.8, scale=0.9] {$p=u$} (1);
						\draw[draw=none] (0,0) -- node[pos=1, circle, fill=black, inner sep=1pt] {} node[pos=0.9, xshift=-1ex] {$x$} (2);
						\draw[draw=none] (0,0) -- node[pos=1, circle, fill=black, inner sep=1pt] {} node[pos=0.9, xshift=-1ex] {$y$} (6);
						\draw[draw=none] (0,0) -- node[pos=1, circle, fill=black, inner sep=1pt] {} node[pos=1, anchor=south, scale=0.8] {$\max(\gamma_1)$} (7);

						\draw[draw=none] (0,0) -- node[pos=1, circle, fill=black, inner sep=1pt] {} node[pos=1.1, yshift=1ex] {$q$} (3);
						\draw[draw=none] (0,0) -- node[pos=1, circle, fill=black, inner sep=1pt] {} node[pos=1, anchor=west, scale=0.8] {$\max(\gamma_2)$} (4);
						\draw[draw=none] (0,0) -- node[pos=1, circle, fill=black, inner sep=1pt] {} node[pos=1.1, xshift=1ex] {$v$} (5);
					\end{tikzpicture}

					\item If $x < p < q < y$, we need to show that $(p,q) \in (\Lcn)^2 \times (\Rcn)^2$. By Lemma \ref{lem: deux arcs imbriqués avec la fin dans le même Lc Rc alors ils sont dans la période} applied on $x < p < y$ and $x < q < y$, we obtain that $(p,y), (q,y) \in (\Lcn \times \Rcn) \sqcup (\Rcn \times \Lcn)$. Therefore, $(p,q) \in (\Lcn)^2 \sqcup (\Rcn)^2$.
				\end{itemize}

				Hence the result: the arcs $p \rightarrow q$ and $x \rightarrow y$ do not cross.
			\end{enumerate}
		\end{proof}

		\begin{exemple} \label{ex: C(A) remplace chaine par boucle} Consider the cyclic $c$-noncrossing arc diagram $\mathcal{A}$ from Example \ref{ex: C_a(A)}.

			\noindent\begin{minipage}{0.7\textwidth}
				We have ($n = 10$):
				\begin{align*}
					\mathcal{C}_2(\mathcal{A}) & = {\color{gray!70!black}2 \rightarrow 3} \uparrow \mathbf{5} \uparrow {6 \rightarrow 8} \downarrow \mathbf{15} \uparrow {16 \rightarrow 18} \downarrow \mathbf{25} \uparrow {26} \cdots \\
					\mathcal{C}_2^n(\mathcal{A}) & = {\color{gray!70!black}2 \rightarrow 3} \uparrow \mathbf{5} \uparrow {6 \rightarrow 8} \downarrow \mathbf{5} \uparrow {6 \rightarrow 8} \downarrow \mathbf{5} \uparrow {6} \cdots
				\end{align*}
				Thus, $\gamma_{\tau_2^n} = \{ -3 \rightarrow 5,\; 1 \rightarrow 6 \rightarrow 8 \} + n\ZZ$. If we remove all these chains from $\mathcal{A}$ and replace them with the loop of support $\{ -3, 5, 1, 6, 8 \} + n\ZZ$, the cyclic arc diagram obtained is indeed still $c$-noncrossing.
			\end{minipage} \hfill
			\begin{tikzpicture}[baseline, scale=1.1]
				\def\n{10}

				\def\Rc{2, 3, 5, 7, 9, 10}
				\def\Lc{1, 4, 6, 8}

				\newcommand{\ang}[1]{{-(#1-1)*360/\n + 90}}

				\foreach \i in {1,...,\n}{
					\coordinate (\i) at (\ang{\i}:1);
				}

				\foreach \i in \Rc{
					\draw (0,0) -- (\i);
				}
				\foreach \i in \Lc{
					\draw (\ang{\i}:1.5) -- (\i);
				}

				\draw[gray!70!black] plot [ smooth, tension=0.8 ] coordinates {(9) (\ang{10}:1.3) (\ang{1}:0.7) (2) (3)};
				\draw[thick] plot [ smooth, tension=0.8 ] coordinates {(7) (8) (\ang{9}:1.2) (\ang{10}:1.4) (1) (\ang{2}:1.2) (\ang{3}:1.2) (\ang{4}:0.7) (5) (6) (7)};

				\foreach \i in {1,...,\n}{
					\node[circle, fill=white, inner sep=0.2pt] at (\i) {\i};
				}
			\end{tikzpicture}\hfill\phantom{}
		\end{exemple}

		We can finally prove the fact we noticed in Example \ref{ex: C_a(A)}.

		\begin{proposition} \label{prop: les périodes sont les mêmes dans C_a(A)}
			Let $\mathcal{A}$ be a cyclic $c$-noncrossing arc diagram and $a,b \in \ZZ$. If $\mathcal{C}_a(\mathcal{A})$ is infinite, then $\mathcal{C}_a^n(\mathcal{A})$ and $\mathcal{C}_b^n(\mathcal{A})$ are both ultimately periodic, and they have the same period.
		\end{proposition}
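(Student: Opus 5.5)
The plan is to reduce the statement to showing that the two periodic parts share at least one residue class. Everything else then follows from determinism of the construction of Definition \ref{def: suite C_a(x)}.

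\textbf{Step 1 (infiniteness and periodicity).} By Proposition \ref{prop: C_a(A) est fini ou infini}, if $\mathcal{C}_a(\mathcal{A})$ is infinite then $\mathcal{C}_b(\mathcal{A})$ is infinite too. Both $\mathcal{C}_a^n(\mathcal{A})$ and $\mathcal{C}_b^n(\mathcal{A})$ are then ultimately periodic. Let $\tau_a^n$ and $\tau_b^n$ be periodic factors of these sequences.

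\textbf{Step 2 (reduction to a common residue).} The continuation of $\mathcal{C}_x(\mathcal{A})$ from an integer term $y$ depends only on $y$, because the procedure restarts from $y$ exactly as in the definition. It also commutes with translation by $n$, since $\mathcal{C}_{y+pn}(\mathcal{A}) = \mathcal{C}_y(\mathcal{A}) + pn$. This is the same argument as in the proof of Proposition \ref{prop: C_a(A) est fini ou infini}. Consequently, if some residue occurs in both periodic parts, then from that occurrence on the two sequences modulo $n$ coincide. Hence they have the same period, as sets of cyclic rotations of a minimal factor. It therefore suffices to prove that $\Gamma_{\tau_a^n}$ and $\Gamma_{\tau_b^n}$ (as in Proposition \ref{prop: on peut remplacer une boucle de chaine par une seule boucle}) share an element.

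\textbf{Step 3 (contradiction via two real loops).} Assume $\Gamma_{\tau_a^n} \cap \Gamma_{\tau_b^n} = \emptyset$. By Proposition \ref{prop: on peut remplacer une boucle de chaine par une seule boucle}, the supports $L_a \cup R_a$ and $L_b \cup R_b$ of these two families each meet both $\Lcn$ and $\Rcn$. The aim is to show that the two replacement loops $\widetilde{\gamma}_a$ and $\widetilde{\gamma}_b$ can be inserted simultaneously, which would give a cyclic $c$-noncrossing arc diagram with two real loops. That is impossible, since such a diagram has at most one real loop. Concretely, two translation-invariant loops whose supports each contain points of both $\Lcn$ and $\Rcn$ always contain a pair of arcs satisfying one of the cases of Lemma \ref{lem: critère de croisement d'arcs}.

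To insert both loops, I would first form $\mathcal{B}$ by replacing the chains of $\Gamma_{\tau_a^n}$. I then check that the chains of $\Gamma_{\tau_b^n}$ still form an infinite periodic sequence $\mathcal{C}_b(\mathcal{B})$ with the same periodic factor, so that the proposition applies a second time in $\mathcal{B}$.

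\textbf{Main obstacle.} The hard part is verifying that the neighbor arcs $\alpha_{\max(\gamma)}$, for $\gamma \in \Gamma_{\tau_b^n}$, are unchanged in $\mathcal{B}$. The risk is that a new arc $p \rightarrow q$ of $\widetilde{\gamma}_a$ becomes geometrically closer to $\max(\gamma)$ (Lemma \ref{lem: arc voisin = le plus proche}). I would attack this with the case analysis from the proof of Proposition \ref{prop: on peut remplacer une boucle de chaine par une seule boucle}: each new arc $p \rightarrow q$ is nested under an old arc $u \rightarrow v$ with $u \le p < q \le v$, where $u \rightarrow v$ lies in a chain of $\Gamma_{\tau_a^n}$. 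Combining this with Lemma \ref{lem: deux arcs imbriqués avec la fin dans le même Lc Rc alors ils sont dans la période} should show the following: if $p \rightarrow q$ separated $\max(\gamma)$ from its old neighbor arc, then the chain of $\gamma$ would itself intersect the period of $\mathcal{C}_a^n(\mathcal{A})$. That contradicts the disjointness assumption.

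If this route proves too delicate, the fallback is to avoid the second replacement altogether. One would instead show directly, using Lemma \ref{lem: si facteur c|d dans tau_a alors enchevetrement des chaînes} and Corollary \ref{cor: croisement de chaîne}, that the interleaved chain families $\Gamma_{\tau_a^n}$ and $\Gamma_{\tau_b^n}$ must already contain two crossing arcs in $\mathcal{A}$, since each family winds around the annulus.
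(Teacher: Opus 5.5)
Your Steps 1 and 2 are sound and coincide with the first half of the paper's proof. Passing from a common chain to a common residue is also fine, since once $\mathcal{C}_a(\mathcal{A})$ enters a chain it runs up to its maximum.

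\textbf{The gap.} It is in Step 3, exactly at the step you defer. Everything rests on the claim that, after replacing $\Gamma_{\tau_a^n}$ by $\widetilde{\gamma}_a$, the neighbor arcs $\alpha_{\max(\gamma)}$ for $\gamma \in \Gamma_{\tau_b^n}$ are unchanged. You only assert this, and it is a strong restriction, not a routine check.

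To see why, take the main case where $\tau_b$ contains an arrow. Choose a factor $x \updownarrow y$ with $x = \max(\gamma)$ and $\alpha_x = s \rightarrow y$ in a chain $\eta$, and say $x \in \Lcn$.
\begin{itemize}
\item In $\mathcal{B}$, every arc outside the real loop $\widetilde{\gamma}_a$ has both endpoints on the same side, since otherwise it would cross the loop.
\item Hence $\mathrm{Supp}(\gamma) \subset \Lcn$, and by Lemma~\ref{lem: si facteur c|d dans tau_a alors enchevetrement des chaînes} and Corollary~\ref{cor: croisement de chaîne}, $\mathrm{Supp}(\eta) \subset \Rcn$.
\item Consequently $S_x^{\Lcn} = \emptyset$ in $\mathcal{A}$, and $s$ is the smallest initial point of a covering arc of $x$.
\item Let $p \rightarrow p'$ be the arc of $\widetilde{\gamma}_a$ over $x$. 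Definition~\ref{def: arc couvrant, arc voisin} shows that $\alpha_x$ survives in $\mathcal{B}$ only when $s < p < p' < y$ with $p, p' \in \Rcn$. In every other noncrossing position, the loop arc becomes the new neighbor arc.
\end{itemize}
So you must derive a contradiction from all these configurations. That is the heart of the proposition, not a verification.

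Two further problems. Lemma~\ref{lem: deux arcs imbriqués avec la fin dans le même Lc Rc alors ils sont dans la période}, which you propose to use, concludes something about the chain carrying the covering arc (here $\eta$). It says nothing about $\gamma$, since $\max(\gamma)$ lies under no arc of $\gamma$. Your fallback is too vague to count as an argument.

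\textbf{How the paper closes it.} The paper never puts both loops in one diagram. It builds $\mathcal{B}_a$ and $\mathcal{B}_b$ separately. It takes an arc $x \rightarrow y$ of $\widetilde{\gamma}_b$ with $(x,y)$ mixed, and arcs $p \rightarrow p'$, $q \rightarrow q'$ of $\widetilde{\gamma}_a$ with $p < x < p'$ and $q < y < q'$. Since the chains through $x$ and $y$ miss the period of $\mathcal{C}_a^n(\mathcal{A})$, Lemma~\ref{lem: deux arcs imbriqués avec la fin dans le même Lc Rc alors ils sont dans la période} forces $(x,p')$ and $(y,q')$ to be mixed. Corollary~\ref{cor: croisement de chaîne} then makes $(x,y)$ same-sided, a contradiction.

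\textbf{A direct fix for your setup.} You can also close your own setup in $\mathcal{A}$ with no second replacement.
\begin{itemize}
\item Every integer outside $\bigcup_{\gamma \in \Gamma_{\tau_a^n}} \mathrm{Supp}(\gamma)$ is covered by an arc of a chain of $\Gamma_{\tau_a^n}$. Indeed, between consecutive integers of $\mathcal{C}_a(\mathcal{A})$ lies either a chain arc or a neighbor arc.
\item So some such arc $u \rightarrow v$ covers $x$.
\item Since $S_x^{\Lcn} = \emptyset$ and $s$ is minimal, $u \in \Rcn$ and $s < u < x < y$. If $s = u$, then already $\eta \in \Gamma_{\tau_a^n}$.
\item Lemma~\ref{lem: deux arcs imbriqués avec la fin dans le même Lc Rc alors ils sont dans la période}, applied to $u$ lying under $s \rightarrow y$ with $(u,y) \in (\Rcn)^2$, puts $\eta$ in $\Gamma_{\tau_a^n}$. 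This contradicts disjointness.
\end{itemize}
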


		\begin{proof}
			We already know from Proposition \ref{prop: C_a(A) est fini ou infini} that $\mathcal{C}_a^n(\mathcal{A})$ and $\mathcal{C}_b^n(\mathcal{A})$ are infinite and periodic. Let $\tau_a^n$ (resp. $\tau_b^n$) be a periodic factor of $\mathcal{C}_a^n(\mathcal{A})$ (resp. $\mathcal{C}_b^n(\mathcal{A})$). If $\tau_a^n \cap \tau_b^n \neq \emptyset$, then there exists $c \in \ZZ$ appearing in both sequences $\mathcal{C}_a(\mathcal{A})$ and $\mathcal{C}_b(\mathcal{A})$. This means these two sequences coincide after the term $c$, therefore the sequences $\mathcal{C}_a^n(\mathcal{A})$ and $\mathcal{C}_b^n(\mathcal{A})$ are equal from a certain point. Thus, they have the same period.

			We now show by contradiction that the case $\tau_a^n \cap \tau_b^n = \emptyset$ cannot happen. Suppose that it does. Then, let $\mathcal{B}_a = \left( \mathcal{A} \setminus \Gamma_{\tau_a^n} \right) \cup \widetilde{\gamma}_a$, where $\Gamma_{\tau_a^n}$ is the set of chains and loops defined in Proposition \ref{prop: on peut remplacer une boucle de chaine par une seule boucle} and $\widetilde{\gamma}_a$ is the loop of support $\bigcup_{\gamma \in \Gamma_{\tau_a^n}} \mathrm{Supp}(\gamma)$. By Proposition \ref{prop: on peut remplacer une boucle de chaine par une seule boucle}, $\mathcal{B}_a$ is a cyclic $c$-noncrossing arc diagram. Define similarly the cyclic $c$-noncrossing arc diagram $\mathcal{B}_b = \left( \mathcal{A} \setminus \Gamma_{\tau_b^n} \right) \cup \widetilde{\gamma}_b$. We have that $\mathrm{Supp}(\widetilde{\gamma}_a) \cap \mathrm{Supp}(\widetilde{\gamma}_b) = \emptyset$, because $\Gamma_{\tau_a^n} \cap \Gamma_{\tau_b^n} = \emptyset$. Since $\widetilde{\gamma}_b$ is a (real) loop, there exists $x \rightarrow y \in \widetilde{\gamma}_b$ such that $(x,y) \in (\Lcn \times \Rcn) \sqcup (\Rcn \times \Lcn)$. We know that neither $x$ nor $y$ belong to a periodic factor of $\mathcal{C}_a(\mathcal{B})$, as such a periodic factor is a segment of $\mathrm{Supp}(\widetilde{\gamma}_a)$.

			Let $p \rightarrow p',q \rightarrow q'$ be two arcs of $\widetilde{\gamma}_a$ such that $p < x < p'$ and $q < y < q'$, which is possible to consider since $\widetilde{\gamma}_a$ is a loop and has infinitely many points. Therefore, by Lemma \ref{lem: deux arcs imbriqués avec la fin dans le même Lc Rc alors ils sont dans la période} applied to $p < x < p'$ and $q < y < q'$, we obtain that $(x,p'), (y,q') \in (\Lcn \times \Rcn) \sqcup (\Rcn \times \Lcn)$. If $p' = q'$, then $(x,y) \in (\Lcn)^2 \sqcup (\Rcn)^2$. If $p' \neq q'$, then $x < p' < y < q'$, and by Corollary \ref{cor: croisement de chaîne} we have $(p',y) \in (\Lcn \times \Rcn) \sqcup (\Rcn \times \Lcn)$, hence $(x,y) \in (\Lcn)^2 \sqcup (\Rcn)^2$. This is a contradiction in both cases, therefore $\tau_a^n = \tau_b^n$.
		\end{proof}

		Proposition \ref{prop: les périodes sont les mêmes dans C_a(A)} allows us to not specify the index $a$ in the period of $\mathcal{C}_a^n(\mathcal{A})$. We may also omit this index for the sequence itself, in this case the notation $\mathcal{C}^n(\mathcal{A})$ refers to any periodic final segment of a sequence $\mathcal{C}_a^n(\mathcal{A})$ for any $a \in \ZZ$.

		One final result that will be useful in the next section is a criterion for a chain or a loop to have some of its elements in the period of the sequence $\mathcal{C}^n(\mathcal{A})$.

		\begin{lemme} \label{lem: chaine dans Lc et Rc alors dans période}
			Let $\mathcal{A}$ be a cyclic $c$-noncrossing arc diagram such that $\mathcal{C}^n(\mathcal{A})$ is infinite, and let $\tau^n$ be a periodic factor of this sequence. For all chain or loop $\gamma \in \mathrm{Ch}(\mathcal{A}) \cup \mathrm{L}(\mathcal{A})$, if $\mathrm{Supp}(\gamma) \cap \Lcn \neq \emptyset$ and $\mathrm{Supp}(\gamma) \cap \Rcn \neq \emptyset$, then $\mathrm{Supp}(\gamma) \cap (\tau^n + n\ZZ) \neq \emptyset$.

			In other words, if a chain or a loop contains an element of $\Lcn$ and an element of $\Rcn$, then some of its elements belong to the period $\tau^n$.
		\end{lemme}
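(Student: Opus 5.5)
The plan is to argue by contradiction, using Proposition \ref{prop: on peut remplacer une boucle de chaine par une seule boucle}. Let $\gamma$ be a chain or loop meeting both $\Lcn$ and $\Rcn$, and suppose $\mathrm{Supp}(\gamma) \cap (\tau^n + n\ZZ) = \emptyset$.

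First I dispose of loops. A $c$-noncrossing arc diagram has at most one real loop. If $\gamma$ is a loop, the sequence $\mathcal{C}_a(\mathcal{A})$ started at any $a \in \mathrm{Supp}(\gamma)$ is exactly $\gamma$ from $a$ onward. Hence its period is contained in $\mathrm{Supp}(\gamma) \bmod n$, and by Proposition \ref{prop: les périodes sont les mêmes dans C_a(A)} this period is $\tau^n$, a contradiction. So $\gamma$ is a chain.

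Next I replace the periodic chains by a single loop. Let $\widetilde{\gamma}$ be the loop of Proposition \ref{prop: on peut remplacer une boucle de chaine par une seule boucle} with support $L \cup R$, and let $\mathcal{B}$ be the resulting cyclic $c$-noncrossing arc diagram. By hypothesis, $\gamma$ is still a chain of $\mathcal{B}$ and is disjoint from $\mathrm{Supp}(\widetilde{\gamma})$. Since $\gamma$ meets both $\Lcn$ and $\Rcn$, I pick an arc $x \rightarrow y$ of $\gamma$, or more generally two consecutive points along $\gamma$, with $(x,y) \in \Lcn \times \Rcn \sqcup \Rcn \times \Lcn$. Such a pair exists because the labels along the chain change at least once.

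The contradiction then comes from the argument at the end of the proof of Proposition \ref{prop: les périodes sont les mêmes dans C_a(A)}. Because $\widetilde{\gamma}$ is a loop with infinite support in both directions, there are arcs $p \rightarrow p'$ and $q \rightarrow q'$ of $\widetilde{\gamma}$ with $p < x < p'$ and $q < y < q'$. Since the chain of $x \rightarrow y$ does not meet the period, Lemma \ref{lem: deux arcs imbriqués avec la fin dans le même Lc Rc alors ils sont dans la période} applies with $\widetilde{\gamma}$ playing the role of a chain meeting the period. It gives $(x,p'),(y,q') \in \Lcn \times \Rcn \sqcup \Rcn \times \Lcn$. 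If $p' = q'$, this forces $x$ and $y$ to lie in the same set, which contradicts the choice of $(x,y)$. If $p' \neq q'$, then $x < p' < y < q'$, and Corollary \ref{cor: croisement de chaîne} gives $(p',y)$ of opposite types. Again $x$ and $y$ would have the same type, a contradiction.

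The main obstacle is justifying the use of Lemma \ref{lem: deux arcs imbriqués avec la fin dans le même Lc Rc alors ils sont dans la période} in $\mathcal{B}$ rather than in $\mathcal{A}$. The lemma is stated for chains of $\mathcal{A}$ that meet the period, and here the nesting arcs belong to the loop $\widetilde{\gamma}$. I would handle this in one of two ways.

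The first option is to check that $\mathcal{C}^n(\mathcal{B})$ has period $\mathrm{Supp}(\widetilde{\gamma}) \bmod n$. Any sequence in $\mathcal{B}$ eventually enters $\widetilde{\gamma}$, so Proposition \ref{prop: les périodes sont les mêmes dans C_a(A)} applied in $\mathcal{B}$ gives this. I then apply the lemma inside $\mathcal{B}$, where $\gamma$ remains a chain disjoint from the period.

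The alternative is to work directly in $\mathcal{A}$. I would replace $p \rightarrow p'$ by a genuine arc of $\mathcal{A}$ from a periodic chain, or by the neighbor arc of $\max$ of such a chain, which covers $x$. This mirrors the case analysis in the proof of Proposition \ref{prop: on peut remplacer une boucle de chaine par une seule boucle}.
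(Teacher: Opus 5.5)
Your strategy is the paper's. You argue by contradiction and use Proposition \ref{prop: on peut remplacer une boucle de chaine par une seule boucle} to get a cyclic $c$-noncrossing diagram $\mathcal{B}$ whose loop $\widetilde{\gamma}$ is disjoint from $\gamma$. You then conclude that this real loop must cross an arc $x \rightarrow y$ of $\gamma$ whose endpoints have opposite types. The paper simply asserts ``such a loop will cross this arc''. It splits on whether $\mathcal{A}$ has a loop rather than on whether $\gamma$ is one, which is cosmetic, and your treatment of the loop case is fine.

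The step that fails as written is the appeal to Lemma \ref{lem: deux arcs imbriqués avec la fin dans le même Lc Rc alors ils sont dans la période}. That lemma concerns an arc of a chain that does \emph{not} meet the period enclosing a point of a chain that \emph{does}. In your configuration the enclosing arc $p \rightarrow p'$ lies on the periodic loop and the enclosed point $x$ lies on the non-periodic chain $\gamma$. The hypotheses are therefore reversed and the lemma yields nothing. Passing to $\mathcal{B}$ (your first workaround) does not change this, and the second workaround is too vague to check.

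The obstacle is illusory, because noncrossing in $\mathcal{B}$ gives everything through Lemma \ref{lem: critère de croisement d'arcs}.
\begin{itemize}
\item If $x$ and $y$ are covered by the same loop arc, then $p < x < y < p'$ with $(x,y)$ of opposite types is already a crossing.
\item Otherwise $p < x < p' \leq q < y < q'$. Noncrossing of $p \rightarrow p'$ with $x \rightarrow y$ forces $(x,p')$ to be of opposite types. Corollary \ref{cor: croisement de chaîne}, applied to $x \rightarrow y$ and the partial chain $p' \rightarrow \cdots \rightarrow q \rightarrow q'$ of $\widetilde{\gamma}$, forces $(p',y)$ to be of opposite types. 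Hence $x$ and $y$ have the same type, a contradiction.
\end{itemize}
The pair $(y,q')$ is never needed.
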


		\begin{proof}
			If $\tau^n$ does not contain an arrow $\uparrow$ or $\downarrow$, then $\mathcal{A}$ has a loop $\widetilde{\gamma}$ and $\tau^n$ contains only elements of its support. Moreover, any arc $p \rightarrow q$ in $\mathcal{A} \setminus \widetilde{\gamma}$ has both its extremities in the same set $\Lcn$ or $\Rcn$, otherwise it would intersect the loop. Therefore, the result is true.

			Otherwise, we know that $\mathcal{A}$ does not have any loop. Consider $p \rightarrow q$ an arc of a chain $\gamma$ such that $(p,q) \in (\Lcn \times \Rcn) \cup (\Rcn \times \Lcn)$. If $\gamma \notin \Gamma_{\tau^n}$ (using the notation from  Proposition \ref{prop: on peut remplacer une boucle de chaine par une seule boucle}), then by this Proposition there exists a diagram with a loop and the arc $p \rightarrow q$, which is absurd since such a loop will cross this arc. Therefore, $\gamma \in \Gamma_{\tau^n}$ and by definition, it has elements in $\tau^n$.
		\end{proof}

\section{Generalization of \textsc{Reading}'s bijection} \label{sec: generalization of reading's bijection}

	In this section, we define a generalization of \textsc{Reading}'s map $nc_c$, which we denote $nc_c^{\widetilde{A}}$, and prove Theorem \ref{thm: generalisation de l'application de Reading}.

	\readinggeneralise*

	\subsection{A map from \texorpdfstring{$c$}{c}-sortable biclosed sets to \texorpdfstring{$c$}{c}-noncrossing partitions}

		We first define a bijective map between $c$-sortable TITOs on $\ZZ$ and cyclic $c$-noncrossing arc diagrams by restriction of \textsc{Barkley}'s bijection defined in Paragraph \ref{par: cyclic noncrossing arc diagrams} of Section \ref{sec: background}.

		If $\prec$ is a $c$-sortable TITO on $\ZZ$, then by definition it does not have two consecutive waxing blocks. This means the map $\ncad$ can indeed be restricted to the set of $c$-sortable TITOs on $\ZZ$. We note this restriction $\ncad_c$. At this point, $\ncad_c$ is an injective map from the set of $c$-sortable TITOs on $\ZZ$ to the set $\widetilde{\mathrm{NCAD}}$ of cyclic noncrossing arc diagrams.

		\begin{lemme}
			The map $\ncad_c$ takes values in $c$-$\widetilde{\mathrm{NCAD}}$, the set of cyclic $c$-noncrossing arc diagrams.
		\end{lemme}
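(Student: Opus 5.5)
We have to check the two defining conditions of a cyclic $c$-noncrossing arc diagram for $\mathcal{A} = \ncad(\prec)$, where $\prec$ is a $c$-sortable TITO. Since $\prec$ has no two consecutive waxing blocks, Barkley's bijection already tells us that $\mathcal{A}$ is a cyclic noncrossing arc diagram. So we only have to show two things: each arc $(p,q,L,R)$ has $L \subset \Lcn$ and $R \subset \Rcn$, and every loop of $\mathcal{A}$ is real.

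\textbf{First condition.} Let $\affc{p,q}$ be a cover reflection with $p<q$, $q \prec p$, consecutive. Take $j$ with $p<j<q$.
\begin{itemize}
\item If $j \prec q$, then $j \prec q \prec p$ is a pattern $jki$ with $i=p<j<k=q$. Condition 2 of Definition \ref{def: biclos c-triable} then forbids $j \in \Rcn$, so $j \in \Lcn$.
\item If $p \prec j$, then $q \prec p \prec j$ is a pattern $kij$, so $j \notin \Lcn$, hence $j \in \Rcn$.
\end{itemize}
Since $q,p$ are consecutive, every such $j$ falls into exactly one of these cases. Hence $L \subset \Lcn$ and $R \subset \Rcn$. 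This is the same observation already used in the proof of Proposition \ref{prop: les couvertures d'un TITO c-triable le caractérisent}.

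\textbf{Second condition.} A loop is a bi-infinite sequence of arcs $\cdots \to b_{-1}\to b_0\to b_1\to\cdots$ with $b_i$ increasing. Each arc $b_i \to b_{i+1}$ is a cover reflection with $b_{i+1}\prec b_i$ consecutive. The points therefore form an infinite $\prec$-decreasing sequence of consecutive elements, which must lie in a single block, with $b_{i+1}$ the $\prec$-predecessor of $b_i$. This block is infinite in the direction where the integers increase while $\prec$ decreases, so it is the waning block $\underline{[M]}$; a waxing block of an affine permutation shape cannot support such a sequence, since translation by $n$ goes the wrong way.

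\begin{itemize}
\item If $\prec$ is a single waxing block $[\llbracket 0,n-1\rrbracket]$, no loop exists and the condition is vacuous.
\item Otherwise $\prec$ has shape $[L]\underline{[M]}[R]$. Walking along $\prec$-predecessors from any point of $\underline{[M]}$ while staying a descending chain, the loop must be the whole of $\underline{[M]}$ traversed as one infinite descending chain. That is, it is the case where no $x \prec y$ with $x<y$ occurs in $\underline{[M]}$; in the other case of Proposition \ref{prop: forme de M dans le cas où il y a plusieurs chaînes de descentes}, the descending chains are finite and no loop occurs.
\end{itemize}
In the loop case, the support of the loop is $M + n\ZZ$. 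Condition 1 of Definition \ref{def: biclos c-triable} requires $M \cap L_c \neq \emptyset$ and $M \cap R_c \neq \emptyset$, so the loop is real.

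The main obstacle is the second step: making rigorous that a loop in $\ncad(\prec)$ must come from the waning block, and that its support is the whole of $M+n\ZZ$. The first thing to try is to use that consecutive elements at finite distance lie in the same block, and that within $\underline{[M]}$ the $\prec$-order and the translation by $n$ are reversed. Then a bi-infinite increasing-in-$\ZZ$ chain of consecutive elements exhausts the block. Alternatively, it suffices to show that the support meets both $\Lcn$ and $\Rcn$. For this, apply Proposition \ref{prop: forme de M dans le cas où il y a plusieurs chaînes de descentes}\ref{enum: a1 dans Rc et ar dans Lc}: chain starts lie in $\Rcn$ and chain ends lie in $\Lcn$. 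When $\underline{[M]}$ is a single decreasing loop, the shape condition on $M$ is what gives realness.
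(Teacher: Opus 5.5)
Your proof is correct and follows essentially the same route as the paper: the forbidden patterns $jki$ and $kij$, applied to a cover pair $q \prec p$, force $L \subset \Lcn$ and $R \subset \Rcn$. A loop can only arise as the waning block read as a single decreasing chain, and condition 1 of Definition~\ref{def: biclos c-triable} then makes that loop meet both $\Lcn$ and $\Rcn$. Your extra step, that the consecutive $\prec$-predecessors of a point exhaust its block and that $x \prec x+n$ rules out waxing blocks, supplies detail the paper only asserts.
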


		\begin{proof}
			Let $\prec$ be a $c$-sortable TITO on $\ZZ$. We need to show that for all $\alpha = (i,k,L,R) \in \ncad_c(\prec)$, $L \subset \Lcn$ and $R \subset \Rcn$, as well as $\ncad_c(\prec)$ has no imaginary loop (Definition \ref{def: full chains and loops}). Let $j \in L$, by definition, we have $j \prec k \prec i$. Since $\prec$ is $c$-sortable, then $j \in \Lcn$. The same argument shows that $R \subset \Rcn$. Moreover, a loop of $\ncad_c(\prec)$ comes from a maximal descending chain of $\prec$ such that its first and last element are equal modulo $n$. This can only happen in a waning block in which any window consists of decreasing elements. By Definition \ref{def: biclos c-triable}, there are elements from both $\Lcn$ and $\Rcn$ in the waning block of $\prec$ if it has one, therefore $\ncad_c(\prec)$ does not have imaginary loops.

			Hence, $\ncad_c(\prec)$ is a cyclic $c$-noncrossing arc diagram.
		\end{proof}

		Note that, under the bijection $\mathrm{NC}_c$ between cyclic $c$-noncrossing arc diagrams and $c$-noncrossing partitions defined in Section \ref{sssec: bijection between NCAD and NC}, we can already define what will be the \emph{generalized \textsc{Reading} bijection}, denoted
		\begin{equation} \label{eq: nc_c exprimé comme composée}
			nc_c^{\widetilde{A}} = \mathrm{NC}_c \circ \ncad_c \circ \pprec.
		\end{equation} It is an injective map from the set of $c$-sortable biclosed sets of the affine root system to the set of $c$-noncrossing partitions that extends the right part of the factorization $nc_c : \mathrm{Sort}_c(W) \hookrightarrow c\text{-}\mathrm{Bic}(W) \xrightarrow{\sim} \mathrm{NC}(W,c)$. Despite this composition of bijections, the map $nc_c^{\widetilde{A}}$ is explicit (see Example \ref{ex: direct map} for a computation).

		\begin{definition}[Generalized \textsc{Reading} bijection] \label{def: reading généralisée}
			Let $B$ be a $c$-sortable biclosed set of $W$. If $\pprec(B)$ is of shape $[L]\underline{[M]}[R]$:
			\begin{itemize}
				\item let $L_1, \dots, L_l \subset \ZZ$ ($l \geq 1$) be orbits of the maximal descending chains of the block $[L]$,
				\item let $M_1, \dots, M_m \subset \ZZ$ ($m \geq 1$) be orbits of the maximal descending chains of the block $\underline{[M]}$,
				\item let $R_1, \dots, R_l \subset \ZZ$ ($r \geq 1$) be orbits of the maximal descending chains of the block $[R]$.
			\end{itemize}
			Otherwise, $\pprec(B)$ is of shape $[\llbracket 1,n \rrbracket]$: let $m = r = 0$ and $L_1, \dots, L_l \subset \ZZ$ ($l \geq 1$) be orbits of the maximal descending chains of $\pprec(B)$. Despite the use of the letter $L$, the sets $L_1, \dots, L_l$ may contain elements of $\Rcn$: this notation is there to use the same notations for both types of shape.

			Using the results of Paragraph \ref{par: partitions non croisées Sn tilde} in Section \ref{sec: background}, for each $i \in \llbracket 1,l \rrbracket$ (resp. $i \in \llbracket 1,r \rrbracket$, resp. $i \in \llbracket 1,m \rrbracket$ if $m \geq 2$) we define $\gamma_i^L$ (resp. $\gamma_i^R$, resp. $\gamma_i^M$) to be the unique cyclic elementary divisor of $c$ of orbit $L_i$ (resp. $R_i$, resp. $M_i$). If $m = 1$, we define $\gamma_1^M$ to be the unique pseudo-cyclic elementary divisor of $c$ of orbit $M_1 + n\ZZ$.

			We define the image of $B$ by the generalized \textsc{Reading} map in type $\widetilde{A}$ as follow:
			\begin{equation*}
				nc_c^{\widetilde{A}}(B) = \prod_{i = 1}^l \gamma_i^L \prod_{i = 1}^m \gamma_i^M \prod_{i = 1}^r \gamma_i^R.
			\end{equation*}
		\end{definition}

		We now need to show that the map $\ncad_c$ is surjective onto the set of cyclic $c$-noncrossing arc diagrams in order to obtain a bijective map. This will immediately prove that $nc_c^{\widetilde{A}}$ is also a bijection. We achieve this by explicitly computing its inverse map. \textsc{Barkley} computed the preimages of all cyclic noncrossing arc diagrams consisting of a single cyclic arc by the map $\ncad$ \cite[§4]{barkley2025extendedweakorderaffine}, and the preimages for any cyclic noncrossing arc diagram can be obtained as the join (in the extended weak order) of these preimages for all cyclic arcs in the diagram. However, it does not seem easy to see that our definition of $c$-sortable TITOs on $\ZZ$ are indeed the joins of such elements. We will instead explicitly compute the inverse map.

	\subsection{Renumbering a cyclic \texorpdfstring{$c$}{c}-noncrossing arc diagram} \label{ssec: renumérotation}

		The preimage by $\ncad_c$ of a cyclic $c$-noncrossing arc diagram will be obtained by «~reading~» the diagram in a specific way, similar to the description of the inverse map of \textsc{Reading}'s bijection in type $A$ \cite[6.3]{gobet2018dual}. However, some complexity arises from the fact that the labeled points of a cyclic noncrossing arc diagram are considered modulo $n$. The sequence $\mathcal{C}(\mathcal{A})$, defined in Section \ref{sssec: structure des c-NCAD + suite C}, will be of great help in order to renumber (with labels in $\ZZ$ instead of $\ZZ/n\ZZ$) the diagram.

		\begin{definition}[Numbering of a cyclic $c$-noncrossing arc diagram] \label{def: renumérotation}
			Let $\mathcal{A}$ be a cyclic $c$-noncrossing arc diagram. A \emph{numbering} of $\mathcal{A}$ is a transversal of the quotient set $\left(\mathrm{Ch}(\mathcal{A}) \cup \mathrm{L}(\mathcal{A}) \right) / \sim$ where $\gamma \sim \gamma'$ if and only if there exists $p \in \ZZ$ such that $\gamma = \gamma' + pn$.
		\end{definition}

		In other words, it is a subset $\mathcal{N}$ of $\mathrm{Ch}(\mathcal{A}) \cup \mathrm{L}(\mathcal{A})$ in which exactly one representative of each class of chains and loops modulo $n$ belongs to $\mathcal{N}$. Note that loops are alone in their class modulo $n$, hence there is no choice for numbering them.

		The term «~renumbering~» refers to the fact that, geometrically on the diagram, a choice of such a transversal corresponds to a renumbering of the labels from $1$ to $n$ (or $0$ to $n-1$) in a way that specializes each drawn chain of arcs (representing a class of chain modulo $n$) into one specific chain. Examples of renumberings are given in Examples \ref{ex: a-renumérotation arc} and \ref{ex: renumérotation arc}.

		Among all the possible numberings of a cyclic $c$-noncrossing arc diagram, we are interested in specific ones we call \emph{$a$-numberings} for $a \in \ZZ$. The definition is quite involved and it is not trivial to see that they indeed define numberings, see Proposition \ref{prop: les a numérotations sont des numérotations}.

		\begin{definition}[$a$-numbering of a cyclic $c$-noncrossing arc diagram] \label{def: a-renumérotation}
			Let $a \in \ZZ$ and consider the sequence $\mathcal{C}_a(\mathcal{A})$. If it is finite, with $b$ as its last element, define
			\begin{equation*}
				\mathcal{N}_a(\mathcal{A}) = \{ \gamma \in \mathrm{Ch}(\mathcal{A}) \;|\; \mathrm{Supp}(\gamma) \subset \rrbracket b-n, b \rrbracket \}.
			\end{equation*}

			Otherwise, consider the first factor $\tau_a$ of $\mathcal{C}_a(\mathcal{A})$ such that $\tau_a \bmod n$ is a periodic factor of $\mathcal{C}_a^n(\mathcal{A})$ and $\tau_a$ starts with $\downarrow$ if possible, otherwise with $\uparrow$ if possible, and otherwise with $\rightarrow$.

			Define $\mathcal{M}_a(\mathcal{A})$ to be the set of all chains or loops containing a number that is in $\tau_a$. By Lemma \ref{lem: chaine dans Lc et Rc alors dans période}, any chain such that no representative modulo $n$ is in $\mathcal{M}_a(\mathcal{A})$ has all its elements either in $\Lcn$ or $\Rcn$.
			\begin{equation*}
				\mathcal{M}_a(\mathcal{A}) = \{ \gamma \in \mathrm{Ch}(\mathcal{A}) \cup \mathrm{L}(\mathcal{A}) \;|\; \mathrm{Supp}(\gamma) \cap \tau_a \neq \emptyset \}.
			\end{equation*}

			If $\tau_a$ starts with $\downarrow$, consider $M$ (resp. $m$) the maximal (resp. minimal) number that appears in a chain in $\mathcal{M}_a(\mathcal{A})$, which is possible since in this case $\mathcal{M}_a(\mathcal{A})$ does not contain a loop. We select the remaining chains in the interval $\rrbracket M-n, M \llbracket$ if they are in $\Lcn$ and $\rrbracket m, m+n \llbracket$ otherwise:
			\begin{equation*}
				\left\lbrace
				\begin{array}{>{\displaystyle}l}
					\mathcal{L}_a(\mathcal{A}) = \{ \gamma \in \mathrm{Ch}(\mathcal{A}) \;|\; \mathrm{Supp}(\gamma) \subset \Lcn \cap \rrbracket M-n, M \llbracket \setminus (\tau_a + n\ZZ) \} \\
					\mathcal{R}_a(\mathcal{A}) = \{ \gamma \in \mathrm{Ch}(\mathcal{A}) \;|\; \mathrm{Supp}(\gamma) \subset \Rcn \cap \rrbracket m, m+n \llbracket \setminus (\tau_a + n\ZZ) \} .
				\end{array}
				\right.
			\end{equation*}

			Otherwise, consider the two cyclic $c$-noncrossing arc diagrams $\mathcal{A}_L$ and $\mathcal{A}_R$ obtained by only keeping the chains that only have elements of $\Lcn$ and $\Rcn$ respectively, and then removing all the chains or loops that have a representative modulo $n$ in $\mathcal{M}_a(\mathcal{A})$. Their sequences $\mathcal{C}_a(\mathcal{A}_L)$ and $\mathcal{C}_a(\mathcal{A}_R)$ are finite, therefore we can define:
			\begin{equation*}
				\left\lbrace
				\begin{array}{>{\displaystyle}l}
					\mathcal{L}_a(\mathcal{A}) = \mathrm{Ch}(\mathcal{A}) \cap \mathcal{N}_a(\mathcal{A}_L) \\
					\mathcal{R}_a(\mathcal{A}) = \mathrm{Ch}(\mathcal{A}) \cap \mathcal{N}_a(\mathcal{A}_R).
				\end{array}
				\right.
			\end{equation*}
			In both cases, let
			\begin{equation*}
				\mathcal{N}_a(\mathcal{A}) = \mathcal{L}_a(\mathcal{A}) \cup \mathcal{M}_a(\mathcal{A}) \cup \mathcal{R}_a(\mathcal{A}).
			\end{equation*}

			The set of chains or loops $\mathcal{N}_a(\mathcal{A})$ is called the $a$-numbering of $\mathcal{A}$.
		\end{definition}

		\begin{exemple} \label{ex: a-renumérotation arc}
			A numbering of a cyclic $c$-noncrossing arc diagram is represented on the diagram by specifying each class of chains modulo $n$ into a representative. To distinguish labels in $\ZZ/n\ZZ$ from labels in $\ZZ$, we draw a box around the labels in $\ZZ$. For example, consider the cyclic $c$-noncrossing arc diagram in Figure \ref{subfig: initial diagram}, for $c = \affc{1,4,6,8}_{[1]} \affc{10,9,7,5,3,2}_{[-1]}$.
			\begin{figure}[ht!]
				\centering
				\begin{subfigure}{0.333\textwidth}
					\centering
					\begin{tikzpicture}[baseline=(current bounding box.center), scale=1.3]
						\def\n{10}

						\def\Rc{2, 3, 5, 7, 9, 10}
						\def\Lc{1, 4, 6, 8}

						\newcommand{\ang}[1]{{-(#1-1)*360/\n + 90}}

						\foreach \i in {1,...,\n}{
							\coordinate (\i) at (\ang{\i}:1);
						}

						\foreach \i in \Rc{
							\draw (0,0) -- (\i);
						}
						\foreach \i in \Lc{
							\draw (\ang{\i}:1.5) -- (\i);
						}

						\draw[black, thick] plot [ smooth, tension=0.8 ] coordinates {(1) (\ang{2}:1.2) (\ang{3}:1.2) (\ang{4}:0.8) (\ang{5}:1.2) (\ang{6}:0.8) (\ang{7}:1.2) (\ang{8}:0.8) (\ang{9}:1.2) (10)};

						\draw[black] plot [ smooth, tension=0.8 ] coordinates {(8) (\ang{9}:1.4) (\ang{10}:1.2) (\ang{1}:0.8) (2) (3)};

						\draw[black, densely dotted] plot [ smooth, tension=0.8 ] coordinates {(4) (\ang{5}:1.4) (6)};

						\draw[black, densely dashed] plot [ smooth, tension=0.8 ] coordinates {(5) (\ang{6}:0.6) (7)};

						\foreach \i in {1,...,\n}{
							\node[fill=white, circle, inner sep=0pt, font=\footnotesize] at (\i) {\i};
						}
					\end{tikzpicture}
					\caption{Initial diagram}
					\label{subfig: initial diagram}
				\end{subfigure}%
				\begin{subfigure}{0.334\textwidth}
					\centering
					\begin{tikzpicture}[baseline=(current bounding box.center), scale=1.3]
						\def\n{10}

						\def\Rc{2, 3, 5, 7, 9, 10}
						\def\Lc{1, 4, 6, 8}
						\def\NUM{1/11, 2/12, 3/13, 8/8, 10/20}

						\newcommand{\ang}[1]{{-(#1-1)*360/\n + 90}}

						\foreach \i in {1,...,\n}{
							\coordinate (\i) at (\ang{\i}:1);
						}

						\foreach \i in \Rc{
							\draw (0,0) -- (\i);
						}
						\foreach \i in \Lc{
							\draw (\ang{\i}:1.5) -- (\i);
						}

						\draw[black, thick] plot [ smooth, tension=0.8 ] coordinates {(1) (\ang{2}:1.2) (\ang{3}:1.2) (\ang{4}:0.8) (\ang{5}:1.2) (\ang{6}:0.8) (\ang{7}:1.2) (\ang{8}:0.8) (\ang{9}:1.2) (10)};

						\draw[black] plot [ smooth, tension=0.8 ] coordinates {(8) (\ang{9}:1.4) (\ang{10}:1.2) (\ang{1}:0.8) (2) (3)};

						\draw[black, densely dotted] plot [ smooth, tension=0.8 ] coordinates {(4) (\ang{5}:1.4) (6)};

						\draw[black, densely dashed] plot [ smooth, tension=0.8 ] coordinates {(5) (\ang{6}:0.6) (7)};

						\foreach \i/\j in \NUM{
							\node[rectangle, fill=white, inner sep=1pt, draw, font=\footnotesize, draw opacity=0.3] at (\i) {\j};
						}
						\foreach \i in {4,5,6,7,9}{
							\node[fill=white, circle, inner sep=0pt, font=\footnotesize] at (\i) {\i};
						}
					\end{tikzpicture}
					\caption{$\mathcal{M}_a(\mathcal{A})$ renumbered}
					\label{subfig: renumérotation partielle}
				\end{subfigure}%
				\begin{subfigure}{0.333\textwidth}
					\centering
					\begin{tikzpicture}[baseline=(current bounding box.center), scale=1.3]
						\def\n{10}

						\def\Rc{2, 3, 5, 7, 9, 10}
						\def\Lc{1, 4, 6, 8}
						\def\NUM{1/11, 2/12, 3/13, 4/4, 5/-5, 6/6, 7/-3, 8/8, 9/-1, 10/20}

						\newcommand{\ang}[1]{{-(#1-1)*360/\n + 90}}

						\foreach \i in {1,...,\n}{
							\coordinate (\i) at (\ang{\i}:1);
						}

						\foreach \i in \Rc{
							\draw (0,0) -- (\i);
						}
						\foreach \i in \Lc{
							\draw (\ang{\i}:1.5) -- (\i);
						}

						\draw[black, thick] plot [ smooth, tension=0.8 ] coordinates {(1) (\ang{2}:1.2) (\ang{3}:1.2) (\ang{4}:0.8) (\ang{5}:1.2) (\ang{6}:0.8) (\ang{7}:1.2) (\ang{8}:0.8) (\ang{9}:1.2) (10)};

						\draw[black] plot [ smooth, tension=0.8 ] coordinates {(8) (\ang{9}:1.4) (\ang{10}:1.2) (\ang{1}:0.8) (2) (3)};

						\draw[black, densely dotted] plot [ smooth, tension=0.8 ] coordinates {(4) (\ang{5}:1.4) (6)};

						\draw[black, densely dashed] plot [ smooth, tension=0.8 ] coordinates {(5) (\ang{6}:0.6) (7)};

						\foreach \i/\j in \NUM{
							\node[rectangle, fill=white, inner sep=1pt, draw, font=\footnotesize, draw opacity=0.3] at (\i) {\j};
						}
					\end{tikzpicture}
					\caption{Renumbered diagram}
					\label{subfig: renumérotation finie}
				\end{subfigure}
				\caption{Example of the steps for renumbering a cyclic $c$-noncrossing arc diagram.}
				\label{fig: renumérotation avec que des ↑}
			\end{figure}%

			We have $\mathcal{C}_4(\mathcal{A}) = {\color{black!70}4 \rightarrow 6} \downarrow \mathbf{10} \uparrow 12 \rightarrow 13 \uparrow \mathbf{20} \uparrow 22 \rightarrow 23 \uparrow \cdots$. Therefore, $\tau_4 = \uparrow 12 \rightarrow 13 \uparrow \mathbf{20}$. This means that $\mathbf{11 \rightarrow 20}$ and $8 \rightarrow 12 \rightarrow 13$ are the representative chains of their class modulo $n$, we specialize their points on the diagram by putting them in a square box in Figure \ref{subfig: renumérotation partielle}.

			Since $\tau_4$ starts with $\uparrow$, we consider the two cyclic $c$-noncrossing arc diagrams $\mathcal{A}_L$ and $\mathcal{A}_R$ obtained by keeping only the chains that have not been boxed and that are in $\Lcn$ and $\Rcn$ respectively (Figures \ref{subfig: AL} and \ref{subfig: AR}).

			\begin{figure}[ht!]
				\centering
				\begin{subfigure}{0.23\textwidth}
					\centering
					\begin{tikzpicture}[baseline=(current bounding box.center), scale=1.3]
						\def\n{10}

						\def\Rc{2, 3, 5, 7, 9, 10}
						\def\Lc{1, 4, 6, 8}
						\def\NUM{1/11, 2/12, 3/13, 8/8, 10/20}

						\newcommand{\ang}[1]{{-(#1-1)*360/\n + 90}}

						\foreach \i in {1,...,\n}{
							\coordinate (\i) at (\ang{\i}:1);
						}

						\foreach \i in \Rc{
							\draw (0,0) -- (\i);
						}
						\foreach \i in \Lc{
							\draw (\ang{\i}:1.5) -- (\i);
						}

						\draw[black, densely dotted] plot [ smooth, tension=0.8 ] coordinates {(4) (\ang{5}:1.4) (6)};

						\foreach \i/\j in \NUM{
							\node[rectangle, fill=white, inner sep=1pt, draw, font=\footnotesize, draw opacity=0.3] at (\i) {\j};
						}
						\foreach \i in {4,5,6,7,9}{
							\node[fill=white, circle, inner sep=0pt, font=\footnotesize] at (\i) {\i};
						}
					\end{tikzpicture}
					\caption{$\mathcal{A}_L$}
					\label{subfig: AL}
				\end{subfigure}%
				\begin{subfigure}{0.23\textwidth}
					\centering
					\begin{tikzpicture}[baseline=(current bounding box.center), scale=1.3]
						\def\n{10}

						\def\Rc{2, 3, 5, 7, 9, 10}
						\def\Lc{1, 4, 6, 8}
						\def\NUM{1/11, 2/12, 3/13, 4/4, 6/6, 8/8, 10/20}

						\newcommand{\ang}[1]{{-(#1-1)*360/\n + 90}}

						\foreach \i in {1,...,\n}{
							\coordinate (\i) at (\ang{\i}:1);
						}

						\foreach \i in \Rc{
							\draw (0,0) -- (\i);
						}
						\foreach \i in \Lc{
							\draw (\ang{\i}:1.5) -- (\i);
						}

						\draw[black, densely dotted] plot [ smooth, tension=0.8 ] coordinates {(4) (\ang{5}:1.4) (6)};

						\foreach \i/\j in \NUM{
							\node[rectangle, fill=white, inner sep=1pt, draw, font=\footnotesize, draw opacity=0.3] at (\i) {\j};
						}
						\foreach \i in {5,7,9}{
							\node[fill=white, circle, inner sep=0pt, font=\footnotesize] at (\i) {\i};
						}
					\end{tikzpicture}
					\caption{Renumbering of $\mathcal{A}_L$}
					\label{subfig: renumbering AL}
				\end{subfigure}\hfill
				\begin{subfigure}{0.23\textwidth}
					\centering
					\begin{tikzpicture}[baseline=(current bounding box.center), scale=1.3]
						\def\n{10}

						\def\Rc{2, 3, 5, 7, 9, 10}
						\def\Lc{1, 4, 6, 8}
						\def\NUM{1/11, 2/12, 3/13, 8/8, 10/20}

						\newcommand{\ang}[1]{{-(#1-1)*360/\n + 90}}

						\foreach \i in {1,...,\n}{
							\coordinate (\i) at (\ang{\i}:1);
						}

						\foreach \i in \Rc{
							\draw (0,0) -- (\i);
						}
						\foreach \i in \Lc{
							\draw (\ang{\i}:1.5) -- (\i);
						}

						\draw[black, densely dashed] plot [ smooth, tension=0.8 ] coordinates {(5) (\ang{6}:0.6) (7)};

						\foreach \i/\j in \NUM{
							\node[rectangle, fill=white, inner sep=1pt, draw, font=\footnotesize, draw opacity=0.3] at (\i) {\j};
						}
						\foreach \i in {4,5,6,7,9}{
							\node[fill=white, circle, inner sep=0pt, font=\footnotesize] at (\i) {\i};
						}
					\end{tikzpicture}
					\caption{$\mathcal{A}_R$}
					\label{subfig: AR}
				\end{subfigure}%
				\begin{subfigure}{0.23\textwidth}
					\centering
					\begin{tikzpicture}[baseline=(current bounding box.center), scale=1.3]
						\def\n{10}

						\def\Rc{2, 3, 5, 7, 9, 10}
						\def\Lc{1, 4, 6, 8}
						\def\NUM{1/11, 2/12, 3/13, 5/-5, 7/-3, 8/8, 9/-1, 10/20}

						\newcommand{\ang}[1]{{-(#1-1)*360/\n + 90}}

						\foreach \i in {1,...,\n}{
							\coordinate (\i) at (\ang{\i}:1);
						}

						\foreach \i in \Rc{
							\draw (0,0) -- (\i);
						}
						\foreach \i in \Lc{
							\draw (\ang{\i}:1.5) -- (\i);
						}

						\draw[black, densely dashed] plot [ smooth, tension=0.8 ] coordinates {(5) (\ang{6}:0.6) (7)};

						\foreach \i/\j in \NUM{
							\node[rectangle, fill=white, inner sep=1pt, draw, font=\footnotesize, draw opacity=0.3] at (\i) {\j};
						}
						\foreach \i in {4,6}{
							\node[fill=white, circle, inner sep=0pt, font=\footnotesize] at (\i) {\i};
						}
					\end{tikzpicture}
					\caption{Renumbering of $\mathcal{A}_R$}
					\label{subfig: renumbering AR}
				\end{subfigure}
				\caption{Renumbering $\mathcal{A}_L$ and $\mathcal{A}_R$}
				\label{fig: renumbering AL AR}
			\end{figure}

			Moreover, $\mathcal{C}_4(\mathcal{A}_L) = 4 \rightarrow 6$ and $\mathcal{C}_4(\mathcal{A}_R) = 4$, therefore we number the chains of $\mathcal{A}_L$ that are in $\Lcn$ between $\rrbracket -4, 6 \rrbracket$ and the chains of $\mathcal{A}_R$ that are in $\Rcn$ between $\rrbracket -6, 4 \rrbracket$, which we have done in Figures \ref{subfig: renumbering AL} and \ref{subfig: renumbering AR}.

			Finally, the $4$-numbering of $\mathcal{A}$ is the set of chains $\mathcal{N}_4(\mathcal{A})$ given by the following choices of representatives is the one represented in Figure \ref{subfig: renumérotation finie}.
		\end{exemple}

		\begin{exemple} \label{ex: renumérotation arc}
			The $2$-numbering of the cyclic $c$-noncrossing diagram of Example \ref{ex: C_a(A)} is depicted in Figure \ref{fig: renumérotation de l'exemple de C_a(A)}, and is obtained from the case where $\tau_a$ starts with $\downarrow$.
			\begin{figure}[ht!]
				\centering
				\begin{tikzpicture}[baseline=(current bounding box.center), scale=1.3]
					\def\n{10}

					\def\Rc{2, 3, 5, 7, 9, 10}
					\def\Lc{1, 4, 6, 8}
					\def\NUM{1/11, 2/12, 3/13, 4/14, 5/15, 6/16, 7/7, 8/18, 9/9, 10/10}

					\newcommand{\ang}[1]{{-(#1-1)*360/\n + 90}}

					\foreach \i in {1,...,\n}{
						\coordinate (\i) at (\ang{\i}:1);
					}

					\foreach \i in \Rc{
						\draw (0,0) -- (\i);
					}
					\foreach \i in \Lc{
						\draw (\ang{\i}:1.7) -- (\i);
					}

					\draw[gray!70!black] plot [ smooth, tension=0.8 ] coordinates {(9) (\ang{10}:1.3) (\ang{1}:0.7) (2) (3)};
					\draw[densely dashed] plot [ smooth, tension=0.8 ] coordinates {(1) (\ang{2}:1.3) (\ang{3}:1.4) (\ang{4}:0.8) (\ang{5}:1.3) (6) (\ang{7}:1.3) (8)};
					\draw[thick] plot [ smooth, tension=0.8 ] coordinates {(7) (\ang{8}:0.7) (\ang{9}:1.3) (\ang{10}:1.4) (\ang{1}:0.8) (\ang{2}:1.2) (\ang{3}:1.2) (\ang{4}:0.7) (5)};

					\foreach \i/\j in \NUM{
						\node[rectangle, fill=white, inner sep=1pt, draw, font=\footnotesize, draw opacity=0.3] at (\i) {\j};
					}
				\end{tikzpicture}
				\caption{The $2$-numbering of the cyclic $c$-noncrossing arc diagram of Example \ref{ex: C_a(A)}}
				\label{fig: renumérotation de l'exemple de C_a(A)}
			\end{figure}%
		\end{exemple}

		\begin{proposition} \label{prop: les a numérotations sont des numérotations}
			Let $\mathcal{A}$ be a cyclic $c$-noncrossing arc diagram and $a \in \ZZ$. Then the $a$-numbering of $\mathcal{A}$ is well defined and is a numbering of $\mathcal{A}$.
		\end{proposition}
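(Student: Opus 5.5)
We have to show two things: every construction in Definition \ref{def: a-renumérotation} makes sense, and $\mathcal{N}_a(\mathcal{A})$ contains exactly one representative of each class of $\left(\mathrm{Ch}(\mathcal{A}) \cup \mathrm{L}(\mathcal{A})\right)/\sim$. The argument splits according to whether $\mathcal{C}_a(\mathcal{A})$ is finite or infinite. By Proposition \ref{prop: C_a(A) est fini ou infini}, this does not depend on $a$.

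\emph{Finite case.} Let $b$ be the last term of $\mathcal{C}_a(\mathcal{A})$. Then $b$ is the maximum of a chain and has no neighbor arc, so $\Gamma_b = \emptyset$: no arc passes strictly over $b$. The same holds for every translate $b+pn$. Every chain $\gamma$ therefore satisfies $\mathrm{Supp}(\gamma) \subset \rrbracket b+pn-n, b+pn \rrbracket$ for some $p$. Indeed, if a chain contained both some $x \leq b+pn-n$ and some $y > b+pn-n$, then either an arc of it would cover $b+pn-n$, or $b+pn-n$ itself would be an interior point of the chain. The latter is impossible because $b$ ends its chain, and $b-n$ can only be the maximum of its own chain. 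Chains have length less than $n$ modulo $n$, since their elements are distinct modulo $n$. Hence each class has exactly one representative inside the window $\rrbracket b-n, b\rrbracket$. There is no loop in this case: a loop would make the sequence infinite once it was reached, and in fact every $b$ lies under a loop arc.

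\emph{Infinite case.} First, the factor $\tau_a$ exists. Proposition \ref{prop: les périodes sont les mêmes dans C_a(A)} gives ultimately periodic sequences with a common period, and the preference rule ($\downarrow$, then $\uparrow$, then $\rightarrow$) selects a first occurrence. Next, $\mathcal{M}_a(\mathcal{A})$ contains at most one representative per class. If $\gamma$ and $\gamma+pn$ with $p \neq 0$ both met $\tau_a$, then $\tau_a \bmod n$ would contain a repeated residue, contradicting the minimality of the period length. Conversely, every class meeting $\tau^n + n\ZZ$ is represented, since $\tau_a$ reduces to the whole periodic factor.

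For the remaining chains, Lemma \ref{lem: chaine dans Lc et Rc alors dans période} shows that each has support entirely in $\Lcn$ or entirely in $\Rcn$. They are therefore handled by $\mathcal{L}_a$ and $\mathcal{R}_a$, and these two families are disjoint from each other and from $\mathcal{M}_a$. The families are defined differently according to how $\tau_a$ starts.

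\begin{itemize}
\item If $\tau_a$ starts with $\downarrow$, the sequence contains no loop, so $M$ and $m$ exist. Each remaining chain is contained in the region delimited by the $\mathcal{M}_a$-chains. To show that each class meets the window $\rrbracket M-n,M\llbracket$ (for $\Lcn$) or $\rrbracket m,m+n\llbracket$ (for $\Rcn$) with its whole support, I would use Corollary \ref{cor: croisement de chaîne}. A pure-$\Lcn$ chain cannot straddle $M-n+pn$: since $M$ is the maximum of $\bigcup \mathcal{M}_a$, the point $M$ is an endpoint of a chain in $\mathcal{M}_a$, and Lemma \ref{lem: deux arcs imbriqués avec la fin dans le même Lc Rc alors ils sont dans la période} would force a straddling chain into the period. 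The $\Rcn$ case is symmetric, with $m$ in place of $M$; here Corollary \ref{cor: m appartient à la première chaine} identifies which chain contains $m$.
\item If $\tau_a$ does not start with $\downarrow$, the diagrams $\mathcal{A}_L$ and $\mathcal{A}_R$ are cyclic $c$-noncrossing, being subdiagrams of $\mathcal{A}$. Their sequences are finite, since they contain only chains in $\Lcn$ (resp. $\Rcn$) and removing $\mathcal{M}_a$ kills the period. This finiteness claim needs an argument: an infinite sequence for $\mathcal{A}_L$ would yield, through Proposition \ref{prop: on peut remplacer une boucle de chaine par une seule boucle}, a real loop made of $\Lcn$ points only, which is absurd. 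The finite case then applies to $\mathcal{A}_L$ and $\mathcal{A}_R$ and gives unique representatives.
\end{itemize}

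\emph{Main obstacle.} I expect the hardest step to be the window argument in the $\downarrow$ case, namely showing that no pure chain straddles the window boundary. This is where the choice of starting arrow matters, and I would attack it by an infinite-descent argument like the one in Lemma \ref{lem: si facteur c|d dans tau_a alors enchevetrement des chaînes}.
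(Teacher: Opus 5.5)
Your outline is the paper's own proof. The finite case uses the window $\rrbracket b-n,b\rrbracket$, whose boundary points are chain maxima with no covering arc. In the infinite case, $\mathcal{M}_a(\mathcal{A})$ is read off the period and the remaining classes are pure by Lemma~\ref{lem: chaine dans Lc et Rc alors dans période}. When $\tau_a$ starts with $\downarrow$ the windows are anchored at $M$ and $m$; otherwise the finite case is applied to $\mathcal{A}_L$ and $\mathcal{A}_R$. Your argument that $\mathcal{C}_a(\mathcal{A}_L)$ is finite goes beyond what the paper writes. However, two steps do not go through as stated, and both are exactly where the convention on the first symbol of $\tau_a$ is used.

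\emph{Uniqueness inside $\mathcal{M}_a(\mathcal{A})$.} If $\gamma$ and $\gamma+pn$ both meet $\tau_a$, that alone does not give a repeated residue, since they may meet it at different points. Take a lift of the period that starts in the middle of a chain, say $x_2\rightarrow x_3\updownarrow\cdots\updownarrow (x_1+n)\rightarrow$ where $x_1\rightarrow x_2\rightarrow x_3$ is a chain. It meets this chain and its translate by $n$ without repeating any residue. What rescues your argument is this: when $\mathcal{A}$ has no loop, $\tau_a$ begins with $\updownarrow$, so every run of integers in $\tau_a$ ends at the maximum of its chain. Then $\max(\gamma)$ and $\max(\gamma)+pn$ would both lie in $\tau_a$, and only at that point does minimality of the period give a contradiction. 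The loop case, where $\mathcal{M}_a(\mathcal{A})$ is the loop alone, should be treated separately.

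\emph{The $\downarrow$ case.} Lemma~\ref{lem: deux arcs imbriqués avec la fin dans le même Lc Rc alors ils sont dans la période} needs the pair $(p,q')$ to lie in $(\Lcn)^2\sqcup(\Rcn)^2$. To forbid a pure-$\Lcn$ chain straddling $M+pn$ you need $M\in\Lcn$. To forbid a pure-$\Rcn$ chain straddling $m+pn$ you need $m\in\Rcn$. Knowing that $M$ is an endpoint of a chain of $\mathcal{M}_a(\mathcal{A})$ is not enough, and you prove neither fact; what is missing is not an infinite-descent argument but these two facts.

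The paper obtains them as follows.
\begin{itemize}
\item Integers increase along $\mathcal{C}_a(\mathcal{A})$ and each run of $\tau_a$ ends at a chain maximum. So $M$ is the last integer of $\tau_a$ and is immediately followed by the $\downarrow$ opening $\tau_a+n$, whence $M\in\Lcn$.
\item Let $p_0$ be the first integer of $\tau_a$. Corollary~\ref{cor: m appartient à la première chaine} says $m$ is the minimum of the chain of $p_0$.
\item Lemma~\ref{lem: si facteur c|d dans tau_a alors enchevetrement des chaînes}, applied to the factor $M\downarrow(p_0+n)$ and combined with Corollary~\ref{cor: croisement de chaîne}, puts $m+n$ and $M$ in opposite sets $\Lcn$, $\Rcn$. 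Hence $m\in\Rcn$.
\end{itemize}
With these two facts, your straddling argument via Lemma~\ref{lem: deux arcs imbriqués avec la fin dans le même Lc Rc alors ils sont dans la période} is complete.
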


		\begin{proof}
			We need to show that for every class of chains or loops modulo $n$, there is exactly one representative in $\mathcal{N}_a(\mathcal{A})$.

			In the case where $\mathcal{C}_a(\mathcal{A})$ is finite, of last element $b \in \ZZ$, then all the integers $b + pn$ for $p \in \ZZ$ have no neighbor arc and they are the final points of the chains they belong to. This means for each arc $u \rightarrow v$ in $\mathcal{A}$, if either $u$ or $v$ is in $\rrbracket b-n, b \rrbracket$, then the other is also in this interval. Therefore, $\mathcal{N}_a(\mathcal{A})$ indeed contains exactly one chain from each class of chains modulo $n$, and since $\mathcal{A}$ contains no loop, then it is a numbering of $\mathcal{A}$.

			In the other case, if $\mathcal{A}$ contains a loop, then it is the only element of $\mathcal{M}_a(\mathcal{A})$ because any periodic factor of $\mathcal{C}_a(\mathcal{A})$ consists only of numbers present in the support of this loop. Since a loop is the only representative of its class modulo $n$, there is exactly one representative modulo $n$ of the class of this loop in $\mathcal{N}_a(\mathcal{A})$. Otherwise, $\mathcal{A}$ contains no loop and therefore $\tau_a$ starts with either $\uparrow$ or $\downarrow$. Let $\Gamma = \{ \gamma_0 + pn \;|\; n \in \ZZ \}$ be the class modulo $n$ of some chain $\gamma_0 \in \mathrm{Ch}(\mathcal{A})$. We prove that there is exactly one chain of $\Gamma$ that belongs to $\mathcal{N}_a(\mathcal{A})$.

			If there is an integer in $\tau_a$ that is also present in a chain of $\Gamma$, then a representative of $\Gamma$ is present in $\mathcal{M}_a(\mathcal{A})$ and therefore in $\mathcal{N}_a(\mathcal{A})$. Moreover, it is the only representative possible: indeed, since $\tau_a$ starts with $\updownarrow$, all the integers between two $\updownarrow$s belong to the same chain and two integers separated by $\updownarrow$s belong to different chains. Moreover, the maximal element of every chain that appears in $\tau_a$ is present in $\tau_a$, thus two distinct chains congruent modulo $n$ cannot both appear in $\tau_a$. Hence, there is exactly one representative of $\Gamma$ in $\mathcal{N}_a(\mathcal{A})$.

			Otherwise, there is no integer in $\tau_a$ that is also present in a chain of $\Gamma$. Then we know by Lemma \ref{lem: chaine dans Lc et Rc alors dans période} that all the chains in $\Gamma$ have their supports either in $\Lcn$ or $\Rcn$. Therefore, for all $\gamma \in \Gamma$, we have $\max(\gamma) < \min(\gamma) + n$, otherwise $\gamma$ would intersect $\gamma + n$. If $\tau_a$ starts with $\uparrow$, then $\Gamma \subset \mathcal{A}_L$ or $\Gamma \subset \mathcal{A}_R$ therefore there is a unique chain $\gamma \in \Gamma$ that is in $\mathcal{L}_a(\mathcal{A}) \cup \mathcal{R}_a(\mathcal{A})$ by construction. Else, $\tau_a$ starts with $\downarrow$. This means that $M$, the maximal element that appears in the chains of $\mathcal{M}_a(\mathcal{A})$, is in $\Lcn$. For $m$, the minimal element that appears in these chains, we know by Corollary \ref{cor: m appartient à la première chaine} that it belongs to the first chain appearing in $\tau_a$. Thus, by Lemma \ref{lem: si facteur c|d dans tau_a alors enchevetrement des chaînes} and Corollary \ref{cor: croisement de chaîne} applied to the chain containing $M$ and the chain containing $m+n$, we have that $m$ is in $\Rcn$. Consider $\gamma_0 \in \Gamma$ to be the unique chain such that $M-n < \min(\gamma) < M$ if $\mathrm{Supp}(\gamma) \subset \Lcn$ or $m < \min(\gamma) < m+n$ if $\mathrm{Supp}(\gamma) \subset \Rcn$. By Lemma \ref{lem: deux arcs imbriqués avec la fin dans le même Lc Rc alors ils sont dans la période}, we cannot have $\min(\gamma) < M < \max(\gamma)$ if $\mathrm{Supp}(\gamma) \subset \Lcn$ or $\min(\gamma) < m+n < \max(\gamma)$ if $\mathrm{Supp}(\gamma) \subset \Rcn$. Therefore, $\mathrm{Supp}(\gamma_0) \subset \Lcn \cap \rrbracket M-n, M \llbracket$ or $\mathrm{Supp}(\gamma_0) \subset \Rcn \cap \rrbracket m, m+n \llbracket$.

			In conclusion, we showed that for any class modulo $n$ of chains $\Gamma$, there exists a unique $\gamma \in \Gamma$ such that $\gamma \in \mathcal{N}_a(\mathcal{A})$. Therefore, $\mathcal{N}_a(\mathcal{A})$ is a numbering of $\mathcal{A}$.
		\end{proof}

		\begin{lemme} \label{lem: les cycles dans Ma(A) ont L_c et R_c dans un intervalle de longueur n}
			Let $\mathcal{A}$ be a cyclic $c$-noncrossing arc diagram with no loop. Let $\mathcal{N}_a(\mathcal{A})$ be the $a$-numbering of $\mathcal{A}$ for some $a \in \ZZ$. Denote $\mathcal{M}_a(\mathcal{A})$ the set of chains from Definition \ref{def: a-renumérotation} and, if $\mathcal{C}_a(\mathcal{A})$ is finite, let $\mathcal{M}_a(\mathcal{A})$ be the set $\mathcal{N}_a(\mathcal{A})$. Define $m = \min\left(\{\min(\gamma) \;|\: \gamma \in \mathcal{M}_a(\mathcal{A})\}\right)$ and $M = \max\left(\{\max(\gamma) \;|\: \gamma \in \mathcal{M}_a(\mathcal{A})\}\right)$.

			\begin{enumerate}[label*=(\roman*)]
				\item If $\mathcal{C}_a(\mathcal{A})$ is finite or if the period of $\mathcal{C}_a^n(\mathcal{A})$ contains an arrow $\downarrow$, then for all $\gamma \in \mathcal{N}_a(\mathcal{A})$,
				\begin{equation*}
					\mathrm{Supp}(\gamma) \cap \Lcn \subset \rrbracket M - n, M \rrbracket \quad\text{ and } \quad \mathrm{Supp}(\gamma) \cap \Rcn \subset \llbracket m, m+n \llbracket.
				\end{equation*}
				\item Otherwise, for all $\gamma \in \mathcal{M}_a(\mathcal{A})$,
				\begin{equation*}
					\mathrm{Supp}(\gamma) \cap \Lcn \subset \llbracket m, m+n \llbracket \quad\text{ and } \quad \mathrm{Supp}(\gamma) \cap \Rcn \subset \rrbracket M - n, M \rrbracket.
				\end{equation*}
			\end{enumerate}
		\end{lemme}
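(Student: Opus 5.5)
The plan is a case analysis following the three cases of Definition \ref{def: a-renumérotation}. The tools are the crossing criterion (Lemma \ref{lem: critère de croisement d'arcs} and Corollary \ref{cor: croisement de chaîne}), Lemma \ref{lem: si facteur c|d dans tau_a alors enchevetrement des chaînes} describing consecutive chains in a period, and Lemma \ref{lem: deux arcs imbriqués avec la fin dans le même Lc Rc alors ils sont dans la période}.

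\textbf{Finite case.} Let $b$ be the last element of $\mathcal{C}_a(\mathcal{A})$. Every $b+pn$ is the maximum of its chain and has no neighbor arc, so no arc passes over any $b+pn$. Hence every $\gamma\in\mathcal{N}_a(\mathcal{A})$ has support in $\rrbracket b-n,b\rrbracket$. Since $\mathcal{M}_a(\mathcal{A})=\mathcal{N}_a(\mathcal{A})$ here, $M\le b$ and $m>b-n$. The natural route is to use the supports directly: each support lies in $\llbracket m,M\rrbracket$, of length less than $n$. Its $\Lcn$-part is then contained in $\rrbracket M-n,M\rrbracket$ and its $\Rcn$-part in $\llbracket m,m+n\llbracket$, and this part is essentially trivial.

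\textbf{Infinite case, no loop.} Write $\tau_a = \updownarrow c_1 \cdots \updownarrow c_2 \cdots$, and let $\gamma_1,\dots,\gamma_r$ be the chains of $\mathcal{M}_a(\mathcal{A})$ in the order they appear in $\tau_a$, with $\gamma_{r+1}=\gamma_1+n$ because the factor is periodic. Lemma \ref{lem: si facteur c|d dans tau_a alors enchevetrement des chaînes} gives the interleaving
\[
\min\gamma_i<\min\gamma_{i+1}<\max\gamma_i<\max\gamma_{i+1}.
\]
Corollary \ref{cor: croisement de chaîne} then gives $(\min\gamma_{i+1},\max\gamma_i)\in \Lcn\times\Rcn\sqcup\Rcn\times\Lcn$. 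By Corollary \ref{cor: m appartient à la première chaine}, $m=\min\gamma_1$, and symmetrically $M=\max\gamma_r$.

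The key step is to track how the arrow type relates to these endpoints. An arrow $\downarrow$ after $\max\gamma_i$ means $\max\gamma_i\in\Lcn$, and the neighbor-arc rule then forces $\min\gamma_{i+1}\in\Rcn$. I would show that if the period contains a $\downarrow$ (so $\tau_a$ starts with $\downarrow$), then every $\Lcn$-element of the $\gamma_i$ lies in $\rrbracket M-n,M\rrbracket$. The argument runs as follows.
\begin{itemize}
\item Suppose some $\Lcn$-element $x\le M-n$ lies in some $\gamma_i$.
\item Then $x+n\le M$, and $x+n$ lies in $\gamma_i+n$.
\item Comparing $\gamma_i+n$ with $\gamma_r$ via Corollary \ref{cor: croisement de chaîne} produces a crossing in which both relevant endpoints are in $\Lcn$, contradicting the criterion.
\end{itemize}
The same argument with $m$ handles $\Rcn$. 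Case (ii), where the period has only $\uparrow$, is the mirror statement. There all maxima are in $\Rcn$, which reverses which side the $\Lcn$ and $\Rcn$ points must lie on, and I would invoke the symmetric diagram $\mathcal{A}'$ from the Remark after Lemma \ref{lem: arc voisin = le plus proche} to transfer the argument.

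\textbf{Chains of $\mathcal{L}_a$ and $\mathcal{R}_a$ in case (i).} These are contained in $\rrbracket M-n,M\llbracket$ or $\rrbracket m,m+n\llbracket$ by definition, so the bound is immediate.

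\textbf{Main obstacle.} The hardest step is the endpoint bookkeeping in the infinite case: showing that consecutive chains in one period cannot "overshoot" by $n$ on the wrong side. I would attempt an induction along $\gamma_1,\dots,\gamma_r$, using Lemma \ref{lem: deux arcs imbriqués avec la fin dans le même Lc Rc alors ils sont dans la période} to exclude nested translates.
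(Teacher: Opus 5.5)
Your finite case, your treatment of $\mathcal{L}_a(\mathcal{A})\cup\mathcal{R}_a(\mathcal{A})$, and your key step for the chains of $\mathcal{M}_a(\mathcal{A})$ in case (i) are the paper's argument. Comparing $\gamma_i+n$ with $\gamma_r$ is the translate by $n$ of the paper's comparison of $\gamma_j$ with $\gamma_M-n$.

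The ``main obstacle'' you anticipate is not there, and no induction is needed. Apply Corollary~\ref{cor: croisement de chaîne} to $\gamma_r$ and to the partial chain of $\gamma_i+n$ starting at $x+n$. Do not use the whole chain: its minimum may lie below $\min\gamma_r$, and then you only get the harmless nested case. The two inequalities that make the configuration interleaved are $\min\gamma_r<x+n$ and $M<\max\gamma_i+n$. Both follow from your interleaving with $\gamma_{r+1}=\gamma_1+n$:
\begin{itemize}
\item $x\ge m=\min\gamma_1>\min\gamma_r-n$;
\item $\max\gamma_i\ge\max\gamma_1>\max\gamma_r-n=M-n$.
\end{itemize}
Also $x\neq M-n$, since two translates of one chain cannot both lie in $\mathcal{M}_a(\mathcal{A})$. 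This is exactly the paper's inequality $\min(\gamma_M)-n<m<M-n<p$. Finally, $M\in\Lcn$ because the last integer of $\tau_a$ is followed by the $\downarrow$ that restarts the period, so you get $x\in\Rcn$.

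The step that fails as written is your reduction of case (ii) via $\mathcal{A}'=\{-q\rightarrow -p\}$. That Remark only matches neighbor arcs. Read back in $\mathcal{A}$, the sequences $\mathcal{C}(\mathcal{A}')$ climb chains downward and enter the next chain at the initial, not the final, point of a neighbor arc. Moreover, the arrow after a chain is decided by the class of its maximum, and under $x\mapsto -x$ that becomes the minimum of the image chain. So $\mathcal{C}(\mathcal{A}')$ is not the negative of $\mathcal{C}(\mathcal{A})$. Neither the hypothesis ``only $\uparrow$ in the period'' nor the set $\mathcal{M}_a(\mathcal{A})$ is transported, and (ii) does not follow from (i) this way.

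The repair is easy. You can rerun the same argument with $M\in\Rcn$ (now $M$ is followed by $\uparrow$), hence $m\in\Lcn$. The lemma on factors $c\updownarrow d$ and the resulting interleavings do not depend on the arrow type; this is what the paper does with its ``resp.''. If you prefer a symmetry, use the one exchanging $L_c$ and $R_c$ while keeping the arcs $p\rightarrow q$, i.e.\ mirroring the picture across the line of points. This preserves noncrossingness, chains and neighbor arcs, because $\alpha_a$ only depends on whether an initial point lies in the same class as $a$. It swaps $\uparrow$ and $\downarrow$ and carries the choice of $\tau_a$ and of $\mathcal{M}_a(\mathcal{A})$ along. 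It therefore sends case (ii) to the all-$\downarrow$ subcase of (i).
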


		\begin{proof}
			We first prove $(i)$ when $\mathcal{C}_a(\mathcal{A})$ is finite: we know that the support of all the chains of $\mathcal{N}_a(\mathcal{A})$ belong to the same interval of length $n$, hence $M - m \leq n$ and the support of any chain of $\mathcal{N}_a(\mathcal{A})$ belongs to both $\rrbracket M-n, M \rrbracket$ and $\llbracket m, m+n \llbracket$, which proves $(i)$ in this case. If $\mathcal{C}_a(\mathcal{A})$ is infinite with the period containing an arrow $\downarrow$, then the chains $\gamma \in \mathcal{L}_a(\mathcal{A}) \cup \mathcal{R}_a(\mathcal{A})$ satisfy $(i)$ by definition.

			Now we prove $(i)$ for when $\mathcal{C}_a(\mathcal{A})$ is infinite (resp. we prove $(ii)$). Let $\tau_a$ be the factor of $\mathcal{C}_a(\mathcal{A})$ defined in Definition \ref{def: a-renumérotation} and suppose it starts with an arrow $\downarrow$ (resp. it contains only arrows $\uparrow$). Note $\gamma_m$ the chain containing $m$ and $\gamma_M$ the chain containing $M$, and $p$ the first integer element of $\tau_a$. We have $p \in \mathrm{Supp}(\gamma_m)$ by Corollary \ref{cor: m appartient à la première chaine} and $M-n \downarrow p$ (resp. $M-n \uparrow p$) is a factor of $\mathcal{C}_a(\mathcal{A})$. By Definition of $\mathcal{C}_a(\mathcal{A})$, we have that $M \in \Lcn$ (resp. $M \in \Rcn$). Moreover, $m \in \Rcn$ (resp. $m \in \Lcn$) since by Lemma \ref{lem: si facteur c|d dans tau_a alors enchevetrement des chaînes} and Corollary \ref{cor: croisement de chaîne} we have
			\begin{equation} \label{eq: preuve lemme les cycles dans Lc et Rc sont dans un intervalle de longueur n}
				\min(\gamma_M) - n < m < M-n < p.
			\end{equation}

			Let $j \in \bigcup_{\gamma \in \mathcal{M}_a(\mathcal{A})} \mathrm{Supp}(\gamma)$. If $j < M-n$, we need to show that $j \in \Rcn$ (resp. $j \in \Lcn$). In this case, either $j = m$ or $m < j < M-n$. In the first case, we already have $j \in \Rcn$ (resp. $j \in \Lcn$). In the second case, we have $\min(\gamma_M - n) < j < M-n$ by (\ref{eq: preuve lemme les cycles dans Lc et Rc sont dans un intervalle de longueur n}). Let $\gamma_j$ be the chain containing $j$. Since $p$ is the first element of $\tau_a$, we have $M-n < p \leq \max(\gamma_j)$. Therefore, we have
			\begin{equation*}
				\min(\gamma_M - n) < j < M-n < \max(\gamma_j)
			\end{equation*} and by Corollary \ref{cor: croisement de chaîne}, we obtain that $j \in \Rcn$ (resp. $j \in \Lcn$). Therefore, $\mathrm{Supp}(\gamma) \cap \Lcn \subset \rrbracket M - n, M \rrbracket$ (resp. $\mathrm{Supp}(\gamma) \cap \Rcn \subset \rrbracket M - n, M \rrbracket$).

			Similarly, if $j > m+n$, we have either $j = M$ or $m+n < j < M$, and by the same argument we obtain that $j \in \Lcn$ (resp. $j \in \Rcn$), and thus $\mathrm{Supp}(\gamma) \cap \Rcn \subset \llbracket m, m+n \llbracket$ (resp. $\mathrm{Supp}(\gamma) \cap \Lcn \subset \llbracket m, m+n \llbracket$). Hence, the other case of $(i)$ (resp. $(ii)$).
		\end{proof}

	\subsection{Generalizing \textsc{Reading}'s bijection} \label{ssec: generalizing reading's bijection}

		In this section, we prove Theorem \ref{thm: generalisation de l'application de Reading} and explicitly provide the inverse map. Most of the work involves the map $\ncad_c$, of which we will also exhibit in Section \ref{sssec: défintion de TITOc} its inverse: it is called $\mathrm{tito}_c$ and associates to each cyclic $c$-noncrossing arc diagram a $c$-sortable TITO on $\ZZ$.

		\subsubsection{Ordering chains and loops} \label{sssec: ordering chains and loops}

			Being able to renumber a cyclic $c$-noncrossing arc diagram allows us to define the inverse map of $\ncad_c$. The construction to obtain the $c$-sortable TITO from a cyclic $c$-noncrossing arc diagram is based on removing arcs one by one, in a specific order, similarly to the computation of the inverse map of \textsc{Reading}'s bijection in type $A$ \cite[§6.1]{gobet2018dual}. This order is defined using the notion of arcs «~hiding~» other arcs.

			\begin{definition} \label{def: chain cachée}
				Let $\mathcal{A}$ be a cyclic $c$-noncrossing arc diagram and $\gamma, \gamma'$ be two distinct partial chains or loops of $\mathcal{A}$. We say that \emph{$\gamma$ hides $\gamma'$} if one of the following cases holds:
				\begin{itemize}
					\item $\gamma$ is a loop and $\mathrm{Supp}(\gamma') \subset \Rcn$,
					\item $\gamma'$ is a loop and $\mathrm{Supp}(\gamma) \subset \Lcn$,
					\item $\gamma$ and $\gamma'$ are partial chains and, by marking an integer with $\underline{\phantom{a}}$ (resp. $\overline{\phantom{a}}$) if it belongs to $\Rcn$ (resp. to $\Lcn$): $\min(\gamma) < \underline{\min(\gamma')} < \max(\gamma')$ or $\min(\gamma') < \overline{\min(\gamma)} < \max(\gamma)$.

				\end{itemize}
			\end{definition}

			Geometrically, a partial chain or a loop hides another one if part of it is located on the left side of the other when traveling from its minimal to its maximal element, while taking into account the numbering. 

			\begin{exemple}
				In the renumbered cyclic $c$-noncrossing arc diagram of Example \ref{ex: a-renumérotation arc}, the chain $11 \rightarrow 20$ hides the chain $8 \rightarrow 12 \rightarrow 13$. Although it seems that the point $8$ of this chain also hides the chain $11 \rightarrow 20$, it is not the case since $8 < 11$ (in fact, $8$ hides the previous copy of $11 \rightarrow 20$ which is $1 \rightarrow 10$). Similarly, the chain $4 \rightarrow 6$ does not hide any chain of the $4$-renumbering of this example.
			\end{exemple}

			Similarly to the construction of \cite[§6.1]{gobet2018dual}, we will select a chain or a loop among all the ones that are not hidden by any other chain or loop. However, it is not trivial to see that this is always possible, especially on the geometric representation of a cyclic $c$-noncrossing arc diagram.

			\begin{lemme} \label{lem: il y a une chaine non cachée}
				Let $\mathcal{N}$ be a nonempty finite subset of $\mathrm{Ch}(\mathcal{A}) \cup \mathrm{L}(\mathcal{A})$. There exists $\gamma_0 \in \mathcal{N}$ such that for all $\gamma \in \mathcal{N} \setminus \{\gamma_0\}$, $\gamma$ does not hide $\gamma_0$. Moreover, if $\gamma_0$ is a loop, it is the only element of $\mathcal{N}$ that is not hidden.
			\end{lemme}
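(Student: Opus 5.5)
The plan is to show that, on a finite set $\mathcal{N}$, the «hides» relation has no cycle. A finite relation without cycles has a source, that is, an element hidden by no other element, and this source is the $\gamma_0$ we want.

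First I treat loops. A cyclic $c$-noncrossing arc diagram has at most one real loop, so $\mathcal{N}$ contains at most one loop $\lambda$. By Definition \ref{def: chain cachée}, a chain $\gamma'$ can hide $\lambda$ only if $\mathrm{Supp}(\gamma') \subset \Lcn$. Suppose $\lambda$ is hidden by nothing. I claim that every other element $\gamma \in \mathcal{N}$ is then hidden by something. For this I want to show that every chain $\gamma$ of $\mathcal{A}$ satisfies $\mathrm{Supp}(\gamma) \subset \Lcn$ or $\mathrm{Supp}(\gamma) \subset \Rcn$: an arc with one endpoint in $\Lcn$ and the other in $\Rcn$ must cross the loop by Lemma \ref{lem: critère de croisement d'arcs}, as in the proof of Lemma \ref{lem: chaine dans Lc et Rc alors dans période}. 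Since $\lambda$ is not hidden, no chain of $\mathcal{N}$ lies in $\Lcn$. Hence every chain of $\mathcal{N}$ lies in $\Rcn$, and is therefore hidden by $\lambda$. This gives the «moreover» part. It also shows that if some loop is not hidden we are done.

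So assume $\lambda$ is hidden, or that there is no loop, and work only with the chains of $\mathcal{N}$. Loops play no further role: if $\lambda$ is hidden by some chain of $\mathcal{N}$, that chain cannot itself be hidden by $\lambda$, since its support lies in $\Lcn$ and not in $\Rcn$. It remains to rule out cycles $\gamma_1 \to \gamma_2 \to \dots \to \gamma_r \to \gamma_1$ among chains, where $\to$ means «hides».

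For this I attach to each chain $\gamma$ a real number $h(\gamma)$ that strictly increases along $\to$. The natural candidate comes from Corollary \ref{cor: croisement de chaîne}. If $\gamma$ hides $\gamma'$, the two chains are either interleaved, with $\min(\gamma) < \min(\gamma') < \max(\gamma) < \max(\gamma')$ and the middle endpoints in different sets, or nested, with one chain inside the other and the inner chain entirely in $\Lcn$ or entirely in $\Rcn$. The key step is a case analysis over these configurations. In the nested case, the inner chain is hidden exactly when it lies in $\Rcn$, and it hides the outer chain exactly when it lies in $\Lcn$. In the interleaved case, the direction of hiding is read off from whether $\min(\gamma')$ lies in $\Rcn$ or $\min(\gamma)$ lies in $\Lcn$. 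Both situations mirror the type $A$ picture of \cite[§6.1]{gobet2018dual}, where $\Lcn$ points lie on the left and $\Rcn$ points on the right. I would try the function $h(\gamma) = \min(\gamma)$ if $\min(\gamma) \in \Rcn$ and $h(\gamma) = -\max(\gamma)$ otherwise, or more robustly a geometric height obtained by unrolling the annulus into a strip, and then check each case of Corollary \ref{cor: croisement de chaîne} against it.

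I expect this to be the main obstacle. The chains come from a cyclic diagram, so a chain may interleave with translates of another chain; this is why Definition \ref{def: chain cachée} depends on the chosen representatives. An alternative that I would pursue if no monotone $h$ works is a minimal counterexample: take a hiding cycle of minimal total length $\sum_i (\max(\gamma_i) - \min(\gamma_i))$ and use Corollary \ref{cor: croisement de chaîne} on consecutive pairs to produce a shorter cycle. Since $\mathcal{N}$ is finite, a cycle-free relation has an unhidden element, which completes the proof.
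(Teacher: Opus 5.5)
Your treatment of the loop is correct and matches the paper's. If $\mathcal{N}$ contains a loop $\lambda$, every chain lies entirely in $\Lcn$ or entirely in $\Rcn$, since otherwise it would cross $\lambda$. So if $\lambda$ is not hidden, every chain lies in $\Rcn$ and is hidden by $\lambda$.

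There is a genuine gap in the chain part, which is the actual content of the lemma: the absence of hiding cycles is announced but never proved. Your candidate $h$ is not monotone along hiding in either direction.
\begin{itemize}
\item Take a chain lying in $\Lcn$ nested under a chain whose minimum is in $\Lcn$, say $4 \rightarrow 6$ under $1 \rightarrow 10$ with $1,4,6 \in \Lcn$. The inner chain hides the outer one, yet $h$ drops from $-6$ to $-10$.
\item In Example \ref{ex: ordre de select}, $7 \rightarrow 15$ hides $9 \rightarrow 12 \rightarrow 13$, and $h$ rises from $7$ to $9$.
\end{itemize}
The geometric height and the minimal-counterexample argument are only named, so nothing replaces $h$.

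There is also a hole in your reduction to chains. A source of the hiding relation restricted to chains may lie in $\Rcn$, and it is then hidden by $\lambda$. Excluding $2$-cycles through $\lambda$ does not prevent this.

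The missing observation is that Definition \ref{def: chain cachée} already forces a condition on the minima alone. If a chain $\gamma$ hides a chain $\gamma'$, then either $\min(\gamma') \in \Rcn$ and $\min(\gamma) < \min(\gamma')$, or $\min(\gamma) \in \Lcn$ and $\min(\gamma') < \min(\gamma)$. You need no nested/interleaved case analysis via Corollary \ref{cor: croisement de chaîne}, and no worry about translates.

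Consequently, the following lexicographic potential strictly increases from hider to hidden:
\begin{itemize}
\item $\phi(\gamma) = (0,-\min(\gamma))$ if $\min(\gamma) \in \Lcn$;
\item $\phi(\gamma) = (2,\min(\gamma))$ if $\min(\gamma) \in \Rcn$;
\item $\phi(\lambda) = (1,0)$ for the loop.
\end{itemize}
This works for $\lambda$ too, because a loop hides only chains in $\Rcn$ and is hidden only by chains in $\Lcn$. So this single potential settles both acyclicity and your loop issue.

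Its minimum is exactly the element the paper exhibits directly:
\begin{itemize}
\item if some chain of $\mathcal{N}$ has its minimum in $\Lcn$, the chain whose minimum is the largest such;
\item otherwise the loop, if $\mathcal{N}$ contains one;
\item otherwise the chain with the smallest minimum.
\end{itemize}
The paper simply checks from the definition that this element is not hidden, which is shorter than proving acyclicity.
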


			\begin{proof}
				We explicitly find this $\gamma_0$.
				\begin{itemize}
					\item If $\{ \min(\gamma) \;|\; \gamma \in \mathcal{N} \setminus \mathrm{L}(\mathcal{A}) \} \cap \Lcn \neq \emptyset$, we consider $\gamma_0$ the chain such that its minimal element is the maximal element of this set. In particular, since there is a point of $\gamma_0$ in $\Lcn$, no loop hides $\gamma_0$. Moreover, we have for all $\gamma \in \mathcal{N} \setminus (\mathrm{L}(\mathcal{A}) \cup \{\gamma_0\})$ either $\overline{\min(\gamma_0)} < \underline{\min(\gamma)}$ or $\min(\gamma) < \overline{\min(\gamma_0)}$. By Definition \ref{def: chain cachée}, $\gamma$ does not hide $\gamma_0$.
					\item Otherwise, this means that all the chains have an element in $\Rcn$ (their smallest element). If there is a loop $\widetilde{\gamma}$ in $\mathcal{N}$, then all the chains in $\mathcal{N}$ have all their elements in $\Rcn$. Therefore, $\widetilde{\gamma}$ is not hidden and our candidate is $\gamma_0 = \widetilde{\gamma}$. If there is no loop in $\mathcal{N}$, then we consider the chain $\gamma_0$ such that its minimal element is minimal. We have, for all $\gamma \in \mathcal{N} \setminus \{\gamma_0\}$, $\underline{\min(\gamma_0)} < \underline{\min(\gamma)}$. By Definition \ref{def: chain cachée}, $\gamma_0$ is not hidden by $\gamma$.
				\end{itemize}
				The last point of the Lemma comes from the fact that, by Definition \ref{def: chain cachée}, a loop always hides or is hidden by a chain.
			\end{proof}

			From Definition \ref{def: chain cachée} and Lemma \ref{lem: il y a une chaine non cachée}, we can define a total order on a finite subset of $\mathcal{A}$ iteratively.

			\begin{definition}
				Let $\mathcal{A}$ be a cyclic $c$-noncrossing arc diagram and $\mathcal{N} \subset \mathrm{Ch}(\mathcal{A}) \cup \mathrm{L}(\mathcal{A})$ be a finite subset of chains and loops of $\mathcal{A}$. We define a map $\texttt{select}_\mathcal{N} : \mathcal{P}(\mathcal{N}) \rightarrow \mathcal{N}$, where $\mathcal{P}(\mathcal{N})$ is the power set of $\mathcal{N}$, that selects the chain or loop from $\mathcal{N}$ whose maximal element is minimal among all the chains or loops of $\Gamma$ that are not hidden by any other chain or loop of $\Gamma$ (we state that for a loop $\gamma$, $\max(\gamma) = +\infty$):
				\begin{equation*}
					\forall \Gamma \in \mathcal{P}(\mathcal{N}),\; \texttt{select}_\mathcal{N}(\Gamma) = \mathrm{argmin} \left(\{ \max(\gamma) \;|\; \gamma \in \Gamma \text{ s.t. no chains or loops of } \Gamma \text{ hide } \gamma \}\right).
				\end{equation*}

				This defines a total order on $\mathcal{N}$, named $\lhd_\mathcal{N}$, by iteratively selecting and removing chains or loops:
				\begin{equation*}
				(\mathcal{N}, \lhd_\mathcal{N}) = \{ \gamma_1 \lhd_\mathcal{N} \gamma_2 \lhd_\mathcal{N} \dots \lhd_\mathcal{N} \gamma_k \}
				\end{equation*} if and only if for all $1 \leq i \leq k$, $\gamma_i = \texttt{select}_\mathcal{N}(\mathcal{N} \setminus \{ \gamma_1, \dots, \gamma_{i-1} \})$.
			\end{definition}

			\begin{remarque}
				Note that one could select on the minimal element of $\gamma$, or even any arbitrary element of $\gamma$. Indeed, two chains $\gamma, \gamma'$ that are both not hidden by any other chain or loop satisfy $\max(\gamma) < \min(\gamma')$ or $\max(\gamma') < \min(\gamma)$. We chose to select on the maximal element for similarities with \cite[§6.1]{gobet2018dual}.
			\end{remarque}

			\begin{exemple} \label{ex: ordre de select}
				Consider $\mathcal{N} = \mathcal{N}_2(\mathcal{A})$ the $2$-numbering of $\mathcal{A}$ from Example \ref{ex: renumérotation arc}. The only chain of $\mathcal{N}$ not hidden by any other chain of $\mathcal{N}$ is $14$. Thus $\texttt{select}_{\mathcal{N}}(\mathcal{N}) = 14$. Then $11 \rightarrow 16 \rightarrow 18$ is the only chain not hidden by any other chain of $\mathcal{N} \setminus \{14\}$, thus $\texttt{select}_{\mathcal{N}}(\mathcal{N} \setminus \{14\}) = 11 \rightarrow 16 \rightarrow 18$. Continuing this process, we obtain the following total order on $\mathcal{N}$:
				\begin{equation*}
					14 \lhd_\mathcal{N} 11 \rightarrow 16 \rightarrow 18 \lhd_\mathcal{N} 7 \rightarrow 15 \lhd_\mathcal{N} 9 \rightarrow 12 \rightarrow 13 \lhd_\mathcal{N} 10.
				\end{equation*}
			\end{exemple}

			This order is the key ingredient to define the inverse map of $\ncad_c$.

		\subsubsection{A map from cyclic \texorpdfstring{$c$}{c}-noncrossing arc diagrams to \texorpdfstring{$c$}{c}-sortable TITOs} \label{sssec: défintion de TITOc}

			\begin{definition}[Inverse map of $\ncad_c$] \label{def: TITO associé à un diagramme d'arcs}
				Let $\mathcal{A}$ be a cyclic $c$-noncrossing arc diagram and $a \in \ZZ$. Denote $\mathcal{C}_a(\mathcal{A})$ the sequence defined in Definition \ref{def: suite C_a(x)} and $\mathcal{N}_a(\mathcal{A})$ the $a$-numbering of $\mathcal{A}$ defined in Section \ref{ssec: renumérotation}. We define a TITO on $\ZZ$ from $\mathcal{A}$, named $\mathrm{tito}_{c,a}(\mathcal{A})$, in different ways depending on the nature of $\mathcal{A}$.

				If $\mathcal{C}_a(\mathcal{A})$ is finite or if there exists an arrow $\downarrow$ in the period of $\mathcal{C}_a^n(\mathcal{A})$, consider the totally ordered set $(\mathcal{N}_a(\mathcal{A}), \lhd_{\mathcal{N}_a(\mathcal{A})})$. Let $M$ be the ordered set composed of the supports of each chain of $\mathcal{N}_a(\mathcal{A})$ taken from first to last under the order $\lhd_{\mathcal{N}_a(\mathcal{A})}$, and where each support is ordered from largest to smallest integer. Define $\mathrm{tito}_{c,a}(\mathcal{A})$ to be the TITO of window $[M]$.

				Otherwise, consider the three totally ordered sets $(\mathcal{L}_a(\mathcal{A}), \lhd_{\mathcal{L}_a(\mathcal{A})})$, $(\mathcal{M}_a(\mathcal{A}), \lhd_{\mathcal{M}_a(\mathcal{A})})$ and $(\mathcal{R}_a(\mathcal{A}), \lhd_{\mathcal{R}_a(\mathcal{A})})$ from Definition \ref{def: a-renumérotation}. Let $L$ (resp. $M$, resp. $R$) be the ordered set composed of the supports of each chain of $\mathcal{L}_a(\mathcal{A})$ (resp. $\mathcal{M}_a(\mathcal{A})$, resp. $\mathcal{R}_a(\mathcal{A})$) taken from first to last under the order $\lhd_{\mathcal{L}_a(\mathcal{A})}$ (resp. $\lhd_{\mathcal{M}_a(\mathcal{A})}$, resp. $\lhd_{\mathcal{R}_a(\mathcal{A})}$), and order each support from largest to smallest integer. Define $\mathrm{tito}_{c,a}(\mathcal{A})$ to be the TITO of window $[L]\underline{[M]}[R]$.
			\end{definition}

			As defined, this map $\mathrm{tito}_{c,a}$ depends \textit{a priori} on the choice of $a \in \ZZ$. In fact, it is not the case, but it is not trivial to see that $\mathrm{tito}_{c,a} = \mathrm{tito}_{c,b}$ for $a,b \in \ZZ$ from the definition. We will prove in Propositions \ref{prop: TITO_c est bien définie} and \ref{prop: TITO_c a les bonnes couvertures} that for any $a \in \ZZ$, the map $\mathrm{tito}_{c,a}$ sends a cyclic $c$-noncrossing arc diagram $\mathcal{A}$ to a $c$-sortable TITO on $\ZZ$ that has as covers exactly the affine transpositions $\affc{p,q}$ for $p \rightarrow q \in \mathcal{A}$. By Proposition \ref{prop: les couvertures d'un TITO c-triable le caractérisent}, such a TITO is unique.

			\begin{exemple}
				Let $\mathcal{A}$ be the cyclic $c$-noncrossing arc diagram of Example \ref{ex: C_a(A)} and its $2$-numbering from Example \ref{ex: renumérotation arc}. We have an arrow $\downarrow$ in $\tau_2$, therefore we have $\mathrm{tito}_{c,2}(\mathcal{A})$ of shape $[M]$ where $M$ is the ordered set composed of the supports of each chain in $\mathcal{N}$ computed in Example \ref{ex: ordre de select}. Thus,
				\begin{equation*}
					\mathrm{tito}_{c,2}(\mathcal{A}) = [ 14,\ 18, 16, 11,\ 15, 7,\ 13, 12, 9,\ 10 ].
				\end{equation*}
			\end{exemple}
			\begin{exemple} \label{ex: un exemple de TITO_c}
				If $\mathcal{B}$ is the cyclic $c$-noncrossing arc diagram of Example \ref{ex: a-renumérotation arc}, then no $\downarrow$ appears in a periodic factor of $\mathcal{C}_4(\mathcal{B})$. Therefore, $\mathrm{tito}_{c,4}(\mathcal{B})$ is of shape $[L]\underline{[M]}[R]$. We have $L = (6,4)$, $M = (20, 11,\ 13, 12, 8)$ and $R = (-3, -5,\ -1)$ using the $\texttt{select}$ function on $\mathcal{L}_4(\mathcal{B})$, $\mathcal{M}_4(\mathcal{B})$ and $\mathcal{R}_4(\mathcal{B})$ respectively. Therefore:
				\begin{equation*}
					\mathrm{tito}_{c,4}(\mathcal{B}) = [6,4] \underline{[20, 11,\ 13, 12, 8]} [-3,-5,\ -1].
				\end{equation*}
			\end{exemple}

			We now have all the tools to prove Theorem \ref{thm: generalisation de l'application de Reading}. The following two propositions contain the major part of the proof as they state that the map $\mathrm{tito}_{c,a}$, for any $a \in \ZZ$, is well defined and is the inverse map of $\ncad_c$.

			Let $\mathcal{A}$ be a cyclic $c$-noncrossing arc diagram and $a \in \ZZ$.
			\begin{proposition}\label{prop: TITO_c est bien définie}
				The TITO $\mathrm{tito}_{c,a}(\mathcal{A})$ is well defined and is a $c$-sortable TITO on $\ZZ$.
			\end{proposition}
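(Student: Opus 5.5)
The proof splits into three parts: (a) the window(s) of Definition \ref{def: TITO associé à un diagramme d'arcs} really define a real TITO on $\ZZ$; (b) this TITO has one of the two shapes of Definition \ref{def: biclos c-triable}; (c) it avoids the forbidden patterns. For (c), by Proposition \ref{prop: TITO c triable motifs plus facile à distance finie}, it suffices to check patterns $kij$ and $jki$ with $k$ and $i$ consecutive for $\prec$.

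For (a), I would first use Proposition \ref{prop: les a numérotations sont des numérotations}. The supports of the chains and loops of $\mathcal{N}_a(\mathcal{A})$ contain exactly one representative of each residue modulo $n$, so the concatenated windows list a transversal of $\ZZ/n\ZZ$. In the case where $\mathcal{C}_a(\mathcal{A})$ is finite or the period contains a $\downarrow$, we obtain a single window $[M]$. To check it is an affine permutation I need the window to be realizable, namely that no element is at distance $\geq n$ from the translate it must precede. I would obtain this from Lemma \ref{lem: les cycles dans Ma(A) ont L_c et R_c dans un intervalle de longueur n}(i), which confines $\Lcn$-elements to $\rrbracket M-n,M\rrbracket$ and $\Rcn$-elements to $\llbracket m,m+n\llbracket$. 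I would also check that the sum condition is harmless, since a TITO is only required to be translation invariant.

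In the other case, $[L]$ and $[R]$ are built from $\mathcal{N}_a(\mathcal{A}_L)$ and $\mathcal{N}_a(\mathcal{A}_R)$ with finite sequences, so they are waxing blocks by the same argument. For $\underline{[M]}$ I must show the block is waning and real. Write $\tau_a$ for the periodic factor (only $\uparrow$ arrows) and $M,m$ for the extreme elements of the chains of $\mathcal{M}_a(\mathcal{A})$, as in Lemma \ref{lem: les cycles dans Ma(A) ont L_c et R_c dans un intervalle de longueur n}. Then $M-n\uparrow p$ is a factor of $\mathcal{C}_a(\mathcal{A})$, so the first element listed (the maximum $M$ of the first chain) is followed, after one period, by the last element listed minus $n$. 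This gives the waning relation $x+n\prec x$ without a cover relation $x+n\prec x$. For a loop, the window is simply its support read in decreasing order.

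For (b), I would show the following.
\begin{itemize}
\item The supports in $\mathcal{L}_a$ and $\mathcal{R}_a$ lie in $\Lcn$ and $\Rcn$ respectively, by construction.
\item $\mathcal{M}_a(\mathcal{A})$ meets both $\Lcn$ and $\Rcn$, by Proposition \ref{prop: on peut remplacer une boucle de chaine par une seule boucle}(1), or by reality of the loop when there is one.
\end{itemize}

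For (c), take $k\prec i$ consecutive with $i<k$. Then either $k,i$ lie in the same chain (descending order of its support), or $k$ is the last element of one selected chain and $i$ the maximum of the next selected chain under $\lhd$. In the first case, $k\to$ arc with $i<k$ is an arc of $\mathcal{A}$. Since that arc passes left of $\Lcn$-points and right of $\Rcn$-points, every $j\in\rrbracket i,k\llbracket$ in $\Lcn$ must be selected before that chain, and every $j$ in $\Rcn$ after it. This is exactly the content of the hiding relation of Definition \ref{def: chain cachée} combined with the fact that $\texttt{select}$ only takes unhidden chains. In the second case I would argue that consecutive selected chains with $\max(\gamma_{t+1})<\min(\gamma_t)$ would create a descent not coming from an arc. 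This must be excluded, or shown to involve no $j$ in between, using the remark that unhidden chains are separated and the noncrossing criterion of Corollary \ref{cor: croisement de chaîne}.

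I expect the main obstacle to be this pattern check across block boundaries and across the period of $\underline{[M]}$. There, $j$ may belong to a translate of a chain not in the numbering, so one must compare with the copy actually selected. This is where the specific choice of $a$-numbering, and Lemmas \ref{lem: si facteur c|d dans tau_a alors enchevetrement des chaînes} and \ref{lem: deux arcs imbriqués avec la fin dans le même Lc Rc alors ils sont dans la période}, will have to be invoked carefully.
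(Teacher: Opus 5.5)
Your reduction to consecutive pairs via Proposition \ref{prop: TITO c triable motifs plus facile à distance finie}, your shape check in (b), and your hiding-plus-\texttt{select} argument all match the paper. That argument, however, only settles the subcase where $i\to k$ is an arc and the chain of $j$ is a numbered chain of the same block. What you call ``the main obstacle'' is left unproved, and that is where the substance of the statement lies. (Part (a) needs no realizability check: any ordered transversal of $\ZZ/n\ZZ$ is the window of a TITO, the waning of $\underline{[M]}$ is imposed by the notation, and realness only needs $M$ to contain at least two residues, which (b) gives.)

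The first missing piece is that your case split requires every cover $\affc{i,k}$ to come from an arc $i\to k$. A descent between two consecutive chains of one window is excluded by \texttt{select}. The descent at the junction of a window with its translate is not. Ruling it out uses three facts: the length-$n$ confinement of $[L]$, $[R]$ (and of $[M]$ when $\mathcal{C}_a(\mathcal{A})$ is finite); the position of $M$ at the end of $\tau_a$ in the $\downarrow$ case; and, for $\underline{[M]}$, the fact that for each factor $x\uparrow y$ of $\tau_a$ the chain of $y$ hides the chain of $x$. This is Proposition \ref{prop: TITO_c a les bonnes couvertures}. Its proof does not use $c$-sortability, and the paper's proof of the present statement takes it for granted (it speaks of ``the arc $i\to k$''), so you may import it.

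The second gap is the genuinely missing idea. Let $j'$ be the representative of the class of $j$ lying in a numbered chain; the case $j\neq j'$ is settled by Lemma \ref{lem: les cycles dans Ma(A) ont L_c et R_c dans un intervalle de longueur n}, which you spent on the nonexistent realizability issue. Normalize so that $i,k$ lie in the base window; then $m\le i<k\le M$.
\begin{itemize}
\item In the waxing case the lemma puts the $\Lcn$-points of numbered chains in $\rrbracket M-n,M\rrbracket$ and the $\Rcn$-points in $\llbracket m,m+n\llbracket$. So $j\in\Lcn$, $j\neq j'$ and $m<j<M$ force $j<j'$: $j$ lies in an earlier translate of the window and $j\prec k$. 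Likewise $j\in\Rcn$ forces $j>j'$, hence $i\prec j$.
\item Part (ii) of the lemma swaps the two intervals in the waning case, which exactly compensates for the translates being ordered the other way.
\item In $[L]$ and $[R]$ the definition already confines everything to an interval of length $n$, so $j=j'$ there and your argument suffices.
\end{itemize}
Across blocks, $j\in[L]$ or $j\in[R]$ is immediate from the block order. For $i,k\in[L]$ and $j\in\underline{[M]}$, however, you must prove $j\in\Rcn$. Suppose instead $j\in\Lcn$. Then Corollary \ref{cor: croisement de chaîne} confines the chain of $j$ strictly between $i$ and $k$ with support in $\Lcn$, so it is not a loop. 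Since it meets the period, it contains the neighbor arc of $\max(\eta)$ for some chain $\eta$ of the period. That $\eta$ is then nested inside it, contradicting the interleaving given by Lemma \ref{lem: si facteur c|d dans tau_a alors enchevetrement des chaînes}. The case $i,k\in[R]$, $j\in\underline{[M]}$ is symmetric.
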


			\begin{proof}
				Since $\mathcal{N}_a(\mathcal{A})$ is a numbering of $\mathcal{A}$, the sets $L$, $M$ and $R$ define a subset of $\ZZ$ in which exactly one element of each class of integers modulo $n$ is present, thus the windows $[M]$ and $[L]\underline{[M]}[R]$ do well define TITOs.

				We now check that $\mathrm{tito}_{c,a}(\mathcal{A})$ is a $c$-sortable TITO on $\ZZ$. By definition, it has the right shape: either $[M]$ or $[L]\underline{[M]}[R]$ with $L$, $M$ and $R$ like in Definition \ref{def: biclos c-triable}. Using Proposition \ref{prop: TITO c triable motifs plus facile à distance finie}, it only remains to check that the patterns $ki\dots j$ with $j \in \Lcn$ and $j \dots ki$ with $j \in \Rcn$ such that $i < j < k$ and $k$ and $i$ are consecutive in the TITO, are avoided.

				Let $i,j,k \in \ZZ$ such that $\affc{i,k} \in \mathrm{Cov}(\mathrm{tito}_{c,a}(\mathcal{A}))$ and $i < j < k$. We show that $j \prec k$ if $j \in \Lcn$ and $i \prec j$ if $j \in \Rcn$. Note $\gamma$ the chain or loop containing the arc $i \rightarrow k$ in $\mathcal{A}$ and $\gamma'$ the chain or loop containing $j$. Since $k$ and $i$ are consecutive in the TITO, we have $\gamma \neq \gamma'$. Without loss of generality, we can assume that $\gamma \in \mathcal{N}_a(\mathcal{A})$ by shifting $i$, $j$ and $k$ by the same multiple of $n$.
				Moreover, let $j'$ be the representative modulo $n$ of $j$ that appears in the $a$-numbering $\mathcal{N}_a(\mathcal{A})$.

				First, we treat the cases where $\{i,k\}$ and $\{j\}$ belong to different blocks of $\mathrm{tito}_{c,a}(\mathcal{A})$. If $j \in [L]$ and $i,k$ lie in another block, then $j \in L_c$ and $j \prec k$. Similarly, if $j \in [R]$ and $i,k$ lie in another block, then $j \in R_c$ and $i \prec j$. Finally, if $j \in \underline{[M]}$ and $i,k$ are in $[L]$ (resp. $[R]$), then $j$ must belong in $\Rcn$ (resp. $\Lcn$): indeed, assume for contradiction that $j \in \Lcn$ (resp. $j \in \Rcn$), then $\gamma'$ would be confined between $i$ and $k$ by Corollary \ref{cor: croisement de chaîne}, and take values only in $\Lcn$ (resp. $\Rcn$). If $\gamma'$ is a loop, this is already a contradiction. Otherwise, since $j \in \underline{[M]}$, then $\gamma'$ is a chain that intersects a periodic factor of the sequence $\mathcal{C}_a(\mathcal{A})$. In particular, $\gamma'$ contains the neighbor arc of the maximal element of a chain $\eta$ that also appears in a periodic factor. Since all the elements of $\gamma'$ are in $\Lcn$ (resp. $\Rcn$), $\gamma'$ contains the neighbor arc of $\max(\eta)$ and $i \rightarrow k \in \gamma$ is a covering arc of $\max(\eta)$ such that $\max(\gamma') < k$, then $\max(\eta) \in \Lcn$ (resp. $\max(\eta) \in \Rcn$). Therefore $\eta$ is confined between $\min(\gamma')$ and $\max(\gamma')$ by Corollary \ref{cor: croisement de chaîne}, but this contradicts Lemma \ref{lem: si facteur c|d dans tau_a alors enchevetrement des chaînes}. Hence, $j \in \Rcn$ and $i \prec j$ (resp. $j \in \Lcn$ and $j \prec k$).

				Secondly, we treat the case where $i,j$ and $k$ belong to the same block.
				In this case, neither $\gamma$ nor $\gamma'$ are loops as otherwise they would be equal.
				If $j = j'$, meaning that $\gamma' \in \mathcal{N}_a(\mathcal{A})$, then $\gamma'$ hides $\gamma$ if $j \in \Lcn$ and $\gamma$ hides $\gamma'$ if $j \in \Rcn$ by Definition \ref{def: chain cachée}, because $i < j < k$. Therefore, if $\lhd$ is the total order defined by the \texttt{select} function (Definition \ref{def: TITO associé à un diagramme d'arcs}), we have $\gamma' \lhd \gamma$ if $j \in \Lcn$ and $\gamma \lhd \gamma'$ if $j \in \Rcn$. In particular, $j$ appears before $k$ if $j \in \Lcn$ and $j$ appears after $i$ if $j \in \Rcn$.
				This case ($j = j'$) already fully covers the cases where $i$, $j$ and $k$ belong to a block $[L]$ or a block $[R]$. Indeed, the union of the supports of all the chains in $\mathcal{L}_a(\mathcal{A})$ or all the chains in $\mathcal{R}_a(\mathcal{A})$ is contained within an interval of length $n$, and since $i < j < k$, then $j = j'$.

				From now on, we suppose that $j \neq j'$ and we finish our proof by exploring the cases where they belong to either a block $[M]$ or a block $\underline{[M]}$.
				We have by Lemma \ref{lem: les cycles dans Ma(A) ont L_c et R_c dans un intervalle de longueur n} that there exist two integers $m < M$ such that $m \leq i < k \leq M$ and one of the two following cases is true:
				\begin{enumerate}[label*=(\alph*)]
					\item if $j \in \Lcn$ and the block is $\underline{[M]}$, or if $j \in \Rcn$ and the block is $[M]$, then $j' \in \llbracket m, m+n \llbracket$,
					\item if $j \in \Rcn$ and the block is $\underline{[M]}$, or if $j \in \Lcn$ and the block is $[M]$, then $j' \in \rrbracket M-n, M \rrbracket$.
				\end{enumerate}
				Since $i < j < k$, we have $m < j < M$ and therefore $j' < j$ in case (a) and $j < j'$ in case (b).
				If the block is $\underline{[M]}$, this means that $j$ appears before this window (\textit{i.e.} $j \prec x$ for all $x \in \underline{[M]}$) if $j \in \Lcn$, and after this window if $j \in \Rcn$; and if the block is $[M]$, this means that $j$ appears after this window if $j \in \Rcn$, and before this window if $j \in \Lcn$.
				In both cases, we have that $j \prec k$ if $j \in \Lcn$ and $i \prec j$ if $j \in \Rcn$.

				This concludes the proof that $\mathrm{tito}_{c,a}(\mathcal{A})$ is a $c$-sortable TITO on $\ZZ$.
			\end{proof}

			\begin{proposition} \label{prop: TITO_c a les bonnes couvertures}
				For all $p,q \in \ZZ$ such that $p < q$, the TITO $\mathrm{tito}_{c,a}(\mathcal{A})$ satisfies:
				\begin{equation*}
					p \rightarrow q \in \mathcal{A} \Leftrightarrow \affc{p,q} \in \mathrm{Cov}(\mathrm{tito}_{c,a}(\mathcal{A})).
				\end{equation*}
			\end{proposition}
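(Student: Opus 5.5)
The plan is to read off the cover relations of $\prec := \mathrm{tito}_{c,a}(\mathcal{A})$ directly from the window in Definition \ref{def: TITO associé à un diagramme d'arcs}, using Proposition \ref{prop: TITO_c est bien définie} (which says $\prec$ is a $c$-sortable TITO of the right shape). Recall that $\affc{p,q}$ with $p<q$ is a cover reflection exactly when $q \prec p$ is a cover relation. Elements of different blocks are at infinite distance, so every cover relation lies inside one block. Inside a block, two consecutive elements of $\prec$ are of one of three kinds:
\begin{enumerate}[label*=(\alph*)]
	\item two consecutive entries of the support of the same chain or loop;
	\item the last (smallest) entry of a chain $\gamma$ followed by the first (largest) entry of the next chain $\gamma'$ for $\lhd$;
	\item the last entry of a window followed by the first entry of the next translated window (shifted by $+n$ in a waxing block, by $-n$ in $\underline{[M]}$).
\end{enumerate}
I will show that pairs of kind (a) are exactly the arcs of $\mathcal{A}$, and that pairs of kinds (b) and (c) are never descents, except when they are themselves arcs of a loop.

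Kind (a) is immediate. Each support is written in decreasing order, so an arc $p \rightarrow q$ of a chain in the numbering gives consecutive entries $q \prec p$ with $p<q$. Translation invariance then covers every arc of $\mathcal{A}$, since the numbering contains one representative of each class of chains. This proves $\Rightarrow$, together with the part of $\Leftarrow$ coming from kind (a).

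For kind (b), assume $\gamma \lhd \gamma'$ are consecutive and, for contradiction, $\max(\gamma') < \min(\gamma)$. By the third item of Definition \ref{def: chain cachée}, $\gamma$ cannot hide $\gamma'$, because their ranges are disjoint. Since $\gamma'$ is not hidden in the set remaining after $\gamma$ is removed, it was already not hidden when $\gamma$ was selected. But $\max(\gamma') < \max(\gamma)$, which contradicts the choice made by $\texttt{select}$. Hence $\min(\gamma) < \max(\gamma')$, and the pair $\min(\gamma)\prec\max(\gamma')$ is an ascent, so it is not a cover reflection. A loop always comes last in its block by the last part of Lemma \ref{lem: il y a une chaine non cachée}, so it does not interfere with this argument.

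Kind (c) is the step I expect to be the main obstacle.
\begin{itemize}
	\item For a waxing block $[M]$ or a block $[L]$, $[R]$, I need $\min(\text{last chain}) < \max(\text{first chain}) + n$. When $\mathcal{C}_a(\mathcal{A})$ is finite, or for $[L]$ and $[R]$, all supports lie in an interval of length $n$, which gives this directly. In the case with an arrow $\downarrow$, I would use Lemma \ref{lem: les cycles dans Ma(A) ont L_c et R_c dans un intervalle de longueur n}(i): the $\Lcn$ entries lie in $\rrbracket M-n, M \rrbracket$ and the $\Rcn$ entries in $\llbracket m, m+n \llbracket$. I would then split according to whether the two boundary elements lie in $\Lcn$ or $\Rcn$.
	\item For the waning block $\underline{[M]}$, the pair is $x \prec y-n$, with $x$ the last entry and $y$ the first. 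If $\mathcal{M}_a(\mathcal{A})$ is a loop, consecutive loop entries are arcs, so this pair is the arc closing the window. Otherwise $\mathcal{M}_a(\mathcal{A})$ comes from the periodic factor $\tau_a$, which starts with $\uparrow$. Here I would use Lemma \ref{lem: si facteur c|d dans tau_a alors enchevetrement des chaînes} and Corollary \ref{cor: m appartient à la première chaine}: $m$ lies in the first chain, and $M-n \uparrow p$ is a factor. From these I would show $x < y-n$, so the pair is an ascent; this is consistent with the structure in Proposition \ref{prop: forme de M dans le cas où il y a plusieurs chaînes de descentes}. The delicate point is matching the order $\lhd_{\mathcal{M}_a}$ with the order of appearance in $\tau_a$.
\end{itemize}
Once kinds (b) and (c) are excluded, every cover reflection of $\prec$ is of kind (a), hence comes from an arc of $\mathcal{A}$, which proves $\Leftarrow$.
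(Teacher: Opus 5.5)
Your decomposition into kinds (a), (b), (c) is the paper's. The paper argues by contradiction, but it rules out two different chains inside one window by exactly your \texttt{select} argument, and then analyses the window boundary block by block. Your treatment of (a), (b), of $[L]$, $[R]$ and of the finite case is correct. One small correction: a loop is not ``last'' in its block but \emph{alone} in it, because $\mathcal{M}_a(\mathcal{A})$ is then the loop itself.

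The genuine gap is kind (c) for the waxing block $[M]$ when the period contains $\downarrow$. Let $y$ be the first entry (maximum of the $\lhd$-first chain) and $x$ the last entry (minimum of the $\lhd$-last chain); you need $x<y+n$. Lemma \ref{lem: les cycles dans Ma(A) ont L_c et R_c dans un intervalle de longueur n}(i) settles two of your cases:
\begin{itemize}
\item $y\in\Lcn$: then $y>M-n$, while every entry is at most $M$. This uses $m<M-n$, which you must take from the proof of that lemma, not its statement.
\item $x,y\in\Rcn$: then $x<m+n\le y+n$.
\end{itemize}
It does not settle $x\in\Lcn$, $y\in\Rcn$. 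There you only know $x\in\rrbracket M-n,M\rrbracket$ and $y\in\llbracket m,m+n\llbracket$, and since $M-m>n$ these bounds allow $x>y+n$. No splitting on membership in $\Lcn$ or $\Rcn$ can close this case; you need the order $\lhd$.

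The paper's missing idea is to show that the $\lhd$-first chain cannot have its maximum in $\Rcn$. Take the smallest integer $z$ of $\tau_a$ larger than $y$; the paper checks that its chain $\gamma_z$ covers $y$, i.e.\ $\min(\gamma_z)<y<z$. Corollary \ref{cor: croisement de chaîne} with Definition \ref{def: chain cachée} then shows that if $y\in\Rcn$, $\gamma_z$ would hide the first chain. In the nested case the first chain lies in $\Rcn$; in the interleaved case $\min(\gamma_z)\in\Lcn$. Either way this contradicts that the chain was selected first. Hence $y\in\Lcn$, and the bad case never occurs.

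For $\underline{[M]}$ your outline is on the right track, and the ``delicate point'' you flag is exactly what the paper supplies. For every factor $c\uparrow d$ of $\tau_a$, we have $c\in\Rcn$, so Lemma \ref{lem: si facteur c|d dans tau_a alors enchevetrement des chaînes} and Corollary \ref{cor: croisement de chaîne} give $\min(\eta)\in\Lcn$ for the chain $\eta$ of $d$. Hence $\eta$ hides the chain of $c$, and $\lhd_{\mathcal{M}_a(\mathcal{A})}$ is the reverse of the order of appearance in $\tau_a$. So the $\lhd$-first chain ends $\tau_a$ ($y$ is the last integer of $\tau_a$), and the $\lhd$-last chain begins it. That last chain contains the neighbor arc of $y-n$, which gives $x<y-n$.
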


			\begin{proof}
				The forward implication $\Rightarrow$ is clear by construction, since an arrow belongs to a chain or a loop, and their support appears as a consecutive sequence of decreasing order in the TITO.
				The backwards implication $\Leftarrow$ is more complicated to prove, and we prove it by contradiction: suppose that there exists $p,q \in \ZZ$ with $p < q$ such that $\affc{p,q} \in \mathrm{Cov}(\mathrm{tito}_{c,a}(\mathcal{A}))$ and $p \rightarrow q \notin \mathcal{A}$. Up to replacing $p \rightarrow q$ by $p + kn \rightarrow q + kn$ for some integer $k \in \ZZ$, we can assume that $q$ appears in the window notation of $\mathrm{tito}_{c,a}(\mathcal{A})$ obtained by definition of the map $\mathrm{tito}_{c,a}$.
				Since $q$ and $p$ are consecutive for the order ${\prec} = \mathrm{tito}_{c,a}(\mathcal{A})$, by construction they either are consecutive elements of the same chain or loop, or they belong to different chains or loops. Only the second case is possible, because we suppose that $p \rightarrow q \notin \mathcal{A}$. Thus, denoting $\gamma_p$ (resp. $\gamma_q$) the chain or loop containing $p$ (resp. $q$), we have $\gamma_p \neq \gamma_q$. Moreover, neither $\gamma_p$ nor $\gamma_q$ is a loop because they belong to the same block, and a loop is always alone in its block. Also, we have that $p = \max(\gamma_p)$ and $q = \min(\gamma_q)$.
				\begin{equation*}
					\mathrm{tito}_{c,a}(\mathcal{A}) = \cdots \prec \underbrace{\max(\gamma_q) \prec \dots \prec q}_{\gamma_q} \prec \underbrace{p \prec \dots \prec \min(\gamma_p)}_{\gamma_p} \prec \cdots
				\end{equation*}%

				We begin by showing that $q$ is necessarily the last element of the window of the block it belongs to. Suppose otherwise by contradiction, then $p$ also appears in this window. Then $\gamma_q \lhd \gamma_p$ are consecutive for the order $\lhd$ obtained from the $\texttt{select}$ function. This means that either $\gamma_p$ is hidden by $\gamma_q$, or that $\max(\gamma_q) < \max(\gamma_p)$. Since $\max(\gamma_p) = p < q = \min(\gamma_q) < \max(\gamma_q)$, both cases are impossible (see Definition \ref{def: chain cachée}).

				Thus $q$ is the last element of the window of its block. Therefore $p - n$ (resp. $p + n$) is the first element of the same block if it is waxing (resp. waning). If this block is $[L]$ or $[R]$ (from the second case of Definition \ref{def: TITO associé à un diagramme d'arcs}), it is a waxing block and we know that all the elements of $L$ or $R$ belong to the same interval of length $n$. But $q - (p - n) = q-p+n > n$, so this is absurd. Therefore, $p$ and $q$ belong either to a block $[M]$ (first case of Definition \ref{def: TITO associé à un diagramme d'arcs}) or a block $\underline{[M]}$ (second case of the definition). We treat each case separately.
				\begin{equation*}
					[M] = [\overbrace{p-n, \dots, \min(\gamma_p) - n}^{\gamma_p - n}, \dots, \overbrace{\max(\gamma_q), \dots, q}^{\gamma_q}]; \quad \underline{[M]} = \underline{[\overbrace{p+n, \dots, \min(\gamma_p) + n}^{\gamma_p + n}, \dots, \overbrace{\max(\gamma_q), \dots, q}^{\gamma_q}]}
				\end{equation*}

				\begin{itemize}
					\item If this block is $[M]$, it is a waxing block, and this means that either $\mathcal{C}_a(\mathcal{A})$ is finite, or the periodic factor $\tau_a$ of $\mathcal{C}_a(\mathcal{A})$ defined in Definition \ref{def: a-renumérotation} contains an arrow $\downarrow$. In the first case, we know that all elements of $M$ belong to the same interval of length $n$, and we conclude similarly as in the previous cases of the blocks $[L]$ and $[R]$.

					In the second case, recall from Definition \ref{def: a-renumérotation} that there exists $m,M \in \ZZ$ such that the chains $\gamma$ in $\mathcal{L}_a(\mathcal{A})$ (resp. $\mathcal{M}_a(\mathcal{A})$, resp. $\mathcal{R}_a(\mathcal{A})$) have their support in $\rrbracket M-n, M \rrbracket$ (resp. $\llbracket m,M \rrbracket$, resp. $\llbracket m,m+n \llbracket$). Let $y = \min(\{ z \in \tau_a \cap \ZZ \;|\; z > p-n \})$, which exists since $p-n < M$ and $M \in \tau_a$, and denote $\gamma_y$ the chain containing $y$. Since $p-n$ is the maximal element of $\gamma_p - n$, then $\gamma_p - n \neq \gamma_y$. Therefore, either $p-n < \min(\gamma_y) = y$ or $\min(\gamma_y) < p-n < y$ by definition of $y$. But since $y$ is in $\tau_a$, it is not the minimal element of its chain, therefore only the second case holds.

					Since the elements of $\gamma_p - n$ appear first in the window $[M]$, then $\gamma_p - n$ is not hidden by any other chain of $\mathcal{N}_a(\mathcal{A})$. In particular, $\gamma_y$ doesn't hide $\gamma_p - n$, and by Definition \ref{def: chain cachée} we obtain that $p - n \in \Lcn$. In particular, $\gamma_p - n \notin \mathcal{R}_a(\mathcal{A})$.

					If $\gamma_p - n \in \mathcal{L}_a(\mathcal{A})$, then we know that $p-n \in \rrbracket M-n, M \rrbracket$, hence $M < p$. Recall that $p < q$, therefore $M < q$: this is absurd since $q \in \gamma_q \in \mathcal{N}_a(\mathcal{A})$, thus $q \in \llbracket m,M \rrbracket$.

					Finally, if $\gamma_p - n \in \mathcal{M}_a(\mathcal{A})$, then $p-n$ appears in $\tau_a$ since it is the maximal element of $\gamma_p - n$. Recall that $M$ also appears in $\tau_a$ (in fact, it is its last element). In the sequence $\mathcal{C}_a(\mathcal{A})$, which is a concatenation of the factors $\tau_a + kn$ for $k \in \NN$ from a certain point, the integer $p-n$ appears before $M$ (from the factor $\tau_a$), which in turn appears before $p$ (from the factor $\tau_a + n$). Since the integers in $\mathcal{C}_a(\mathcal{A})$ appear in increasing order by construction, we have that $M < p$ which is once again absurd.

					Therefore, the block containing $p$ and $q$ cannot be $[M]$.

					\item The only option left is that the block containing $p$ and $q$ is $\underline{[M]}$, hence there are no arrows $\downarrow$ in $\tau_a$.
					If $\gamma_q = \gamma_p + n$, because the window $\underline{[M]}$ starts with $p+n$ and ends with $q$, we would have $\mathcal{M}_a(\mathcal{A}) = \{\gamma_q\}$. And since $q > p$, $\gamma_q$ would be a loop and so $p \rightarrow q \in \mathcal{A}$ which is absurd.

					Hence $\gamma_p + n \neq \gamma_q$ and both belong to $\mathcal{M}_a(\mathcal{A})$ so neither of them are loops, and so $\gamma_p+n$ is hidden by no other chains from $\mathcal{M}_a(\mathcal{A})$ and no chain of $\mathcal{M}_a(\mathcal{A})$ is hidden by $\gamma_q$. By Lemma \ref{lem: si facteur c|d dans tau_a alors enchevetrement des chaînes} and Definition \ref{def: chain cachée}, for each factor $c \uparrow d$ in $\tau_a$ with $c,d \in \ZZ$, the chain containing $d$ hides the chain containing $c$. Thus, $\tau_a$ starts with elements of $\gamma_q$ and ends with elements of $\gamma_p+n$. In particular, $\gamma_q$ contains the neighbor arc of $p$. Thus, $q = \min(\gamma_q) < p$, which is absurd since $p < q$.
					\begin{equation*}
						\mathcal{C}_a(\mathcal{A}) = \cdots \uparrow \dots \rightarrow p \overbrace{{\uparrow} \underbrace{\dots \rightarrow \max(\gamma_q)}_{\in \gamma_q} \uparrow \cdots \uparrow \underbrace{\dots \rightarrow p+n}_{\in \gamma_p + n}}^{\tau_a} \uparrow \dots \rightarrow \max(\gamma_q) + n \uparrow \cdots
					\end{equation*}
				\end{itemize}
				In conclusion, $q$ does not belong to any block of $\mathrm{tito}_{c,a}(\mathcal{A})$, which is absurd. Therefore, the arc $p \rightarrow q$ exists in $\mathcal{A}$, hence the proposition is proven.
			\end{proof}

			In particular, Propositions \ref{prop: TITO_c est bien définie} and \ref{prop: TITO_c a les bonnes couvertures}, combined with Proposition \ref{prop: les couvertures d'un TITO c-triable le caractérisent}, show that $\mathrm{tito}_{c,a} = \mathrm{tito}_{c,b}$ for all $a,b \in \ZZ$. We denote this common map $\mathrm{tito}_c$. The window notation of the obtained TITO on $\ZZ$ may (and in most cases, will) be different depending on the choice of $a \in \ZZ$, but the underlying TITO is the same. See Example \ref{ex: TITOca = TITOcb} for illustrations.

			\begin{corollaire} \label{cor: ncadc est bijective d'inverse TITOc}
				The map $\ncad_c$ is a bijection between the set of $c$-sortable TITOs on $\ZZ$ and the set of cyclic $c$-noncrossing arc diagrams. Moreover, $\ncad_c^{-1} = \mathrm{tito}_c$.
			\end{corollaire}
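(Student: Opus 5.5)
We have to show that $\ncad_c$, from $c$-$\mathrm{TITO}$ to $c$-$\widetilde{\mathrm{NCAD}}$, is injective and surjective, and that its inverse is $\mathrm{tito}_c$.

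For injectivity we use Proposition \ref{prop: les couvertures d'un TITO c-triable le caractérisent}. The diagram $\ncad_c(\prec)$ records exactly the cover reflections of $\prec$: each cover $\affc{p,q}$ with $p<q$ becomes the cyclic arc $p \rightarrow q$, and by the definition of $\ncad$ there are no other arcs. So two $c$-sortable TITOs with the same image have the same set of cover reflections, and therefore coincide. The earlier lemma shows that $\ncad_c$ takes values in $c$-$\widetilde{\mathrm{NCAD}}$, so we indeed have a well-defined injection into that set.

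For surjectivity, fix $\mathcal{A} \in c$-$\widetilde{\mathrm{NCAD}}$ and any $a \in \ZZ$. By Proposition \ref{prop: TITO_c est bien définie}, $\prec := \mathrm{tito}_{c,a}(\mathcal{A})$ is a $c$-sortable TITO on $\ZZ$. We must check $\ncad_c(\prec)=\mathcal{A}$.

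By Proposition \ref{prop: TITO_c a les bonnes couvertures}, the cover reflections $\affc{p,q}$ of $\prec$ with $p<q$ are exactly the pairs with $p \rightarrow q \in \mathcal{A}$. Hence $\ncad(\prec)$ and $\mathcal{A}$ have the same arc endpoints. It remains to see that the arcs agree as full tuples $(p,q,L,R)$. In both diagrams every arc satisfies $L \subset \Lcn$ and $R \subset \Rcn$: for $\ncad_c(\prec)$ by the lemma preceding the definition of $nc_c^{\widetilde{A}}$, and for $\mathcal{A}$ by definition of a cyclic $c$-noncrossing arc diagram. Since $L \sqcup R = \rrbracket p,q \llbracket$, this forces $L = \Lcn \cap \rrbracket p,q \llbracket$ and $R = \Rcn \cap \rrbracket p,q \llbracket$, so an arc is determined by its endpoints. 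Thus $\ncad_c(\prec)=\mathcal{A}$, and $\ncad_c$ is a bijection.

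Finally, $\mathrm{tito}_{c,a}(\mathcal{A})$ is then the unique preimage of $\mathcal{A}$. In particular it does not depend on $a$, which justifies the notation $\mathrm{tito}_c$, and we get $\ncad_c^{-1} = \mathrm{tito}_c$. The only step needing care is the identification of arcs with their endpoints; everything substantive is already contained in Propositions \ref{prop: TITO_c est bien définie} and \ref{prop: TITO_c a les bonnes couvertures}, so I expect no real obstacle.
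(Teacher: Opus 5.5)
Your proof is correct and follows the paper's argument: injectivity of $\ncad_c$, then surjectivity by applying Propositions \ref{prop: TITO_c est bien définie} and \ref{prop: TITO_c a les bonnes couvertures} to $\mathrm{tito}_{c,a}(\mathcal{A})$, with independence from $a$ coming from uniqueness. The paper simply cites injectivity as known (restriction of Barkley's bijection $\ncad$), whereas you derive it from Proposition \ref{prop: les couvertures d'un TITO c-triable le caractérisent}, and you spell out, via $L = \Lcn \cap \rrbracket p,q \llbracket$ and $R = \Rcn \cap \rrbracket p,q \llbracket$, the endpoint identification of arcs that the paper covers with ``by construction''.
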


			\begin{proof}
				We already know that $\ncad_c$ is injective. Let $\mathcal{A}$ be a cyclic $c$-noncrossing arc diagram and $\prec = \mathrm{tito}_c(\mathcal{A})$. By Proposition \ref{prop: TITO_c est bien définie} it is a $c$-sortable TITO on $\ZZ$, and by Proposition \ref{prop: TITO_c a les bonnes couvertures} we have
				\begin{equation*}
					p \rightarrow q \in \mathcal{A} \Leftrightarrow \affc{p,q} \in \mathrm{Cov}(\prec).
				\end{equation*}
				By construction, $\ncad_c(\prec)$ is the cyclic $c$-noncrossinc arc diagram whose arcs are exactly the arcs $p \rightarrow q$ for $\affc{p,q} \in \mathrm{Cov}(\prec)$. Therefore, $\ncad_c(\prec) = \mathcal{A}$ and $\ncad_c$ is bijective of inverse $\mathrm{tito}_c$.
			\end{proof}

		\subsubsection{Generalized \textsc{Reading}'s map} \label{sssec: generalized reading's map}

			As a consequence of Corollary \ref{cor: ncadc est bijective d'inverse TITOc}, the map $nc_c^{\widetilde{A}}$ defined in Definition \ref{def: reading généralisée} is also a bijection by composition of bijections (Equation \ref{eq: nc_c exprimé comme composée}). What is left to prove Theorem \ref{thm: generalisation de l'application de Reading} is to show that this map generalizes \textsc{Reading}'s map $nc_c$.

			\readinggeneralise*

			\begin{proof}
				Corollary \ref{cor: ncadc est bijective d'inverse TITOc} and Equation \ref{eq: nc_c exprimé comme composée} already show that $nc_c^{\widetilde{A}}$ is a bijective map from the set of $c$-sortable biclosed sets of $R$ to the set of $c$-noncrossing partitions of $W$. We need to show that it coincides with the map $nc_c$ on inversions sets of $c$-sortable elements of $W$.

				Let $w \in W$ be a $c$-sortable element and define two $c$-noncrossing partitions $x = nc_c(w)$ and $y = nc_c^{\widetilde{A}}(N(w))$. By definition of $nc_c$ \cite[§8]{reading2010sortable}, $x$ is a $R$-reduced product of the reflections of $\mathrm{Cov}(w)$ in a specific order. By Proposition \ref{prop: TITO_c a les bonnes couvertures} and Definition \ref{def: reading généralisée}, we have that $y$ is also a product of the elements of $\mathrm{Cov}(\pprec(N(w))) = \mathrm{Cov}(w)$, possibly in another order. Recall from Paragraph \ref{par: partitions non croisées Sn tilde} of Section \ref{sec: background} that no $c$-noncrossing partition distinct from $x$ can be obtained by permuting the letters of an $R$-reduced expression of $x$, therefore $y = x$.
			\end{proof}

			\begin{exemple}[Direct map] \label{ex: direct map}
				Consider, for $n = 4$ and $c = \affc{1,2,3}_{[1]}\affc{0}_{[-1]}$, the biclosed set
				\begin{equation*}
					B = \{ \affc{0,1}_p, \affc{0,2}_p, \affc{3,1}_{p+1}, \affc{3,2}_{p+1} \;|\; p \in \NN \} \sqcup \{ \affc{0,3}_p, \affc{3,0}_{p+1} \;|\; p \in \NN \}.
				\end{equation*}
				Its corresponding TITO on $\ZZ$ is $\pprec(B) = [1,2]\underline{[3,0]}$. Applying Definition \ref{def: reading généralisée}, we compute the maximal descending chains of $\pprec(B)$. They are $(1)$, $(2)$ (in the block $[L]$) and $(3,0,-1)$ (in the block $\underline{[M]}$). Hence, the image of $B$ is:
				\begin{equation*}
					x = nc_c^{\widetilde{A}}(B) = \affc{1} \affc{2} \affc{3}_{[1]} \affc{0}_{[-1]} = s_0s_1s_2s_1s_0 s_3.
				\end{equation*}
			\end{exemple}

			\begin{exemple}[Inverse map] \label{ex: TITOca = TITOcb}
				In Example \ref{ex: un exemple de TITO_c}, we computed ${\prec} = \mathrm{tito}_{c,4}(\mathcal{B})$ where $\mathcal{B}$ is the cyclic $c$-noncrossing arc diagram of Example \ref{ex: a-renumérotation arc}. This TITO is indeed $c$-sortable, and moreover $\ncad_c(\prec) = \mathcal{B}$. If we use another initial point for the renumbering of $\mathcal{B}$, say the $-1$-numbering, we obtain that
				\begin{equation*}
					\mathcal{C}_{-1}(\mathcal{B}) = -1 \uparrow 0 \uparrow 2 \rightarrow 3 \uparrow 10 \uparrow 12 \rightarrow 13 \uparrow 20 \uparrow \cdots.
				\end{equation*}
				Therefore, $\tau_{-1} = \uparrow 0 \uparrow 2 \rightarrow 3$, and since it does not contain $\downarrow$s, the $-1$-numbering of $\mathcal{B}$ is depicted in Figure \ref{fig: renumérotation avec autre point} and hence we have
				\begin{equation*}
					\mathrm{tito}_{c,-1}(\mathcal{B}) = [-4, -6] \underline{[3,2,-2,\ 0,-9]} [-3,-5,\ -1].
				\end{equation*}
				\begin{figure}[ht!]
					\centering
					\begin{tikzpicture}[baseline=(current bounding box.center), scale=1.3]
						\def\n{10}

						\def\Rc{2, 3, 5, 7, 9, 10}
						\def\Lc{1, 4, 6, 8}
						\def\NUM{1/-9, 2/2, 3/3, 4/-6, 5/-5, 6/-4, 7/-3, 8/-2, 9/-1, 10/0}

						\newcommand{\ang}[1]{{-(#1-1)*360/\n + 90}}

						\foreach \i in {1,...,\n}{
							\coordinate (\i) at (\ang{\i}:1);
						}

						\foreach \i in \Rc{
							\draw (0,0) -- (\i);
						}
						\foreach \i in \Lc{
							\draw (\ang{\i}:1.5) -- (\i);
						}

						\draw[black] plot [ smooth, tension=0.8 ] coordinates {(1) (\ang{2}:1.2) (\ang{3}:1.2) (\ang{4}:0.8) (\ang{5}:1.2) (\ang{6}:0.8) (\ang{7}:1.2) (\ang{8}:0.8) (\ang{9}:1.2) (10)};

						\draw[black, densely dashed] plot [ smooth, tension=0.8 ] coordinates {(8) (\ang{9}:1.4) (\ang{10}:1.2) (\ang{1}:0.8) (2) (3)};

						\draw[black, densely dotted] plot [ smooth, tension=0.8 ] coordinates {(4) (\ang{5}:1.4) (6)};

						\draw[gray] plot [ smooth, tension=0.8 ] coordinates {(5) (\ang{6}:0.6) (7)};

						\foreach \i/\j in \NUM{
							\node[rectangle, fill=white, inner sep=1pt, draw, font=\footnotesize, draw opacity=0.3] at (\i) {\j};
						}
					\end{tikzpicture}
					\caption{The $-1$-numbering of $\mathcal{B}$}
					\label{fig: renumérotation avec autre point}
				\end{figure}%

				The window is different from $\mathrm{tito}_{c,4}(\mathcal{B})$ computed in Example \ref{ex: un exemple de TITO_c}, but they define the same TITO since we can obtain the window of $\mathrm{tito}_{c,4}(\mathcal{B})$ from the window of $\mathrm{tito}_{c,-1}(\mathcal{B})$ by «~sliding~» the first window two steps to the right and the second window seven steps to the left.

			\end{exemple}

	\subsection*{Remarks and questions}
		\subparagraph*{Type $\widetilde{C}$} Under the classical folding of type $\widetilde{A}_{2n-3}$ into type $\widetilde{C}_{n-1}$, the objects used in this paper are well behaved (see \cite{barkley2022combinatorial, reading2024noncrossingpartitionsannulus}) and our results naturally restrict to this type. The argument is beyond the scope of this paper and will appear in the author's PhD thesis.

		\subparagraph*{Other classical affine types} Most combinatorial objects involved in this paper have generalizations in other classical affine types. See \cite{barkley2022combinatorial} for the combinatorial description of biclosed sets as subsets of TITOs on $\ZZ$ in types $\widetilde{A}$, $\widetilde{B}$, $\widetilde{C}$ and $\widetilde{D}$. See  \cite{reading2024noncrossingpartitionsannulus} for a combinatorial model of noncrossing partitions in types $\widetilde{A}$ and $\widetilde{C}$ and \cite{reading2025symmetricnoncrossingpartitionsannulus} for types $\widetilde{B}$ and $\widetilde{D}$. We think the methodology used in this paper could be adapted on a type by type basis in order to obtain similar results in the other classical affine types, but we have little faith that this method could lead to a uniform approach.

		\subparagraph*{Generalization of aligned elements} It would be interesting to generalize \textsc{Reading}'s result stating that $c$-sortable elements and $c$-aligned elements are the same in all \textsc{Coxeter} groups \cite[4.3]{reading2010sortable}. A generalized notion of $c$-aligned elements could lead to a uniform generalization of $c$-sortable elements and \textsc{Reading}'s bijection in all affine types (and potentially all \textsc{Coxeter} groups). 



\clearpage

\phantomsection
\addcontentsline{toc}{section}{References}
\bibliographystyle{alpha}
\bibliography{biblio.bib}

\end{document}